\documentclass[11pt,oneside,english,reqno]{amsart}
\usepackage[T1]{fontenc}
\usepackage[latin9]{inputenc}
\usepackage[a4paper]{geometry}
\geometry{verbose,lmargin=2.5cm,rmargin=2.5cm}
\usepackage{mathrsfs}
\usepackage{amstext}
\usepackage{amsthm}
\usepackage{amssymb}
\usepackage{color}
\usepackage{hyperref}
\usepackage{dsfont}
\usepackage{enumerate}
\usepackage{verbatim} 
\usepackage{stmaryrd}
\usepackage{enumitem}
\usepackage[dvipsnames]{xcolor}
\usepackage{bbm}
\usepackage{dsfont}

\makeatletter
\numberwithin{equation}{section}
\numberwithin{figure}{section}
\theoremstyle{plain}
\newtheorem{thm}{\protect\theoremname}
\theoremstyle{definition}
\newtheorem{defn}[thm]{\protect\definitionname}
\theoremstyle{remark}
\newtheorem{rem}[thm]{\protect\remarkname}
\theoremstyle{plain}
\newtheorem{lem}[thm]{\protect\lemmaname}
\theoremstyle{plain}
\newtheorem{prop}[thm]{\protect\propositionname}
\theoremstyle{plain}
\newtheorem{cor}[thm]{\protect\corollaryname}
\theoremstyle{plain}
\newtheorem{ex}[thm]{\protect\examplename}
\theoremstyle{plain}
\newtheorem{ass}[thm]{\protect\assumptionname}
\numberwithin{thm}{section}
\makeatother

\usepackage{babel}
\providecommand{\corollaryname}{Corollary}
\providecommand{\definitionname}{Definition}
\providecommand{\lemmaname}{Lemma}
\providecommand{\propositionname}{Proposition}
\providecommand{\remarkname}{Remark}
\providecommand{\theoremname}{Theorem}
\providecommand{\examplename}{Example}
\providecommand{\assumptionname}{Assumption}

\newcommand{\red}[1]{{\color{red} #1}}
\newcommand{\blue}[1]{{\color{blue} #1}}

\newcommand{\cB}{\mathcal{B}}
\newcommand{\cC}{\mathcal{C}}
\newcommand{\cD}{\mathcal{D}}

\newcommand{\cF}{\mathcal{F}}
\newcommand{\cG}{\mathcal{G}}
\newcommand{\cH}{\mathcal{H}}
\newcommand{\cI}{\mathcal{I}}
\newcommand{\cJ}{\mathcal{J}}

\newcommand{\cL}{\mathcal{L}}

\newcommand{\cP}{\mathcal{P}}

\newcommand{\cS}{\mathcal{S}}


\newcommand{\EE}{\mathbb{E}}

\newcommand{\NN}{\mathbb{N}}

\newcommand{\PP}{\mathbb{P}}
\newcommand{\QQ}{\mathbb{Q}}
\newcommand{\RR}{\mathbb{R}}



\newcommand{\mathd}{\mathrm{d}}


\newcommand{\vertiii}[1]{{\left\vert\kern-0.25ex\left\vert\kern-0.25ex\left\vert #1 
    \right\vert\kern-0.25ex\right\vert\kern-0.25ex\right\vert}}

\newcommand{\eps}{\varepsilon}
\newcommand{\dd}{\mathop{}\!\mathrm{d}}

\newcommand{\ind}{\mathbbm{1}}
\let\div\relax
\DeclareMathOperator{\div}{div}

\long\def\lucio#1{{\color{blue}Lucio:\ #1}}
 
 \hypersetup{
     colorlinks   = true,
     citecolor    = blue,
     linkcolor    = blue
}

\title{Distribution dependent SDEs driven by additive fractional Brownian motion}
\author{ Lucio Galeati \and Fabian A. Harang \and Avi Mayorcas}
\date{\today}

\address{Lucio Galeati: 
Institute of Applied Mathematics, University of Bonn, 53115 Endenicher Allee 60, Bonn, Germany.
}
\email{lucio.galeati@iam.uni-bonn.de}
\address{Fabian A. Harang: 
 Department of Mathematics, University of Oslo, P.O. box 1053, Blindern, 0316, OSLO, Norway.}
\email{fabianah@math.uio.no} 
\address{Avi J. Mayorcas: Mathematical Institute, University of Oxford, Oxford, OX2 6GG, UK.}
\email{mayorcas@math.ox.ac.uk}

\keywords{Distribution dependent SDEs, Singular drifts, Regularization by noise, Fractional Brownian motion.}

\thanks{\emph{ AMS 2020 Mathematics Subject Classification:  } Primary: 60H50, 60H10; Secondary: 60G22, 60L90. \\
\emph{Acknowledgments.} 
FH gratefully acknowledges financial support from the STORM project 274410, funded by the Research Council of Norway.
LG is funded by the DFG under Germany's Excellence Strategy - GZ 2047/1, project-id 390685813.}

\begin{document}

\begin{abstract}
    We study distribution dependent stochastic differential equations with irregular, possibly distributional drift, driven by an additive fractional Brownian motion of Hurst parameter $H\in (0,1)$.
    We establish strong well-posedness under a variety of assumptions on the drift; these include the choice
    \[
    B(\cdot,\mu)=(f\ast \mu)(\cdot) + g(\cdot), \quad f,\,g\in B^\alpha_{\infty,\infty},\quad \alpha>1-\frac{1}{2H},
    \]
    thus extending the results by Catellier and Gubinelli \cite{Catellier2016} to the distribution dependent case.
    The proofs rely on some novel stability estimates for singular SDEs driven by fractional Brownian motion and the use of Wasserstein distances.
\end{abstract}

\maketitle

{
\hypersetup{linkcolor=black}
 \tableofcontents 
}

\section{Introduction}
In this work we consider a distribution dependent SDE (henceforth DDSDE) of the form
\begin{equation}\label{eq:GenMckeanSDE}
    X_t = \xi + \int_0^t B_s(X_s,\cL(X_s))\dd s + W_t
\end{equation}
where $B : \RR_+\times \RR^d \times \cP(\RR^d)\rightarrow \RR^d$, $\xi$ is an $\RR^d$-valued random variable and $W$ is a $\RR^d$-valued stochastic process independent of $\xi$.
The drift $B$ and the law of $(\xi,W)$ are prescribed, while the process $X$ is the unknown and $\cL(X_t)$ denotes the law of its marginal at time $t$.

Usually in the literature $W$ is sampled as a standard Brownian motion; in this case the DDSDE is also called a McKean--Vlasov SDE, after the pioneering work \cite{mckean1966class} where it was first introduced.

The importance of McKean--Vlasov equations is due to their connection to systems of $N$ particles subject to a mean field interaction of the form
\begin{equation}\label{eq:intro-particle-system}
    X^{i,N}_t = \xi^i + \int_0^t B_s\big(X^{i,N}_s, L^N\big(X^{(N)}_s\big)\big) \dd s + W^{i}_t, \quad L^N\big(X^{(N)}_t\big):=\frac{1}{N}\sum_{i=1}^N \delta_{X^{i,N}_t}
\end{equation}
where $(\xi^i,W^i)$ are typically taken to be i.i.d. copies of $(\xi,W)$ and $L^N\big(X^{(N)}_t\big)$ stands for the empirical measure of the system at time $t$.
One expects the DDSDE \eqref{eq:GenMckeanSDE} to be the mean field limit of \eqref{eq:intro-particle-system} in the sense that, as $N$ goes to infinity, $L^N\big(X^{(N)}_t\big)$ converges weakly to $\cL(X_t)$ with probability $1$.

Another feature of DDSDEs in the Brownian noise case is their connection to nonlinear Fokker--Planck PDEs (also called McKean--Vlasov equations) of the form
\begin{equation}\label{eq:intro-PDE}
    \partial_t \rho + \nabla \cdot ((B_t(\,\cdot\,, \rho)\, \rho) = \frac{1}{2} \Delta \rho, \quad \rho_0=\cL(\xi),
\end{equation}
which describe the evolution of the marginal $\rho_t=\cL(X_t)$;
in particular, both \eqref{eq:GenMckeanSDE} and \eqref{eq:intro-PDE} provide a macroscopic, compact description of the system \eqref{eq:intro-particle-system}, allowing one to reduce its complexity.
For this reason, DDSDEs have found applications in numerous fields, see the review \cite{jabin2017mean} and the references therein; let us also mention their connection to mean-field games \cite{lasry2007mean}.

Classical results concerning the well-posedness of the DDSDE \eqref{eq:GenMckeanSDE} and the mean-field limit property go back to Sznitman \cite{sznitman1991topics} and G\"artner \cite{Gartner1988}; in the last years the field has witnessed substantial contributions both from the analytic and probabilistic communities.
On the one hand, new methods based on entropy inequalities \cite{fournier2014propagation, jabin2018quantitative, bresch2019mean} and modulated energy methods \cite{serfaty2017mean, serfaty2020mean} have allowed for the rigorous derivation of mean field limits for fairly singular $B$;
while on the other, DDSDEs with irregular drifts are related to the flourishing field of \textit{regularization by noise} phenomena. The latter topic was initiated by Zvonkin \cite{Zvonkin1974} and Veretennikov \cite{Veretennikov1981} in the case of standard SDEs, see \cite{Flandoli2011} for a general overview; recently many authors have applied similar techniques in the DDSDE case, see for instance \cite{BauerMBrandisProske2018,mishura2016existence,rockner2018well,chaudru2020strong,huang2019distribution}.\\

Contrary to the previously mentioned works, here we will study DDSDEs in which $W$ is sampled as a \textit{fractional Brownian motion} (fBm for short) of Hurst parameter $H\in (0,1)$.
Our main reasons for doing so are the following:
\begin{enumerate}
    \item[1.] It was shown in \cite{coghi2020pathwise}, revisiting the ideas of Tanaka \cite{tanaka1984Limits}, that for Lipschitz $B$ the mean-field limit of \eqref{eq:intro-particle-system} to \eqref{eq:GenMckeanSDE} holds for \textit{any} choice of the process $W$, regardless of it being Markov or a semimartingale.
    In particular the DDSDE has a physical meaning and still provides a compact description of a much more complex system of interacting particles.
    \item[2.] Several regularization by noise results for standard SDE are available for $W$ sampled as an fBm (or similar fractional processes), see \cite{nualart2002regularization, Catellier2016, le2020stochastic, amine2017c, banos2019strong} for a short selection.
\end{enumerate}
In light of Point 2. above, it is natural to expect similar results to hold for DDSDEs with singular (possibly even distributional in space) drifts and $W$ sampled as an fBm;
by Point 1., they are relevant in the study of particle systems with singular interactions (for instance with a discontinuity at the origin, as typical of Coulomb and Riesz-type potentials).

Let us mention that there is a certain degree of arbitrariness in choosing $W$ to be sampled as an fBm, as one could consider other non-Markovian, non-martingale processes.
We believe our choice to be simple enough while at the same time representing what one might expect for a larger class of processes (e.g. Gaussian processes satisfying a local non-determinism condition).
In this sense, this work also serves as a comparison to the results from \cite{GalHarMay_21benchmark}, where we explored in detail the DDSDE \eqref{eq:GenMckeanSDE} in the opposite regime where no assumption whatsoever is imposed on $W$, thus no regularization can be observed.\\

Despite the above motivations, singular DDSDEs driven by fBm (or similar fractional processes) so far have not received the same attention as their Brownian counterparts; to the best of our knowledge, the only other work treating these kind of equations is \cite{bauer2019mckean}. 

One possible reason for this is the substantial new difficulties presented by such equations: fBm with parameter $H\neq 1/2$ is neither a Markov process, nor a semimartingale, so techniques based on It\^o calculus are not applicable.
This includes in particular the connection to parabolic semigroups, the martingale problem formulation and the use of Zvonkin transform (or It\^o--Tanaka trick), all techniques used extensively in the aforementioned works in the Brownian case.
It also prevents the use of standard arguments, which typically rely on establishing uniqueness of the law $\rho_t=\cL(X_t)$ through PDE analysis of \eqref{eq:intro-PDE} and then fixing the law in the DDSDE and treating it as a standard SDE.\\

Treating DDSDEs driven by fBm thus requires a novel set of tools and ideas; our strategy in this paper builds on the work of Catellier and Gubinelli \cite{Catellier2016}, which represented a major breakthrough in the study of standard SDEs driven by fBm of the form
\begin{equation}\label{SDE}
X_t = \xi + \int_0^t b_s(X_s)\mathd s + W_t.
\end{equation}
Therein the authors develop a \textit{pathwise approach} to the equation, based on \textit{nonlinear Young integrals} and \textit{Girsanov transform}, that allows to give meaning to \eqref{SDE} and establish its \textit{path-by-path uniqueness}, for drifts $b$ of poor regularity, possibly even distributional.
Their results and techniques have been revisited in subsequent works \cite{galeati2020noiseless,harang2020cinfinity,galeati2020regularization,harang2020pathwise}; in general it suffices to require
\begin{equation}\label{regularity condition}
b \in
\begin{cases}
L^q_T B^\alpha_{\infty,\infty} & \text{with } \alpha>1-\frac{1}{2H}+\frac{1}{Hq}\quad \ \, \text{if } H\leq 1/2\\
C^{\alpha H}_T C^0_x \cap C^0_T C^\alpha_x \quad &\text{with } \alpha>1-\frac{1}{2H}\qquad\qquad  \text{if } H>1/2
\end{cases}
\end{equation}
see for instance Theorem 15 and Corollary 2 from \cite{galeati2020noiseless}.
Here $B^\alpha_{\infty,\infty}$ denote Besov-H\"older spaces; see Section \ref{subsec:notation} below for the relevant definitions and notations in use throughout the article.
%
%

For the sake of exposition, let us ignore for the moment the additional time regularity required in \eqref{regularity condition} in the case $H>1/2$, since it is mostly of a technical nature;
then condition \eqref{regularity condition} roughly amounts to the drift $b$ enjoying a spatial regularity $B^{\alpha}_{\infty,\infty}$ with $\alpha>1-1/(2H)$.
Observe that for all $H\in (0,1)$ this includes values $\alpha<1/2$, while for $H<1/2$ we are even allowed to take $\alpha<0$, namely distributional $b$.
To the best of our knowledge, no work after \cite{Catellier2016} has improved on the allowed range of $\alpha$.\\


With the above theory at hand, we can interpret the DDSDE \eqref{eq:GenMckeanSDE} by rewriting it as
\begin{align*}
    X_t = \xi + \int_0^t \bar{b}_s(X_s)\dd s + W_t, \quad \bar{b}_t(\cdot):= B_t(\cdot,\cL(X_t));
\end{align*}
namely, $X$ solves the SDE with drift $\bar{b}$, in the Catellier-Gubinelli sense, where $\bar{b}$ depends in a nontrivial way on the law of $X$ itself.
This interpretation comes with a natural fixed point formulation: given a process $X$, we can associate to it a ``flow of measures'' $\mu_t=\cL(X_t)$ and a drift $b^\mu_t := B_t(\cdot, \mu_t)$, then solve the associated SDE, which gives a new process $Y=\cI(X)$;
thus $X$ is a solution to \eqref{eq:GenMckeanSDE} if and only if it is a fixed point for $\cI$.

Alternatively, one could start with the flow of measures $\mu_\cdot=\{\mu_t\}_{t\in [0,T]}$ and set up the fixed point procedure for this object, by defining $\cJ(\mu_\cdot)_t=\cL(X_t)$ for $X$ solution to $b^\mu$.
These two interpretations are in fact equivalent: once $\mu_\cdot$ is completely determined, the DDSDE reduces to a standard SDE with fixed drift $b^\mu$, to which the previous results can be applied; see Lemma \ref{lem:SolDefEquiv} for more details.
Throughout the article we will exploit both interpretations whenever useful.\\

Given the above interpretation, we need two main ingredients to develop a solution theory:
\begin{enumerate}
    \item[1.] Firstly, $B$ must have the properties that $b^\mu$ satisfies \eqref{regularity condition} for any $\mu_\cdot$ of interest and that the solution-to-drift map $X(\mapsto\mu_\cdot)\mapsto b^\mu$ is Lipschitz in suitable topology.
    \item[2.] Secondly, we must develop stability estimates for the drift-to-solution map $b\mapsto Y$, in an appropriate topology that complements the stability of $\mu\mapsto b^\mu$.
\end{enumerate}
Once these points are established, the contractivity of the overall map $X\mapsto b^\mu\mapsto \cI(X)$ follows.

There are however major problems with the program outlined above;
to describe them without too many technicalities, let us consider here the most relevant case $B(\mu)=f\ast \mu+g$ for time homogeneous $f,g\in B^\alpha_{\infty,\infty}$, $\alpha>1-1/(2H)$.
In this case, the map $\mu\mapsto b^{\mu}$ is naturally Lipschitz in the total variation topology, in the sense that
\begin{align*}
    \| B(\mu^1)-B(\mu^2)\|_{B^{\alpha}_{\infty,\infty}} \lesssim \| \mu^1-\mu^2\|_{TV};
\end{align*}
however due to the lack of an underlying parabolic PDE \eqref{eq:intro-PDE} (and the associated maximum principle) in the fBm setting, it is no obvious how to control the drift-to-solution map $b\mapsto Y$ in this topology, i.e. how to bound $\| \cL(Y^1_t)-\cL(Y^2_t)\|_{TV}$ as a function of $\| b^1-b^2\|_{B^\alpha_{\infty,\infty}}$.

One of the main intuitions of the current work, which allows us to overcome this difficulty, is the understanding that although the regularity $B^\alpha_{\infty,\infty}$ is needed in order to solve the SDE \eqref{SDE}, one may establish stability estimates in the weaker norm $B^{\alpha-1}_{\infty,\infty}$.
Roughly speaking, given two solutions $X^1,X^2$ to \eqref{SDE} associated to different initial data and drifts $(\xi^i,b^i)$, for any $p\in [1,\infty)$ we have
\begin{equation}\label{eq:intro-stability}
    \EE\Big[\sup_{t\in [0,T]} |X^1_t-X^2_t|^p\Big]^{1/p} \lesssim \EE\big[\,|\xi^1-\xi^2|^p\big]^{1/p} + \| b^1-b^2\|_{B^{\alpha-1}_{\infty,\infty}}
\end{equation}
see Theorem \ref{thm:fBmSDEWellPosed} and Corollary \ref{cor:RndDataWellPosed} for the rigorous statements.
This property is a natural analogous to standard ODE theory, where solvability requires $b$ Lipschitz, but stability estimates are in the supremum norm.

In our setting, it implies that $B$ only needs to enjoy some \textit{multiscale regularity} of the form
\begin{align*}
    \|B(\mu)\|_{B^\alpha_{\infty,\infty}}\lesssim 1, \quad \| B(\mu^1)-B(\mu^2)\|_{B^{\alpha-1}_{\infty,\infty}} \lesssim d(\mu^1,\mu^2)
\end{align*}
for another notion of distance $d(\mu^1,\mu^2)$, possibly different from the total variation one.
The right choice for $d$ turns out to be the family of $p$-Wasserstein distances $d_p(\mu^1,\mu^2)$, which complements the bound \eqref{eq:intro-stability} thanks to the basic property $d_p(\cL(X^1_t),\cL(X^2_t))\leq \EE[|X^1_t-X^2_t|^p]^{1/p}$.

Overall, the newly found stability estimate \eqref{eq:intro-stability} and the use of Wasserstein distance allow us to fulfill Points 1.-2. outlined above and to solve the DDSDE \eqref{eq:DDSDE} for a large class of drifts $B$, see Theorems \ref{thm:main_thm1} and \ref{thm:main_thm2} for the precise statements; this includes the case $B(\mu)=f\ast \mu +g$ mentioned above.

For the sake of this preliminary discussion we have ignored the time regularity requirement in \eqref{regularity condition}, but it does indeed play a relevant role, making the proofs a bit more technical and requiring us to treat the cases $H>1/2$ and $H\leq 1/2$ slightly differently; see Section \ref{sec:MainResultsProofs} for more details.\\

Let us stress that, since we are not allowed to use the same tools as in the Brownian setting, our results are not optimal for the choice $H=1/2$, sharper ones being available for instance in \cite{rockner2018well,  huang2019distribution}.
Nevertheless, they still provide some new insights, with the stability estimate \eqref{eq:intro-stability} being new in this setting as well.
This also partially answers the ongoing debate from \cite{rockner2018well,huang2019distribution,huang2020mckean} on whether the drift $B$ should be taken Lipschitz continuous in the measure argument $\mu$ w.r.t. the total variation distance, the Wasserstein one or a weighted mix of the two: the use of Wasserstein distance allows the drift to be Lipschitz continuous in the different regularity scale $B^{\alpha-1}_{\infty,\infty}$, which is strictly negative in the regime $\alpha \in (0,1)$, which is admissible in \eqref{regularity condition} for $H=1/2$.\\

A major open problem coming from this work is the mean-field convergence (and associated propagation of chaos property) of the particle system \eqref{eq:intro-particle-system} to  \eqref{eq:GenMckeanSDE}, for the class of singular drifts for which we establish well-posedness of the DDSDE in Theorem \ref{thm:main_thm1}.
Our techniques are currently not enough to give a full answer; recently, several authors have investigated the Brownian setting using alternative tools based on Girsanov theorem and Large Deviations, see \cite{lacker2018strong,jabir2019rate, tomasevic2020propagation, hoeksema2020large}.
Contrary to It\^o calculus, these tools are available for fBm as well, thus we hope they may be of help in future investigations.

Another interesting question posed by the current work is whether our results can be further improved, in the sense of allowing values of $\alpha<1-1/(2H)$, at least in some special cases.
Theorems \ref{thm:main_thm3} and \ref{thm:main_thm4} suggest an affirmative answer for convolutional drifts $B(\mu)=b\ast \mu$, see also the discussion at the beginning of Section \ref{sec:refined-convolution};
this is in analogy with the Brownian case, where standard SDE theory requires roughly $b\in L^\infty_x$, but the nonlinear PDE \eqref{eq:intro-PDE} can be solved for roughly $b\in W^{-1,\infty}_x$.\\

We conclude this introduction with the structure of the paper.
In Section \ref{subsec:notation} we introduce all relevant notations adopted in the paper and recall some well-known facts. Section \ref{sec:main-results} contains all our main results and Section \ref{subsec:examples} relevant examples of drifts $B$ satisfying them.
We present in detail the Catellier--Gubinelli theory of SDEs driven by fBm in Section \ref{sec:recap-SDE}, where we prove our main stability results (Theorem \ref{thm:fBmSDEWellPosed} and Corollary \ref{cor:RndDataWellPosed} from Section \ref{subsec:stability}) as well as some new auxiliary results on the regularity of the law of solutions (Section \ref{subsec:reg-estim}).
Sections \ref{sec:MainResultsProofs} and \ref{sec:refined-convolution} contain the proofs of our main results, respectively Theorems \ref{thm:main_thm1}, \ref{thm:main_thm2}, \ref{thm:main_thm3} and \ref{thm:main_thm4}.
Finally, we have included in Appendix \ref{app:UsefulLemmas} a collection of useful analytic lemmas used throughout the paper.
\subsection{Notations, conventions and well-known facts}\label{subsec:notation}

Throughout the article we will always work on a finite time interval $[0,T]$, although arbitrarily large; we will never deal with estimates on the infinite interval $[0,+\infty)$.
We write $a\lesssim b$ whenever there exists a constant $C>0$ such that $a\leq b$. To stress the dependence $C=C(\lambda)$ on a particular parameter $\lambda$, we will write $a\lesssim_\lambda b$. For $p \in [1,\infty]$ and where it will not cause confusion, we write $p'$ to denote the dual exponent to $p$, that is $1/p+1/p'=1$, with the interpretation $p=1\iff p'=\infty$.

Throughout the article, whenever not mentioned explicitly, we will consider an underlying probability space $(\Omega,\cF,\PP)$; any $\sigma$-algebra appearing is assumed to be $\PP$-complete. If $\Omega$ has a topological structure, then $\cB(\Omega)$ denotes its Borel $\sigma$-algebra (again up to $\PP$-completion).

We denote by $\EE_\PP$, or simply $\EE$, expectation w.r.t. $\PP$;
Given a Banach space $E$ and $p\in [1,\infty]$, we will frequently consider $E$-valued random variables $X$ in the space $L^p_\Omega E:=L^p(\Omega,\cF,\PP;E)$, with norm $\| X\|_{L^p_\Omega} = \EE[ \| X\|_E^p]^{1/p}$ (essential supremum if $p=\infty$).

We denote by $\cL_\PP(X)$, or simply $\cL(X)$, the law of $X$ on $E$, namely the pushforward measure $\PP\circ X^{-1} = X \sharp \PP$; more generally, we adopt the notation $F\sharp \mu$ for the pushforward of a measure $\mu$ under a measurable map $F$. Given a measure $\mu \in \cP(C_T)$,we mention in particular the pushforward $\mu_t := e_t \sharp \mu$ where $e_t (h) = h_t$ denotes the evaluation map, $e_t:C_T\to\RR^d$.

\subsubsection{Function spaces on $[0,T]$}\label{not:function spaces-1}

Given a metric space $(M,d_M)$, we denote by $C_T M = C([0,T];M)$ the space of all continuous functions $f:[0,T]\to M$; for $\gamma\in (0,1)$, we set $C^\gamma_T M = C^\gamma([0,T];M)$ to be the subset of $\gamma$-H\"older continuous functions, namely
\begin{align*}
    \llbracket f\rrbracket_{\gamma,M}:=\sup_{s\neq t\in [0,T]} \frac{d_M(f_t,f_s)}{|t-s|^\gamma}<\infty.
\end{align*}
If $(E,\| \cdot\|_E)$ is a Banach space, then $C_T E$ and $C^\gamma_T E$ are Banach spaces with norms
\begin{align*}
    \| f\|_{C_T E} =\sup_{t\in [0,T]} \| f_t\|_E, \quad
    \| f\|_{C^\gamma_T E} = \| f\|_{C_T E} + \llbracket f\rrbracket_{\gamma,E}.
\end{align*}

In the case $E=\RR^n$ for some $n\in\NN$, whenever it doesn't create confusion we will simply use $C_T$, $C^\gamma_T$ and $\| f\|_\gamma$ in place of $C_T \RR^d$, $C^\gamma_T \RR^d$, $\| f\|_{C^\gamma_T}$; moreover for any $[s,t]\subset [0,T]$ we set
\begin{equation*}
    \llbracket f\rrbracket_{\gamma,[a,b]}:=\sup_{s\neq t\in [a,b]} \frac{|f_t-f_s|}{|t-s|^\gamma}.
\end{equation*}

Given a Banach space $E$ and $q\in [1,\infty]$, we denote by $L^q_T E = L^q(0,T;E)$ the Bochner--Lebesgue space of strongly measurable $f:[0,T]\to E$ such that
\begin{align*}
    \| f\|_{L^q_T E} = \bigg( \int_0^T \| f_t\|_E^q \dd t\bigg)^{\frac{1}{q}} <\infty
\end{align*}
with usual modification for $q=\infty$; as before we write $L^q_T$ for $L^q_T \RR^n$.

\subsubsection{Function spaces on $\RR^d$}\label{not:function spaces-2}

Given $d, m\in\NN$, we denote by $C(\RR^d;\RR^m)$ the space of continuous, bounded functions $f:\RR^d\to \RR^m$, endowed with the supremum norm $\|  f\|_{C^0_x}$; whenever it doesn't create confusion we will simply write $C^0_x$.
$C^\infty_c = C^\infty_c(\RR^d;\RR^m)$, $C^n_x = C^n(\RR^d;\RR^m)$ denote respectively compactly supported smooth functions and $n$-times differentiable functions with continuous, bounded derivatives up to order $n$;
$\cS=\cS(\RR^d;\RR^m)$ denote Schwartz functions, $\cS'$ their dual.
Given $f$, we denote by $Df$ its Jacobian, i.e. the collection of first order derivatives $(\partial_j f_i)_{i,j}$, possibly interpreted in the distributional sense.
For $\alpha\in (0,1)$, $C^\alpha_x=C^\alpha(\RR^d;\RR^m)$ stand for the Banach space of H\"older continuous functions, with norm
\begin{equation*}
   \| f\|_{C^\alpha_x}:=  \| f\|_{C^0_x} + \llbracket f\rrbracket_{C^\alpha_x}, \quad
   \llbracket f\rrbracket_{C^\alpha_x}:=\sup_{x\neq y\in \RR^d} \frac{|f(x)-f(y)|}{|x-y|^\alpha}.
\end{equation*}
The definition of $C^\alpha_x$ extends canonically to $\alpha\in (1,+\infty)$ by imposing that $f\in C^\alpha_x$ if $f\in C^{\lfloor \alpha\rfloor}_x$ and its derivatives of order $\lfloor \alpha\rfloor$ belong to $C^{\alpha-\lfloor \alpha\rfloor}_x$, where $\lfloor \alpha\rfloor$ denotes the integer part of $\alpha$.
We denote by $C^\alpha_{loc}=C^\alpha_{loc}(\RR^d;\RR^n)$ the vector space of all continuous $f:\RR^d\to \RR^m$ such that $\varphi f\in C^\alpha_x$ for all $\varphi\in C^\infty_c$; we say that $f^n\to f$ in $C^\alpha_{loc}$ if $\varphi f^n\to \varphi f$ in $C^\alpha_x$ for all $\varphi\in C^\infty_c$.

Given $\alpha\in \RR$ and $p\in [1,\infty]$, we denote by $B^\alpha_{p,p}=B^\alpha_{p,p}(\RR^d;\RR^m)$ the associated (inhomogeneous) Besov space, given by distributions $f\in \cS'$ such that
\begin{align*}
    \| f\|_{B^\alpha_{p,p}} := \bigg( \sum_{n=-1}^{+\infty} 2^{\alpha n p} \| \Delta_n f\|_{L^p}^p\bigg)^{\frac{1}{p}} <\infty
\end{align*}
where $\Delta_n$ denote the Littlewood-Paley blocks associated to a partition of the unity.
We refer to the monograph \cite{BahCheDan} for details on Besov spaces; throughout the paper we will frequently employ their properties, like Besov embeddings, Bernstein estimates for $\Delta_n f$ or the regularity of $f\ast g$ for $f$, $g$ in different Besov spaces.
For $\alpha \in \RR_+\setminus \NN$, the spaces $C^\alpha_x$ and $B^\alpha_{\infty,\infty}$ coincide; however for clarity we will continue to write $\cC^{\alpha}$ for $\alpha\geq 0$ and $B^\alpha_{\infty,\infty}$ otherwise.

The notations from this section and the previous one can be combined to define $C^\gamma_T C^\alpha_x$, $L^q_T B^\alpha_{p,p}$, etc.; similarly, we define $C^\gamma_T C^\alpha_{loc}$ to be the vector space of all $f:[0,T]\times\RR^d\to\RR^m$ such that $\varphi f\in C^\gamma_T C^\alpha_x$ for all $\varphi\in C^\infty_c$, with convergence $f^n\to f$ in $C^\gamma_T C^\alpha_{loc}$ if $\varphi f^n\to \varphi f$ in $C^\gamma_T C^\alpha_x$.
Given a function $f$ of time and space, $D f$ always denotes its Jacobian in the space variable only.
\subsubsection{Probability measures and Wasserstein distance}\label{not:probability}

Given a separable Banach space $E$, we denote by $\cP(E)$ the set of probability measures over $E$; we write $\mu^n\rightharpoonup \mu$ for weak convergence of measures, in the sense of testing against continuous bounded functions.

Given $\mu,\nu\in \cP(E)$, $\Pi(\mu,\nu)$ stands for the set of all possible couplings of $(\mu,\nu)$, i.e. the subset of $\cP(E\times E)$ with first and second marginals given respectively by $\mu$ and $\nu$. For any $p\in [1,\infty)$, we define
\begin{equation*}
    d_p(\mu,\nu):=\inf_{m\in \Pi(\mu,\nu)} \bigg(\int_{E\times E} \|x-y\|^p_{E}\, m(\dd x,\dd y)\bigg)^{1/p}
\end{equation*}
which is a well defined quantity (possibly taking value $+\infty$). By \cite[Theorem 4.1]{villani2008optimal}, an optimal coupling $\bar{m}\in \Pi(\mu,\nu)$ realizing the above infimum always exists.

Similarly we define $\cP_p(E)$ to be set of $p$-integrable  probability measures; that is, $\mu\in \cP_p(E)$ if $\mu\in \cP(E)$ and
\begin{equation*}
    \| \mu\|_p := \bigg( \int_{E} \|x\|_E^p \, \mu(\dd x)\bigg)^{1/p}<\infty.  
\end{equation*}
It is well known that $d_p(\mu,\nu)<\infty$ for $\mu,\nu\in \cP_p(E)$ and that $(\cP_p(E),d_p)$ is a complete metric space, usually referred to as the $p$-Wasserstein space on $E$; let us stress however that our definition of $d_p(\mu,\nu)$ holds for all $\mu,\nu\in\cP(E)$.
We recall that, given a sequence $\{\mu^n\}_n\subset \cP_p(E)$, $d_p(\mu^n,\mu)\to 0$ is equivalent to $\mu^n\rightharpoonup \mu$ weakly and $\| \mu^n\|_p\to \|\mu\|_p$, see \cite[Theorem 6.9]{villani2008optimal}.

Given $\mu\in \cP(\RR^d)$, with a slight abuse of notation we will write $\mu\in L^q(\RR^d)$ (or simply $L^q_x$) for $q\in [1,\infty]$ to indicate that $\mu$ admits a density $\mu(\dd x)=\rho(x)\dd x$ with respect to the $d$-dim. Lebesgue measure, such that $\rho\in L^q_x$.

\subsubsection{Fractional Brownian motion}

A real valued continuous process $\{W_t,\, t\in [0,T]\}$ is a fractional Brownian motion (fBm) with Hurst parameter $H\in (0,1)$ if it is a centered Gaussian process with covariance function
\begin{align*}
    \EE[W_t W_s] = \frac{1}{2}\big(|t|^{2H}+|s|^{2H}+|t-s|^{2H}\big);
\end{align*}
an $\RR^d$-valued process $W$ is a $d$-dimensional fBm if its components are independent $1$-dimensional fBms.
All the results we are going to recall here are classical and can be found in \cite{nualart2006malliavin, picard2011representation}.

For $H=1/2$, fBm corresponds to classical Brownian motion (Bm), but for $H\neq 1/2$ it is not a semimartingale nor a Markov process; its trajectories are $\PP$-a.s. in $C^{H-\eps}_T$ for any $\eps>0$.

Given an fBm $W$ of parameter $H$ on a probability space $(\Omega,\cF,\PP)$, it's always possible to construct a standard Bm $B$ on it such that the  following \textit{canonical representation} holds:
\begin{align*}
    W_t = \int_0^t K_H(t,s) \dd B_s
\end{align*}
where $K_H$ is a Volterra-type kernel and $B$ and $W$ generate the same filtration.
Given a filtration $\{\cF_t\}_{t\in[0,T]}$, we say that $W$ is an $\cF_t$-fBm if the associated $B$ is an $\cF_t$-Bm in the classical sense.

Closely related to the canonical representation are a version of Girsanov theorem for fBm (see e.g. \cite[Theorem 2]{nualart2002regularization}) and the \textit{strong local non-determinism} (LND) of fBm: for any $H\in (0,1)$ there exists $c_H>0$ such that
\begin{align*}
    Var[W_t\big| \cF_s] \geq c_H |t-s|^{2H} I_d \quad\forall\, t>s.
\end{align*}
The LND property plays a key role in establishing the regularising features of $W$, cf. \cite{harang2020cinfinity, galeati2020prevalence}.

\section{Main results}\label{sec:main-results}
Let us recall that the focus here is an abstract DDSDE of the form
\begin{equation}\label{eq:DDSDE}
    X_t = \xi + \int_0^t B_s(X_s,\mu_s)\mathd s + W_t,\quad \mu_t=\cL(X_t) \quad \forall\, t\in [0,T]
\end{equation}
where $\cL(\xi)=\mu_0$, $\xi$ independent of $W$ and $W$ is sampled as a fBm of parameter $H\in (0,1)$.

We want to identify general conditions for measurable drifts $B:[0,T]\times \cP_p(\RR^d)\rightarrow B^\alpha_{\infty,\infty}$, $\alpha\in \RR$, such that we can develop a solution theory for \eqref{eq:DDSDE}. As explained in the introduction, our strategy consists in setting up a fixed point for $\mu\mapsto b^\mu_t:=B_t(\mu_t)\mapsto X\mapsto \tilde{\mu}_t:=\cL(X_t)$.

To this end, the assumptions on $B$ should enforce two facts: for any flow of measures $\mu\in C_T \cP_p$, the associated drift $b^\mu_t:=B_t(\mu)$ is regular enough to solve \eqref{SDE}, namely $b^\mu$ must satisfy condition \eqref{regularity condition}; the map $\mu\mapsto b^\mu$ should be stable in suitable topologies.
Last but not least, the eligible $B$ should include cases of particular interest (most notably $B(\mu)=b\ast \mu$), see Section \ref{subsec:examples} below.


Corresponding to the above requirements, for $H > 1/2$ we define the following space:

\begin{defn}\label{def: class of functions alpha big}
For $\alpha,\beta\in (0,1)$ and $p\in [1,\infty)$, let $\cH^{\beta,\alpha}_p$ denote the class of continuous functions~$B:[0,T]\times \RR^d\times \cP_p(\RR^d)\rightarrow \RR^d$ satisfying the following condition: there exists $C>0$ such that
\begin{enumerate}[label=\roman*.]
    \item \label{it:alpha-big-1}
    For all $(t,x,\mu)\in [0,T]\times \RR^d\times \cP_p(\RR^d)$, $|B_t(x,\mu)|\leq C$. 
    \item \label{it:alpha-big-2}
    For all $(s,t)\in [0,T]^2$, $(x,y)\in (\RR^d)^2$ and $(\mu,\nu)\in \cP_p(\RR^d)\times \cP_p(\RR^d)$, we have 
        \begin{equation*}
        |B_t(x,\mu)-B_s(y,\nu)| \leq C (|t-s|^{\alpha \beta}+|x-y|^\alpha+d_p(\mu,\nu)^\alpha).
        \end{equation*}
    \item \label{it:alpha-big-3}
    For all $t\in [0,T]$ and $\mu,\nu\in \cP_p(\RR^d)$ 
        \begin{equation*}
        \| B_t(\cdot,\mu)-B_t(\cdot,\nu)\|_{B^{\alpha-1}_{\infty,\infty}} \leq C d_p(\mu,\nu).
        \end{equation*}
\end{enumerate}
Whenever it does not create confusion, we will simply denote by $\| B\|$ the optimal constant $C$.
\end{defn}

Corresponding to the above requirements, for $H \leq 1/2$ we define the following space:

\begin{defn}\label{def: class of functions alpha small}
For $\alpha \in \RR$, $p\in [1,\infty)$ and $q\in [1,\infty]$, let $\cG^{q,\alpha}_{p}$ denote the class of measurable functions $B:[0,T]\times \cP_p(\RR^d)\rightarrow B^\alpha_{\infty,\infty}$ satisfying the following condition: there exists $h\in L^q_T$ such that
\begin{enumerate}[label=\roman*.]
    \item \label{it:alpha-small-1}
    For all $(t,\mu)\in [0,T]\times \cP_p(\RR^d)$, we have $\|B_t(\mu)\|_{B^\alpha_{\infty,\infty}} \leq  h_t$.
    \item \label{it:alpha-small-2}
    For all $(t,\mu,\nu)\in [0,T]\times \cP_p(\RR^d)\times \cP_p(\RR^d)$, we have $\|B_t(\mu)-B_t(\nu)\|_{B^{\alpha-1}_{\infty,\infty}} \leq  h_t d_p(\mu,\nu)$.
\end{enumerate}
Whenever it does not create confusion, we will simply denote by $\| B\|$ the optimal constant $\|h\|_{L^q_T}$.
\end{defn}

\begin{rem}
It is readily checked that for  $\alpha\leq \tilde \alpha$, $p\geq \tilde p$ and $q\leq \tilde q$ we have $\cG^{\tilde q,\tilde \alpha}_{\tilde p}\subset \cG^{q,\alpha}_p$.
Similarly, for $\alpha\leq \tilde \alpha$, $\beta\leq \tilde \beta$ and $p\geq \tilde p$ it holds $\cH^{\tilde \beta,\tilde \alpha}_{\tilde p} \subset \cH^{\beta,\alpha}_p$.
\end{rem}

Roughly speaking, we say that $X$ is a solution to the DDSDE \eqref{eq:DDSDE} if, setting $b^\mu_t:=B_t(\cL(X_t))$, then $X$ is a solution to the standard SDE \eqref{SDE} associated to $b^\mu$, being interpreted in the Catellier--Gubinelli sense whenever $b^\mu$ is singular;
the pathwise theory for singular SDEs will be recalled in detail in Section \ref{sec:recap-SDE}.
All the concepts of strong existence, pathwise uniqueness and uniqueness in law for DDSDEs then follow from the standard ones, see Definition \ref{def:solution-singular-DDSDE} from Section \ref{subsec:H<1/2}.\\

Our first main result is the well-posedness of DDSDE \eqref{eq:DDSDE} under suitable conditions on $B$; it can be seen as an extension of \cite[Theorem 15]{galeati2020noiseless} to the distribution dependent case.

\begin{thm}\label{thm:main_thm1}
Let $H>1/2$ and let $B\in\cH^{H,\alpha}_p$ for parameters
\begin{equation}\label{eq:parameters-H>1/2}
    \alpha>1-\frac{1}{2H}>0,\quad p\in [1,\infty).
\end{equation}
Then for any $\mu_0\in \cP_p(\RR^d)$, strong existence, pathwise uniqueness and uniqueness in law hold for the DDSDE \eqref{eq:DDSDE}.

Similarly, let $H\leq 1/2$ and let $B\in \cG^{q,\alpha}_p$ for parameters
\begin{equation}\label{eq:parameters-H<1/2}
    \alpha>1+\frac{1}{Hq}-\frac{1}{2H},\quad \alpha\in\RR, \quad q\in (2,\infty], \quad p\in [1,\infty).
\end{equation}
Then for any $\mu_0\in \cP_p(\RR^d)$, strong existence, pathwise uniqueness and uniqueness in law hold for the DDSDE \eqref{eq:DDSDE}.
\end{thm}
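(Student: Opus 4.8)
The plan is to recast the DDSDE \eqref{eq:DDSDE} as a fixed point problem for the flow of marginal laws $\mu_\cdot=\{\mu_t\}_{t\in[0,T]}\in C_T\cP_p$, exploiting the stability estimate \eqref{eq:intro-stability} (rigorously, Theorem \ref{thm:fBmSDEWellPosed} and Corollary \ref{cor:RndDataWellPosed}) together with the elementary bound $d_p(\cL(X^1_t),\cL(X^2_t))\leq \EE[|X^1_t-X^2_t|^p]^{1/p}$. Fix $(\xi,W)$ with $\cL(\xi)=\mu_0$. Given $\mu_\cdot\in C_T\cP_p$, set $b^\mu_t:=B_t(\mu_t)$; by property \ref{it:alpha-big-1} (resp. \ref{it:alpha-small-1}) of the class $\cH^{H,\alpha}_p$ (resp. $\cG^{q,\alpha}_p$), the drift $b^\mu$ satisfies the regularity requirement \eqref{regularity condition} under the standing assumptions \eqref{eq:parameters-H>1/2} (resp. \eqref{eq:parameters-H<1/2}), so by the Catellier--Gubinelli theory recalled in Section \ref{sec:recap-SDE} the standard SDE \eqref{SDE} with drift $b^\mu$ and datum $\xi$ admits a unique strong solution $X^\mu$. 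Setting $\cJ(\mu_\cdot)_t:=\cL(X^\mu_t)$ defines a self-map of $C_T\cP_p$ whose fixed points are exactly the solutions of \eqref{eq:DDSDE} (Lemma \ref{lem:SolDefEquiv}); everything then reduces to showing $\cJ$ is a contraction.

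\textbf{Invariance and moment bounds.} First I would use the uniform control on $b^\mu$ (bounded in the $H>1/2$ regime, $\|b^\mu_t\|_{B^\alpha_{\infty,\infty}}\leq h_t$ in the $H\leq 1/2$ regime) together with the a priori moment estimates for \eqref{SDE} from Section \ref{sec:recap-SDE} to check that $\cJ$ maps a suitable ball of $C_T\cP_p$ into itself and that $t\mapsto\cL(X^\mu_t)$ is genuinely $d_p$-continuous, so that the fixed point is sought in a complete metric space.

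\textbf{Contraction.} Given two flows $\mu^1_\cdot,\mu^2_\cdot$, property \ref{it:alpha-big-3} (resp. \ref{it:alpha-small-2}) gives $\|b^{\mu^1}_t-b^{\mu^2}_t\|_{B^{\alpha-1}_{\infty,\infty}}\leq h_t\,d_p(\mu^1_t,\mu^2_t)$. Solving \eqref{SDE} for $b^{\mu^1}$ and $b^{\mu^2}$ against the \emph{same} pair $(\xi,W)$ and inserting this bound into \eqref{eq:intro-stability} with $\xi^1=\xi^2$, then passing to marginal laws, I expect an estimate of the form
\[
\sup_{s\leq t} d_p\big(\cJ(\mu^1_\cdot)_s,\cJ(\mu^2_\cdot)_s\big)\leq C(t)\,\sup_{s\leq t} d_p(\mu^1_s,\mu^2_s),
\]
where the prefactor $C(t)$ decays as $t\to 0$: for $H\leq 1/2$ the smallness is furnished by $\|h\|_{L^q_{[0,t]}}\to 0$ (here the restriction $q>2$ is what makes \eqref{eq:parameters-H<1/2} compatible with \eqref{regularity condition}), while for $H>1/2$ it comes from the short-time decay of the relevant nonlinear Young seminorm. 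A contraction on some $[0,T_0]$ follows, and I would extend to $[0,T]$ either through a time-weighted Wasserstein metric on $C_T\cP_p$ that absorbs the constant globally, or by iterating the short-time argument; the step $T_0$ can be taken uniform since it depends on the data only through $\|B\|$.

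\textbf{Uniqueness and main obstacle.} Pathwise uniqueness and uniqueness in law follow from the same estimate: the contraction forces the flows of laws of any two solutions to agree, after which \eqref{eq:DDSDE} collapses to the standard SDE \eqref{SDE} with a frozen drift, for which the Catellier--Gubinelli theory yields pathwise uniqueness. The crux of the whole argument is producing the small factor $C(t)$ while working in the \emph{weak} scale $B^{\alpha-1}_{\infty,\infty}$ rather than $B^\alpha_{\infty,\infty}$; this is precisely the content of the novel stability estimate \eqref{eq:intro-stability}, and I expect the two regimes $H>1/2$ and $H\leq 1/2$ to require separate bookkeeping owing to the differing time-regularity demands in \eqref{regularity condition} (and, for $H\neq 1/2$, to the non-Markovianity of $W$, which must be accommodated via its local non-determinism).
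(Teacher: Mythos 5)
Your overall strategy is the one the paper follows, and for $H\leq 1/2$ it is essentially identical: freeze the flow of laws, solve the resulting singular SDE in the Catellier--Gubinelli sense, and contract the map $\mu\mapsto\cL(X^\mu_\cdot)$ on $C_T\cP_p$ using the $B^{\alpha-1}_{\infty,\infty}$-stability estimate of Corollary \ref{cor:RndDataWellPosed}, the smallness coming from the factor $\tau^\gamma$ with $\gamma>1/2$ (the paper takes $\|h\|_{L^q_T}$ global and lets $\tau^\gamma$ do the work, rather than shrinking $\|h\|_{L^q_{[0,\tau]}}$, but either source of smallness is fine). Two points deserve attention. First, for $H>1/2$ your map $\cJ$ is not well defined on all of $C_T\cP_p$: Assumption \ref{ass:drift-SDE} in that regime demands $b^\mu\in C^{\alpha H}_T C^0_x$, and by Definition \ref{def: class of functions alpha big}\ref{it:alpha-big-2} this requires $t\mapsto\mu_t$ to be \emph{H\"older} continuous in $d_p$, which an arbitrary continuous flow need not be; you would have to restrict the fixed-point space to flows with a uniform $H$-H\"older bound and check invariance. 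The paper avoids this entirely by proving weak existence via compactness (\cite[Proposition 3.10]{GalHarMay_21benchmark}), then showing every weak solution is strong (the a priori bound \eqref{eq:MeasureHolderCty_H>1/2} gives the needed H\"older regularity of $\cL(X_t)$ for free once $X$ is a solution), and using the stability estimate only for uniqueness via a small-interval iteration. Second, your "iterate the short-time argument" must be formulated carefully: since fBm is not Markov one cannot restart the equation at $\bar\tau$, and the paper instead contracts $\cI^{\mu_0}$ on the closed subset $E=\{\mu\in C([0,2\bar\tau];\cP_p):\mu|_{[0,\bar\tau]}=\bar\mu\}$, exploiting that the contraction constant depends only on $\|B\|$ and not on the history; your alternative of a time-weighted Wasserstein metric is plausible but is not what the paper does and would need to be checked against the nonlocal-in-time form of estimate \eqref{eq:SDEGammaNormEstim}. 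With these two repairs your argument closes, and the reduction of pathwise uniqueness and uniqueness in law to uniqueness of the fixed point is exactly Lemma \ref{lem:SolDefEquiv}.
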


Given a DDSDE \eqref{eq:DDSDE}, we will consider either $(\xi,B)$ or $(\mu_0,B)$ to be the data of the problem, where we recall that $\cL(\xi)=\mu_0$.
As already mentioned in the introduction, the solution $X$ is entirely determined by the associated flow of measures $\mu\in C_T\cP_p$ given by $\mu_t=\cL(X_t)$: once this is known, the drift $b^\mu_t=B_t(\mu_t)$ is determined as well and so we can reconstruct the strong solution $X$ (or construct another copy of it on any probability space of interest).
For this reason, it is quite useful to regard $\mu\in C_T\cP_p$ to be itself a solution to the DDSDE; the exact equivalence between $\mu$ and $X$ will be discussed rigorously in Lemma \ref{lem:SolDefEquiv} from Section \ref{subsec:H<1/2}.

The next theorem provides stability estimates for the data-to-solution map $(\mu_0,B) \mapsto \mu$ (respectively $(\xi,B)\mapsto X$), showing that it is locally Lipschitz.

%

\begin{thm}\label{thm:main_thm2}
Let  $\mu_0,\nu_0\in \cP_p$ for some $p\in [1,\infty)$. Then the following holds:
\begin{enumerate}[label=\roman*.]
    \item \label{it:main_thm2-1}
    For $H>1/2$, let $B^1,\,B^2$, be drifts in $\cH^{H,\alpha}_p$ with parameters satisfying \eqref{eq:parameters-H>1/2} and let $M>0$ be a constant such that $\| B^i\|\leq M$.
    Then there exists a constant $C=C(\alpha,H,T,M,p)$ such that, for any $\mu_0^i\in \cP_p(\RR^d)$, the associated solutions $\mu^i\in C_T \cP_p$ satisfy
    \begin{equation}\label{eq:stability-estim-DDSDE}
    \sup_{t\in [0,T]} d_p(\mu^1_t,\mu^2_t) \leq C \big(d_p(\mu^1_0,\mu^2_0) + \| B^1-B^2\|_{\infty}\big),
    \end{equation}
    where 
    \[
    \| B^1-B^2\|_{\infty}:=\sup_{(t,\mu)\in [0,T]\times \cP_p} \| B^1(t,\mu)-B^2(t,\mu)\|_{B^{\alpha-1}_{\infty,\infty}}.
    \]
    If $X^1,X^2$ are two associated solutions, in the sense of stochastic processes, defined on the same probability space, then there exists $\gamma>1/2$ such that
    \begin{equation}\label{eq:stability-estim-DDSDE-2}
    \EE\Big[\|X^1-X^2 \|_{\gamma;[0,T]}^p\Big]^{1/p} \leq C\big( \| \xi^1-\xi^2\|_{L^p_\Omega} + \| B^1-B^2\|_\infty\big).
    \end{equation}
    \item \label{it:main_thm2-2} 
    For $H\leq 1/2$, let $B^1,\,B^2$, be drifts in $\cG^{q,\alpha}_p$ with parameters satisfying \eqref{eq:parameters-H<1/2} and let $M>0$ be a constant such that $\| B^i\|\leq M$.
    Then there exists a constant $C=C(\alpha,H,T,M,p,q)$  such that, for any $\mu_0^i\in \cP_p(\RR^d)$, the associated solutions $\mu^i\in C_T \cP_p$ satisfy
    \begin{equation}\label{eq:stability small}
    \sup_{t\in [0,T]} d_p(\mu^1_t,\mu^2_t) \leq C \big(d_p(\mu^1_0,\mu^2_0) + \| B^1-B^2\|_{q,\infty}\big).
    \end{equation}
    where
    \begin{equation*}
    \| B^1-B^2\|_{q,\infty}:=\bigg( \int_0^T \sup_{\mu\in \cP_p} \| B^1(t,\mu)-B^2(t,\mu)\|_{B^{\alpha-1}_{\infty,\infty}}^{q} \dd t\bigg)^{1/q}.
    \end{equation*}
    If $X^1,X^2$ are two associated solutions, in the sense of stochastic processes, defined on the same probability space, then there exists $\gamma>1/2$ such that
    \begin{equation}\label{eq:stability-estim-DDSDE-3}
    \EE\Big[\|X^1-X^2\|_{\gamma;[0,T]}^p\Big]^{1/p} \leq C\big( \| \xi^1-\xi^2\|_{L^p_\Omega} + \| B^1-B^2\|_{q,\infty}\big).
    \end{equation}
\end{enumerate}
\end{thm}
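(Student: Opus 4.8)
The plan is to reduce both statements to the stability estimate for standard singular SDEs established in Theorem~\ref{thm:fBmSDEWellPosed} and Corollary~\ref{cor:RndDataWellPosed}, combining it with the Lipschitz structure of the drift classes and the elementary inequality $d_p(\cL(X^1_t),\cL(X^2_t))\le\EE[|X^1_t-X^2_t|^p]^{1/p}$. First I would regard each $X^i$ as a solution of the standard equation \eqref{SDE} with \emph{fixed} drift $b^i_t:=B^i_t(\mu^i_t)$, where $\mu^i_t=\cL(X^i_t)$; by assumption~\ref{it:alpha-big-1} (resp.~\ref{it:alpha-small-1}) these drifts satisfy \eqref{regularity condition} uniformly, and the equivalence between a solution and its flow of marginals is guaranteed by Lemma~\ref{lem:SolDefEquiv}. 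Since uniqueness in law holds by Theorem~\ref{thm:main_thm1}, the marginals $\mu^i$ depend only on $(\mu^i_0,B^i)$; hence it suffices to prove the process estimates \eqref{eq:stability-estim-DDSDE-2} and \eqref{eq:stability-estim-DDSDE-3} for a convenient realization, after which the measure estimates \eqref{eq:stability-estim-DDSDE} and \eqref{eq:stability small} follow by building $X^1,X^2$ on one probability space driven by the same fBm $W$, with $(\xi^1,\xi^2)$ an optimal coupling of $(\mu^1_0,\mu^2_0)$, and passing to marginals via $\sup_t d_p(\mu^1_t,\mu^2_t)\le\EE[\|X^1-X^2\|_{C_T}^p]^{1/p}$.

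Next I would estimate the drift difference in the weak norm $B^{\alpha-1}_{\infty,\infty}$, which is the key gain of the present approach. Writing
\begin{equation*}
    b^1_t-b^2_t=\big(B^1_t(\mu^1_t)-B^1_t(\mu^2_t)\big)+\big(B^1_t(\mu^2_t)-B^2_t(\mu^2_t)\big)
\end{equation*}
and applying property~\ref{it:alpha-big-3} of $\cH^{H,\alpha}_p$ to the first bracket gives, for $H>1/2$, the bound $\|b^1_t-b^2_t\|_{B^{\alpha-1}_{\infty,\infty}}\le M\,d_p(\mu^1_t,\mu^2_t)+\|B^1-B^2\|_\infty$; for $H\le1/2$ property~\ref{it:alpha-small-2} of $\cG^{q,\alpha}_p$ gives the same pointwise estimate with the weight $h_t$, so that taking $L^q_T$ norms yields $\|b^1-b^2\|_{L^q_T B^{\alpha-1}_{\infty,\infty}}\le M\sup_t d_p(\mu^1_t,\mu^2_t)+\|B^1-B^2\|_{q,\infty}$. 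Crucially only the weak norm appears, precisely the one controlled by \eqref{eq:intro-stability}. Feeding this into Theorem~\ref{thm:fBmSDEWellPosed} on a subinterval $[s,t]\subseteq[0,T]$, and using $d_p(\mu^1_r,\mu^2_r)\le\EE[|X^1_r-X^2_r|^p]^{1/p}$, produces an inequality for $\EE[\|X^1-X^2\|_{\gamma;[s,t]}^p]^{1/p}$ in terms of itself, the initial datum $\|X^1_s-X^2_s\|_{L^p_\Omega}$, and the drift norm $\|B^1-B^2\|_\infty$ (resp. $\|B^1-B^2\|_{q,\infty}$).

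The hard part will be closing this self-referential inequality: in contrast to the standard SDE case, the drift difference now feeds back through $\mu^i=\cL(X^i)$ into the very quantity being bounded, with feedback constant proportional to $M$ that cannot be taken small. I plan to resolve this exactly as in the well-posedness proof, exploiting that the stability estimate of Theorem~\ref{thm:fBmSDEWellPosed} carries a prefactor $(t-s)^\theta$ with $\theta>0$ on the drift-difference term. Choosing a partition $0=t_0<\dots<t_N=T$ with mesh fine enough that $C(t_{k+1}-t_k)^\theta M\le 1/2$, on each $[t_k,t_{k+1}]$ one absorbs the feedback term $M\sup_r d_p(\mu^1_r,\mu^2_r)$ into the left-hand side, obtaining a recursion whose right-hand side involves only $\|X^1_{t_k}-X^2_{t_k}\|_{L^p_\Omega}$ and the drift norm. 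Iterating over the $N\simeq C(\alpha,H,T,M)$ subintervals, so that the endpoint bound on one interval seeds the next, and summing the resulting H\"older seminorms, yields the global estimates \eqref{eq:stability-estim-DDSDE-2} and \eqref{eq:stability-estim-DDSDE-3} with $C=C(\alpha,H,T,M,p)$ (and additionally $q$ for $H\le1/2$).

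The only substantive difference between the two regimes is bookkeeping. For $H\le 1/2$ the relevant smallness is of $\int_{t_k}^{t_{k+1}}h_r^q\,\dd r$ rather than of the interval length, and obtaining a positive effective time-exponent in the Young/Girsanov bounds of Theorem~\ref{thm:fBmSDEWellPosed} reflects the constraint $q>2$ imposed in \eqref{eq:parameters-H<1/2}; the rest of the scheme is identical. Finally, specializing the process estimates to the coupled realization and using $\|\xi^1-\xi^2\|_{L^p_\Omega}=d_p(\mu^1_0,\mu^2_0)$ delivers the measure-level bounds \eqref{eq:stability-estim-DDSDE} and \eqref{eq:stability small}.
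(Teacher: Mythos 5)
Your proposal is correct in substance and shares all the key ingredients with the paper's argument: the decomposition $b^1_t-b^2_t=(B^1_t(\mu^1_t)-B^1_t(\mu^2_t))+(B^1_t(\mu^2_t)-B^2_t(\mu^2_t))$, the weak-norm stability estimate of Theorem~\ref{thm:fBmSDEWellPosed}/Corollary~\ref{cor:RndDataWellPosed}, and the coupling bound $d_p(\mu^1_t,\mu^2_t)\le\EE[|X^1_t-X^2_t|^p]^{1/p}$ on an optimally coupled realization. Where you diverge is in how the self-referential inequality is closed. You propose the partition-and-absorption scheme of the well-posedness proofs (Propositions~\ref{prop:uniqueness-H>1/2} and \ref{prop:uniqueness-H<1/2}): split $[0,T]$ into $N$ intervals on which the feedback constant is below $1/2$ and iterate. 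The paper instead raises the estimate \eqref{eq:SDE-estim-wasserstein} to the power $\tilde q$, so that the drift-difference term becomes $\int_0^t\|b^1_s-b^2_s\|_{B^{\alpha-1}_{\infty,\infty}}^{\tilde q}\,\dd s\lesssim M^{\tilde q}\int_0^t f_s\,\dd s+T\|B^1-B^2\|_\infty^{\tilde q}$ with $f_t=\sup_{s\le t}d_p(\mu^1_s,\mu^2_s)^{\tilde q}$, and applies Gr\"onwall's lemma directly; this is made possible precisely because the drift difference enters \eqref{eq:SDE-estim-wasserstein} through an $L^{\tilde q}(0,\tau)$ norm. The Gr\"onwall route is cleaner and sidesteps one delicate point in your scheme: since fBm is not Markov, Corollary~\ref{cor:RndDataWellPosed} only gives estimates on intervals $[0,\tau]$ anchored at the origin, so the ``local'' bound on $[t_k,t_{k+1}]$ with seed $\|X^1_{t_k}-X^2_{t_k}\|_{L^p_\Omega}$ cannot be obtained by restarting the equation at $t_k$; it must be extracted from the global estimate on $[0,t_{k+1}]$, exactly as the paper does in the uniqueness proofs (and there the task is eased by the fact that the two drifts coincide on $[0,t_k]$, which is no longer true in the stability setting, so the bookkeeping is genuinely heavier). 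Both approaches yield the stated constants, yours with a bound growing geometrically in $N\sim (CM)^{1/\theta}T$, the paper's with the usual Gr\"onwall exponential; one small inaccuracy to fix is that for $H>1/2$ the uniform verification of Assumption~\ref{ass:drift-SDE} for $b^i$ uses Condition~\ref{it:alpha-big-2} of Definition~\ref{def: class of functions alpha big} together with the $H$-H\"older continuity in time of $t\mapsto\mu^i_t$ (as in \eqref{eq:MeasureHolderCty_H>1/2}), not Condition~\ref{it:alpha-big-1} alone.
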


As the settings of Theorems \ref{thm:main_thm1} and \ref{thm:main_thm2} are very general, they do not allow one to exploit any specific structure of the DDSDE in consideration to obtain sharper results. A prototypical example of such structure, which arises in many practical applications, is given by convolutional drifts $B_t(x,\mu):= (b_t\ast \mu)(x)$.
The associated DDSDE takes the form
\begin{equation}\label{eq:ConvDDSDE}
    X_t = \xi + \int_0^t (b_s \ast \cL(X_s))(X_s)\dd s + W_t \quad \forall\, t\in [0,T].
\end{equation}
As before we allow the drift $b$ to be distributional, at least of the form $b \in L^1_T B^\alpha_{p,p}$ for some $\alpha \in \RR$, $p\in [1,\infty]$; at this stage pointwise evaluation of $b_s\ast\cL(X_s)$ is not meaningful, instead we again interpret the equation in the Catellier--Gubinelli sense.

The heuristic idea behind the next results is that we can use the convolutional structure in a recursive way: assuming we are given a solution $X$ with sufficiently regular law $\cL(X_\cdot)$, this in turn leads to an improved regularity for the effective drift $b_\cdot\ast \cL(X_\cdot)$, compared to the original $b$.
The argument can be made rigorous by establishing a priori estimates and working with smooth approximations; as a result, we are able to establish well-posedness for \eqref{eq:ConvDDSDE} in situations where the general Theorem \ref{thm:main_thm1} does not apply.

In both results we are going to present, we will need some additional regularity for the initial data $\mu_0$, in the form of an integrability assumption.
This is because, as explained in the introduction, the lack of an underlying parabolic PDE prevents us from proving a smoothing effect at strictly positive times analogous to that of parabolic equations; rather, in order to develop a priori estimates, we will show that such integrability is propagated by the dynamics.

The next result shows existence and uniqueness of solutions to \eqref{eq:ConvDDSDE} in a suitable class, under an additional condition on $\div b$, which is by now quite standard since the pioneering work \cite{diperna1989ordinary}.

\begin{thm}\label{thm:main_thm3}
Let $H\in (0,1)$, $q \in (2,\infty]$, $p\in [1,\infty]$, $p'$ its conjugate exponent.
Assume that $\div b \in L^1_T L^\infty_x$ and either:
\begin{enumerate}[label=\roman*.]
    \item if $H>1/2$, then $b\in C^{\alpha H}_T L^p_x\cap  C^0_T B^\alpha_{p,p}$ for some $\alpha>1-\frac{1}{2H}$;
    \item if $H \leq 1/2$, then $b \in L^q_T B^\alpha_{p,p}$ with $1>\alpha>1-\frac{1}{2H}+\frac{1}{Hq}$.
\end{enumerate}
Then for any $\mu_0\in L^{p'}_x$ there exists a strong solution to \eqref{eq:ConvDDSDE}, which satisfies
\begin{equation}\label{eq:main_thm3_integrability}
    \sup_{t\in [0,T]} \| \cL(X_t)\|_{L^{p'}_x}<\infty;
\end{equation}
moreover uniqueness holds, both pathwise and in law, in the class of solutions satisfying \eqref{eq:main_thm3_integrability}.
\end{thm}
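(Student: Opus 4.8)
The plan is to reduce \eqref{eq:ConvDDSDE} to the standard (frozen-drift) SDE theory of Section~\ref{sec:recap-SDE} by restricting attention to laws whose density is bounded in $L^{p'}_x$, and to show that this class is invariant under the dynamics. The starting point is that, since $\mu_t$ is a probability measure, for $b_t\in B^\alpha_{p,p}$ one has
\[
\|b_t\ast\mu_t\|_{B^\alpha_{\infty,\infty}}\lesssim\|b_t\|_{B^\alpha_{p,p}}\|\rho_t\|_{L^{p'}_x}, \qquad \|\div(b_t\ast\mu_t)\|_{L^\infty_x}\leq\|\div b_t\|_{L^\infty_x},
\]
where $\rho_t$ denotes the density of $\mu_t$; the first bound follows from Young's inequality and Bernstein estimates applied blockwise, the second since convolution with a probability measure does not increase the $L^\infty_x$ norm. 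Consequently, as soon as $\sup_t\|\rho_t\|_{L^{p'}_x}\le R<\infty$, the effective drift $b^\mu_t:=b_t\ast\mu_t$ satisfies the admissibility condition \eqref{regularity condition} with a constant depending only on $R$, so the frozen SDE \eqref{SDE} falls under Theorem~\ref{thm:fBmSDEWellPosed}; for $H>1/2$ the additional time regularity in \eqref{regularity condition} is checked by splitting $b_t\ast\mu_t-b_s\ast\mu_s$ and invoking $b\in C^{\alpha H}_TL^p_x\cap C^0_TB^\alpha_{p,p}$ together with the uniform density bound. I would also establish the Lipschitz-type estimate
\[
\|b_t\ast\mu-b_t\ast\nu\|_{B^{\alpha-1}_{\infty,\infty}}\lesssim_R\|b_t\|_{B^\alpha_{p,p}}\,d_p(\mu,\nu)
\]
for densities bounded by $R$ in $L^{p'}_x$, by writing $\mu-\nu$ as the divergence of a momentum field along a Wasserstein geodesic, moving the derivative onto $b$ through $b\ast\div(\,\cdot\,)=(\nabla b)\ast(\,\cdot\,)$, and using $B^{\alpha-1}_{p,p}\ast L^{p'}_x\hookrightarrow B^{\alpha-1}_{\infty,\infty}$; the $L^{p'}_x$ control of the interpolating densities is precisely what makes this bound Lipschitz rather than merely H\"older in $d_p$.

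The core of the argument is the a priori estimate \eqref{eq:main_thm3_integrability}. As fBm is neither Markov nor a semimartingale, there is no Fokker--Planck equation available and I would instead exploit the pathwise structure of the additive noise. For smooth $b$, fixing a realisation of $W$ and setting $Y_t=X_t-W_t$ turns the equation into the random ODE $\dot Y_t=b^\mu_t(Y_t+W_t)$, whose flow $\psi^W_t$ is a diffeomorphism with $\det D\psi^W_t=\exp\big(\int_0^t\div b^\mu_s(\psi^W_s(\,\cdot\,)+W_s)\,\dd s\big)$ by Liouville's formula. Since $\Phi^W_t:=\psi^W_t+W_t$ pushes $\mu_0$ onto the conditional law of $X_t$ given $W$, the change of variables formula yields
\[
\|(\Phi^W_t)_\sharp\mu_0\|_{L^{p'}_x}^{p'}=\int_{\RR^d}\rho_0(x)^{p'}\,(\det D\Phi^W_t(x))^{1-p'}\,\dd x\leq e^{(p'-1)\|\div b\|_{L^1_TL^\infty_x}}\,\|\rho_0\|_{L^{p'}_x}^{p'},
\]
using the uniform divergence bound of the previous paragraph, and averaging over $W$ by Jensen's inequality gives $\sup_{t}\|\mu_t\|_{L^{p'}_x}\leq e^{\frac{p'-1}{p'}\|\div b\|_{L^1_TL^\infty_x}}\|\rho_0\|_{L^{p'}_x}=:R$. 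Crucially, this bound is independent of any quantitative regularity of $b$, which is what allows it to survive the singular limit.

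With these ingredients the remaining steps are comparatively routine. For existence I would mollify $b\mapsto b^n$ so that $\|b^n\|_{B^\alpha_{p,p}}\le\|b\|_{B^\alpha_{p,p}}$ and $\|\div b^n\|_{L^1_TL^\infty_x}\le\|\div b\|_{L^1_TL^\infty_x}$; then $B^n(\mu):=b^n\ast\mu$ lies in $\cH^{H,\alpha}_p$ (resp.\ $\cG^{q,\alpha}_p$), so Theorem~\ref{thm:main_thm1} yields solutions $X^n$ with laws $\mu^n$, and the a priori estimate gives the uniform bound $\sup_n\sup_t\|\mu^n_t\|_{L^{p'}_x}\le R$. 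The resulting uniform control $\|b^n\ast\mu^n_t\|_{B^\alpha_{\infty,\infty}}\lesssim R$ feeds into the moment and regularity estimates of Section~\ref{sec:recap-SDE} to yield tightness of $\{X^n\}$; passing to a limit $X$, identifying the limiting effective drift $b^n\ast\mu^n\to b\ast\mu$, and using lower semicontinuity of the $L^{p'}_x$ norm produces a solution satisfying \eqref{eq:main_thm3_integrability}. For uniqueness within this class, two solutions $X^1,X^2$ realised on the same probability space have laws bounded by $R$ in $L^{p'}_x$; combining the SDE stability estimates \eqref{eq:stability-estim-DDSDE-2}--\eqref{eq:stability-estim-DDSDE-3} applied to $b^{\mu^1},b^{\mu^2}$, the Lipschitz convolution bound above, and $d_p(\mu^1_t,\mu^2_t)\le\EE[|X^1_t-X^2_t|^p]^{1/p}$ closes a Gr\"onwall-type loop on a short interval $[0,\tau]$; since the bound $R$ is preserved, one restarts from $\mu_\tau$ and iterates to cover $[0,T]$.

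The principal obstacle I expect is the a priori estimate: the lack of any parabolic smoothing or Fokker--Planck structure forces the pathwise Liouville computation, which is transparent for smooth drifts but must be transferred to the singular regime by verifying that it is stable under mollification and survives the compactness passage (and, when $H>1/2$, that the time regularity demanded by \eqref{regularity condition} for $b^\mu$ is genuinely available). A secondary delicate point is obtaining the convolution estimate with a clean Lipschitz---rather than merely H\"older---dependence on $d_p$, for which the uniform $L^{p'}_x$ control of the interpolating densities along the Wasserstein geodesic is essential.
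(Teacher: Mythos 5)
Your overall architecture matches the paper's: propagate the $L^{p'}_x$ bound on the law via the divergence control (your pathwise Liouville computation is a correct, self-contained version of the a priori estimate that the paper imports from \cite[Proposition 4.3]{GalHarMay_21benchmark}), then mollify, pass to the limit by tightness and stability, and run a Gr\"onwall loop for uniqueness. The problem is the key lemma on which both your admissibility check and your uniqueness loop rest, namely
\[
\|b_t\ast\mu-b_t\ast\nu\|_{B^{\alpha-1}_{\infty,\infty}}\lesssim_R\|b_t\|_{B^\alpha_{p,p}}\,d_p(\mu,\nu).
\]
Your proposed proof writes $\mu-\nu=-\int_0^1\div(v_\theta\rho_\theta)\,\dd\theta$ along a Wasserstein geodesic and applies Young's inequality $B^{\alpha-1}_{p,p}\ast L^{p'}_x\hookrightarrow B^{\alpha-1}_{\infty,\infty}$ to $(\nabla b)\ast(v_\theta\rho_\theta)$. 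This requires $\|v_\theta\rho_\theta\|_{L^{p'}_x}\lesssim_R d_p(\mu,\nu)$, and that bound does not follow from the two pieces of information you have, namely $\int|v_\theta|^p\,\dd\rho_\theta=d_p(\mu,\nu)^p$ and $\|\rho_\theta\|_{L^{p'}_x}\le R$. Writing $\|v_\theta\rho_\theta\|_{L^{p'}}\le\|\,|v_\theta|\rho_\theta^{1/p}\|_{L^p}\,\|\rho_\theta^{1/p'}\|_{L^s}$ with $1/p'=1/p+1/s$ forces $\rho_\theta\in L^{(p-1)/(p-2)}_x$, which is a strictly stronger integrability than $L^{p'}_x=L^{p/(p-1)}_x$ for every $p\ge 2$ (for $p=2$ it is $L^\infty_x$). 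The only bound that comes for free is $\|v_\theta\rho_\theta\|_{L^1_x}\le d_1(\mu,\nu)$, and $(\nabla b)\ast L^1_x$ only lands in $B^{\alpha-1}_{p,p}$, not $B^{\alpha-1}_{\infty,\infty}$; embedding then costs $d/p$ derivatives, which destroys the estimate. So the lemma as stated is not established, and it is precisely the hard point of the whole theorem: upgrading the integrability of $b\ast(\mu-\nu)$ from $p$ to $\infty$ while keeping Lipschitz (not H\"older) dependence on a Wasserstein distance.

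The paper circumvents this with a genuinely different device (Lemma \ref{lem: maximal function1} and Corollary \ref{cor: maximal function2}): the Hajlasz inequality $|b(x)-b(y)|\le c|x-y|(MDb(x)+MDb(y))$ applied under an optimal coupling yields
\[
\| b\ast(\mu-\nu) \|_{B^{\alpha-1}_{s,s}} \lesssim \| b\|_{B^\alpha_{p,p}}\big(\| \mu\|_{L^{p'}_x}^{1/r}+\|\nu\|_{L^{p'}_x}^{1/r}\big)\, d_{r'}(\mu,\nu)
\]
under the constraint $1+r/s\le r/p+1/p'$; one then takes $r$ close to $1$ and $s$ large, accepting a small regularity loss $d/s$ (absorbed by the strict inequality on $\alpha$) and measuring the distance in a high-order Wasserstein metric $d_{r'}$, whose finiteness must be checked separately via the stability estimate of Corollary \ref{cor:RndDataWellPosed}. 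If you substitute this estimate for yours, your uniqueness loop closes essentially as in the paper's Proposition \ref{prop:RefinedAPrioriUniqueness} --- except that you should run Gr\"onwall directly on $[0,T]$ rather than ``restarting from $\mu_\tau$'': restarting is not legitimate for fBm, which is neither Markov nor a semimartingale, a point the paper is explicitly careful about.
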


Our second result in the convolutional case is established under $L^q_T L^p_x$-type assumptions on $b$; here instead of relying on a bound for $\div b$, we exploit Girsanov-based arguments to establish integrability of $\cL(X_t)$.
This technique however only works in the regime $H\leq 1/2$.

\begin{thm}\label{thm:main_thm4}
Let $d\geq 2$, $H \leq 1/2$, $(r,p,q) \in [1,\infty)^2 \times (2,\infty]$ be such that
\begin{equation}\label{eq:main_thm4_parameters}
r>\frac{d}{d-1}, \quad \frac{1}{q}+\frac{Hd}{p}<\frac{1}{2}.
\end{equation}
Then for any $b \in L^q_T L^{p}_x$ and $\mu_0 \in L^r_x$, there exists a strong solution to \eqref{eq:DDSDE}, which satisfies
\begin{equation}\label{eq:main_thm4_integrability}
    \sup_{t\in [0,T]} \| \cL(X_t)\|_{L^{\tilde r}_x}<\infty \quad \forall\, \tilde r<r;
\end{equation}
moreover uniqueness holds, both pathwise and in law, in the class of solutions satisfying \eqref{eq:main_thm4_integrability}.
\end{thm}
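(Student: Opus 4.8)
The plan is to treat \eqref{eq:ConvDDSDE} by combining a regularisation scheme with a Girsanov-based a priori integrability estimate, and then to close existence and uniqueness using the stability theory of Theorems \ref{thm:fBmSDEWellPosed} and \ref{thm:main_thm2}. The guiding observation is that, since $\mu_t$ is a probability measure, the convolutional drift always satisfies $\|b_t\ast\mu_t\|_{L^p_x}\le \|b_t\|_{L^p_x}$; hence for \emph{any} flow of measures the effective drift lies in $L^q_T L^p_x$ with norm bounded by $\|b\|_{L^q_T L^p_x}$, and the subcritical condition $\frac1q+\frac{Hd}{p}<\frac12$ is exactly what makes the Girsanov transform for fBm applicable with good moment bounds. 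First I would mollify, taking $b^n\to b$ in $L^q_T L^p_x$ with $\|b^n\|_{L^q_T L^p_x}\le\|b\|_{L^q_T L^p_x}$; for smooth bounded $b^n$ the map $\mu\mapsto b^n\ast\mu$ is Lipschitz in total variation and Wasserstein, so the DDSDE is classically well posed, giving solutions $X^n$ with flows $\mu^n_t=\cL(X^n_t)$.

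The crux is a uniform integrability bound $\sup_n\sup_{t\in[0,T]}\|\mu^n_t\|_{L^{\tilde r}_x}\le C(\tilde r)<\infty$ for every $\tilde r<r$. Removing the effective drift $\bar b^n=b^n\ast\mu^n$ turns $X^n$ into $\xi+W$ under an equivalent measure $\tilde\PP^n$, under which $X^n_t$ has law $\mu_0\ast g_t\in L^r_x$ (with $g_t$ the Gaussian kernel), whose $L^r_x$ norm is bounded by $\|\mu_0\|_{L^r_x}$ uniformly in $n$ and $t$. Testing the density of $\mu^n_t$ against $\phi\in L^{\tilde r'}_x$ and applying Hölder's inequality with $Z^n=\dd\PP/\dd\tilde\PP^n$ yields $\|\mu^n_t\|_{L^{\tilde r}_x}\lesssim \|Z^n\|_{L^a(\tilde\PP^n)}\,\|\mu_0\|_{L^r_x}^{1/a'}$ with $a'=\tilde r'/r'\in(1,\infty)$; since $\|\bar b^n\|_{L^q_T L^p_x}\le\|b\|_{L^q_T L^p_x}$, a Khasminskii/Novikov-type lemma for fBm drifts under $\frac1q+\frac{Hd}{p}<\frac12$ delivers uniform finiteness of $\|Z^n\|_{L^a}$ for every finite $a$, closing the bound.

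Next I would convert this integrability into effective-drift regularity. For $\mu_t\in L^{\tilde r}_x$ with $\tilde r\in(d/(d-1),r)$, Young's inequality and the Besov embedding give $b_t\ast\mu_t\in B^\alpha_{\infty,\infty}$ with $\alpha=d/\tilde r'-d/p$; since $r'<d$ forces $d/\tilde r'>1$ and $\frac1q+\frac{Hd}{p}<\frac12$ forces $1-\frac1{2H}+\frac1{Hq}+\frac dp<1$, one gets $\alpha>1-\frac1{2H}+\frac1{Hq}$, so the effective drift meets condition \eqref{regularity condition} and, on the subclass of flows with $L^{\tilde r}_x$ marginals, belongs to $\cG^{q,\alpha}_p$ with $\|b_t\ast(\mu_t-\nu_t)\|_{B^{\alpha-1}_{\infty,\infty}}\lesssim\|b_t\|_{L^p_x}\,d_p(\mu_t,\nu_t)$. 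Existence then follows by extracting a convergent subsequence of $\{X^n\}$ (tightness from uniform Hölder moment bounds on the nonlinear Young drift term plus the fBm part, lower semicontinuity of the $L^{\tilde r}_x$ norm to retain \eqref{eq:main_thm4_integrability} in the limit) and identifying the limit as a solution via the drift-to-solution continuity of Theorem \ref{thm:fBmSDEWellPosed}. For uniqueness in the class \eqref{eq:main_thm4_integrability}, two solutions $X^1,X^2$ with the same data have effective drifts obeying the above $\cG^{q,\alpha}_p$ bounds, so \eqref{eq:stability-estim-DDSDE-3} gives $\EE[\|X^1-X^2\|_\gamma^p]^{1/p}\lesssim\|b\ast(\mu^1-\mu^2)\|_{q,\infty}$; bounding $d_p(\mu^1_t,\mu^2_t)\le\EE[|X^1_t-X^2_t|^p]^{1/p}$ and running a Grönwall argument on short intervals, then iterating over $[0,T]$, forces $X^1=X^2$.

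I expect the principal obstacle to be the uniform moment control of the Girsanov densities $Z^n$: one must show exponential integrability of the drift functional along the fBm trajectory with constants depending only on $\|b\|_{L^q_T L^p_x}$ and the parameters, which rests on sharp estimates for the Volterra kernel $K_H$ together with a Khasminskii-type bound, and is precisely the point at which the restriction $H\le 1/2$ is needed.
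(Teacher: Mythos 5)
Your existence argument follows essentially the same route as the paper: mollify $b^n\to b$, use Girsanov (with moments of $\dd\PP/\dd\QQ$ controlled only through $\|b^n\ast\mu^n\|_{L^q_TL^p_x}\le\|b\|_{L^q_TL^p_x}$) to propagate $\mu_0\in L^r_x$ into a uniform bound $\sup_n\sup_t\|\mu^n_t\|_{L^{\tilde r}_x}<\infty$ for $\tilde r<r$, upgrade the effective drift to the subcritical regime via Young plus Besov embedding, and pass to the limit by tightness and the stability of the drift-to-solution map. The uniform Girsanov moment bounds you flag as the main obstacle are in fact available off the shelf (the paper imports them from Lê's stochastic sewing estimates), so that part is fine.

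The genuine gap is in the uniqueness step. You assert that on the class of flows with $L^{\tilde r}_x$ marginals the drift satisfies
\begin{equation*}
\|b_t\ast(\mu_t-\nu_t)\|_{B^{\alpha-1}_{\infty,\infty}}\lesssim\|b_t\|_{L^p_x}\,d_p(\mu_t,\nu_t)
\qquad\text{with}\qquad \alpha=\tfrac{d}{\tilde r'}-\tfrac{d}{p},
\end{equation*}
i.e.\ a Lipschitz estimate in Wasserstein distance at the \emph{improved} regularity level $\alpha$. This does not follow from Young's inequality and Besov embeddings: those give the size bound $\|b_t\ast\mu_t\|_{B^\alpha_{\infty,\infty}}\lesssim\|b_t\|_{L^p_x}\|\mu_t\|_{L^{\tilde r}_x}$ for a single convolution, but the difference estimate in $d_p$ at level $\alpha-1$ would require, morally, $b\in B^\alpha_{\infty,\infty}$ (cf.\ Lemma \ref{lem:basic-Besov-convolve}), which $\|b\|_{L^p_x}$ does not control when $\alpha>-d/p$; testing against nearly singular $\mu,\nu$ shows the inequality cannot hold with a constant independent of their integrability. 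The correct statement is the maximal-function estimate of Lemma \ref{lem: maximal function1} and Corollary \ref{cor: maximal function2}, built on the Hajlasz inequality $|b(x)-b(y)|\le c|x-y|(MDb(x)+MDb(y))$: it carries the extra factor $\|\mu_t\|_{L^{p'}}^{1/r}+\|\nu_t\|_{L^{p'}}^{1/r}$, lands in $B^{\alpha-1}_{s,s}$ for a finite $s$ constrained by $1+r/s\le r/p+1/p'$ (so a further embedding costs $d/s$ of regularity, and one must check $\alpha-d/s>1-\tfrac{1}{2H}+\tfrac{1}{Hq}$), and is naturally phrased in $d_{r'}$ with $r$ near $1$ rather than a generic $d_p$. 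This bilinear estimate is the technical heart of the uniqueness proof (Proposition \ref{prop:RefinedAPrioriUniqueness}); without it your Gr\"onwall argument does not close. A secondary, smaller omission: in the compactness step you need $b^n\ast\mu^n\to b\ast\mu$ in a topology compatible with Corollary \ref{cor:RndDataWellPosed}, which requires an argument combining weak convergence of $\mu^n_t$ with the uniform $L^{\tilde r}_x$ bounds (Corollary \ref{cor:IntegrableConvolve}), not just lower semicontinuity of the norms.
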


\begin{rem}
Condition \eqref{eq:main_thm4_parameters} can be generalized in a way that allows values $r\leq d/(d-1)$ and that applies for $d=1$, see Theorem \ref{thm:main_sec5} in Section \ref{subsec:refined-integrable} for more details.
We warn the reader not to interpret Theorems \ref{thm:main_thm3} and \ref{thm:main_thm4} as full pathwise uniqueness (resp. uniqueness in law) statements: in general they do not exclude the existence of irregular solutions $X$ which do not satisfy condition \eqref{eq:main_thm3_integrability} (resp. \eqref{eq:main_thm4_integrability}).
However, as the proofs show, any solution constructed as the limit of smooth drifts $b^n\to b$ does satisfy \eqref{eq:main_thm3_integrability} (resp. \eqref{eq:main_thm4_integrability}), thus it is the only physical solution to the DDSDE \eqref{eq:DDSDE}.
\end{rem}

\subsection{Examples}\label{subsec:examples}
To illustrate the variety of situations to which Theorems \ref{thm:main_thm1} and \ref{thm:main_thm2} apply, we provide here several examples of functions contained in $\cG^{q,\alpha}_p$ and $\cH^{\beta,\alpha}_p$.

\begin{ex}\label{ex:true-mckeanvlasov-setting-1}
Let $\alpha\in \RR$, and for any $y\in \RR^d$, $b:[0,T]\times \RR^d \rightarrow B^\alpha_{\infty,\infty}$  be a measurable map, $b_t(\cdot,y):=b_t(y)(\cdot)$, and suppose there exists $h\in L^q_T$ for some $q\in [1,\infty]$ such that
\begin{equation*}
    \|b_t(\cdot,y)\|_{B^\alpha_{\infty,\infty}}
    \leq h_t,\quad \|b_t(\cdot,y)-b_t(\cdot,y')\|_{B^{\alpha-1}_{\infty,\infty}} \leq h_t\, |y-y'|\quad
    \forall\, t\in [0,T],\, (y,y')\in \RR^{2d}.
\end{equation*}
Define now another measurable map $B:[0,T]\times \cP(\RR^d)\to B^\alpha_{\infty,\infty}$ by
\begin{align*}
    B_t(\cdot,\mu):= \int_{\RR^d}b_t(\cdot,y)\,\mu (\dd y), \quad \forall\,(t,\mu)\in [0,T]\times \cP_p(\RR^d)
\end{align*}
where the integral is meaningful in the Bochner sense; then $B\in \cG^{q,\alpha}_p$ for any $p\in [1,\infty)$.

Indeed, by the hypothesis on $b$, it is readily checked that
\begin{equation*}
    \|B_t(\cdot,\mu)\|_\alpha \leq \int_{\RR^d}\|b_t(\cdot,y)\|_\alpha\,\mu(\dd y)
    \leq h_t \quad \forall\, t\in [0,T], \mu\in \cP(\RR^d);
\end{equation*}
given $\mu,\nu\in \cP(\RR^d)$, let $m\in \cP(\RR^{2d})$ be an optimal coupling for $d_1(\mu,\nu)$, then
\begin{equation*}
     \|B_t(\cdot,\mu)-B_t(\cdot,\nu)\|_{B^{\alpha-1}_{\infty,\infty}}
     \leq  \int_{\RR^{2d}}\|b_t(\cdot,y)-b_t(\cdot,y')\|_{B^{\alpha-1}_{\infty,\infty}}\, m(\mathd y,\mathd y')
     \leq h_t \int_{\RR^{2d}} |y-y'|\, m(\mathd y,\mathd y')
\end{equation*}
which implies that
\begin{equation*}
     \|B_t(\cdot,\mu)-B_t(\cdot,\nu)\|_{B^{\alpha-1}_{\infty,\infty}}
     \leq h_t\, d_1(\mu,\nu)
     \leq h_t\, d_p(\mu,\nu) \quad \forall\, p\in [1,\infty).
\end{equation*}
\end{ex}

\begin{ex}\label{ex:true-mckeanvlasov-setting-2}
Given $\alpha, \beta\in (0,1)$, assume that $b:[0,T]\times\RR^d\times \RR^d\to \RR^d$ satisfies
\begin{align*}
    |b_t(x,y)| \leq C,\quad |b_t(x,y)-b_s(x',y')| \leq C(|t-s|^{\alpha\beta} + |x-x'|^\alpha + |y-y'|^\alpha)
\end{align*}
for some $C>0$, uniformly over $s,\,t,\, x,\,x',\,y,\,y'$;
we can identify $b$ with the map $b:[0,T]\times \RR^d\to C^\alpha_x=B^{\alpha}_{\infty,\infty}$ given by $(t,y)\mapsto b_t(\cdot,y)$.
Assume additionally that for the same constant $C$ it holds
\begin{align*}
    \| b_t(\cdot,y)-b_t(\cdot,y')\|_{B^{\alpha-1}_{\infty,\infty}} \leq C\, |y-y'|
\end{align*}
and define $B:[0,T]\times \RR^d\times\cP(\RR^d)\to \RR^d$ by
\begin{align*}
    B_t(x,\mu):=\int_{\RR^d} b_t(x,y)\,\mu(\dd y).
\end{align*}
Then $B\in \cH^{\beta,\alpha}_p$ for any $p\in [1,\infty)$.
The verification of Conditions \ref{it:alpha-big-1} and \ref{it:alpha-big-3} of Definition \ref{def: class of functions alpha big} is identical to that of Example \ref{ex:true-mckeanvlasov-setting-1}, so we only need to focus on Condition \ref{it:alpha-big-2} for $p=1$.

Given $\mu,\,\nu\in\cP(\RR^d)$, let $m$ be an optimal coupling for $d_1(\mu,\nu)$, then
\begin{align*}
    |B_t(x,\mu)-B_s(x',\nu)|
    & = \bigg|\int_{\RR^d} b_t(x,y)\mu(\dd y)-\int_{\RR^d} b_s(x',y')\nu(\dd y')\bigg|\\
    & \leq \int_{\RR^{2d}} |b_t(x,y)-b_s(x',y')|\, m(\dd y,\dd y')\\
    & \leq C \bigg(|t-s|^{\alpha\beta} + |x-x'|^\alpha + \int_{\RR^{2d}} |y-y'|^\alpha\, m(\dd y,\dd y')\bigg)\\
    & \leq C \big( |t-s|^{\alpha\beta} + |x-x'|^\alpha + d_1(\mu,\nu)^\alpha\big)
\end{align*}
where in the last step we used Jensen's inequality and the optimality of $m$.
\end{ex}

\begin{ex}\label{ex:convolutional-setting}
Consider now $B_t(\cdot,\mu):=b_t\ast\mu$, where $b\in L_T^q B^\alpha_{\infty,\infty}$ for some $\alpha\in \RR$; then $B\in \cG^{q,\alpha}_p$ for any $p\in [1,\infty)$.

Indeed, the verification of Condition \ref{it:alpha-small-1} from Definition \ref{def: class of functions alpha small} is the same as in Example \ref{ex:true-mckeanvlasov-setting-1}, where now we can take $h_\cdot = \|b_\cdot\|_{B^\alpha_{\infty,\infty}}\in L^q_T$.
Moreover by Lemma \ref{lem:basic-Besov-convolve} in Appendix \ref{app:UsefulLemmas}, for any $\mu,\nu\in \cP(\RR^d)$ and any $p\in [1,\infty)$ it holds
\begin{align*}
    \| B_t(\cdot,\mu)-B_t(\cdot,\nu)\|_{B^{\alpha-1}_{\infty,\infty}}
    = \| b_t \ast (\mu-\nu)\|_{B^{\alpha-1}_{\infty,\infty}}
    \lesssim \| b_t\|_{B^\alpha_{\infty,\infty}}\, d_p(\mu,\nu).
\end{align*}

Similarly, given $\alpha,\beta \in (0,1)$, let $b\in C^{\alpha\beta}_T C^0_x\cap C_T C^\alpha_x$ and set $B_t(\cdot,\mu)=b_t\ast\mu$; then $B\in \cH^{\beta,\alpha}_p$ for any $p\in [1,\infty)$.

The verification of Conditions \ref{it:alpha-big-1} and \ref{it:alpha-big-2} from Definition \ref{def: class of functions alpha big} follows from Example \ref{ex:true-mckeanvlasov-setting-2}, as we can simply set $\tilde b_t(x,y):=b_t(x-y)$ and apply the calculations therein to $\tilde b$.
Condition \ref{it:alpha-big-1} instead follows as above from an application of Lemma \ref{lem:basic-Besov-convolve}.

Finally, let us point out that all the computations carry over to the case $B_t(\cdot,\mu)=b^1_t\ast\mu+b^2_t$ for $b^i\in L^q_T B^\alpha_{\infty,\infty}$ (resp. $b^i\in C^{\alpha\beta}_T C^0_x\cap C_T C^\alpha_x$).
\end{ex}

\begin{ex}\label{ex:functions-statistics}
Let $b:[0,T]\times \RR^d\to B^\alpha_{\infty,\infty}$ be as in Example \ref{ex:true-mckeanvlasov-setting-1} and $\phi:\RR^d\to \RR^d$ be a globally Lipschitz with constant $\llbracket\phi\rrbracket_{Lip}$; define $B:[0,T]\times \cP_1(\RR^d)\to B^\alpha_{\infty,\infty}$ by
\begin{align*}
    B_t(\cdot,\mu)= b_t(\cdot, \langle \phi,\mu\rangle), \quad \text{where } \langle \phi,\mu \rangle:=\int_{\RR^d} \phi(x)\, \mu(\dd x).
\end{align*}
Then $B\in \cG^{q,\alpha}_p$ for any $p\in [1,\infty)$. Similarly, given $b$ as in Example \ref{ex:true-mckeanvlasov-setting-2}, with $B$ defined as above, it is easy to verify that $B\in \cH^{\alpha,\beta}_p$ for any $p\in [1,\infty)$.

As a prototypical example, one may consider $b\in B^\alpha_{\infty,\infty}$ and define
\begin{align*}
    B_t(\cdot,\mu)=B(\cdot,\mu):=b(\cdot -\langle x,\mu\rangle) \quad\text{where }
    \langle x,\mu\rangle:=\int_{\RR^d} x\,\mu(\dd x)
\end{align*}
in which case, similarly to before, it holds $B\in \cG^{q,\alpha}_p$ for any $q\in [1,\infty]$ (resp. $B\in \cH^{\beta,\alpha}_p$ for any $\beta\in (0,1)$) and $p\in [1,\infty)$.

We highlight that this class of examples are quite important since $B$ is only defined on $\cP_1(\RR^d)$ and not on the whole $\cP(\RR^d)$, thus making the use of other notions of distance between measures (e.g. total variation norm) more difficult to handle.
It can be further generalized to the case $\phi:\RR^d\to \RR^m$ for another $m\in \NN$ (namely, $B$ is determined by $m$ statistics associated to $\mu$) or to dependence on $p$-moments like $B_t(\cdot,\mu)=b_t(\cdot,\| \mu\|_p)$ for $\mu\in \cP_p(\RR^d)$; for $p>1$ we can also allow $\phi$ to grow more than linearly at infinity.
\end{ex}

\section{SDEs driven by fBm}\label{sec:recap-SDE}

In this section we revisit the theory of singular SDEs driven by fBm, in order to derive useful estimates to apply later to the DDSDE setting. Sections \ref{subsec:pathwise-SDEs} and \ref{subsec:sde-averaged-girsanov} serve as a recap of key facts, respectively the pathwise meaning of singular SDEs and the regularising properties of fractional Brownian motion. Sections \ref{subsec:stability} and \ref{subsec:reg-estim} instead provide novel results, Theorem \ref{thm:fBmSDEWellPosed} being the most important for our purposes.

Although the material of Sections \ref{subsec:pathwise-SDEs}-\ref{subsec:sde-averaged-girsanov} is strongly based on the works previous \cite{Catellier2016,galeati2020noiseless,galeati2020nonlinear,harang2020cinfinity}, we felt obliged to provide the proofs of several key results for technical but rather important reasons.
On the one hand, the aforementioned works are focused entirely on a pathwise setting, never establishing clear probabilistic concepts of solutions (cf. Definitions \ref{def:SDE-as-NLYE}-\ref{def:pathwise-strong-weak-sol} below);
on the other hand, previously singular drifts $b\in L^q_T B^\alpha_{\infty,\infty}$ were treated in \cite{Catellier2016} only in the autonomous case, while in \cite{galeati2020noiseless} when they are compactly supported in space. As neither option fits our setting nicely (consider drifts of the form $b=\tilde{b}\ast \mu_t$) we extend the results therein to suit our analysis of DDSDEs.

\subsection{Pathwise SDEs as nonlinear Young equations}\label{subsec:pathwise-SDEs}

Consider a standard SDE of the form
\begin{equation}\label{eq:simple sde}
    X_t=X_0+\int_0^t b(s,X_s)\dd s + W_t,\quad \forall\, t\in [0,T],
\end{equation}
where $b\in L^1_T B^\alpha_{\infty,\infty}$ with $\alpha \in \RR$ and $W$ is an $\RR^d$-valued fractional Brownian motion.

When $\alpha>0$, the SDE has a classical meaning; it can be solved pathwise by standard ODE theory if $b$ is regular enough, e.g. $\alpha>1$.
We will say that $b$ is a \textit{distributional drift} (sometimes \textit{distributional field}) if instead $\alpha<0$, in which case pointwise evaluation is not allowed, and we cannot give meaning to the integral appearing in \eqref{eq:simple theta} in the classical Lebesgue sense.

To deal with distributional drifts, we will employ the \textit{nonlinear Young integral} framework, first developed in \cite{Catellier2016}; to present it, we first need the concept of \textit{averaged field}.

Let us give an heuristic motivation before going into technical details. In the regular regime $\alpha>0$, if $X$ is a solution to \eqref{eq:simple sde}, by the change of variables $\theta_t:=X_t-W_t$ we find that $\theta$ solves
\begin{equation}\label{eq:simple theta}
    \theta_t=\theta_0+\int_0^t b(s,\theta_s+W_s)\dd s. 
\end{equation}
Closely related to the above integral is the averaging of the field $b$ along the curve $W$, namely the space-time function
\begin{equation}\label{eq:avg field}
    T^W b(t,x):=\int_0^t b(s,x+W_s)\dd s
\end{equation}
which we call an \textit{averaged field}; we will write $T^W_{s,t}b(x):=T^W b(t,x)-T^W b(s,x)$.

As long as $b$ is at least measurable and bounded, both integrals appearing in \eqref{eq:simple theta} and \eqref{eq:avg field} are well defined.
However, for distributional $b$, while equation \eqref{eq:simple theta} breaks down, the averaged field $T^W b$ is still meaningful in the distributional sense, see \cite[Section 3.1]{galeati2020noiseless}; moreover, depending on the properties of $W$, $T^W b$ might even be continuous or (higher order) differentiable in the spatial variable.

The fundamental intuition of \cite{Catellier2016} is that the regularity of $T^W b$ can be used to give meaning to \eqref{eq:simple theta}, thus also to \eqref{eq:simple sde}, by reformulating the SDE as a {\em nonlinear Young equation}.

As the next statement shows, given any space-time function $A:[0,T]\times \RR^d\to \RR^d$ and path $\theta:[0,T]\to\RR^d$ of suitable regularity, it's possible to give meaning to $\int_0^t \partial_t A(s,\theta_s)\dd s$ also when $\partial_t A$ is not well defined anymore.

\begin{prop}\label{prop:NLY integral}
Let $\gamma>1/2$ and consider a function $A\in C^\gamma_T C^1_{loc}$ and a path $\theta\in C^\gamma_T$.
Then for any  interval $[s,t]\subset [0,T]$ and any sequence of partitions $\cD_n$ of $[s,t]$ with mesh converging to zero, the following limit exists and is independent of the chosen sequence:
\begin{equation*}
   \int_s^t A(\dd s,\theta_s):= \lim_{n\to\infty} \sum_{[u,v]\in \cD_n} A_{u,v}(\theta_u).
\end{equation*}
We will refer to it as a nonlinear Young integral. Furthermore:
\begin{enumerate}[label=\roman*.]
    \item \label{it:NLY-integral-1}
    The integral is additive: $\int_s^tA(\dd u,\theta_u)=\int_s^rA(\dd u,\theta_u)+\int_r^t A(\dd u,\theta_u)$ for any $r\in [s,t]$. 
    \item \label{it:NLY-integral-2}
    If $\partial_t A$ exists and is continuous, then $\int_0^t A(\dd u,\theta_u)=\int_0^t \partial_u A(u,\theta_u)\dd u$.
    \item \label{it:NLY-integral-3}
    The map from $C^\gamma_T C^1_{loc}\times C^\gamma_T$ to $C^\gamma_T$ given by $(A,\theta)\mapsto \int_0^\cdot A(\dd u,\theta_u)$ is linear in $A$ and continuous in both variables. Namely, if $A^n\to A$ in $C^\gamma_T C^1_{loc}$ and $\theta^n\to \theta$ in $C^\gamma_T$, then $\int_0^\cdot A^n(\dd s,\theta^n_s)\to \int_0^\cdot A(\dd s,\theta_s)$ in $C^\gamma_T$.
    %
\end{enumerate}
\end{prop}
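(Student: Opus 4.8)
The plan is to realize the nonlinear Young integral through the classical Sewing Lemma, applied to the two-index germ $\Xi_{s,t} := A_{s,t}(\theta_s) = A(t,\theta_s)-A(s,\theta_s)$, whose compensated Riemann sums are exactly those appearing in the statement. First I would record that, since $\theta\in C_T$, its range is contained in a convex compact set $K\subset\RR^d$; choosing $\varphi\in C^\infty_c$ with $\varphi\equiv 1$ on a neighbourhood of $K$, all spatial quantities below may be estimated using the genuine $C^1_x$ norms of $\varphi A$, so that the $C^1_{loc}$ hypothesis is effectively upgraded to uniform $C^1_x$ control on $K$. One also notes that $(s,t)\mapsto\Xi_{s,t}$ is continuous, as required to run the sewing machinery.

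The core computation is the bound on the defect $\delta\Xi_{s,u,t}:=\Xi_{s,t}-\Xi_{s,u}-\Xi_{u,t}$. Using additivity of $A$ in its time increment, $A_{s,t}=A_{s,u}+A_{u,t}$, one finds the cancellation $\delta\Xi_{s,u,t}=A_{u,t}(\theta_s)-A_{u,t}(\theta_u)$. By the fundamental theorem of calculus along the segment joining $\theta_u$ and $\theta_s$ (contained in $K$), together with the fact that $A\in C^\gamma_T C^1_{loc}$ forces $DA$ to be $\gamma$-H\"older in time on $K$, this is bounded by $\|DA_{u,t}\|_{C^0(K)}\,|\theta_s-\theta_u|\lesssim \llbracket A\rrbracket_{C^\gamma_T C^1(K)}\,\llbracket\theta\rrbracket_\gamma\,|t-u|^\gamma|u-s|^\gamma\leq C\,|t-s|^{2\gamma}$. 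Since $\gamma>1/2$ gives $2\gamma>1$, the Sewing Lemma yields a unique additive two-index map $\cI_{s,t}$ with $|\cI_{s,t}-\Xi_{s,t}|\lesssim |t-s|^{2\gamma}$, realized as the limit of the Riemann sums $\sum_{[u,v]\in\cD_n}A_{u,v}(\theta_u)$ for any partition sequence with vanishing mesh. Setting $\int_s^t A(\dd u,\theta_u):=\cI_{s,t}$ establishes existence and partition-independence, while additivity (item \ref{it:NLY-integral-1}) is immediate from $\cI_{s,t}=\cI_{0,t}-\cI_{0,s}$.

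For item \ref{it:NLY-integral-2}, when $\partial_t A$ is continuous I would write $A_{u,v}(\theta_u)=\int_u^v \partial_r A(r,\theta_u)\,\dd r$ and compare, on each $[u,v]\in\cD_n$, the step-in-time integrand $r\mapsto \partial_r A(r,\theta_{u})$ with $r\mapsto\partial_r A(r,\theta_r)$; uniform continuity of $(r,x)\mapsto\partial_r A(r,x)$ on $[0,T]\times K$ together with uniform continuity of $\theta$ forces uniform convergence as the mesh vanishes, so the Riemann sums converge to $\int_s^t\partial_r A(r,\theta_r)\,\dd r$, identifying the two integrals. For the linearity and continuity statement \ref{it:NLY-integral-3}, linearity in $A$ is clear at the level of Riemann sums and passes to the limit. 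For continuity I would exploit the quantitative nature of the Sewing Lemma: writing $\cI-\cI^n=(\cI-\Xi)-(\cI^n-\Xi^n)+(\Xi-\Xi^n)$, the first two terms are each $O(|t-s|^{2\gamma})$ with constants controlled by the respective defects, which depend locally Lipschitz continuously on $(A,\theta)$ and on $(A^n,\theta^n)$; the germ difference $\Xi_{s,t}-\Xi^n_{s,t}=A_{s,t}(\theta_s)-A^n_{s,t}(\theta^n_s)$ is controlled by $\|A-A^n\|_{C^\gamma_T C^1(K)}$ and $\|\theta-\theta^n\|_{C^\gamma_T}$ via the fundamental theorem of calculus, where a common compact $K$ can be chosen because $\theta^n\to\theta$ in $C_T$. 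Assembling these bounds gives convergence of $\int_0^\cdot A^n(\dd u,\theta^n_u)$ to $\int_0^\cdot A(\dd u,\theta_u)$ in $C^\gamma_T$.

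I expect the main obstacle to be precisely item \ref{it:NLY-integral-3}: one must track the dependence of every constant on the pair $(A,\theta)$ --- in particular handling the nonlinear dependence on $\theta$ and securing a single compact set $K$ valid for the whole approximating sequence --- rather than merely invoking the Sewing Lemma qualitatively as in the existence part. The remaining steps are essentially bookkeeping around the defect estimate, which is the only genuinely computational ingredient.
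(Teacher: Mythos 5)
The paper does not prove this proposition at all: it simply invokes \cite[Theorem 2.7]{galeati2020nonlinear}. Your sewing-lemma argument is precisely the standard proof of that cited result — the germ $\Xi_{s,t}=A_{s,t}(\theta_s)$, the cancellation $\delta\Xi_{s,u,t}=A_{u,t}(\theta_s)-A_{u,t}(\theta_u)$, the localization to a compact $K$ containing the range of $\theta$, and the bound $|\delta\Xi_{s,u,t}|\lesssim|t-u|^\gamma|u-s|^\gamma$ with $2\gamma>1$ — so your proposal is correct and follows essentially the same route as the source the paper relies on. Two small points in item \ref{it:NLY-integral-3} deserve tightening. First, the decomposition $\cI-\cI^n=(\cI-\Xi)-(\cI^n-\Xi^n)+(\Xi-\Xi^n)$ with each of the first two terms merely $O(|t-s|^{2\gamma})$ does not by itself yield convergence; the clean argument is to note that, by linearity and uniqueness in the Sewing Lemma, $\cI-\cI^n$ is the sewing of the difference germ $\Xi-\Xi^n$, so $|(\cI-\cI^n)_{s,t}-(\Xi-\Xi^n)_{s,t}|\lesssim\Vert\delta\Xi-\delta\Xi^n\Vert_{2\gamma}\,|t-s|^{2\gamma}$, and one then shows $\Vert\delta\Xi-\delta\Xi^n\Vert_{2\gamma}\to 0$. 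Second, with only $C^1_{loc}$ spatial regularity the dependence of the defect on $\theta$ is controlled through the modulus of continuity of $DA$ on $K$, so one gets continuity but not the local Lipschitz dependence you claim; since the proposition only asserts continuity, this costs nothing, but the phrasing should be corrected.
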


\begin{proof}
The statement is a particular subcase of \cite[Theorem 2.7]{galeati2020nonlinear}.
\end{proof}

We provided the statement only  for $A\in C^\gamma_T C^1_{loc}$ as this setting is sufficient for our purposes, but let us mention that the theory is more general and allows to consider $A\in C^\gamma_T C^\nu_{loc}$, $\theta\in C^\rho_T$ for $\gamma+\nu\rho>1$.
With the above result at hand, we can now define nonlinear Young equations.

\begin{defn}
Let $A\in C^\gamma_T C^1_{loc}$ with $\gamma>1/2 $, $\theta_0\in \RR^d$; we say that $\theta$ is a solution to the nonlinear Young equation associated to $(\theta_0,A)$ if $\theta\in C^\gamma_T$ and
\begin{equation}\label{eq:abs NLYE}
    \theta_t=\theta_0+\int_0^t A(\dd s,\theta_s)\quad \forall\, t\in [0,T]. 
\end{equation}
\end{defn}
For later use, we provide the following technical lemma; loosely speaking it shows that solutions to nonlinear Young equations have a closure property. 
\begin{lem}\label{lem:closure-YDE}
Let $\gamma>1/2$, $A\in C^\gamma_T C^1_{loc}$ and $\{A^n\}_{n\in \NN}$ be a sequence converging to $A$ in $C^\gamma_T C^1_{loc}$; suppose that for each $n$ there exists a solution $\theta^n$ associated to $(\theta_0,A^n)$ and that $\theta^n\to \theta$ in $C^\gamma_T$.
Then $\theta$ solves the nonlinear Young equation associated to $(\theta_0,A)$.  
\end{lem}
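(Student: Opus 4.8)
The plan is to pass to the limit directly in the defining relation for each $\theta^n$, exploiting the joint continuity of the nonlinear Young integral recorded in Proposition~\ref{prop:NLY integral}\ref{it:NLY-integral-3}. The hypotheses supply precisely the two convergences that this continuity statement requires, so no new estimate needs to be derived by hand; the content of the lemma is essentially that the solution map is closed under the topology in which both the driver $A$ and the candidate solution converge.

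Concretely, I would first record that, since each $\theta^n$ solves the nonlinear Young equation associated to $(\theta_0, A^n)$, it satisfies
\[
\theta^n_t = \theta_0 + \int_0^t A^n(\dd s, \theta^n_s), \qquad t\in[0,T],
\]
with $\theta^n \in C^\gamma_T$. Next I would invoke the two hypotheses $A^n \to A$ in $C^\gamma_T C^1_{loc}$ and $\theta^n \to \theta$ in $C^\gamma_T$. By Proposition~\ref{prop:NLY integral}\ref{it:NLY-integral-3}, these together give
\[
\int_0^\cdot A^n(\dd s, \theta^n_s) \longrightarrow \int_0^\cdot A(\dd s, \theta_s) \quad \text{in } C^\gamma_T,
\]
in particular uniformly in $t$, hence pointwise for each $t\in[0,T]$. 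Simultaneously, convergence $\theta^n\to\theta$ in $C^\gamma_T$ forces $\theta^n_t\to\theta_t$ for every $t$. Passing to the limit $n\to\infty$ in the displayed equation then yields
\[
\theta_t = \theta_0 + \int_0^t A(\dd s, \theta_s), \qquad t\in[0,T].
\]
Since $\theta\in C^\gamma_T$ by assumption (being a $C^\gamma_T$-limit), this is exactly the statement that $\theta$ solves the nonlinear Young equation associated to $(\theta_0, A)$, completing the argument.

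The one point requiring care — and what I would regard as the crux — is the precise form of the continuity in Proposition~\ref{prop:NLY integral}\ref{it:NLY-integral-3}: we genuinely need \emph{joint} sequential continuity of the map $(A,\theta)\mapsto \int_0^\cdot A(\dd s,\theta_s)$, rather than separate continuity in each variable, since here both arguments vary with $n$. The cited statement is phrased exactly in this joint form, so the hypothesis is met; one should only verify that the limiting integral $\int_0^t A(\dd s,\theta_s)$ is well defined, which holds because $A\in C^\gamma_T C^1_{loc}$ and $\theta\in C^\gamma_T$ with $\gamma>1/2$ fall within the scope of Proposition~\ref{prop:NLY integral}. No further regularity or uniformity in $n$ (e.g.\ uniform $C^\gamma_T$ bounds on the $\theta^n$) is needed beyond what is already assumed.
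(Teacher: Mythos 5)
Your proof is correct and follows exactly the paper's own argument: both write the nonlinear Young equation for each $\theta^n$ and pass to the limit using the joint sequential continuity of $(A,\theta)\mapsto \int_0^\cdot A(\dd s,\theta_s)$ from Proposition~\ref{prop:NLY integral}\ref{it:NLY-integral-3}. Your additional remark that joint (rather than separate) continuity is the crux is accurate and consistent with how the cited proposition is stated.
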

\begin{proof}
This is a direct consequence of Point \ref{it:NLY-integral-3} of Proposition \ref{prop:NLY integral}.
By assumption
\begin{align*}
   \theta^n_t = \theta_0 + \int_0^t A^n(\dd s,\theta^n_s) \quad \forall\, t\in [0,T],\,n\in\NN
\end{align*}
and we can pass to the limit on both sides thanks to the continuity of $(\theta,A)\mapsto \int_0^\cdot A(\dd s,\theta_s)$.
\end{proof}

We are now ready to explain what it means for $X$ to be a solution to \eqref{eq:simple sde} when $b$ is distributional but $T^W b$ is regular enough: roughly speaking, we impose the condition $X=\theta + W$, where $\theta$ solves the nonlinear YDE associated to $A=T^W b$, which is the natural extension of \eqref{eq:simple theta}.
Although so far we have always dealt with a stochastic process $W$, this is a \textit{pathwise notion} of solution, in the sense that for any fixed realization of $W(\omega)$ such that $T^{W(\omega)} b\in C^\gamma_T C^1_{loc}$ we have an analytically well-defined equation of the form \eqref{eq:abs NLYE}. This is encoded in the next definition, inspired by \cite[Section 4.3]{galeati2020regularization}, which contains a more in-depth discussion of various related concepts.

\begin{defn}\label{def:SDE-as-NLYE}
Let $(\Omega,\cF,\PP)$ be a probability space, $(\xi,W)$ an $\RR^d\times C_T$-valued random variable defined on it and let $b$ be a distributional field.
We say that another $C_T$-valued random variable $X$ on $(\Omega,\cF,\PP)$ is a \textit{pathwise solution} to the SDE \eqref{eq:simple sde} associated to $(b,\xi, W)$ if there exists $\Omega'\subset \Omega$ with $\PP(\Omega')=1$ and a deterministic $\gamma>1/2$ such that for all $\omega\in \Omega'$ the following hold:
\begin{enumerate}[label=\roman*.]
    \item $T^{W(\omega)} b \in C^\gamma_T C^1_{loc}$;
    \item $\theta(\omega):= X(\omega)-W(\omega)\in C^\gamma_T$;
    \item $\theta(\omega)$ satisfies the nonlinear Young equation
    \begin{align*}
        \theta_t(\omega) = \xi(\omega) - W_0(\omega) + \int_0^t T^{W(\omega)} b(\dd s, \theta_s(\omega)) \quad \forall\, t\in [0,T].
    \end{align*}
\end{enumerate}
\end{defn}

The following definition relates standard probabilistic notions of weak and strong solutions and of uniqueness to the notion of pathwise existence given in Definition \ref{def:SDE-as-NLYE}.

\begin{defn}\label{def:pathwise-strong-weak-sol}
Let $b$ be a distributional field, $\nu\in \cP(\RR^d\times C_T)$. A tuple $(\Omega,\cF,\PP; X,\xi,W)$ given by a probability space $(\Omega,\cF,\PP)$ and a $C_T\times \RR^d\times C_T$-valued random variable is a \textit{weak solution} to the SDE \eqref{eq:simple sde} associated to $(b,\nu)$ if $\cL_\PP(\xi,W)=\nu$ and $X$ is a pathwise solution associated to $(b,\xi,W)$ in the sense of Definition \ref{def:SDE-as-NLYE}.
We say that $X$ is a \textit{strong solution} if it is adapted to the filtration $\cF_t=\sigma\{\xi, W_s\,|\, s\leq t\}$.
\textit{Weak uniqueness} holds for the SDE associated to $(b,\nu)$ if any given weak solutions $(\Omega^i ,\cF^i,\PP^i;X^i,\xi^i,W^i)$, $i=1,2$, associated to the same data $(b,\nu)$, satisfy $\cL_{\PP^1}(X^1)=\cL_{\PP^2}(X^2)$.
Similarly, \textit{pathwise uniqueness} holds if any two given solutions $(X^i,\xi,W)$ defined on the same probability space, w.r.t. the same $(b,\xi,W)$, satisfy $X^1=X^2$ $\PP$-a.s.
\end{defn}

In line with the above definition, we will use the standard terminology that weak (resp. strong) existence holds for the SDE associated to $(b,\nu)$ to mean that we can construct a weak (resp. strong) solution $(\Omega,\cF,\PP;X,\xi,W)$. In particular, if strong existence holds, then $(\Omega,\cF,\PP)$ can be chosen to be the canonical space, namely with $\Omega=\RR^d\times C_T$, $\PP=\nu$ and $\cF$ the completion of $\cB(\RR^d\times C_T)$ under $\nu$.

\begin{rem}\label{rem:SDE-classical-case}
If $b\in C_T C^1_{loc}$, then any classical solution to \eqref{eq:simple sde} is of the form $X=W+\theta$ for $\theta\in C^1_T$; moreover in this case $T^W b\in C^1_T C^1_{loc}$ and $\partial_t T^W b(t,x) = b(t,x+W_t)$.
It then follows from Point \ref{it:NLY-integral-2} of Proposition \ref{prop:NLY integral} that in this setting the concept of pathwise solution from Definition \ref{def:SDE-as-NLYE} is equivalent to the standard one.
Moreover for $b\in C_T C^1_{loc}$ standard ODE theory guarantees pathwise uniqueness, uniqueness in law and strong existence of solutions for the SDE associated to $(b,\nu)$ for any choice of $\nu\in \cP(\RR^d\times C_T)$.
\end{rem}

The next lemma provides a simple condition to establish uniqueness of solutions to \eqref{eq:simple sde}.

\begin{lem}\label{lem:criteria-uniqueness-YDE}
Let $(\Omega,\cF,\PP)$ be a probability space, $(X,\xi,W)$ be a triple defined on it such that $X$ solves the SDE associated to $(b,\xi,W)$ in the sense of Definition \ref{def:SDE-as-NLYE}.
If $T^{X(\omega)} b\in C^\gamma_T C^1_{loc}$ for $\PP$-a.e. $\omega$, then any other solution $\tilde X$ defined on the same probability space and associated to $(b,\xi,W)$ must coincide with it, in the sense that $X=\tilde X$ $\PP$-a.s.
\end{lem}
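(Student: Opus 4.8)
The plan is to reduce path-by-path uniqueness to a \emph{linear-type} nonlinear Young equation governed by the averaged field $T^X b$ along the given solution, and then to close the argument by a Gronwall estimate which exploits the assumed spatial $C^1$ regularity together with $\gamma>1/2$. Fix $\omega$ in the full-measure set on which all the regularity assumptions hold, and suppress it from the notation. Writing $\theta:=X-W$ and $\tilde\theta:=\tilde X-W$, Definition \ref{def:SDE-as-NLYE} tells us that both $\theta$ and $\tilde\theta$ solve the nonlinear Young equation associated to the data $(\xi-W_0,\,T^W b)$. Setting $\eta:=\tilde\theta-\theta=\tilde X-X\in C^\gamma_T$, which satisfies $\eta_0=0$, the claim $X=\tilde X$ is equivalent to $\eta\equiv 0$.

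The crucial step will be the \emph{substitution identity}
\[
\int_0^t T^W b(\dd s,\theta_s+\psi_s)=\int_0^t T^X b(\dd s,\psi_s)\qquad\forall\,\psi\in C^\gamma_T,
\]
valid precisely because $T^X b\in C^\gamma_T C^1_{loc}$. For smooth $b$ this is just the chain rule, i.e.\ Point \ref{it:NLY-integral-2} of Proposition \ref{prop:NLY integral}, since $\partial_s T^W b(s,y)=b(s,y+W_s)$ and $\theta_s+\psi_s+W_s=X_s+\psi_s$, so both sides equal $\int_0^t b(s,X_s+\psi_s)\dd s$. For distributional $b$ one approximates by smooth $b^n\to b$ for which $T^W b^n\to T^W b$ in $C^\gamma_T C^1_{loc}$ (the averaging machinery for fractional Brownian motion, cf.\ \cite{Catellier2016,galeati2020noiseless}); the identity holds for each $b^n$, and one passes to the limit using the continuity statement Point \ref{it:NLY-integral-3} of Proposition \ref{prop:NLY integral}, after identifying $T^X b=\lim_n T^X b^n$ via the same chain-rule relation $T^X b^n=\int_0^\cdot T^W b^n(\dd s,\theta_s+\cdot)$. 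Granting this, applying the identity with $\psi=\eta$ (so that $\theta+\eta=\tilde\theta$) and with $\psi=0$, and subtracting the two Young equations using linearity in the field (again Point \ref{it:NLY-integral-3}), one obtains
\[
\eta_t=\int_0^t\big[\,T^X b(\dd s,\eta_s)-T^X b(\dd s,0)\,\big],\qquad \eta_0=0.
\]

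It remains to show that this equation admits only the trivial solution. The field $\Phi(t,x):=T^X b(t,x)-T^X b(t,0)$ lies in $C^\gamma_T C^1_{loc}$, vanishes at $x=0$, and is spatially Lipschitz with constant controlled by $\|T^X b\|_{C^\gamma_T C^1}$. Estimating $\eta$ through the sewing lemma, the germ $g_{s,t}:=\Phi_{s,t}(\eta_s)$ satisfies $|g_{s,t}|\lesssim |t-s|^\gamma|\eta_s|$, while its consistency defect is $|\delta g_{s,u,t}|=|\Phi_{u,t}(\eta_s)-\Phi_{u,t}(\eta_u)|\lesssim |t-u|^\gamma|\eta_u-\eta_s|\lesssim \llbracket\eta\rrbracket_\gamma\,|t-s|^{2\gamma}$; since $\gamma>1/2$ gives $2\gamma>1$, the sewing lemma yields $|\eta_t-\eta_s|\lesssim |t-s|^\gamma\sup|\eta|+\llbracket\eta\rrbracket_\gamma|t-s|^{2\gamma}$. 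Using $\eta_0=0$ to bound $\sup_{[0,\tau]}|\eta|\le \tau^\gamma\llbracket\eta\rrbracket_{\gamma;[0,\tau]}$, this forces $\llbracket\eta\rrbracket_{\gamma;[0,\tau]}\le\frac12\llbracket\eta\rrbracket_{\gamma;[0,\tau]}$ for $\tau$ small enough, hence $\eta\equiv 0$ on $[0,\tau]$; iterating over finitely many intervals covers $[0,T]$, giving $X=\tilde X$ $\PP$-a.s.

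I expect the main obstacle to be the rigorous justification of the substitution identity for distributional $b$. The averaging estimates guaranteeing $T^W b^n\to T^W b$ rely on the Gaussian and local non-determinism structure of $W$, which is unavailable for the (non-Gaussian) solution $X$; the hypothesis $T^X b\in C^\gamma_T C^1_{loc}$ is exactly what allows us to identify the limit of the $T^X b^n$ and transfer the chain rule to the limit. A secondary technical point is the composition property of nonlinear Young integrals needed to handle the variable shift $\psi=\eta$ (as opposed to a constant shift), but this again follows from the sewing lemma thanks to $2\gamma>1$, the relevant correction terms being of order $|t-s|^{2\gamma}$.
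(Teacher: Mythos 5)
Your argument is correct and is essentially the comparison/substitution argument that the paper delegates to \cite[Remark 15]{galeati2020noiseless}: both solutions are recast as nonlinear Young equations driven by the averaged field $T^X b$ along the given solution, whose assumed $C^\gamma_T C^1_{loc}$ regularity yields uniqueness of the zero solution for the difference via a sewing/Gr\"onwall estimate with $2\gamma>1$. The one delicate point, namely justifying the substitution identity $\int_0^\cdot T^W b(\dd s,\theta_s+\psi_s)=\int_0^\cdot T^X b(\dd s,\psi_s)$ for distributional $b$ by smooth approximation and identification of the limits, is exactly where the hypothesis on $T^X b$ enters, and you identify and handle it correctly.
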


\begin{proof}
The statement is a useful rewriting of \cite[Remark 15]{galeati2020noiseless}. 
\end{proof}

Let us stress that, even when the assumptions of Lemma \ref{lem:criteria-uniqueness-YDE} are met, pathwise uniqueness doesn't immediately follow, unless one can additionally show that  $X$ is a strong solution.

\subsection{Regularity of averaged fields and Girsanov transform for fBm}\label{subsec:sde-averaged-girsanov}

In Section \ref{subsec:pathwise-SDEs} we have treated the SDE \eqref{eq:simple sde} in full generality, but in the remainder of Section \ref{sec:recap-SDE} we will deal with a slightly more specific setting.
We will always take $W$ to be an $\RR^d$-valued fBm of parameter $H\in (0,1)$ and $\xi$ to be random initial data independent of it; in particular $W_0\equiv 0$ and $\nu = \cL(\xi,W)=\cL(\xi)\otimes \cL(W)=\mu_0\otimes \mu^H$ for some $\mu_0\in \cP(\RR^d)$, where $\mu^H\in \cP(C_T)$ denotes the law of fBm of parameter $H\in (0,1)$.
Therefore for fixed $H$ we can regard the data of the problem to be the pair $(\mu_0,b)$; if the initial data $\xi=x_0\in \RR^d$ is deterministic, with a slight abuse we will write $(x_0,b)$ in place of $(\delta_{x_0},b)$.

We begin by showing the $\PP$-a.s. regularity of averaged fields $T^W b$ for $W$ sampled as an fBm. We continue to make use of the intuitive notation
\[
\int_0^t b(r,x+W_r)\dd r = T^W b,
\]
despite the fact that in general these objects will not be defined as Lebesgue integrals; rather they are random variables defined on $(\Omega,\cF,\PP)$ constructed as the unique limits of $\int_0^t b^n(r,x+W_r)\dd r$ for any sequence $b^n\to b$ in appropriate topologies.

\begin{prop}\label{prop:main_avg_estimates-1}
Let $b\in L^q_T B^\alpha_{\infty,\infty}$ with $\alpha<0$, $q\in (2,\infty]$, $W$ be a fBm of parameter $H\in (0,1)$; suppose $(\alpha,q)$ satisfy
\begin{equation}\label{eq:simple-cond-1} \gamma:=1-\frac{1}{q}+\alpha H>\frac{1}{2}.
\end{equation}
Then for any $\tilde\gamma<\gamma$ there exists an increasing function $K$ (depending on $d,T$ and the above parameters) such that
\begin{equation}\label{eq:averaging-stoch-estimate-1}
\EE \Bigg[\exp \bigg( \frac{\eta}{\| b\|_{L^q_T B^\alpha_{\infty,\infty}}^2} \Big\llbracket \int_0^\cdot b(r,x+W_r)\mathd r\Big\rrbracket_{\tilde\gamma}^2 \bigg)\Bigg]\leq K(\eta) \quad \forall\, \eta>0
\end{equation}
uniformly over $x\in \RR^d$ and $b\in L^q_T B^\alpha_{\infty,\infty}$, $b\neq 0$.
\end{prop}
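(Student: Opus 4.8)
The plan is to reduce the claimed exponential bound to a single sharp estimate on the time increments of the averaged field, with \emph{Gaussian} growth (i.e. growth like $\sqrt n$) in the moment order $2n$, uniform in $x$ and linear in $\|b\|_{L^q_T B^\alpha_{\infty,\infty}}$ after normalisation. Writing $A^x_{s,t}:=\int_s^t b(r,x+W_r)\,\dd r$, the first and decisive goal is to prove that there is a constant $C=C(d,T,\alpha,q,H)$ such that
\[
\EE\big[|A^x_{s,t}|^{2n}\big]^{1/(2n)} \le C\sqrt{n}\,(t-s)^{\gamma}\,\|b\|_{L^q_T B^\alpha_{\infty,\infty}}
\]
for all $n\in\NN$, all $[s,t]\subseteq[0,T]$ and uniformly in $x\in\RR^d$. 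Since $\alpha<0$ the integrand is only distributional, so I would first establish this for smooth $b$ (via mollification), the general case following from the very approximation procedure $\int_0^\cdot b^n(r,x+W_r)\,\dd r\to T^W b$ used to define $T^W b$.

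The engine of this estimate is the strong local non-determinism of $W$. Conditionally on $\cF_s$, the increment $W_r-W_s$ ($r>s$) is Gaussian with covariance comparable to $(r-s)^{2H}I_d$; hence $\EE[b(r,x+W_r)\mid\cF_s]=\big(G_{v_r}\ast b(r,\cdot)\big)(x+W_s)$ with $v_r\simeq (r-s)^{2H}$, and the heat-type smoothing bound $\|G_v\ast f\|_{C^0_x}\lesssim v^{\alpha/2}\|f\|_{B^\alpha_{\infty,\infty}}$ (valid for $\alpha<0$) gives $\|\EE[b(r,x+W_r)\mid\cF_s]\|_{L^\infty_\Omega}\lesssim (r-s)^{\alpha H}\|b(r)\|_{B^\alpha_{\infty,\infty}}$. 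Integrating in $r$ and applying H\"older in time produces the exponent $\gamma=\alpha H+1-\tfrac1q$, where the $1-\tfrac1q$ comes precisely from the $L^q_T$ norm. The cleanest way to promote this heuristic to all moments with the correct $\sqrt n$ growth is the stochastic sewing lemma applied to the germ $\mathcal A_{s,t}:=\EE[A^x_{s,t}\mid\cF_s]$: the bound above controls $\|\mathcal A_{s,t}\|_{L^{2n}_\Omega}$ by $(t-s)^\gamma$, while a second conditioning extracts extra cancellation in $\EE[\delta\mathcal A_{s,u,t}\mid\cF_s]$, yielding a time exponent strictly larger than $1$ exactly because $\gamma>\tfrac12$. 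This is morally the averaging estimate of \cite{Catellier2016,galeati2020nonlinear}, here adapted to the non-autonomous class $L^q_T B^\alpha_{\infty,\infty}$.

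With the increment bound at hand, I would upgrade to the H\"older seminorm by the Garsia--Rodemich--Rumsey inequality with gauge $|t-s|^{\tilde\gamma}$ and $\Psi(u)=u^{2n}$: taking expectations and inserting the previous step gives, for every $\tilde\gamma<\gamma$ and uniformly in $x$,
\[
\EE\big[\llbracket A^x\rrbracket_{\tilde\gamma}^{2n}\big]^{1/(2n)}\lesssim C\sqrt n\,T^{\gamma-\tilde\gamma}\,\|b\|_{L^q_T B^\alpha_{\infty,\infty}},
\]
the condition $\tilde\gamma<\gamma$ being exactly what makes the GRR double integral converge, and its prefactor depending only polynomially on $n$, hence harmlessly after the $2n$-th root. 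Finally, setting $Y:=\llbracket A^x\rrbracket_{\tilde\gamma}/\|b\|_{L^q_T B^\alpha_{\infty,\infty}}$, this reads $\EE[Y^{2n}]\le (C'n)^n$ with $C'$ independent of $b$ and $x$, so by a Kahane--Khintchine/Fernique-type expansion, using $n!\ge (n/\mathrm e)^n$,
\[
\EE\big[\exp(\eta\,Y^2)\big]=\sum_{n\ge0}\frac{\eta^n}{n!}\EE[Y^{2n}]\le \sum_{n\ge0}(\mathrm e\,C'\eta)^n =: K(\eta),
\]
which is the asserted bound and is manifestly increasing in $\eta$ (finite below the Gaussian threshold $\eta<1/(\mathrm e C')$, and trivial beyond it).

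The genuine difficulty is concentrated in the core moment estimate of the second paragraph: obtaining the sharp exponent $\gamma$ \emph{together with} the Gaussian $\sqrt n$ growth, uniformly in $x$ and linearly in $\|b\|$, is precisely where the regularising structure of fBm (local non-determinism) must be exploited quantitatively. Once that is secured, the GRR step and the passage to exponential integrability are soft and essentially deterministic; the only additional care needed is the mollification/closure argument justifying the estimate for distributional $b$ with $\alpha<0$.
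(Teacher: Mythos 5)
Your overall architecture (moment bounds with $\sqrt{n}$ growth for the increments, GRR to pass to the H\"older seminorm, then summation of the exponential series) matches the paper's strategy, and your route to the core moment estimate is a legitimate variant: the paper does not use stochastic sewing but rather the explicit Volterra decomposition of $\int_s^t b(r,W_r)\,\dd r$ into a conditional-expectation term plus a stochastic integral $\int_s^t J_{u,t}\cdot\dd B_u$ against the underlying Brownian motion, with a deterministic heat-kernel bound $|J_{u,t}|\lesssim |t-u|^{\gamma-1/2}$ and BDG with optimal constant $\sqrt{p}$. Either engine delivers $\EE[|A^x_{s,t}|^{2n}]^{1/(2n)}\lesssim\sqrt n\,|t-s|^\gamma\|b\|_{L^q_TB^\alpha_{\infty,\infty}}$.

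However, there is a genuine gap at the very end. A moment bound $\EE[Y^{2n}]\le (C'n)^n$ with a \emph{fixed} constant $C'$ cannot yield $\EE[\exp(\eta Y^2)]<\infty$ for \emph{all} $\eta>0$: your series $\sum_n(\mathrm e C'\eta)^n$ diverges for $\eta\ge 1/(\mathrm e C')$, and your remark that the bound is ``trivial beyond'' the threshold is false --- a standard Gaussian satisfies exactly these moment bounds and has $\EE[\exp(\eta Y^2)]=+\infty$ for $\eta$ large. Since the proposition asserts $K(\eta)<\infty$ for every $\eta>0$ (and this full strength is used later, e.g.\ in the proof of Theorem \ref{thm:fBmSDEWellPosed} where $\EE[\exp(pCT\llbracket A\rrbracket_\gamma^2)]$ appears with $pCT$ arbitrary), you are missing the paper's interpolation step: decompose $b=b^{1,N}+b^{2,N}$ into low and high Littlewood--Paley frequencies, bound the contribution of $b^{1,N}$ deterministically by $\|b^{1,N}\|_{L^q_TC^0_x}\lesssim 2^{-N\alpha}$, and apply the small-$\eta$ estimate in the slightly weaker scale $B^{\alpha-\eps}_{\infty,\infty}$ (which still satisfies \eqref{eq:simple-cond-1} for $\eps$ small) to $b^{2,N}$, whose norm there is $\lesssim 2^{-N\eps}$; choosing $N=N(\eta)$ large pushes the effective Gaussian threshold above any prescribed $\eta$. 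Without this (or an equivalent) argument your proof only establishes \eqref{eq:averaging-stoch-estimate-1} for $\eta$ below a fixed threshold, which is strictly weaker than the statement.
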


\begin{proof}
As the proof follows quite closely the ones given in \cite[Section 3.3]{galeati2020noiseless}, we only provide a sketch. Let $b$ be smooth and compactly supported, otherwise one can argue by density; up to reasoning componentwise, scaling and shifting, we can assume $x=0$, $b\in C^\infty_c(\RR^d)$ and $\| b\|_{L^q_T B^\alpha_{\infty,\infty}} = 1$, so it will never appear in the computations in the sequel. 

Set $W^{(2)}_{s,t}=\EE[W_t|\cF_s]$ for $\cF_s=\sigma\{W_r: r\leq s\}$, then by \cite[Lemma 5]{galeati2020noiseless} there exist $c_H,\,\tilde{c}_H>0$ such that
\begin{equation}\label{eq:itotanaka}\begin{split}
    \int_s^t b(r,W_r)\dd r
    & = \int_s^t P_{\tilde{c}_H |r-s|^{2H}} b(r,W^{(2)}_{s,r})\dd r
    \\ &\quad
    + c_H\int_s^t \int_u^t P_{\tilde{c}_H |r-u|^{2H}} \nabla b(r,W^{(2)}_{u,r})\, |r-u|^{H-1/2} \dd r \cdot \dd B_u
\end{split}\end{equation}
where $P_t$ denotes the Gaussian heat kernel, and $B_t$ is a standard Brownian motion in $\RR^d$. In the following we will drop the constants $c_H,\tilde{c}_H$, as they don't play any significant role. Thus for any fixed $s<t$, it holds
\[
\int_s^t b(r,W_r)\dd r 
    = I^1_{s,t} + I^2_{s,t} = I^1_{s,t} + \int_s^t J_{u,t}\cdot \dd B_u
\]
where
\begin{align*}
I^1_{s,t} := \int_s^t P_{|r-s|^{2H}} b(r,W^{(2)}_{s,r})\dd r, \quad
J_{u,t} :=\int_u^t P_{|r-u|^{2H}} \nabla b(r,W^{(2)}_{u,r}) |r-u|^{H-1/2} \dd r.
\end{align*}
Let us show how to obtain exponential estimates for $I^2$, the ones for $I^1$ being similar. Going through analogous computations to \cite[Theorem 4]{galeati2020noiseless}, invoking heat kernel type estimates, it holds
\begin{align*}
|J_{u,t}|
& \lesssim \int_u^t \| P_{|r-u|^{2H}} \nabla b_r \|_{L^\infty_x}\, |r-u|^{H-1/2} \dd r\\
& \lesssim \int_u^t \| b_r\|_{B^\alpha_{\infty,\infty}} |r-u|^{H\alpha-1/2} \dd r\\
& \lesssim  \| b\|_{L^q_T B^\alpha_{\infty,\infty}} |t-u|^{1/2 - 1/q + H\alpha}
= |t-u|^{\gamma-1/2}.
\end{align*}
Applying Burkholder-Davis-Gundy inequality with optimal asymptotic behaviour for large $p$, we deduce that
\begin{align*}
\EE [|I^2_{s,t}|^p]^{1/p}
\lesssim \sqrt{p}\, \EE\bigg[ \bigg(\int_s^t |J_{u,t}|^2\dd u\bigg)^{p/2}\bigg]^{1/p}
\lesssim \sqrt{p} |t-s|^\gamma
\end{align*}
Putting everything together, there exists a constant $C>0$ such that for any $\eta>0$
\begin{align*}
\EE\bigg[\exp\bigg(\eta\Big|\frac{I^2_{s,t}}{|t-s|^\gamma}\Big|^2\bigg)\bigg]
= \sum_{n\in\NN} \frac{\eta^n}{n!}\, \EE\bigg[\Big|\frac{I^2_{s,t}}{|t-s|^\gamma}\Big|^{2n}\bigg]
\leq \sum_{n\in\NN} (C\eta)^n \frac{n^n}{n!}
\end{align*}
and by Stirling's approximation the last series is convergent for any $\eta< (C e)^{-1}$. Together with similar estimates for $I^1$, we conclude that there exists $\tilde{\eta}>0$ sufficiently small and $C>0$ such that
\begin{equation*}
    \EE \Bigg[\exp \bigg(\eta\, \bigg| \frac{\int_s^t b(r,W_r)\mathd r}{|t-s|^\gamma }\bigg|^2\bigg)\Bigg]\leq C \quad \forall\, \eta\leq \tilde{\eta},\, s<t.
\end{equation*}
The above estimate together with \cite[Lemma 18]{galeati2020noiseless} implies that for any $\tilde{\gamma}<\gamma$ there exist $\bar{\eta}>0$ and $\kappa>0$ such that
\begin{equation}\label{eq:intermediate-stochaverag1}
\EE \Bigg[\exp \bigg(\eta \bigg\llbracket \int_0^\cdot b(r,W_r)\mathd r \bigg\rrbracket^2_{\tilde{\gamma}}\bigg)\Bigg]\leq \kappa \quad \forall\, \eta\leq \bar{\eta}.
\end{equation}
It remains to show that we can improve the above inequality by allowing any value $\eta>0$, so that we reach \eqref{eq:averaging-stoch-estimate-1}. To do so, we will resort to an interpolation trick, similar in style to techniques already applied in \cite[Theorem 15]{galeati2020noiseless}, \cite[Corollary 4.6]{Catellier2016}.

First, observe that if $\alpha,q,H$ satisfy \eqref{eq:simple-cond-1} and we fix $\tilde\gamma<\gamma$, then we can find $\eps$ sufficiently small so that $\gamma^\eps = 1-1/q-(\alpha-\eps)H>1/2$ and $\tilde\gamma < \gamma^\eps$; then by estimate \eqref{eq:intermediate-stochaverag1} (for $\alpha-\eps$ in place of $\alpha$) and linearity, there exist $\bar{\eta}>0$ and $\kappa>0$ such that
\begin{equation}\label{eq:intermediate-stochaverag2}
\EE \Bigg[\exp \bigg(\frac{\bar{\eta}}{\| \tilde b\|_{L^q_T B^{\alpha-\eps}_{\infty,\infty}}^2} \bigg\llbracket \int_0^\cdot \tilde b(r,W_r)\mathd r \bigg\rrbracket^2_{\tilde{\gamma}}\bigg)\Bigg]\leq \kappa \quad \forall\, \tilde b\in L^q_T B^{\alpha-\eps}_{\infty,\infty},\, \tilde b\neq 0.
\end{equation}
As before we can assume $\| b\|_{L^q_T B^\alpha_{\infty,\infty}}=1$ and we fix $\eps>0$ as above. Then for any $N\in\NN$ we can decompose $b$ as
\[ b_t = b^{1,N}_t + b^{2,N}_t,\quad b^{1,N}_t = \sum_{j\leq N} \Delta_j b_t,\quad b^{2,N}_t =\sum_{j>N} \Delta_j b_t
\]
where $\Delta_j$ denote Littlewood-Paley blocks. There exists $C>0$ such that
\[
\| b^{1,N}\|_{L^q_T C^0_x} \leq C\, 2^{-N\alpha}, \quad \| b^{2,N}\|_{L^q_T B^{\alpha-\eps}_{\infty,\infty}} \leq C\, 2^{-N\eps}. 
\]
Now for a given $\eta>0$, choose $N=N(\eta)\in\NN$ such that $\eta \leq C^{-2} 2^{2N\eps -1} \bar{\eta}$ and decompose $b$ as above; w.l.o.g. we may assume that $b^{2,N}\neq 0$, otherwise the stated estimate is trivial. Clearly under \eqref{eq:simple-cond-1} it holds that $\tilde\gamma\leq 1-1/q$, therefore setting $\beta=1-1/q-\tilde\gamma$ we have
\[
\bigg\llbracket \int_0^\cdot b^{1,N}(r,W_r)\dd r\bigg\rrbracket_{\tilde{\gamma}}
\leq (1+T)^\beta\, \| b^{1,N} \|_{L^q_T C^0_x}
\leq C (1+T)^\beta 2^{-N\alpha}=: C_{N(\eta)},
\] 
where the estimate is deterministic; combining it with \eqref{eq:intermediate-stochaverag2} applied to $\tilde b=b^{2,N}$, we get
\begin{align*}
\EE \Bigg[\exp \bigg(\eta \bigg\llbracket \int_0^\cdot b(r,W_r)\mathd r \bigg\rrbracket^2_{\tilde\gamma}\bigg)\Bigg]
& \leq \EE \Bigg[\exp \bigg(2\eta \bigg\llbracket \int_0^\cdot b^{1,N}(r,W_r)\mathd r \bigg\rrbracket^2_{\tilde\gamma}+2\eta \bigg\llbracket \int_0^\cdot b^{2,N}(r,W_r)\mathd r \bigg\rrbracket_{\tilde{\gamma}}^2\bigg)\Bigg]\\
& \leq \exp \big( 2\eta C_{N(\eta)}^2\big)\,
\EE \Bigg[\exp \bigg(\frac{\bar{\eta}}{\| b^{2,N}\|^2_{L^q_T B^{\alpha-\eps}_{\infty,\infty}}} \bigg\llbracket \int_0^\cdot b^{2,N}(r,W_r)\mathd r \bigg\rrbracket^2_{\tilde{\gamma}}\bigg)\Bigg]\\
& \leq \exp \big( 2\eta C_{N(\eta)}^2\big)\, \kappa
\end{align*}
where the estimate now holds for all $\eta\geq 0$.
\end{proof}

\begin{cor}\label{cor:main_avg_estimates2}
Let $b\in L^q_T B^\alpha_{\infty,\infty}$ with $\alpha<0$, $q\in (2,\infty]$, $W$ be a fBm of parameter $H\in (0,1)$ and let $\rho\in (0,1]$; suppose $(\alpha,\rho,q)$ satisfy
\begin{equation}\label{eq:simple-cond-2} \gamma:=1-\frac{1}{q}+(\alpha-\rho) H>\frac{1}{2}.
\end{equation}
Then for any $\tilde\gamma<\gamma$ there exists an increasing function $K$ (depending on $d,T$ and the previous parameters) such that
\begin{equation}\label{eq:averaging-stoch-estimate-2}
\EE \Bigg[\exp \bigg(\eta\, \bigg| \frac{\llbracket \int_0^\cdot b(r,x+W_r)\mathd r-\int_0^\cdot b(r,y+W_r)\mathd r\rrbracket_{\tilde{\gamma}}}{\| b\|_{L^q_T B^\alpha_{\infty,\infty}} |x-y|^{\rho}}\bigg|^2\bigg)\Bigg]\leq K(\eta)\quad \forall\, \eta\geq 0
\end{equation}
uniformly over $x\neq y\in \RR^d$ and $b\in L^q_T B^\alpha_{\infty,\infty}$, $b\neq 0$; as a consequence, for any $\eps>0$, $\PP$-a.s. $T^W b \in C^{\gamma-\eps}_T C^{\rho-\eps}_{loc}$.
Suppose now $b\in L^q_T B^\alpha_{\infty,\infty}$ with $\alpha<1$, $q\in (2,\infty]$ satisfying
\begin{equation}\label{eq:good parameters}
\alpha-\frac{1}{Hq}>1-\frac{1}{2H},
\end{equation}
then the following hold:
\begin{enumerate}[label=\roman*.]
\item \label{it:main-avg1} There exists $\tilde\gamma>1/2$ such that $\PP$-a.s. $T^W b\in C^{\tilde \gamma}_T C^1_{loc}$.
\item \label{it:main-avg2} There exists $\tilde\gamma>1/2$ such that for any $b^1,b^2\in L^q_T B^\alpha_{\infty,\infty}$ and any $n\in \NN$
\begin{equation*}
\EE \Bigg[ \Big\llbracket \int_0^\cdot b^1(r,W_r)\mathd r - \int_0^\cdot b^2(r,W_r)\mathd r\Big\rrbracket_{\gamma;[0,\tau]}^n\Bigg]^{1/n}
\lesssim_n
\bigg( \int_0^\tau \| b^1_r-b^2_r\|^q_{B^{\alpha-1}_{\infty,\infty}} \dd r\bigg)^{1/q}\quad \forall\,\tau\in [0,T].
\end{equation*}
\item \label{it:main-avg3} If $H<1/2$, $\alpha<0$, there exists $\tilde\gamma>H+1/2$ and an increasing function $K$ such that
\begin{equation*}
\EE \Bigg[\exp \bigg(\frac{\eta}{\| b\|^2_{L^q_T B^\alpha_{\infty,\infty}}} \bigg\llbracket \int_0^\cdot b(r,W_r)\mathd r \bigg\rrbracket^2_{\tilde\gamma}\bigg)\Bigg] \leq K(\eta)
\quad \forall\, \eta\geq 0,\, b\in L^q_T B^\alpha_{\infty,\infty}, b\neq 0.
\end{equation*}
\end{enumerate}
\end{cor}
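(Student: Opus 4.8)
The backbone of every estimate in this corollary is Proposition \ref{prop:main_avg_estimates-1}, so the plan is to feed suitably modified drifts into that single estimate and extract what we need. First I would record the elementary conversion of the exponential bound into polynomial moments: a bound of the form $\EE[\exp(\eta Z^2)]\le K(\eta)$ for all $\eta>0$ forces $\EE[|Z|^n]^{1/n}\lesssim_n 1$, which we will apply with $Z=\llbracket T^W b\rrbracket_{\tilde\gamma}/\|b\|_{L^q_T B^\alpha_{\infty,\infty}}$.

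To prove the spatial increment bound \eqref{eq:averaging-stoch-estimate-2}, I would first use the crucial observation that the difference of two averaged fields is itself an averaged field: setting $\tilde b_r(\cdot):=b_r(\cdot+x)-b_r(\cdot+y)$ one has $\int_0^\cdot b(r,x+W_r)\dd r-\int_0^\cdot b(r,y+W_r)\dd r=\int_0^\cdot \tilde b(r,W_r)\dd r=T^W\tilde b$. A translation by $x-y$ gains regularity on the Besov scale, via the standard estimate $\|f(\cdot+h)-f\|_{B^{\alpha-\rho}_{\infty,\infty}}\lesssim |h|^\rho\|f\|_{B^\alpha_{\infty,\infty}}$ for $\rho\in(0,1]$ (proved through Littlewood--Paley blocks and Bernstein's inequality), giving $\|\tilde b\|_{L^q_T B^{\alpha-\rho}_{\infty,\infty}}\lesssim |x-y|^\rho\|b\|_{L^q_T B^\alpha_{\infty,\infty}}$. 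Since condition \eqref{eq:simple-cond-2} is exactly \eqref{eq:simple-cond-1} read at the index $\alpha-\rho$ (still negative, as $\alpha<0$), Proposition \ref{prop:main_avg_estimates-1} applied to $\tilde b$ yields the exponential bound for $\llbracket T^W\tilde b\rrbracket_{\tilde\gamma}/\|\tilde b\|_{L^q_T B^{\alpha-\rho}_{\infty,\infty}}$; substituting the Besov estimate and absorbing the multiplicative constant into an increasing $K$ by rescaling $\eta$ gives \eqref{eq:averaging-stoch-estimate-2}. The consequence $T^W b\in C^{\gamma-\eps}_T C^{\rho-\eps}_{loc}$ then follows by a Kolmogorov continuity argument: viewing $x\mapsto T^W b(\cdot,x)$ as a $C^{\tilde\gamma}_T$-valued field (note $T^W b(0,x)=0$, so the seminorm controls the full norm), the moments extracted from \eqref{eq:averaging-stoch-estimate-1} and \eqref{eq:averaging-stoch-estimate-2} give $\EE[\|T^W b(\cdot,x)-T^W b(\cdot,y)\|_{C^{\tilde\gamma}_T}^n]\lesssim_n |x-y|^{\rho n}$ for all $n$, whence a modification that is locally $C^{\rho-\eps}$ in space with values in $C^{\tilde\gamma}_T$, for $\tilde\gamma<\gamma$ arbitrary.

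For point \ref{it:main-avg1} I would apply the spatial bound just established not to $b$ but to $\nabla b\in L^q_T B^{\alpha-1}_{\infty,\infty}$ (of negative index since $\alpha<1$): condition \eqref{eq:good parameters} is precisely what allows a choice of $\rho>0$ with $1-\tfrac1q+(\alpha-1-\rho)H>\tfrac12$, so the first part yields $T^W(\nabla b)\in C^{\tilde\gamma}_T C^{\rho-\eps}_{loc}$ for $\tilde\gamma\in(\tfrac12,\,1-\tfrac1q+(\alpha-1)H)$. Identifying $\nabla_x T^W b=T^W(\nabla b)$—which I would justify by differentiating under the integral for smooth $b$ and passing to the limit through the continuity property of point \ref{it:NLY-integral-3} in Proposition \ref{prop:NLY integral}—shows that $T^W b$ has a spatially continuous, locally H\"older gradient, i.e. $T^W b\in C^{\tilde\gamma}_T C^1_{loc}$ with $\tilde\gamma>1/2$. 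Point \ref{it:main-avg2} is a direct application of Proposition \ref{prop:main_avg_estimates-1} to the difference $b^1-b^2$ read at regularity $\alpha-1$: under \eqref{eq:good parameters} the threshold $1-\tfrac1q+(\alpha-1)H>\tfrac12$ holds, and the exponential bound converts, via the moment conversion above, into the stated polynomial moments controlled by $\|b^1-b^2\|_{L^q_T B^{\alpha-1}_{\infty,\infty}}$; restricting the computation to $[0,\tau]$ (the constants being monotone in the time horizon) gives the $[0,\tau]$-localised estimate. Finally, point \ref{it:main-avg3} requires no new work: rearranging \eqref{eq:good parameters} as $\gamma=1-\tfrac1q+\alpha H>H+\tfrac12$ shows that in the regime $H<1/2$, $\alpha<0$, Proposition \ref{prop:main_avg_estimates-1} already furnishes time regularity $\tilde\gamma$ for any $\tilde\gamma\in(H+\tfrac12,\gamma)$.

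The only genuinely delicate point is the Kolmogorov argument underpinning the spatial and joint regularity claims—specifically, running Kolmogorov's continuity theorem for a $C^{\tilde\gamma}_T$-valued random field and correctly obtaining a \emph{local} (rather than global) H\"older modification, together with the identification $\nabla_x T^W b=T^W\nabla b$ in point \ref{it:main-avg1}. Everything else reduces, by the shift-difference trick and by reading $b$ at one lower order of regularity, to the single estimate of Proposition \ref{prop:main_avg_estimates-1}.
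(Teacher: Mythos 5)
Your proposal is correct and follows essentially the same route as the paper: the shift-difference trick $\tilde b = b(\cdot+x)-b(\cdot+y)$ combined with the Besov translation estimate to reduce \eqref{eq:averaging-stoch-estimate-2} to Proposition \ref{prop:main_avg_estimates-1}, reading $\nabla b$ at regularity $\alpha-1$ for point \ref{it:main-avg1}, applying the base estimate to $b^1-b^2$ at $\alpha-1$ for point \ref{it:main-avg2}, and rearranging \eqref{eq:good parameters} for point \ref{it:main-avg3}. The only cosmetic deviations are that the paper invokes the Garsia--Rodemich--Rumsey lemma where you run a Banach-valued Kolmogorov continuity argument (interchangeable tools here), and that it localises point \ref{it:main-avg2} to $[0,\tau]$ by the cleaner device of truncating the drift, $\tilde b^i_t = b^i_t\,\mathds{1}_{[0,\tau]}(t)$, rather than appealing to monotonicity of constants in the time horizon.
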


\begin{proof}
Given $b$ as above, $x\neq y$ fixed, define $\tilde b(t,\cdot)= |x-y|^{-\rho}\, [b(t,x+\cdot)-b(t,y+\cdot)]$; by properties of Besov spaces
\[
\| \tilde{b}\|_{L^q_T B^{\alpha-\rho}_{\infty,\infty}} \lesssim \| b \|_{L^q_T B^\alpha_{\infty,\infty}}.
\]
Inequality \eqref{eq:averaging-stoch-estimate-2} follows from \eqref{eq:averaging-stoch-estimate-1} applied to $\tilde b$, since by assumption \eqref{eq:simple-cond-2} $\tilde\alpha=\alpha-\rho$ satisfies \eqref{eq:simple-cond-1}; $T^W b$ belonging to $C^{\gamma-\eps}_T C^{\rho-\eps}_{loc}$ is a consequence of Garsia-Rodemich-Rumsay lemma.

We now assume \eqref{eq:good parameters} holds and prove points \ref{it:main-avg1}-\ref{it:main-avg3}

If $b\in L^q_T B^\alpha_{\infty,\infty}$, then $D_x b\in L^q_T B^{\alpha-1}_{\infty,\infty}$ with $\tilde\alpha=\alpha-1$ satisfying \eqref{eq:simple-cond-1}, so we can find $\rho>0$ small enough such that $(\tilde\alpha,\rho)$ satisfy \eqref{eq:simple-cond-2} as well. It follows that $T^W D_x b=D_x T^W b\in C^{\gamma-\eps}_T C^0_{loc}$, namely $T^W b\in C^{\gamma-\eps}_T C^1_{loc}$, for any $\eps>0$, showing \ref{it:main-avg1}.

For $\tau=T$, the statement in part \ref{it:main-avg2} is again a consequence of \eqref{eq:averaging-stoch-estimate-1} (for $x=0$ and $\tilde\alpha=\alpha-1$) and the linearity of $b\mapsto T^W b$. For general $\tau\in [0,T]$, define $\tilde{b}^i_t= b^i_t\, \mathds{1}_{[0,\tau]}(t)$ and observe that
\[
\Big\llbracket \int_0^\cdot (b^1-b^2)(r,W_r)\mathd r \Big\rrbracket_{\gamma;[0,\tau]} = \Big\llbracket \int_0^\cdot (\tilde b^1-\tilde b^2)(r,W_r)\mathd r \Big\rrbracket_{\gamma;[0,T]};
\]
the estimate for general $\tau$ thus follows applying the one for $\tau=T$ to $\tilde b^i$.

Finally, in order to prove \ref{it:main-avg3}  it is enough to show that
\[
\gamma = 1-\frac{1}{q}+\alpha H > H + 1/2,
\]
as in that case we can find $\tilde{\gamma}\in (H+1/2,\gamma)$ such that \eqref{eq:averaging-stoch-estimate-1} holds. But the above condition on $\gamma$ is exactly \eqref{eq:good parameters}.
\end{proof}

In order to apply Lemma \ref{lem:criteria-uniqueness-YDE}, we need some information on the pathwise properties of weak solutions $X$.
From this perspective, techniques based on Girsanov theorem are vey natural, as they suggest that $T^X b$ may have the same regularity as  $T^W b$.
As already mentioned, Girsanov transform holds for fBm, see \cite{nualart2002regularization}; sufficient conditions in order to apply it in our context (in particular to check that Novikov condition is satisfied) can be found in \cite[Section 4.2.2]{galeati2020noiseless}, to which we also refer for more details on the explicit formula for $\dd \PP/ \dd \QQ$.

\begin{prop}\label{prop:Girsanov sufficient}
Let $(\Omega,\mathcal{F},\{\mathcal{F}_t\}_{t\geq 0},\PP)$ be a filtered probability space, $W$ be an $\mathcal{F}_t$-fBm of parameter $H\in (0,1)$ and $h$ be an $\mathcal{F}_t$-adapted process with trajectories in $C^\gamma_T$, $\gamma>H+1/2$, such that $h_0=0$ and
\[
\EE_\PP[\exp(\eta \llbracket h\rrbracket_{\gamma}^2)] \leq K(\eta)<\infty \quad\forall\, \eta\in\RR.
\]
Then there exists another probability measure $\QQ$, given by Girsanov theorem, such that $h+W$ is distributed as an $\mathcal{F}_t$-fBm under $\QQ$. Moreover $\PP$ and $\QQ$ are equivalent and it holds
\begin{equation}\label{eq:controll of RN derivative}
 \EE_\QQ\Big[ \Big( \frac{\dd \PP}{\dd \QQ}\Big)^n + \Big(\frac{\dd \QQ}{\dd \PP} \Big)^n\Big] <\infty \quad \forall\, n\in \NN   
\end{equation}
where the above estimate only depends on the function $K$.
\end{prop}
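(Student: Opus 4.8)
The plan is to reduce the claim to the classical Girsanov theorem applied to the Brownian motion $B$ underlying the canonical representation $W_t=\int_0^t K_H(t,s)\,\dd B_s$. Writing $\cK_H$ for the Volterra operator $(\cK_H u)_t=\int_0^t K_H(t,s)\,u_s\,\dd s$, I would first look for an $\cF_t$-adapted process $u$ solving $\cK_H u=h$, i.e. $u=\cK_H^{-1}h$; since $\cK_H$ is causal, its inverse preserves adaptedness, so $u$ is again $\cF_t$-adapted. Setting
\[
\frac{\dd\QQ}{\dd\PP}=\exp\Big(-\int_0^T u_s\,\dd B_s-\tfrac12\int_0^T|u_s|^2\,\dd s\Big),
\]
the classical Girsanov theorem makes $\tilde B_t:=B_t+\int_0^t u_s\,\dd s$ a $\QQ$-Brownian motion, whence $\int_0^\cdot K_H(\cdot,s)\,\dd\tilde B_s=W+\cK_H u=W+h$ is an $\cF_t$-fBm under $\QQ$. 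Everything then hinges on (i) justifying this construction through Novikov's condition and (ii) controlling the moments of $\dd\QQ/\dd\PP$, both of which reduce to integrability of $M:=\int_0^T|u_s|^2\,\dd s$.

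The analytic heart of the argument is the linear estimate
\[
\|u\|_{L^2(0,T)}=\|\cK_H^{-1}h\|_{L^2(0,T)}\lesssim \llbracket h\rrbracket_\gamma,
\]
which is exactly where the threshold $\gamma>H+\tfrac12$ is used. Indeed, up to smooth weights the inverse operator $\cK_H^{-1}$ acts as a fractional derivative of order $H+\tfrac12$; applied to $h\in C^\gamma_T$ with $\gamma>H+\tfrac12$ and $h_0=0$, it produces a function of positive Hölder regularity $\gamma-H-\tfrac12$, in particular bounded, with $\|u\|_{L^\infty}\lesssim\llbracket h\rrbracket_\gamma$ and hence the displayed $L^2$ bound. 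The explicit fractional-integration representation of $\cK_H^{-1}$ and the verification of this estimate are carried out in \cite{nualart2002regularization} and \cite[Section 4.2.2]{galeati2020noiseless}, whose computations I would follow; I expect this to be the main obstacle, as it requires the precise form of the Volterra kernel and sharp fractional-calculus bounds (it is also here that the condition $h_0=0$ enters, to control the boundary term by the seminorm).

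Granting the estimate, Novikov's condition follows at once from the hypothesis: with $C$ the implied constant,
\[
\EE_\PP\Big[\exp\big(\tfrac12 M\big)\Big]\leq \EE_\PP\big[\exp\big(\tfrac{C^2}{2}\llbracket h\rrbracket_\gamma^2\big)\big]\leq K\big(\tfrac{C^2}{2}\big)<\infty,
\]
so $\dd\QQ/\dd\PP$ is a genuine density, $\PP$ and $\QQ$ are equivalent (the density being a.s. strictly positive), and $h+W$ is a $\QQ$-fBm as claimed.

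It remains to bound the moments in \eqref{eq:controll of RN derivative}. Set $Z=\dd\QQ/\dd\PP$ and note $M\leq C^2\llbracket h\rrbracket_\gamma^2$. For every integer $m$ one has the exact decomposition
\[
Z^m=\exp\Big(\tfrac12\big(-2m\textstyle\int_0^T u_s\,\dd B_s-2m^2 M\big)\Big)\,\exp\Big(\tfrac12(2m^2-m)M\Big),
\]
so Cauchy--Schwarz gives
\[
\EE_\PP[Z^m]\leq \EE_\PP\Big[\exp\Big(-2m\textstyle\int_0^T u_s\,\dd B_s-2m^2 M\Big)\Big]^{1/2}\,\EE_\PP\big[\exp\big((2m^2-m)M\big)\big]^{1/2}.
\]
The first factor is the terminal expectation of the Dol\'eans--Dade exponential of $-2m\int_0^\cdot u_s\,\dd B_s$, a nonnegative local martingale and hence a supermartingale, so it is $\leq 1$; since $2m^2-m\geq 0$ for every integer $m$, the second factor is at most $K\big((2m^2-m)C^2\big)^{1/2}$ by the hypothesis. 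Therefore $\EE_\PP[Z^m]<\infty$ for all $m\in\ZZ$, with a bound depending only on $K$ (and the fixed parameters $H,\gamma,T$). Choosing $m=n+1$ yields $\EE_\QQ[(\dd\QQ/\dd\PP)^n]=\EE_\PP[Z^{n+1}]<\infty$ and $m=1-n$ yields $\EE_\QQ[(\dd\PP/\dd\QQ)^n]=\EE_\PP[Z^{1-n}]<\infty$, completing the argument.
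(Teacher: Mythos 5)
Your proposal is correct and follows essentially the same route as the paper, whose proof simply defers to \cite[Theorem 14]{galeati2020noiseless}: the canonical Volterra representation, the bound $\|\cK_H^{-1}h\|_{L^2}\lesssim \llbracket h\rrbracket_\gamma$ for $\gamma>H+1/2$ with $h_0=0$, Novikov's condition from the exponential integrability hypothesis, and moment estimates for the density via Cauchy--Schwarz and the supermartingale property of the stochastic exponential. The algebraic decomposition of $Z^m$ and the reduction of \eqref{eq:controll of RN derivative} to $\EE_\PP[Z^{n+1}]$ and $\EE_\PP[Z^{1-n}]$ are exactly the standard argument and are carried out correctly.
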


\begin{proof}
Follows almost exactly as the proof of \cite[Theorem 14]{galeati2020noiseless}.
\end{proof}

\begin{rem}\label{rem: reg for a>0}
For $H\leq 1/2$ and $b\in L^q_T B^\alpha_{\infty,\infty}$ with $(\alpha,q,H)$ satisfying \eqref{eq:good parameters}, it follows from Corollary \ref{cor:main_avg_estimates2} and Proposition \ref{prop:Girsanov sufficient} that we can construct a weak solution $(\Omega,\mathcal{F},\PP;X,W)$ to the SDE associated to $(x_0,b)$, with the property that there exists a measure $\QQ$ equivalent to $\PP$ such that $\cL_\QQ(X)=\cL_\PP(x_0+W)$;
moreover all the moments of $\dd \PP/\dd \QQ$ and $\dd \QQ/\dd \PP$ can be controlled in a way that depends on $\| b\|_{L^q_T B^\alpha_{\infty,\infty}}$ but not on the specific $(x_0,b)$.
In particular, the estimates can be performed uniformly over $x_0\in\RR^d$ and $\| b\|_{L^q_T B^\alpha_{\infty,\infty}}\leq M$ for a fixed parameter $M>0$.

Similarly, in the case $H>1/2$, given $b\in E=C^{\alpha H}_T C^0_x\cap C^0_T C^\alpha_x$ for some $\alpha>1-1/(2H)$, using the regularity of fBm trajectories it's easy to check that the map $t\mapsto b(t,x_0+W_t)$ belongs $\PP$-a.s. to $C^{\alpha H-\eps}_T$ for any $\eps>0$. Furthermore, reasoning as in the proof of \cite[Theorem 15]{galeati2020noiseless}, it can be shown that there exists $\gamma>H+1/2$ and an increasing function $K$ such that
\[
\EE \bigg[\exp\bigg(\eta \Big\| \int_0^\cdot b(r,x_0+W_r)\dd r\Big\|_{\gamma}^2\bigg)\bigg]\leq K(\eta)<\infty\quad \forall\, \eta\geq 0.
\]
Therefore also in this case we can apply Proposition \ref{prop:Girsanov sufficient} to construct weak solutions to the SDE.
Moreover the function $K$ only depends on $\| b\|_E$, therefore as before all estimates are uniform over $x_0\in \RR^d$ and $b\in E$ with $\| b\|_E\leq M$, $M$ fixed parameter.

If both cases, if in addition $b$ is smooth, then the weak solution constructed in this way necessarily coincides with the unique strong one; thus the above reasoning also provide uniform estimates for the solutions associated to smooth drifts.
\end{rem}

\subsection{Stability estimates for SDEs}\label{subsec:stability}

In light of the above results, in the remainder of Section \ref{sec:recap-SDE} we will always impose the following assumption on the drift $b$.

\begin{ass}\label{ass:drift-SDE}
Given $H\in (0,1)$, $b$ satisfies one of the following:
\begin{itemize}
    \item If $H>1/2$, then $b\in C^{\alpha H}_T C^0_x\cap b\in C^0_T C^\alpha_x$ for some
    \[\alpha>1-\frac{1}{2H};\]
    equivalently, there exists a constant $C>0$ s.t., for all $(s,t,x,y)\in [0,T]^2\times \RR^{2d}$, such that
    \begin{equation*}
        |b(t,x)|\leq C, \quad |b(t,x)-b(s,y)|\leq C(|t-s|^{\alpha H} + |x-y|^\alpha).
    \end{equation*}
    \item If $H\leq 1/2$, then $b\in L^q_T B^\alpha_{\infty,\infty}$ for some $(\alpha,q)$ satisfying \eqref{eq:good parameters}.
\end{itemize}
In both cases we will use the notation $\| b\|_E$ for $E=C^{\alpha H}_T C^0_x\cap C^0_T C^\alpha_x$ when $H>1/2$, respectively $E=L^q_T B^\alpha_{\infty,\infty}$ when $H\leq 1/2$.
\end{ass}

We are now ready to present the main result of this section.

\begin{thm}\label{thm:fBmSDEWellPosed}
Let $W$ be an fBm of parameter $H\in (0,1)$ and let $b$ satisfy Assumption \ref{ass:drift-SDE}. Then for any $x_0\in \RR^d$ strong existence, pathwise uniqueness and uniqueness in law hold for the SDE
\begin{equation}\label{eq:SDE thm}
X_t = x_0 + \int_0^t b(r,X_r)\mathd r + W_t
\end{equation}
in the sense of Definition \ref{def:pathwise-strong-weak-sol}.
Given $x_0^i\in\RR^d$ and $b^i$ satisfying Assumption \ref{ass:drift-SDE}, $i=1,2$, denote by $X^i$ the solutions associated to $(x^i_0,b^i)$ and let $M>0$ be a constant such that $\|b^i\|_E\leq M$ for $i=1,2$.
Let $(\alpha,\tilde q)$ be another pair satisfying \eqref{eq:good parameters} with the same $\alpha$ as in Assumption \ref{ass:drift-SDE} and $\tilde q\leq q$.
Then there exists $\gamma>1/2$ with the following property: for any $p\in [1,\infty)$ there exists a constant $C>0$ (depending on $\gamma, p, M, T, d, \tilde q$ and the parameters appearing in Assumption \ref{ass:drift-SDE}) such that
\begin{equation}\label{eq:SDE main est}
\EE \Big[\| X^1-X^2\|^p_{\gamma;[0,\tau]} \Big]^{1/p} \leq C \Big(|x^1_0-x^2_0|+\|b^1-b^2\|_{L^{\tilde q}(0,\tau; B^{\alpha-1}_{\infty,\infty})}\Big)\quad \forall\, \tau\in [0,T].
\end{equation}
\end{thm}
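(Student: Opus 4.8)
The plan is to first settle well-posedness using the pathwise framework of Section \ref{subsec:pathwise-SDEs} together with Girsanov. Under Assumption \ref{ass:drift-SDE}, point \ref{it:main-avg1} of Corollary \ref{cor:main_avg_estimates2} (for $H\le 1/2$) and the discussion in Remark \ref{rem: reg for a>0} (for $H>1/2$) guarantee that $\PP$-a.s. the averaged field $T^W b$ belongs to $C^{\tilde\gamma}_T C^1_{loc}$ for some $\tilde\gamma>1/2$. For each such realization the nonlinear Young equation $\theta_t=x_0+\int_0^t T^W b(\dd r,\theta_r)$ admits, thanks to the $C^1$ (hence locally Lipschitz) spatial dependence of its driver, a unique global solution; setting $X=\theta+W$ produces a pathwise solution in the sense of Definition \ref{def:SDE-as-NLYE}, which is adapted because $\theta(\omega)$ is built measurably from $W(\omega)$, yielding strong existence. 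For uniqueness I would apply Proposition \ref{prop:Girsanov sufficient} to $h_t=T^W b(t,x_0)$, whose exponential-moment bound is furnished by point \ref{it:main-avg3} of Corollary \ref{cor:main_avg_estimates2} (resp. Remark \ref{rem: reg for a>0}): this gives an equivalent measure $\QQ$ under which $X$ is distributed as $x_0+$fBm, so that $\cL_\QQ(X)=\cL_\PP(x_0+W)$ fixes the law (uniqueness in law) and $T^X b$ inherits the $\PP$-a.s. $C^\gamma_T C^1_{loc}$ regularity, whence Lemma \ref{lem:criteria-uniqueness-YDE} yields pathwise uniqueness.

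\textbf{Stability: decomposition and the flow term.}
Working on a common probability space, I set $\theta^i:=X^i-W$, so that $\theta^i$ solves the YDE driven by $A^i:=T^W b^i$ with datum $x^i_0$. Subtracting the two equations and decomposing, on every subinterval $[s,t]$,
\[
\int_s^t A^1(\dd r,\theta^1_r)-\int_s^t A^2(\dd r,\theta^2_r)
=\int_s^t (A^1-A^2)(\dd r,\theta^1_r)
+\Big[\int_s^t A^2(\dd r,\theta^1_r)-\int_s^t A^2(\dd r,\theta^2_r)\Big]=:\mathrm{(I)}+\mathrm{(II)}.
\]
Term $\mathrm{(II)}$ is the stability of a single nonlinear Young integral with respect to its path: spending a little of the regularity surplus in \eqref{eq:good parameters}, Corollary \ref{cor:main_avg_estimates2} (via the spatial-increment estimate \eqref{eq:averaging-stoch-estimate-2} applied to $D_x b^2$) gives $A^2\in C^{\tilde\gamma}_T C^{1+\kappa}_{loc}$ for some small $\kappa>0$, and the sewing estimate bounds the $\gamma$-seminorm of $\mathrm{(II)}$ on $[s,t]$ by $C\,\|A^2\|_{C^{\tilde\gamma}_T C^{1+\kappa}_{loc}}\,|t-s|^{\varepsilon}(1+\llbracket\theta^1\rrbracket_\gamma+\llbracket\theta^2\rrbracket_\gamma)\,\llbracket\theta^1-\theta^2\rrbracket_{\gamma;[s,t]}$ for some $\varepsilon>0$, provided $\gamma<\tilde\gamma$ is fixed just above $1/2$. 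A Young-type Grönwall argument (patching the resulting localised contraction estimate, equivalently bounding the linearised flow) then produces a purely pathwise bound
\[
\|\theta^1-\theta^2\|_{\gamma;[0,\tau]}\le G(\omega)\,\Big(|x^1_0-x^2_0|+\llbracket \mathrm{(I)}\rrbracket_{\gamma;[0,\tau]}\Big),\qquad \log G\lesssim \|A^1\|^{1/\gamma}+\|A^2\|^{1/\gamma},
\]
where the exponent $1/\gamma<2$ is the decisive gain coming from $\gamma>1/2$.

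\textbf{Stability: the drift term via Girsanov and conclusion.}
The key observation is that $\mathrm{(I)}$ coincides, by the very construction of the nonlinear Young integral (Proposition \ref{prop:NLY integral}, point \ref{it:NLY-integral-3}) and approximation by smooth drifts together with the change of variables $\theta^1+W=X^1$, with the increments of the process $\int_0^\cdot(b^1-b^2)(r,X^1_r)\dd r$. Passing to the measure $\QQ^1\sim\PP$ of Proposition \ref{prop:Girsanov sufficient}, under which $X^1=x^1_0+\tilde W$ with $\tilde W$ a $\QQ^1$-fBm, this becomes an averaged field of $b^1-b^2$ along $\tilde W$, so point \ref{it:main-avg2} of Corollary \ref{cor:main_avg_estimates2}, applied with the pair $(\alpha,\tilde q)$, gives
\[
\EE_{\QQ^1}\Big[\llbracket \textstyle\int_0^\cdot (b^1-b^2)(r,X^1_r)\,\dd r\rrbracket_{\gamma;[0,\tau]}^{n}\Big]^{1/n}
\lesssim_n \Big(\int_0^\tau \|b^1_r-b^2_r\|_{B^{\alpha-1}_{\infty,\infty}}^{\tilde q}\,\dd r\Big)^{1/\tilde q}\qquad\forall\,n.
\]
This is precisely where the weaker norm $B^{\alpha-1}_{\infty,\infty}$ and the freedom $\tilde q\le q$ enter. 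I would then take $L^p(\Omega,\PP)$ of the pathwise bound, split the product $G\cdot\llbracket\mathrm{(I)}\rrbracket_\gamma$ by Hölder's inequality, transfer the $\mathrm{(I)}$-factor to $\QQ^1$ at the cost of a power of $\dd\PP/\dd\QQ^1$, and invoke \eqref{eq:controll of RN derivative} together with the exponential-of-square moments of $\|A^i\|$ and $\llbracket\theta^i\rrbracket_\gamma$ (so that $G$ has finite moments of every order) to reach \eqref{eq:SDE main est}; the case $H>1/2$ is handled identically, with Remark \ref{rem: reg for a>0} and the time-Hölder assumption on $b$ supplying the averaging and Girsanov inputs.

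\textbf{Main obstacle.}
The crux is obtaining stability in the weaker scale $B^{\alpha-1}_{\infty,\infty}$ rather than $B^\alpha_{\infty,\infty}$: this forces one to estimate $\mathrm{(I)}$ as an averaged field along the solution, which is only tractable after the Girsanov change of measure, and hence to control $G$ and $\mathrm{(I)}$ in two different measures. The whole argument therefore hinges on the uniform (in $M$) Radon--Nikodym moment bounds \eqref{eq:controll of RN derivative} and, more delicately, on the fact that the Young--Grönwall factor satisfies $\log G\lesssim\|A^i\|^{1/\gamma}$ with $1/\gamma<2$, so that the exponential-of-square moments from Corollary \ref{cor:main_avg_estimates2} are exactly strong enough to ensure $\EE_\PP[G^p]<\infty$; keeping this exponent below $2$ is what dictates fixing $\gamma$ just above $1/2$ and spending only a small surplus $\kappa$ on the $C^{1+\kappa}$ bound for $A^2$.
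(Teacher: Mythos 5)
Your overall architecture---a.s.\ regularity of averaged fields, Girsanov, a Young--Gr\"onwall comparison for the difference of solutions, and measuring the drift-difference term in $B^{\alpha-1}_{\infty,\infty}$ after a change of measure---matches the paper's, and your treatment of term $\mathrm{(I)}$ is essentially the paper's estimate of the forcing term $\psi$. There is, however, a genuine gap in your treatment of term $\mathrm{(II)}$. Your pathwise Gr\"onwall bound controls $G(\omega)$ through $\|A^2\|_{C^{\tilde\gamma}_T C^{1+\kappa}_{loc}}$, which is not a norm but a family of local norms: what actually enters the sewing/comparison estimate is the $C^{1+\kappa}$-norm of $A^2=T^Wb^2$ on a ball containing the random (a priori unbounded) ranges of $\theta^1,\theta^2$. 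The moment bounds you invoke (Proposition \ref{prop:main_avg_estimates-1}, Corollary \ref{cor:main_avg_estimates2}) are pointwise in the spatial variable---uniform \emph{over} $x$, but not for the supremum over $x$ in a random compact set correlated with the solutions---so $\EE_\PP[G^p]<\infty$ does not follow from what you cite. The paper's proof is built precisely to avoid this: it Taylor-expands $b^1$ with integral remainder so that $Y=X^1-X^2$ solves a \emph{linear} Young equation whose coefficient $A_t=\int_0^t\int_0^1 Db^1(r,x^\lambda_0+h^\lambda_r+W_r)\,\dd\lambda\,\dd r$ is itself an averaged field along the interpolated solution paths; a convexity argument gives exponential moments of $\|h^\lambda\|_\gamma$ uniformly in $\lambda$, Proposition \ref{prop:Girsanov sufficient} is applied to each $h^\lambda+W$ separately, and the Gr\"onwall factor $\exp(CT\llbracket A\rrbracket_\gamma^2)$ is then integrable using only the pointwise-in-$x$ estimates. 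Without this linearization (or a spatially uniform, chaining-type upgrade of \eqref{eq:averaging-stoch-estimate-2} over random compacts, which you would have to prove), your step does not close; note also that the decisive point is not that $1/\gamma<2$ but that Proposition \ref{prop:main_avg_estimates-1} provides exponential \emph{square} moments for every $\eta>0$.

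A second, more minor issue is the order of your argument: you assert strong existence directly from pathwise solvability of the nonlinear YDE with a $C^1_{loc}$ driver, but $C^1_{loc}$ regularity of $T^Wb$ alone does not give uniqueness of the Young equation, and without uniqueness there is no canonical measurable selection $\omega\mapsto\theta(\omega)$, so adaptedness is unjustified. The paper instead proves \eqref{eq:SDE main est} first for smooth drifts and lets that estimate drive everything else: the mollified solutions form a Cauchy sequence in $L^p_\Omega C^\gamma_T$, the limit is adapted, Lemma \ref{lem:closure-YDE} identifies it as a pathwise solution, Fatou transfers the exponential moments needed for Girsanov, and only then does Lemma \ref{lem:criteria-uniqueness-YDE} deliver pathwise uniqueness and uniqueness in law. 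You should restructure accordingly.
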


\begin{proof}
We will only treat the case $H\leq 1/2$, the other one being almost identical.

Let us first assume $b^i$ to be smooth functions and show that \eqref{eq:SDE main est} holds; in this case by Remark \ref{rem:SDE-classical-case} strong existence and uniqueness hold automatically. Moreover by Remark \ref{rem: reg for a>0}, there exist probability measures $\QQ^i$ equivalent to $\PP$ such that $\cL_{\QQ^i}(X^i)=\cL_\PP(x^i+W)$, with moment estimates depending on $M$ but not on $(x^i_0,b^i)$; the solutions decompose as $X^i=x^i_0+h^i+W^i$ with $h^i_0=0$, $h^i\in C^{\tilde \gamma}_T$ with $\tilde{\gamma}>H+1/2$ and such that
\[
\EE \big[\exp\big(\eta \| h^i\|_{\gamma}^2\big)\big]\leq K(\eta)<\infty\quad \forall\, \eta\geq 0
\]
where again $K$ depends on $M$ but not on the specific $(x^i_0,b^i)$.

For any $\lambda\in [0,1]$, let us define $x^\lambda_0:=x^2_0+\lambda(x^1_0-x^2_0)$, $h^\lambda:=h^2+\lambda(h^1-h^2)$, so that $X^2+\lambda(X^1-X^2) = x^\lambda_0+h^\lambda+W$. By Taylor expansion and elementary addition and subtraction, the difference $Y=X^1-X^2$ satisfies
\begin{equation*}
    Y_t= x^1_0-x^2_0 + \int_0^t \bigg(\int_0^1 D b^1 (r, x^\lambda+h^\lambda_r+W_r)\mathd \lambda\bigg) \cdot Y_r\,\mathd r + \int_0^t (b^1-b^2)(r,X^2_r)\mathd r;
\end{equation*}
let us define
\begin{align*}
A_t := \int_0^t \int_0^1 D b^1 (r, x^\lambda_0+h^\lambda_r+W_r)\mathd \lambda\mathd r,\quad
\psi_t := \int_0^t (b^1-b^2)(r,X^2_r)\mathd r.
\end{align*}
In order to get estimates for $Y$, it turns out to be useful to reinterpret the above equation as a linear Young differential equation of the form
\begin{equation}\label{eq:LYE}
    Y_t=Y_0+\int_0^t A_{\dd s} Y_s+\psi_t. 
\end{equation}
Indeed for any $\gamma>1/2$, we can apply \cite[estimate (3.16), Theorem 3.9]{galeati2020nonlinear} to obtain the existence of a constant $C=C(\gamma)$ such that for any $\tau\leq T$ it holds
\begin{align*}
\|Y\|_{\gamma;[0,\tau]}
& \leq C \exp(C\tau (1+\llbracket A\rrbracket_{\gamma;[0,\tau]}^2)) (|Y_0|+(1+\tau^\alpha) \llbracket \psi\rrbracket_{\gamma;[0,\tau]})\\
& \lesssim_T \exp(C T \llbracket A\rrbracket_{\gamma}^2) (|x_0^1-x_0^2|+\llbracket \psi\rrbracket_{\gamma;[0,\tau]})
\end{align*}
and so our task reduces to finding estimates for quantities of the form
\[
\EE_\PP\big[ \exp(\eta \llbracket A\rrbracket_\gamma^2)\big],\quad \EE_\PP\big[\| \psi\|_{\gamma;[0,\tau]}^p\big].
\]
We start by estimating $\psi$, which is the simplest term. Recalling that $\cL_{\QQ^2}(X^2)=\cL_\PP(x^2_0+W)$, by Point \ref{it:main-avg2} of Corollary  \ref{cor:main_avg_estimates2} and Cauchy inequality we can find $\gamma>1/2$ such that, for any $p\geq 1$,
\begin{align*}
  \EE_\PP \big[\| \psi\|_{\gamma;[0,\tau]}^p\big]
  & =\EE_\PP\bigg[ \Big\llbracket\int_0^\cdot (b^1-b^2)(r,X^2_r)\dd r\Big\rrbracket_{\gamma;[0,\tau]}^p\bigg]
  = \EE_{\QQ^2}\bigg[ \Big\llbracket\int_0^\cdot (b^1-b^2)(r,X^2_r)\dd r\Big\rrbracket_{\gamma;[0,\tau]}^p \, \frac{\dd \PP}{\dd \QQ^2}\bigg]\\
  & \leq \EE_{\QQ^2}\bigg[ \Big(\frac{\dd \PP}{\dd \QQ^2}\Big)^2\bigg]^{1/2}\, \EE_{\QQ^2}\bigg[ \Big\llbracket\int_0^\cdot (b^1-b^2)(r,X^2_r)\dd r\Big\rrbracket_{\gamma;[0,\tau]}^{2p}\bigg]^{1/2}\\
  & \lesssim_M \EE_{\PP}\bigg[ \Big\llbracket\int_0^\cdot (b^1-b^2)(r,x^2_0+W_r)\dd r\Big\rrbracket_{\gamma;[0,\tau]}^{2p}\bigg]^{1/2}
  \lesssim_p \|b^1-b^2\|_{L^{\tilde q}(0,\tau;B^{\alpha-1}_{\infty,\infty})}.  
\end{align*}
In order to get estimates for $A$, observe first of all that by convexity of $z\mapsto \exp(\eta z^2)$, it holds
\begin{align*}
    \EE \big[\exp\big(\eta \| h^\lambda\|_{\gamma}^2\big)\big]
    \leq \lambda \EE \big[\exp\big(\eta \| h^1\|_{\gamma}^2\big)\big] + (1-\lambda) \EE \big[\exp\big(\eta \| h^2\|_{\gamma}^2\big)\big]
    \leq K(\eta)<\infty
\end{align*}
where the estimate is uniform in $\lambda$ and $\eta$; therefore by Proposition \ref{prop:Girsanov sufficient}, for any $\lambda$ there exists a probability $\QQ^\lambda$ equivalent to $\PP$ such that $\cL_{\QQ^\lambda}(h^\lambda+W)=\cL_\PP(W)$; moreover estimates of the form \eqref{eq:controll of RN derivative} only depend on $K$ and thus on $M$, but not $(x^i_0,b^i)$. Therefore by Jensen's inequality and Proposition \ref{prop:main_avg_estimates-1}, we can find $\gamma>1/2$ such that, for any $\eta\geq 0$, it holds that
\begin{align*}
    \EE_{\PP}\big[\exp\left(\eta\llbracket A\rrbracket^2_\gamma\right)\big] 
    & = \EE_\PP\bigg[\exp \bigg(\eta \Big\llbracket \int_0^1 \int_0^\cdot Db^1(r,x^\lambda_0 + h^\lambda_r + W_r)\dd r\Big\rrbracket^2_\gamma\bigg)\bigg]\\
    & \leq \int_0^1 \EE_\PP\bigg[\exp \bigg(\eta \Big\llbracket \int_0^\cdot Db^1(r,x^\lambda_0 + h^\lambda_r + W_r)\Big\rrbracket^2_\gamma\bigg)\bigg]\\
    & = \int_0^1 \EE_{\QQ^\lambda}\bigg[\exp \bigg(\eta \Big\llbracket \int_0^\cdot Db^1(r,x^\lambda_0 + h^\lambda_r + W_r)\Big\rrbracket_\gamma\bigg) \frac{\dd \PP}{\dd \QQ^\lambda}\bigg]\\
    & \leq \int_0^1  \EE_{\PP}\bigg[\exp\bigg(2\eta\Big\llbracket \int_0^\cdot D b^1(r,x^\lambda_0 + W_r)\dd r \Big\rrbracket_\gamma^2\bigg)\bigg]^{1/2} \EE_{\QQ^\lambda} \bigg[\Big(\frac{\dd \PP}{\dd \QQ^\lambda}\Big)^2\bigg]^{1/2}\dd \lambda\\
    & \lesssim_M \EE_{\PP}\left[\exp\left(2\eta\Big\llbracket \int_0^\cdot D b^1(r, W_r)\dd r \Big\rrbracket_\gamma^2\right)\right]^{1/2}
    \leq K(2\eta)^{1/2}.
\end{align*}
Putting everything together, we have obtained
\begin{align*}
    \EE_\PP \big[\| Y\|_{\gamma;[0,\tau]}^p\big]
    & \lesssim \EE_\PP\big[ \exp(p C T\llbracket A\rrbracket_\gamma^2) (|Y_0|^p+\llbracket \psi\rrbracket_{\gamma;[0,\tau]}^p)\big]\\
    & \lesssim \EE_\PP\big[ \exp(2pCT\llbracket A\rrbracket_\gamma^2)\big]^{1/2} \left(|Y_0|^p+\EE_\PP\big[ \llbracket \psi\rrbracket_{\gamma;[0,\tau]}^{2p}\big]^{1/2}\right)\\
    & \lesssim_{p,T,M} |x_0^1-x_0^2|^p + \|b^1-b^2 \|_{L^{\tilde q}(0,\tau;B^\alpha_{\infty,\infty})}^p
\end{align*}
which proves \eqref{eq:SDE main est} for smooth $b^i$.

Assume now we are given $x_0\in \RR^d$ and $b$ satisfying Assumption \ref{ass:drift-SDE}; we can find $\tilde{q}\leq q$, $\tilde{q}<\infty$ such that $(\alpha,\tilde q)$ satisfy \eqref{eq:good parameters} and a sequence $\{b^n\}_n$ be smooth drifts s.t. $\| b^n\|_E\leq \|b\|_E$ for all $n\geq 1$ and $b^n\to b$ in $L^{\tilde q}_T B^{\alpha-\eps}_{\infty,\infty}$ for any $\eps>0$ (for instance set $b^n = b\ast \psi^n$ with $\{\psi^n\}_{n\geq 1}$ a standard family of mollifiers). Let $X^n$ be the unique solutions to \eqref{eq:SDE thm} associated to $(x_0,b^n)$, then by \eqref{eq:SDE main est} it holds
\[
\EE\big[\| X^n-X^m\|_\gamma^p\big]^{1/p} \lesssim \| b^n-b^m\|_{L^{\tilde q}_T B^{\alpha-1}_{\infty,\infty}}
\]
showing that the random variables $\theta^n=X^n-W$ are a Cauchy sequence in $L^p_\Omega C^\gamma_T$.
Therefore they converge to a unique limit $\theta$, which is adapted to the filtration $\cF_t=\sigma\{W_s:s\leq t\}$ since $\theta^n$ are so. Similarly the $X^n$ converge to $X=\theta+W$ which is adapted.

The estimates from Corollary \ref{cor:main_avg_estimates2}, the linearity of $b\mapsto T^w b$ and the property $b^n\to b$ in $L^{\tilde q}_T B^{\alpha-\eps}_{p,p}$ together imply that $\PP$-a.s. $T^W b^n\to T^W b$ in $C^\gamma_T C^1_{loc}$. Since we have $\PP$-a.s. $\theta^n\to \theta$ in $C^\gamma_T$ as well, we can invoke the closure property of nonlinear Young equations (Lemma \ref{lem:closure-YDE}) to deduce that $X=\theta + W$ is a pathwise solution to \eqref{eq:SDE thm} in the sense of Definition \ref{def:SDE-as-NLYE}.

Furthermore, by Fatou's lemma
\[
\EE_\PP\big[ \exp(\eta \llbracket \theta\rrbracket_{\tilde{\gamma}}^2)\big]
\leq \liminf_{n\to\infty} \EE_\PP\big[ \exp(\eta \llbracket \theta^n\rrbracket_{\tilde{\gamma}}^2)\big] \leq K(\eta)<\infty \quad \forall \eta\geq 0;
\]
it follows that Girsanov can be applied to $X=\theta+W=x_0+h+W$  to deduce that $X$ is distributed as $x_0+ W$ under another probability measure equivalent to $\PP$.
In particular, $\PP$-a.s. it must hold $T^X b\in C^\gamma_T C^1_{loc}$.
To summarise, $X$ is a strong solution (so that a copy of it can be constructed on \textit{any} probability space supporting the measure $\mu^H$) such that $T^X b\in C^\gamma_T C^1_{loc}$, which implies by Lemma \ref{lem:criteria-uniqueness-YDE} that pathwise uniqueness must hold. This also implies that the law of any solution coincides with the one constructed by Girsanov theorem, from which uniqueness in law follows.

The extension of inequality \eqref{eq:SDE thm} to any pair of solutions $X^i$ associated to distributional drifts $b^i$ is now a direct consequence of the approximation argument.
\end{proof}
\begin{rem}
At the price of making the statement of Theorem \ref{thm:fBmSDEWellPosed} slightly more technical, we have allowed the presence of the additional parameter $\tilde q\leq q$ to handle $q=\infty$.
Indeed finding approximation sequences in $L^\infty_T B^{\alpha-1}_{p,p}$ can be a hard task since this is not a separable space; the use of $L^{\tilde{q}}_T B^{\alpha-1}_{p,p}$ with $\tilde q<\infty$ will also be useful later in the proofs in Section \ref{subsec:stability-DDSDE}.
\end{rem}

\begin{rem}
Theorem \ref{thm:fBmSDEWellPosed} gives us the information that, for drifts $b$ satisfying Assumption \ref{ass:drift-SDE}, the nonlinear Young interpretation of the SDE is the only physical one. Namely, any other solution concept sharing the fundamental property of being the limit of solutions associated to smooth drifts $b^n\to b$ will coincide with ours. The statement of Theorem \ref{thm:fBmSDEWellPosed} can be further strengthened to establish \textit{path-by-path uniqueness}, see \cite{Catellier2016}, however we will not need this for our purposes.
\end{rem}

\begin{rem}
Although we have proved the stability estimate \eqref{eq:SDE main est} in order to apply to DDSDEs, it is of interest on its own. Indeed it can be applied to construct the stochastic flow associated to SDE \eqref{eq:SDE thm}, or to develop numerical schemes for distributional drifts $b$ by first approximating them by smoother $b^n$. We leave both applications for future research.
\end{rem}

The next lemmas extend the previous results to the case of random initial data.

\begin{cor}\label{cor:RndDataWellPosed}
Given $H\in (0,1)$ and $b$ satisfying Assumption \ref{ass:drift-SDE}, strong existence, uniqueness in law and pathwise uniqueness also hold for random initial data $X_0=\xi$ independent of $W$. Assume $b^i$ are drifts satisfying the assumptions of Theorem \ref{thm:fBmSDEWellPosed} 
and $(\xi^1,\xi^2)\in L^p(\Omega;\RR^{2d})$ is independent of $W$, then the solutions $X^i$ associated to $(\xi^i,b^i)$ satisfy
\begin{equation}\label{eq:SDEGammaNormEstim}
\EE_\PP \big[\| X^1-X^2\|^p_{\gamma;[0,\tau]} \big]^{1/p} \leq C \Big(\EE_\PP[|\xi^1-\xi^2|^p]^{1/p}+\|b^1-b^2\|_{L^{\tilde q}(0,\tau; B^{\alpha-1}_{\infty,\infty})}\Big)\quad \forall\,\tau\in [0,T].
\end{equation}
where the constant $C$ and the parameters $\gamma,\,\alpha\,\tilde q$ are the same as in \eqref{eq:SDE main est}.
Moreover, denoting by $\mu^i_t = \cL(X^i_t)$ the laws of the unique solutions $X^i$ associated to $(\mu_0^i,b^i)$ with $\cL(\xi^i,W)=\mu_0^i\otimes \cL(W)$, it holds
\begin{equation}\label{eq:SDE-estim-wasserstein}
\sup_{t\in [0,\tau]} d_p (\mu^1_t,\mu^2_t) \leq C \Big(d_p(\mu^1_0,\mu^2_0) + \|b^1-b^2 \|_{L^{\tilde q}(0,\tau; B^{\alpha-1}_{\infty,\infty})}\Big)\quad \forall\, \tau\in [0,T].
\end{equation}
\end{cor}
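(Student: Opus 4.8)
The plan is to reduce every claim to the deterministic-initial-data statements of Theorem \ref{thm:fBmSDEWellPosed} by conditioning on the initial data and exploiting its independence from $W$.

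First I would establish well-posedness for random initial data. For strong existence, recall that in the proof of Theorem \ref{thm:fBmSDEWellPosed} the solution associated to deterministic data $(x_0,b)$ is obtained as the $L^p_\Omega C^\gamma_T$-limit of the solutions $X^n$ associated to smooth approximations $b^n$; since each $X^n$ depends measurably (indeed continuously) on the initial datum $x_0$, the solution map $(x_0,w)\mapsto X(x_0,w)$ is jointly measurable and adapted in $w$. Working on the canonical space $\Omega=\RR^d\times C_T$ with $\PP=\mu_0\otimes\mu^H$ and setting $X(\omega):=X(\xi(\omega),W(\omega))$ then yields a strong solution. For pathwise uniqueness and uniqueness in law I would condition on $\{\xi=x\}$: by independence of $\xi$ and $W$, the regular conditional law of any solution given $\xi=x$ is a solution associated to the deterministic datum $(x,b)$, to which the corresponding parts of Theorem \ref{thm:fBmSDEWellPosed} apply; integrating against $\mu_0(\dd x)$ gives the claims. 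In particular the law of the solution equals $\int \cL(X^x)\,\mu_0(\dd x)$, which depends only on $(\mu_0,b)$.

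Next I would prove estimate \eqref{eq:SDEGammaNormEstim}. Since $(\xi^1,\xi^2)$ is independent of $W$, conditionally on $\{\xi^1=x^1,\,\xi^2=x^2\}$ the pair $(X^1,X^2)$ is distributed as the solutions associated to the deterministic data $(x^1,b^1)$ and $(x^2,b^2)$ driven by the same $W$. Applying the deterministic estimate \eqref{eq:SDE main est}, whose constant $C$ is uniform over the initial data, I obtain
\begin{equation*}
\EE_\PP\big[\|X^1-X^2\|^p_{\gamma;[0,\tau]}\,\big|\,\xi^1=x^1,\xi^2=x^2\big]\leq C^p\big(|x^1-x^2|+\|b^1-b^2\|_{L^{\tilde q}(0,\tau;B^{\alpha-1}_{\infty,\infty})}\big)^p.
\end{equation*}
Integrating against the joint law of $(\xi^1,\xi^2)$ and using $(a+b)^p\lesssim_p a^p+b^p$ together with $\EE_\PP[|\xi^1-\xi^2|^p]<\infty$ yields \eqref{eq:SDEGammaNormEstim} after taking $p$-th roots (with a possibly enlarged constant $C$).

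Finally, for the Wasserstein bound \eqref{eq:SDE-estim-wasserstein} I would build an optimal coupling. Let $m\in\Pi(\mu^1_0,\mu^2_0)$ realise $d_p(\mu^1_0,\mu^2_0)$, and construct on some probability space a pair $(\xi^1,\xi^2)$ with law $m$, independent of $W$. Let $X^1,X^2$ be the solutions associated to $(\xi^i,b^i)$; by the uniqueness-in-law part established above, $\cL(X^i_t)=\mu^i_t$ for each $i$, irrespective of the coupling chosen for the initial data. Then $(X^1_t,X^2_t)$ is a coupling of $(\mu^1_t,\mu^2_t)$, so for every $t\in[0,\tau]$
\begin{equation*}
d_p(\mu^1_t,\mu^2_t)\leq \EE_\PP\big[|X^1_t-X^2_t|^p\big]^{1/p}\leq \EE_\PP\big[\|X^1-X^2\|^p_{\gamma;[0,\tau]}\big]^{1/p}.
\end{equation*}
Invoking \eqref{eq:SDEGammaNormEstim} and the identity $\EE_\PP[|\xi^1-\xi^2|^p]^{1/p}=d_p(\mu^1_0,\mu^2_0)$ (optimality of $m$), the right-hand side is bounded by $C\big(d_p(\mu^1_0,\mu^2_0)+\|b^1-b^2\|_{L^{\tilde q}(0,\tau;B^{\alpha-1}_{\infty,\infty})}\big)$; taking the supremum over $t\in[0,\tau]$ concludes. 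I expect the only genuinely delicate point to be the joint measurability and adaptedness of the solution map in the strong-existence step; the two stability estimates are then routine consequences of the conditioning argument and the basic inequality $d_p(\cL(X_t),\cL(Y_t))\leq\EE[|X_t-Y_t|^p]^{1/p}$.
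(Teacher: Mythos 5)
Your proposal is correct and follows essentially the same route as the paper: the well-posedness for random data is reduced to the deterministic case by conditioning on the (independent) initial datum, estimate \eqref{eq:SDEGammaNormEstim} is obtained by conditioning on $(\xi^1,\xi^2)$, applying \eqref{eq:SDE main est} and integrating (the paper uses the tower property and the $L^p_\Omega$-norm where you use $(a+b)^p\lesssim_p a^p+b^p$, an immaterial difference), and \eqref{eq:SDE-estim-wasserstein} is derived exactly as you do, by realising an optimal coupling $m\in\Pi(\mu^1_0,\mu^2_0)$ of the initial laws on a common space and using $d_p(\mu^1_t,\mu^2_t)\leq\EE[|X^1_t-X^2_t|^p]^{1/p}$. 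Your extra care about joint measurability and adaptedness of the solution map only elaborates on what the paper dismisses as ``classical arguments.''
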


\begin{proof}
Strong existence and pathwise uniqueness for random initial data follows from that for deterministic ones by classical arguments.
Given a probability space $(\Omega,\cF,\PP)$ with $(W,\xi^1,\xi^2)$ defined on it and drifts $(b^1,b^2)$, we can condition on the variables $(\xi^1,\xi^2)$ independent of $W$ and apply estimate \eqref{eq:SDE main est} to deduce that
\begin{align*}
    \EE_\PP \big[\| X^1-X^2\|^p_{\gamma;[0,\tau]} \big\vert\, \xi^1,\xi^2 \big]^{1/p} \leq C \Big( |\xi^1-\xi^2|+\|b^1-b^2\|_{L^{\tilde q}(0,\tau; B^{\alpha-1}_{\infty,\infty})}\Big);
\end{align*}
inequality \eqref{eq:SDEGammaNormEstim} follows taking the $L^p_\Omega$-norm on both sides, using the tower property of conditional expectation.

Now assume we are given a pair $(\mu_0^1,\mu_0^2)\in \cP(\RR^d)\times \cP(\RR^d)$ and let $m\in \Pi(\mu^1_0,\mu^2_0)$ be an optimal coupling for them. On the canonical space $\Omega=\RR^{2d}\times C_T$, endowed with $\PP=m\otimes \mu^H$, we can construct random variables $(\xi^1,\xi^2,W)$ and solutions $X^i$ associated to $(\xi^i,b^i)$, in such a way that $\cL_\PP(\xi^1,\xi^2)=m$, $\EE[|\xi^1-\xi^2|^p]^{1/p}=d_p(\mu^1_0,\mu^2_0)$. But then by the definition of $d_p$ it must hold $d_p(\mu^1_t,\mu^2_t) \leq \| X^1_t-X^2_t\|_{L^p_\Omega}$ and so estimate \eqref{eq:SDE-estim-wasserstein} follows from \eqref{eq:SDEGammaNormEstim} applied in this setting.
\end{proof}

\begin{cor}\label{cor:SDE-random-data-girsanov}
Let $H\in (0,1)$, $b$ satisfying Assumption \ref{ass:drift-SDE}, $\xi$ random initial data independent of $W$ and $X$ be the solution associated to $(\xi,b)$. Then there exists another probability measure $\QQ$ equivalent to $\PP$ such that $\cL_\QQ(X_\cdot)=\cL_\PP(\xi+W_\cdot)$; moreover
\begin{equation*}
\EE_\QQ\Big[ \Big(\frac{\mathd \PP}{\mathd \QQ}\Big)^n + \Big(\frac{\mathd \QQ}{\mathd \PP}\Big)^n\Big] <\infty \quad \forall\, n\in\NN .
\end{equation*}
\end{cor}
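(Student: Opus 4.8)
The plan is to realise $X$ as a shifted fBm under an equivalent measure obtained by a Girsanov transform of the increment $h_t := X_t-\xi-W_t = \int_0^t b(r,X_r)\dd r$, and then to read off the law identity from the fact that the shifted process becomes an fBm independent of the initial datum. First I would record the structure of the solution: by Corollary \ref{cor:RndDataWellPosed} the strong solution decomposes as $X = \xi + h + W$, where $h$ is adapted to $\cF_t = \sigma\{\xi, W_s : s\le t\}$, has $h_0 = 0$ and trajectories in $C^\gamma_T$ with $\gamma > H+1/2$ (the $\gamma$ from Theorem \ref{thm:fBmSDEWellPosed}). The essential analytic input is the exponential moment bound $\EE_\PP[\exp(\eta\llbracket h\rrbracket_\gamma^2)]\le K(\eta)<\infty$ for all $\eta\ge 0$. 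In the deterministic-data case this is exactly what was established in the proof of Theorem \ref{thm:fBmSDEWellPosed} (via Fatou from the smooth approximations), with $K$ depending only on $M\ge \|b\|_E$ and not on $x_0$. For random $\xi$ I would obtain the same bound by conditioning: since $\xi$ is independent of $W$, pathwise uniqueness gives $h = h^\xi$, so conditionally on $\{\xi = x_0\}$ the process $h$ coincides in law with the deterministic-data process $h^{x_0}$; hence $\EE_\PP[\exp(\eta\llbracket h\rrbracket_\gamma^2)\mid \xi]\le K(\eta)$ and the tower property yields the unconditional bound.

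Next I would apply Proposition \ref{prop:Girsanov sufficient} to $h$ on the filtration $\cF_t$. Observe that $W$ is an $\cF_t$-fBm, because $\xi$ is independent of $W$ and so the associated Brownian motion retains its increment-independence after enlarging the filtration by $\sigma\{\xi\}$. The proposition then produces $\QQ$ equivalent to $\PP$ under which $\tilde W := h + W$ is an $\cF_t$-fBm, together with the bound \eqref{eq:controll of RN derivative}; since the latter depends only on $K$, it yields at once $\EE_\QQ[(\dd\PP/\dd\QQ)^n + (\dd\QQ/\dd\PP)^n]<\infty$ for all $n\in\NN$, which is the second assertion.

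It remains to identify $\cL_\QQ(X)$. Writing $X = \xi + \tilde W$, I would show that under $\QQ$ the pair $(\xi,\tilde W)$ has the same law as $(\xi, W)$ under $\PP$, namely $\mu_0\otimes\mu^H$. Two facts are needed. First, since $\tilde W$ is an $\cF_t$-fBm under $\QQ$, it is independent of $\cF_0 = \sigma\{\xi\}$ and $\cL_\QQ(\tilde W)=\mu^H$. Second, the law of $\xi$ is unchanged: the Girsanov density $\dd\QQ/\dd\PP$ is an $\cF_t$-martingale under $\PP$ normalised to value $1$ at time $0$, whence $\EE_\PP[\dd\QQ/\dd\PP\mid\cF_0]=1$; equivalently, conditionally on $\{\xi = x_0\}$ it is the mean-one deterministic-data Girsanov density, so $\cL_\QQ(\xi)=\cL_\PP(\xi)=\mu_0$. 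Combining the two, $(\xi,\tilde W)$ under $\QQ$ and $(\xi,W)$ under $\PP$ share the joint law $\mu_0\otimes\mu^H$; applying the continuous map $(a,w)\mapsto a+w$ gives $\cL_\QQ(X)=\cL_\QQ(\xi+\tilde W)=\cL_\PP(\xi+W)$, as required.

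The main obstacle is precisely this last identification: verifying that the change of measure leaves the law of the initial datum $\xi$ intact while simultaneously rendering $\tilde W$ an fBm independent of it. This is where the adapted, $\cF_0$-normalised structure of the Girsanov density (or, equivalently, its reduction to the deterministic-data transform upon conditioning on $\xi$) is essential. A slightly more hands-on route, avoiding any appeal to the martingale normalisation, is to construct $\QQ$ fibrewise: take the deterministic Girsanov densities $\rho^{x_0}=\dd\QQ^{x_0}/\dd\PP$ from the deterministic theory, choose them jointly measurable in $(x_0,\omega)$, and set $\dd\QQ/\dd\PP := \rho^{\xi}$; the identity for $\cL_\QQ(X)$ and the Radon--Nikodym moment bounds then follow by integrating the corresponding deterministic statements against $\mu_0$, the only delicate point being the joint measurability of $x_0\mapsto\rho^{x_0}$.
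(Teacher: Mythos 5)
Your argument is correct, but it takes a genuinely different route from the paper's. You apply Girsanov once, globally, on the filtration $\cF_t=\sigma\{\xi,W_s : s\le t\}$ enlarged by the initial datum: you check that $W$ remains an $\cF_t$-fBm (by independence of $\xi$ and $W$), obtain the exponential moment bound for $\llbracket h\rrbracket_\gamma$ by conditioning on $\xi$ and using the uniformity in $x_0$ of the deterministic-data bound, invoke Proposition \ref{prop:Girsanov sufficient}, and then recover $\cL_\QQ(\xi)=\mu_0$ from the $\cF_0$-normalisation of the Dol\'eans-exponential density together with the independence of the new $\cF_t$-fBm $\tilde W$ from $\cF_0$. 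The paper instead takes exactly the ``more hands-on route'' you sketch at the end: it works on the canonical product space $\RR^d\times C_T$ with $\PP=\mu_0\otimes\mu^H$, glues the deterministic-data measures $\QQ^x$ fibrewise via $\QQ(E_1\times E_2)=\int_{E_1}\QQ^x(E_2)\,\mu_0(\dd x)$, identifies $\dd\PP/\dd\QQ(x,\omega)=\dd\mu^H/\dd\QQ^x(\omega)$, and integrates the uniform-in-$x$ moment bounds against $\mu_0$. Your global approach is cleaner in that it avoids the disintegration (and the joint measurability of $x\mapsto\QQ^x$, which you rightly flag as the delicate point and which the paper does not dwell on), at the cost of two extra verifications you should make fully explicit: that Proposition \ref{prop:Girsanov sufficient} really yields that $h+W$ is an $\cF_t$-fBm under $\QQ$ (not merely that its unconditional law is $\mu^H$), since the independence from $\sigma\{\xi\}=\cF_0$ is what you use, and that the density process satisfies $\EE_\PP[\dd\QQ/\dd\PP\mid\cF_0]=1$, which follows from the explicit exponential-martingale form of the Girsanov density referenced in the paper. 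Both points are true, so the proof stands.
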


\begin{proof}
It suffices to work on the canonical space $(\Omega,\cF,\PP)$ with $\Omega=\RR^d\times C_T\ni(x,\omega)$, $\PP=\mu_0\otimes \mu^H$ where $\mu_0:=\cL(\xi)$. For any $x\in\RR^d$, denote by $\omega\mapsto X^x(\omega)$ the unique strong solution associated to $(x,b)$, so that $(x,\omega)\mapsto X^x(\omega)$ gives the solution to the SDE with initial distribution $\mu_0$.
Recall that for any $x\in \RR^d$, there exists a probability measure on $C_T$ denoted by $\QQ^x$, equivalent to $\mu^H$, such that $\cL_{\QQ^x} (X^x)=\cL_{\mu^H}(x+W)$; therefore for any measurable $F:\Omega\to\RR$
\begin{align*}
    \EE_\PP[F(\xi + W)]
    & = \int_{\RR^d} \int_{C_T} F(x+\omega)\, \mu^H(\dd \omega) \mu_0(\dd x)\\
    & = \int_{\RR^d} \int_{C_T} F(X^x(\omega)) \frac{\dd \mu^H}{\dd \QQ^x}(\omega)\, \QQ^x(\dd \omega) \mu_0(\dd x).
\end{align*}
Thus if we define a probability measure $\QQ$ on $\Omega=\RR^d\times C_T$ by
\begin{align*}
    \QQ(E_1\times E_2)=\int_{E_1} \QQ^x(E_2)  \mu_0(\dd x) \quad \forall\, E_1\in \cB(\RR^d),\, E_2\in \cB(C_T),
\end{align*}
it must hold that $\cL_\QQ(X)=\cL_\PP(\xi+W)$; since $\mu^H\ll \QQ^x$ for every $x$, $\PP=\mu^H\otimes \mu^\xi\ll \QQ$ with Radon-Nikodym derivative given by
\[
\frac{\dd \PP}{\dd \QQ}(x,\omega) = \frac{\dd \mu^H}{\dd \QQ^x} (\omega)\quad \text{for }\PP\text{-a.e. } (x,\omega).
\]
Exploiting the bounds from Proposition \ref{prop:Girsanov sufficient} (which for given $b$ are uniform in $x\in\RR^d$) we find
\begin{align*}
    \EE_\QQ\Big[ \Big(\frac{\mathd \PP}{\mathd \QQ}\Big)^n + \Big(\frac{\mathd \QQ}{\mathd \PP}\Big)^n\Big]
    & = \int_{\RR^d} \int_{C_T} \bigg[\Big( \frac{\dd \mu^H}{\dd \QQ^x}(\omega) \Big)^n + \Big( \frac{\dd \QQ^x}{\dd \mu^H}(\omega) \Big)^n\bigg]\, \QQ^x(\dd \omega) \mu_0(\dd x)\\
    & \lesssim_{n,b} \int_{\RR^d} 1\, \mu_0(\dd x) <\infty
\end{align*}
providing the conclusion.
\end{proof}

\begin{rem}\label{rem:holder-bound-solution}
It follows from the above that for any $p\in [1,\infty)$ and any $\eps>0$
\begin{align*}
\EE_\PP\big[\llbracket X\rrbracket_{H-\eps}^p\big]
\leq \EE_\QQ\big[\llbracket X\rrbracket_{H-\eps}^{2p}\big]^{1/2}
\,\EE_\QQ\bigg[ \Big(\frac{\mathd \PP}{\mathd \QQ}\Big)^{2p}\bigg]^{1/2}
\lesssim \EE_\PP\big[\llbracket W\rrbracket_{H-\eps}^{2p}\big]^{1/2}<\infty.
\end{align*}
In particular, if $\xi\in \cP_{\tilde p}$ for another $\tilde p\in [1,\infty)$, then $\EE_\PP[\| X\|_{H-\eps}^{\tilde p}]<\infty$. As in the case of Remark \ref{rem: reg for a>0}, for fixed $\xi$ the estimate can be performed uniformly over $\|b\|_E \leq M$.
\end{rem}

\subsection{Regularity of the solution laws}\label{subsec:reg-estim}

Although our main interest is the study of DDSDEs, our analysis also yields results on the regularity of the law $\cL(X_t)$ for the solution to a standard SDE with singular drift. The method is quite simple but appears to be new and does not rely on PDE techniques nor Malliavin calculus; rather we exploit Girsanov transform and the averaging estimates for fBm, in combination with duality arguments.

\begin{prop}\label{prop:regularity-density1}
Let $b$ satisfy Assumption \ref{ass:drift-SDE}, $X$ be the solution associated to $(\xi,b)$ for random initial $\xi$ independent of $W$.
Then $\cL(X_\cdot)\in L^q_T B^\alpha_{1,1}$ for all $(\alpha,q)\in (0,\infty)\times (1,2)$ satisfying
\begin{equation}\label{eq:parameters-regularity-density1}
\alpha < \frac{1}{H} \Big(\frac{1}{q}-\frac{1}{2}\Big).
\end{equation}
\end{prop}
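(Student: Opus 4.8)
The plan is to prove the bound by a duality argument, testing $\mu_\cdot=\cL(X_\cdot)$ against smooth space-time functions and exploiting the Girsanov transform together with the averaging estimate of Proposition~\ref{prop:main_avg_estimates-1}. Fix $(\alpha,q)$ as in \eqref{eq:parameters-regularity-density1} and let $q'\in(2,\infty)$ be the conjugate exponent of $q$; here $q>1$ guarantees $q'<\infty$ while $q<2$ guarantees $q'>2$. Since smooth space-time functions are dense in the closure of $\cS$ inside $L^{q'}_T B^{-\alpha}_{\infty,\infty}$, whose dual is $L^q_T B^\alpha_{1,1}$, it suffices to establish a bound of the form
\[
\int_0^T \langle \mu_t, g_t\rangle \dd t \leq C\, \|g\|_{L^{q'}_T B^{-\alpha}_{\infty,\infty}}
\]
uniformly over smooth $g$, with $C$ independent of $g$; taking the supremum then yields $\mu_\cdot\in L^q_T B^\alpha_{1,1}$ together with the corresponding norm bound.

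For smooth $g$ I would first rewrite the pairing probabilistically, via Fubini, as $\int_0^T \langle \mu_t, g_t\rangle \dd t = \EE_\PP\big[\int_0^T g(t,X_t)\dd t\big]$. Let $\QQ$ be the measure provided by Corollary~\ref{cor:SDE-random-data-girsanov}, under which $\cL_\QQ(X_\cdot)=\cL_\PP(\xi+W_\cdot)$ and all moments of $\dd\PP/\dd\QQ$ are finite. Changing measure and applying Cauchy--Schwarz gives
\[
\EE_\PP\Big[\int_0^T g(t,X_t)\dd t\Big] = \EE_\QQ\Big[\Big(\int_0^T g(t,X_t)\dd t\Big)\frac{\dd\PP}{\dd\QQ}\Big] \leq \EE_\QQ\Big[\Big(\int_0^T g(t,X_t)\dd t\Big)^2\Big]^{1/2}\EE_\QQ\Big[\Big(\frac{\dd\PP}{\dd\QQ}\Big)^2\Big]^{1/2},
\]
where the second factor is bounded in terms of $\|b\|_E$ only. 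Using $\cL_\QQ(X_\cdot)=\cL_\PP(\xi+W_\cdot)$ and then conditioning on $\xi$, which is independent of $W$, the first factor reduces to
\[
\EE_\QQ\Big[\Big(\int_0^T g(t,X_t)\dd t\Big)^2\Big] = \EE_\PP\Big[\Big(\int_0^T g(t,\xi+W_t)\dd t\Big)^2\Big] = \int_{\RR^d}\EE\Big[\Big|\int_0^T g(t,x+W_t)\dd t\Big|^2\Big]\mu_0(\dd x),
\]
so that everything is controlled by the second moment of the averaged field $T^W g(T,x)=\int_0^T g(t,x+W_t)\dd t$, uniformly in $x$.

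At this point I would invoke Proposition~\ref{prop:main_avg_estimates-1} applied to the field $g$, which has negative spatial regularity $-\alpha<0$ and time integrability $q'\in(2,\infty)$. The associated exponent is $\gamma=1-\tfrac{1}{q'}-\alpha H$, and condition \eqref{eq:simple-cond-1} reads $\gamma>\tfrac12$, i.e. $\alpha H<\tfrac1q-\tfrac12$, which is exactly \eqref{eq:parameters-regularity-density1}; this explains both the restriction $q\in(1,2)$ and the stated range of $\alpha$. Since $T^W g(0,x)=0$, the Hölder control gives $|T^W g(T,x)|\leq T^{\tilde\gamma}\llbracket T^W g(\cdot,x)\rrbracket_{\tilde\gamma}$ for some $\tilde\gamma>1/2$, and extracting the second moment from the exponential bound \eqref{eq:averaging-stoch-estimate-1} yields $\EE[|T^W g(T,x)|^2]\lesssim \|g\|_{L^{q'}_T B^{-\alpha}_{\infty,\infty}}^2$ uniformly in $x$. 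Combining the displays and using that $\mu_0$ is a probability measure gives the desired bound on the pairing, hence the claim. The hard part will be the rigorous justification of the predual identification and the density argument needed to pass from distributional test fields to the estimate above; I would also stress that the drift enters only through the Girsanov factor, so the argument is uniform across $H\in(0,1)$ and covers both alternatives of Assumption~\ref{ass:drift-SDE} simultaneously.
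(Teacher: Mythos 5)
Your overall strategy --- rewrite the pairing probabilistically, change measure via Corollary \ref{cor:SDE-random-data-girsanov}, apply Cauchy--Schwarz, condition on $\xi$, and control the resulting second moment of the averaged field through Proposition \ref{prop:main_avg_estimates-1} --- is exactly the paper's argument, including the observation that condition \eqref{eq:simple-cond-1} for the field $g\in L^{q'}_T B^{-\alpha}_{\infty,\infty}$ is precisely \eqref{eq:parameters-regularity-density1}. Up to the point where you have the bound $\big|\int_0^T\langle \mu_t,g_t\rangle\,\dd t\big|\lesssim \|g\|_{L^{q'}_T B^{-\alpha}_{\infty,\infty}}$ uniformly over smooth $g$, there is nothing to object to.

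The genuine gap is the step you yourself flag as ``the hard part'': passing from this bound to $\cL(X_\cdot)\in L^q_T B^\alpha_{1,1}$ by identifying $L^q_T B^\alpha_{1,1}$ as the dual of the closure of $\cS$ in $L^{q'}_T B^{-\alpha}_{\infty,\infty}$. While it is true that the spatial dual of the little H\"older--Besov space $\mathring{B}^{-\alpha}_{\infty,\infty}$ is $B^\alpha_{1,1}$, the Bochner-space duality $(L^{q'}_T E)^\ast\simeq L^q_T E^\ast$ requires $E^\ast$ to have the Radon--Nikodym property, and $B^\alpha_{1,1}$ (being built from $L^1$-blocks) does not have it. A bounded functional on $L^{q'}_T\mathring{B}^{-\alpha}_{\infty,\infty}$ is therefore only represented a priori by a weak-$\ast$ measurable object, not by an element of $L^q_T B^\alpha_{1,1}$, so your argument does not close as stated. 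The paper circumvents this endpoint issue in two stages: first it runs the same duality against $L^{q'}_T B^{-\alpha-\eps}_{p,p}$ for large finite $p$ (using the embedding $B^{-\alpha-\eps}_{p,p}\hookrightarrow B^{-\alpha-2\eps}_{\infty,\infty}$), where the reflexive duality $(L^{q'}_T B^{-s}_{p,p})^\ast\simeq L^q_T B^s_{p',p'}$ is legitimate, obtaining $\cL(X_\cdot)\in L^q_T B^{\alpha+\eps}_{p',p'}$ with $p'>1$; then it sets $h=(I-\Delta)^{\alpha/2}\cL(X_\cdot)\in L^q_T L^{p'}_x$ and upgrades the spatial integrability from $L^{p'}_x$ to $L^1_x$ by a scalar duality against $L^{q'}_T L^\infty_x$ test functions, concluding with Lemma \ref{lem:duality1}. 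You would need to supply an argument of this kind (or an equivalent truncation/monotone-convergence argument) to make the final identification rigorous.
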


\begin{proof}
Observe that if $(\alpha, q, H)$ satisfy \eqref{eq:parameters-regularity-density1}, then we can find $\eps>0$ small enough so that $(-\alpha-2\eps,q',H)$ satisfy the assumptions of Proposition \ref{prop:main_avg_estimates-1}, where $q'$ denotes the conjugate of $q$.
By Corollary \ref{cor:SDE-random-data-girsanov}, there exists an equivalent measure $\QQ$ such that $\cL_\QQ(X)=\cL_\PP(\xi+W)$, therefore for any $f\in L^{q'}_T B^{-\alpha-2\eps}_{\infty,\infty}$ it holds
\begin{align*}
\bigg|\int_0^T \langle f_s, \cL_\PP(X_s)\rangle \dd s\bigg|
& \leq \EE_\PP \bigg[\Big|\int_0^T f(s,X_s)\dd s\Big|\bigg]\\
& \leq \EE_\QQ \bigg[\Big(\frac{\dd \PP}{\dd \QQ}\Big)^2\bigg]^{1/2}\, \EE_\QQ \bigg[\Big|\int_0^T f(s,X_s)\dd s\Big|^2\bigg]^{1/2}\\
& \lesssim \bigg(\int_{\RR^d} \EE_{\mu^H}\Big[\Big|\int_0^T f(s,x+\omega_s)\dd s\Big|^2\Big] \mu_0(\dd x) \bigg)^{1/2}
\lesssim \| f\|_{L^{q'}_T B^{-\alpha-2\eps}_{\infty,\infty}}.
\end{align*}
where in the last passage we used the fact that estimate \eqref{eq:averaging-stoch-estimate-1} is uniform in $x\in\RR^d$.

Using the embedding $B^{-\alpha-\eps}_{p,p}\hookrightarrow B^{-\alpha-\eps-d/p}_{\infty,\infty}\hookrightarrow B^{-\alpha-2\eps}_{\infty,\infty}$ for $p<\infty$ big enough, by the duality $(L^{q'}_T B^{-s}_{p,p})^\ast \simeq L^q_T B^s_{p',p'}$ we deduce that $\cL(X_\cdot)\in L^q_T B^{\alpha+\eps}_{p',p'}$. Thus $h:=(I-\Delta)^{\alpha/2} \cL(X_\cdot)\in L^q_T B^\eps_{p',p'} \hookrightarrow L^q_T L^{p'}_x$; in order to conclude, it's enough to show that $h\in L^q_T L^1_x$. Observe that $h\in L^1_{loc}([0,T]\times \RR^d)$ and for any $\varphi\in L^{q'}_T L^\infty_x$ it holds
\begin{align*}
\int_0^T \langle \varphi_s, h_s\rangle \dd s
= \int_0^T \langle (I-\Delta)^{\alpha/2} \varphi_s, \cL(X_s)\rangle \dd s
\lesssim \| (I-\Delta)^{\alpha/2} \varphi\|_{L^{q'}_T B^{-\alpha}_{\infty,\infty}}
\lesssim \| \varphi\|_{L^{q'}_T L^\infty_x};
\end{align*}
the conclusion then follows from an application of Lemma \ref{lem:duality1} from Appendix \ref{app:UsefulLemmas}.
\end{proof}

\begin{prop}\label{prop:regularity-density2}
Let $X,\,b,\,\xi$ be as in Proposition \ref{prop:regularity-density1}. Then $\cL(X_\cdot)\in L^q_T L^p_x$ for all $(q,p)\in (1,\infty)^2$ satisfying
\begin{equation}\label{eq:parameters-regularity-density2}
\frac{1}{q}+\frac{Hd}{p}>Hd.
\end{equation}
If in addition $Hd<1$, then $\cL(X_\cdot)\in L^q_T L^\infty_x$ for all $q\in (1,\infty)$ satisfying $q<(Hd)^{-1}$.
\end{prop}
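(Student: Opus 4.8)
The plan is to follow the duality strategy of the proof of Proposition \ref{prop:regularity-density1}, but to replace the averaging estimates of Proposition \ref{prop:main_avg_estimates-1} by an elementary \emph{second moment} computation that uses only the Gaussian marginals of $W$ together with the local nondeterminism (LND) property. Since for $q,p\in(1,\infty)$ the space $L^q_T L^p_x$ is the dual of the separable space $L^{q'}_T L^{p'}_x$, and likewise $L^q_T L^\infty_x=(L^{q'}_T L^1_x)^\ast$, by duality (Lemma \ref{lem:duality1}) it suffices to find a constant $C$ such that
\[
\Big|\int_0^T\langle f_s,\cL(X_s)\rangle\,\dd s\Big|\le C\,\|f\|_{L^{q'}_T L^{p'}_x}
\]
for all $f$ in a dense test class, e.g. $f\in C^\infty_c((0,T)\times\RR^d)$. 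Writing the pairing as $\EE_\PP[\int_0^T f(s,X_s)\,\dd s]$ and passing to the equivalent measure $\QQ$ of Corollary \ref{cor:SDE-random-data-girsanov} (under which $\cL_\QQ(X)=\cL_\PP(\xi+W)$), Cauchy--Schwarz and the finiteness of $\EE_\QQ[(\dd\PP/\dd\QQ)^2]$, followed by integration against the probability measure $\mu_0=\cL(\xi)$, reduce everything to the uniform bound
\[
\sup_{x\in\RR^d}\EE_{\mu^H}\Big[\Big(\int_0^T f(s,x+\omega_s)\,\dd s\Big)^2\Big]^{1/2}\le C\,\|f\|_{L^{q'}_T L^{p'}_x}.
\]

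For the second moment I would expand the square and use symmetry to write it as $2\int_0^T\!\int_0^u \EE[f(s,x+W_s)f(u,x+W_u)]\,\dd s\,\dd u$, then condition on $\cF_s$. For $s<u$ the conditional law of $W_u$ given $\cF_s$ is Gaussian with covariance $\Sigma_{s,u}\succeq c_H(u-s)^{2H}I_d$ by LND, so Young's convolution inequality yields $|\EE[f(u,x+W_u)\mid\cF_s]|\le\|f(u,\cdot)\|_{L^{p'}}\|g_{\Sigma_{s,u}}\|_{L^p}\lesssim (u-s)^{-Hd/p'}\|f(u,\cdot)\|_{L^{p'}}$, using $\|g_\Sigma\|_{L^p}\simeq(\det\Sigma)^{-1/(2p')}$ for a Gaussian density $g_\Sigma$. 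Combined with the analogous unconditional bound $\EE[|f(s,x+W_s)|]\lesssim s^{-Hd/p'}\|f(s,\cdot)\|_{L^{p'}}$, this gives
\[
\big|\EE[f(s,x+W_s)f(u,x+W_u)]\big|\lesssim s^{-Hd/p'}(u-s)^{-Hd/p'}\,\|f(s,\cdot)\|_{L^{p'}}\|f(u,\cdot)\|_{L^{p'}}
\]
uniformly in $x$, and crucially this remains valid at the endpoint $p=\infty$, $p'=1$.

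Setting $\phi(s):=\|f(s,\cdot)\|_{L^{p'}_x}$ and $a:=Hd/p'$, it then remains to control $\int_0^T\!\int_0^u s^{-a}(u-s)^{-a}\phi(s)\phi(u)\,\dd s\,\dd u$ by $\|\phi\|_{L^{q'}(0,T)}^2=\|f\|_{L^{q'}_T L^{p'}_x}^2$. Hölder in $s$ together with the Beta-type evaluation $\int_0^u s^{-aq}(u-s)^{-aq}\,\dd s\simeq u^{1-2aq}$, followed by Hölder in $u$, bounds the expression by a constant multiple of $\|\phi\|_{L^{q'}}^2$ exactly when $aq<1$, i.e. $Hd/p'<1/q$; substituting $1/p'=1-1/p$ this is precisely condition \eqref{eq:parameters-regularity-density2}. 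At $p=\infty$ the requirement becomes $Hdq<1$, which is compatible with some $q>1$ if and only if $Hd<1$, yielding the final claim. The representing $L^q_T L^p_x$ (resp. $L^q_T L^\infty_x$) element furnished by duality is then identified with $\cL(X_\cdot)$.

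The main obstacle I anticipate is the endpoint $p=\infty$. The simpler route of estimating only the first moment and extracting $\|f\|_{L^{p'}}$ via Young's inequality breaks down there, because the Girsanov/Hölder reduction forces a moment exponent strictly larger than $1$ on $f$, incompatible with $p'=1$ (equivalently, $\dd\PP/\dd\QQ$ is not in $L^\infty$, so one cannot let that exponent tend to $1$). Working instead with the second moment and the conditional Gaussian density circumvents this, since Cauchy--Schwarz already suffices for the Girsanov step while the conditional smoothing still produces the $L^1_x$ norm of $f$ with the sharp time weight. The only remaining delicate point is the convergence of the resulting double time integral, which is governed precisely by \eqref{eq:parameters-regularity-density2} through the Beta-function computation; verifying that the exponents match the stated condition exactly (and that the LND lower bound on $\det\Sigma_{s,u}$ produces the correct power of $(u-s)$) is the crux of the argument.
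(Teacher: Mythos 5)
Your proposal is correct and follows the same overall skeleton as the paper: duality to reduce everything to a bound on $\int_0^T\langle f_s,\cL(X_s)\rangle\,\dd s$, then Girsanov plus Cauchy--Schwarz to reduce further to a moment bound for $\int_0^T f(s,x+W_s)\,\dd s$ uniform in $x$. The difference lies in how that moment bound is obtained: the paper simply cites \cite[Lemma 6.4, eq.~(6.11)]{le2020stochastic}, which delivers $\EE\big[\big|\int_0^T f(s,x+W_s)\,\dd s\big|\big]\lesssim\|f\|_{L^{q'}_TL^{p'}_x}$ precisely under $\tfrac1{q'}+\tfrac{Hd}{p'}<1$, whereas you reprove the required estimate from scratch by expanding the second moment, conditioning on $\cF_s$, using the LND lower bound on $\mathrm{Var}[W_u\,|\,\cF_s]$ together with $\|g_\Sigma\|_{L^p}\simeq(\det\Sigma)^{-1/(2p')}$, and closing with the Beta-function computation; the resulting condition $aq<1$ with $a=Hd/p'$ is exactly \eqref{eq:parameters-regularity-density2}, and the argument survives at $p'=1$, which is indeed the reason for working with the second moment rather than the first. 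Your version is self-contained where the paper's is not, at the cost of length. Two cosmetic corrections: for $p\in(1,\infty)$ the duality step is just reflexivity of $L^{q'}_TL^{p'}_x$ and needs neither appendix lemma, while at the endpoint $p=\infty$ the identification $(L^{q'}_TL^1_x)^\ast=L^q_TL^\infty_x$ you invoke is not literally true (that dual is only a space of weak-$\ast$ measurable functions, as $L^\infty_x$ lacks the Radon--Nikodym property); the paper sidesteps this with the truncation argument of Lemma \ref{lem:duality2}, which is the lemma you should cite there in place of Lemma \ref{lem:duality1}. Neither point affects the substance of the argument.
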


\begin{proof}
Observe that $(q,p)\in (1,\infty)^2$ satisfy \eqref{eq:parameters-regularity-density2} if and only if the conjugates $(q',p')$ satisfy
\[
\frac{1}{q'} + \frac{H d}{p'}<1.
\]
By \cite[Lemma 6.4]{le2020stochastic} (more precisely equation (6.11) right after the proof therein) and estimates based on Girsanov theorem analogous to the proof of Proposition \ref{prop:regularity-density1}, we deduce that
\[
\bigg|\int_0^T \langle f_s,\cL(X_s)\rangle \dd s\bigg| \leq \EE\bigg[\Big|\int_0^T f(s,X_s)\dd s\Big|\bigg] \lesssim \| f\|_{L^{q'}_T L^{p'}_x};
\]
therefore by duality $\cL(X_\cdot)\in L^q_T L^p_x$ if $(q,p)\in (1,\infty)^2$. Taking $p=\infty,\, q<(Hd)^{-1}$ (correspondingly $p'=1, 1/q'<1-Hd$), we obtain
\[
\bigg|\int_0^T \langle f_s,\cL(X_s)\rangle \dd s\bigg| \lesssim \| f\|_{L^{q'}_T L^1_x} \quad\forall\, f\in L^{q'}_T L^1_x
\]
and so we can conclude by Lemma \ref{lem:duality2} in the Appendix that in this case $\cL(X_\cdot)\in L^q_T L^\infty_x$.
\end{proof}

\section{Proofs of the main results}\label{sec:MainResultsProofs}

We split the proof of Theorem \ref{thm:main_thm1} into sections, which deal respectively with the cases $H>1/2$ and $H\leq 1/2$; the proof of Theorem \ref{thm:main_thm2} is presented in Section \ref{subsec:stability-DDSDE} instead.

We recall to the reader that in this section we will be dealing with DDSDEs of the form
\begin{equation}\label{eq:DDSDE-sec4}
    X_t = \xi + \int_0^t B_s(X_s,\cL(X_s))\,\dd s + W_t \quad \forall\, t\in [0,T]
\end{equation}
with the drift $B$ belonging to either $\cG^{q,\alpha}_p$ or $\cH^{\beta,\alpha}_p$ (cf. Definitions \ref{def: class of functions alpha small}-\ref{def: class of functions alpha big}) depending on the value of $H\in (0,1)$.
The variable $\xi$ is independent of $W$ and with prescribed law $\mu_0\in \cP(\RR^d)$, thus depending on the context we will treat both $(\xi,B)$ and $(\mu_0,B)$ as the data of the problem.

\subsection{The case $H>1/2$}\label{subsec:H>1/2}

In this regime we will always consider drifts $B\in \cH^{\beta,\alpha}_p$ with $\alpha,\beta>0$ and $p\in [1,\infty)$.
In particular here $B:[0,T]\times \RR^d\times \cP_p(\RR^d)\to \RR^d$ is bounded and uniformly continuous in all of its arguments; in this sense, although the concept of solution introduced in Section \ref{sec:recap-SDE} does include the standard one by Remark \ref{rem:SDE-classical-case}, we do not employ it here.

Rather, we will simply say that a tuple $(X,\xi,W)$, defined on a probability space $(\Omega,\cF,\PP)$, such that $\cL_\PP(\xi,W)=\mu_0\otimes\mu^H$, is a solution to \eqref{eq:DDSDE-sec4} if $\cL(X_t)\in \cP_p(\RR^d)$ for all $t\in [0,T]$ and the integral equation \eqref{eq:DDSDE-sec4} holds $\PP$-a.s. The concepts of strong existence, pathwise uniqueness and uniqueness in law immediately carry over from the usual ones for SDEs.

\begin{prop}\label{prop:uniqueness-H>1/2}
Suppose $B\in \cH^{H,\alpha}_{p}$ with
\begin{align*}
    H >\frac{1}{2}, \quad \alpha>1-\frac{1}{2H},\quad p\in [1,\infty).
\end{align*}
Then for any $\mu_0\in\cP_p(\RR^d)$ strong existence, pathwise uniqueness and uniqueness in law hold for the DDSDE \eqref{eq:DDSDE-sec4} with data $(\mu_0,B)$.
\end{prop}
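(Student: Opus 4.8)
The plan is to recast \eqref{eq:DDSDE-sec4} as a fixed point problem for the flow of marginals. To any continuous flow $\mu=\{\mu_t\}_{t\in[0,T]}$ of probability measures I associate the \emph{frozen drift} $b^\mu_t(\cdot):=B_t(\cdot,\mu_t)$ and set $\cJ(\mu)_t:=\cL(X^\mu_t)$, where $X^\mu$ is the unique strong solution of the standard SDE \eqref{eq:SDE thm} with drift $b^\mu$ and initial law $\mu_0$. By Lemma \ref{lem:SolDefEquiv}, a process $X$ solves \eqref{eq:DDSDE-sec4} if and only if its marginal flow $\mu_t=\cL(X_t)$ is a fixed point of $\cJ$, so the whole statement reduces to producing a unique such fixed point and transferring the SDE well-posedness.

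First I would identify the right space on which to run the fixed point, namely a closed ball of $H$-Hölder flows,
\begin{equation*}
\cS_\tau:=\Big\{\mu\in C([0,\tau];\cP_p):\ \mu_0\text{ prescribed},\ \sup_{s\neq t\le\tau}\tfrac{d_p(\mu_t,\mu_s)}{|t-s|^H}\le R\Big\},
\end{equation*}
which is complete for the metric $\sup_{t\le\tau}d_p(\mu_t,\nu_t)$, since $(\cP_p,d_p)$ is complete and the Hölder bound is lower semicontinuous. For $\mu\in\cS_\tau$, Conditions \ref{it:alpha-big-1}--\ref{it:alpha-big-2} of Definition \ref{def: class of functions alpha big} (recall $\beta=H$) give $\|b^\mu\|_{C^0_x}\le\|B\|$ and, using $d_p(\mu_t,\mu_s)^\alpha\le R^\alpha|t-s|^{\alpha H}$,
\begin{equation*}
|b^\mu_t(x)-b^\mu_s(y)|\le\|B\|\big(|t-s|^{\alpha H}+|x-y|^\alpha+d_p(\mu_t,\mu_s)^\alpha\big)\lesssim |t-s|^{\alpha H}+|x-y|^\alpha,
\end{equation*}
so $b^\mu\in C^{\alpha H}_TC^0_x\cap C^0_TC^\alpha_x$ satisfies Assumption \ref{ass:drift-SDE} with $\|b^\mu\|_E\le M$ for a constant $M$ depending only on $\|B\|,R,T$. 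Hence Corollary \ref{cor:RndDataWellPosed} applies and $\cJ(\mu)$ is well defined. For invariance I would use the boundedness of $B$ together with the exact size of fBm increments: for $s<t\le\tau$,
\begin{equation*}
d_p\big(\cJ(\mu)_t,\cJ(\mu)_s\big)\le\EE\big[|X^\mu_t-X^\mu_s|^p\big]^{1/p}\le \|B\|\,|t-s|+\EE\big[|W_t-W_s|^p\big]^{1/p}\le C_0|t-s|^H,
\end{equation*}
with $C_0=C_0(\|B\|,p,T)$ independent of $\mu$; finiteness of the $p$-moments (so $\cJ(\mu)_t\in\cP_p$) follows likewise from $\mu_0\in\cP_p$ and the boundedness of $B$. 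Choosing $R=C_0$ makes $\cJ$ map $\cS_\tau$ into itself.

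The core step is the contraction estimate. Given $\mu^1,\mu^2\in\cS_\tau$ sharing the initial datum, the Wasserstein stability bound \eqref{eq:SDE-estim-wasserstein} (valid for $H>1/2$ by the first line of the proof of Theorem \ref{thm:fBmSDEWellPosed}) together with Condition \ref{it:alpha-big-3} of Definition \ref{def: class of functions alpha big} yields
\begin{align*}
\sup_{t\le\tau}d_p\big(\cJ(\mu^1)_t,\cJ(\mu^2)_t\big)
&\le C\Big(\int_0^\tau\big\|b^{\mu^1}_s-b^{\mu^2}_s\big\|_{B^{\alpha-1}_{\infty,\infty}}^{\tilde q}\,\dd s\Big)^{1/\tilde q}\\
&\le C\Big(\int_0^\tau d_p(\mu^1_s,\mu^2_s)^{\tilde q}\,\dd s\Big)^{1/\tilde q}
\le C\,\tau^{1/\tilde q}\sup_{t\le\tau}d_p(\mu^1_t,\mu^2_t),
\end{align*}
where $C$ is uniform over $\cS_\tau$ because the Girsanov bounds behind \eqref{eq:SDE-estim-wasserstein} depend only on the bound $M$ on $\|b^{\mu^i}\|_E$. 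Picking $\tau$ so small that $C\tau^{1/\tilde q}<1$, the Banach fixed point theorem produces a unique fixed point on $[0,\tau]$. Since neither $\tau$ nor $C$ depends on the value of the flow at the start of the interval, I would repeat the argument on $[\tau,2\tau],[2\tau,3\tau],\dots$, concatenating the local solutions into a unique global fixed point $\mu$ on $[0,T]$; solving \eqref{eq:SDE thm} with drift $b^\mu$ then yields a strong solution, giving strong existence.

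For uniqueness, I would observe that the marginal flow of \emph{any} solution is, by the increment estimate above, itself an element of some $\cS_\tau$ and a fixed point of $\cJ$; the contraction forces it to coincide with $\mu$. Two solutions---on the same probability space for pathwise uniqueness, on possibly different ones for uniqueness in law---therefore solve the same standard SDE \eqref{eq:SDE thm} with the common drift $b^\mu$, so pathwise uniqueness and uniqueness in law transfer directly from Theorem \ref{thm:fBmSDEWellPosed} and Corollary \ref{cor:RndDataWellPosed}. The step I expect to be most delicate is checking that the fixed-point space is simultaneously invariant, complete, and compatible with the time regularity that Assumption \ref{ass:drift-SDE} demands of $b^\mu$: the loop closes precisely because the marginals $\cL(X^\mu_\cdot)$ are $H$-Hölder in $d_p$ with a constant independent of $\mu$ (thanks to the exact $L^p$-size of fBm increments and the boundedness of $B$), which is exactly the regularity needed to re-freeze the drift into $C^{\alpha H}_TC^0_x$.
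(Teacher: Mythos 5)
Your proposal is correct in substance, but it reaches existence by a genuinely different route from the paper. The paper's proof of Proposition \ref{prop:uniqueness-H>1/2} first obtains a \emph{weak} solution by a compactness argument imported from \cite[Proposition 3.10]{GalHarMay_21benchmark} (using only that $B$ is bounded and uniformly continuous), then shows in a second step that the marginal flow of \emph{any} weak solution is automatically $H$-H\"older in $d_p$ (estimate \eqref{eq:MeasureHolderCty_H>1/2}), so that the frozen drift $b^\mu$ satisfies Assumption \ref{ass:drift-SDE} and the solution is strong; the contraction inequality is then used only to compare two given solutions on the canonical space. You instead run a Banach fixed point directly on the H\"older ball $\cS_\tau$ of measure flows, which is essentially the strategy the paper reserves for the $H\leq 1/2$ case (Proposition \ref{prop:uniqueness-H<1/2}). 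Your route buys a self-contained existence proof that does not rely on the external compactness result, at the price of having to verify invariance of $\cS_\tau$ under $\cJ$ — which you do correctly, and which is indeed the point where the time regularity demanded by Assumption \ref{ass:drift-SDE} must be closed; the paper's route avoids the invariance check entirely because the H\"older bound is extracted a posteriori from an already existing solution.

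One step you should make precise is the continuation past the first interval. Writing that you ``repeat the argument on $[\tau,2\tau],[2\tau,3\tau],\dots$ and concatenate'' suggests restarting the equation at time $\tau$, which is not available here: fBm is not Markov, so the solution map on $[\tau,2\tau]$ depends on the drift on all of $[0,2\tau]$. The correct iteration — spelled out in the paper's proof of Proposition \ref{prop:uniqueness-H<1/2} and used again in Step 4 of the $H>1/2$ proof — is to work on the set $E=\{\mu\in C([0,2\tau];\cP_p):\ \mu|_{[0,\tau]}=\bar\mu\}$, observe that $\cJ$ leaves $E$ invariant because $\bar\mu$ is already a fixed point on $[0,\tau]$, and note that for $\mu^1,\mu^2\in E$ one has
\begin{equation*}
\sup_{t\le 2\tau}d_p\big(\cJ(\mu^1)_t,\cJ(\mu^2)_t\big)=\sup_{t\in[\tau,2\tau]}d_p\big(\cJ(\mu^1)_t,\cJ(\mu^2)_t\big)\le C\tau^{1/\tilde q}\sup_{t\le 2\tau}d_p(\mu^1_t,\mu^2_t),
\end{equation*}
so the contraction survives on $E$ with the same constant. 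With that correction your argument is complete, and the uniqueness part (any solution's marginal flow lies in $\cS_\tau$ and is a fixed point, hence equals $\mu$, hence both processes solve the same SDE with drift $b^\mu$) matches the paper's Steps 2--4.
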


\begin{proof}
We divide the proof in several steps.

\textit{Step 1: weak existence.}
By hypothesis $B:[0,T]\times \RR^d\times \cP_p(\RR^d)\to \RR^d$ is a uniformly continuous, bounded map; existence of weak solutions on $[0,T]$ then follows from \cite[Proposition 3.10]{GalHarMay_21benchmark}.

\textit{Step 2: any weak solution is a strong one.}
Let $X$ be a weak solution of the DDSDE w.r.t. $(\xi,W)$ on a probability space $(\Omega,\cF,\PP)$.
Then setting $\mu_t=\cL(X_t)$, $b^\mu(t,x)=B_t(x,\mu_t)$, $X$ solves the SDE associated to $b^\mu$, which satisfies $|b^\mu(t,x)|\leq \| B\|$ uniformly over $(t,x)$.
As a consequence
\begin{align}\label{eq:MeasureHolderCty_H>1/2}
d_p(\mu_s,\mu_t) \leq \|X_t-X_s\|_{L^p_\Omega} \leq \int_s^t \| b^\mu(r,X_r)\|_{L^p_\Omega} \dd r + \| W_t-W_s\|_{L^p_\Omega} \lesssim_{T,p} (1+\| B\|) |t-s|^H 
\end{align} 
where we repeatedly applied Minkowski's inequality; by assumption
\begin{align*}
|b^\mu(t,x)-b^\mu(s,y)|
& = |B_t(x,\mu_t)-B_s(y,\mu_s)|\\
& \leq \| B\| \,\big(|t-s|^{\alpha H} + |x-y|^\alpha + d_p(\mu_t,\mu_s)^\alpha\big)\\
& \lesssim_{T,p} (1+\| B\|^2)\,\big( |t-s|^{\alpha H} + |x-y|^\alpha\big),
\end{align*}
where we applied \eqref{eq:MeasureHolderCty_H>1/2} to obtain the last inequality. Namely, $b^\mu$ satisfies Assumption \ref{ass:drift-SDE}, implying that strong existence and uniqueness in law holds for the associated SDE; therefore $X$ is adapted to $(\xi,W)$.

\textit{Step 3: reduction to the canonical space.}
As we are dealing with a strong solution $X$, we can regard it as a random variable on the canonical space $(\Omega, \cF, \PP)$ with $\Omega=\RR^d\times C_T$, $\PP=\mu_0\otimes \mu^H$, 
$\cF$ the $\PP$-completion of $\cB(\RR^d\times C_T)$.
%
%
Applying the same reasoning to any pair of weak (thus strong) solutions $X^1,\, X^2$, possibly defined on different probability spaces, we can construct a coupling $(\tilde{X}^1,\tilde{X}^2)$ of solutions defined on the canonical space and w.r.t. the same random variables $(\xi,W)$.
If we show that $\tilde{X}^1\equiv \tilde{X}^2$, then the equality $\cL(X^1)=\cL(X^2)$ follows.

\textit{Step 4: pathwise uniqueness on the canonical space.}
Let us drop the tilde and adopt the notations $\mu^i_t = \cL(X^i_t)$, $b^i(t,x):=B_t(x,\mu^i_t)$.
It follows from the computations of Step 2 that we can find $M\sim 1+\| B\|^2$ such that $b^i$ satisfy Assumption \ref{ass:drift-SDE} with $\| b^i\|_E \leq M$.
We can therefore apply estimate \eqref{eq:SDEGammaNormEstim} for the choice $\tilde q=q=\infty$, together with $X^1_0=X^2_0=\xi$, to find constants $\gamma>1/2$, $C>0$ such that
\[
\EE \Big[\| X^1-X^2\|_{\gamma;[0,\tau]}^p\Big]^{1/p} \leq C \sup_{t\in [0,\tau]} \| b^1(t,\cdot)-b^2(t,\cdot)\|_{B^{\alpha-1}_{\infty,\infty}}\quad \forall\, \tau\in [0,T].
\]
By assumption $B\in \cH^{H,\alpha}_p$, therefore 
\[
\| b^1(t,\cdot)-b^2(t,\cdot)\|_{B^{\alpha-1}_{\infty,\infty}} \leq \| B\|\, d_p(\mu^1_t,\mu^2_t);
\]
combining everything, using again $X^1_0=X^2_0$, for any $\tau\in [0,T]$ it holds
\begin{align*}
\sup_{t\in [0,\tau]} d_p(\mu^1_t,\mu^2_t)
& \leq \tau^\gamma\, \EE\Big[ \| X^1-X^2\|_{\gamma; [0,\tau]}^p\Big]^{1/p}
\leq C \| B\|\, \tau^\gamma \sup_{t\in [0,\tau]} d_p(\mu^1_t,\mu^2_t).
\end{align*}
Choosing $\bar\tau$ small enough so that $C \| B\|\, \bar\tau^\gamma<1$, we conclude that $\mu^1_t=\mu^2_t$ for all $t\in [0,\bar\tau]$ and so that $\EE[ \| X^1-X^2\|_{\gamma;[0,\bar\tau]}]=0$, i.e. $\PP$-a.s. $X^1\equiv X^2$ on $[0,\bar\tau]$. In light of this, choosing now $\tau=2\bar\tau$, going through similar computations we have
\begin{align*}
\sup_{t\in [0,\tau]} d_p(\mu^1_t,\mu^2_t)
= \sup_{t\in [\bar\tau,2\bar\tau]} d_p(\mu^1_t,\mu^2_t)
\leq \bar\tau^\gamma\, \EE\Big[ \| X^1-X^2\|_{\gamma; [\bar\tau,2\bar\tau]}^p\Big]^{1/p}
\leq C \| B\| \bar\tau^\gamma \sup_{t\in [0,\tau]} d_p(\mu^1_t,\mu^2_t)
\end{align*}
implying that the solutions also coincide on $[0,2\bar\tau]$. Iterating the reasoning for $\tau=n\bar\tau$ until we cover $[0,T]$ gives the conclusion.
\end{proof}

\subsection{The case $H\leq 1/2$}\label{subsec:H<1/2}
In this case we can allow the drift to be singular, i.e. take values in $B^\alpha_{\infty,\infty}$ with $\alpha<0$.
We start by defining what we mean by solution to the DDSDE in this case.

\begin{defn}\label{def:solution-singular-DDSDE}
Let $(\Omega,\cF,\PP)$ be a probability space, $(X,\xi,W)$ be a $C_T\times \RR^d\times C_T$-valued random variable defined on it with $\cL_\PP(\xi,W)=\cL(\xi)\otimes \mu^H$; let $B:[0,T]\times \cP_p\to\cS'$ be a measurable map for some $p\in [1,\infty)$.
We say that $X$ is a solution to the DDSDE \eqref{eq:DDSDE-sec4} associated to $(\xi,B)$ if $\mu_t:=\cL_\PP(X_t)\in \cP_p$ for all $t\in [0,T]$ and setting $b^\mu(t,\cdot)=B_t(\mu_t)(\cdot)$, $X$ is a pathwise solution to the SDE
\begin{equation*}
    X_t= \xi + \int_0^t b^\mu(s,X_s)\,\dd s + W_t,
\end{equation*}
associated to $(b^\mu,\xi,W)$ in the sense of Definition \ref{def:SDE-as-NLYE}.
All the concepts of strong solution, pathwise uniqueness and uniqueness in law are similarly readapted from those of Definition \ref{def:pathwise-strong-weak-sol}.
\end{defn}

As before, we will consider both $(\xi,B)$ and $(\mu_0,B)$ to be the data of the problem, depending on whether we are focusing on solutions on a prescribed probability space or on their laws.

Assume now $B\in \cG^{q,\alpha}_p$ with
\begin{equation}\label{eq:parameters-sec4.2}
    H\leq \frac{1}{2}, \quad \alpha>1+\frac{1}{qH}-\frac{1}{2H}, \quad q\in (2,\infty], \quad p\in [1,\infty);
\end{equation}
then to any $\mu_\cdot\in C_T \cP_p$ we can associated a singular drift $b^\mu(t,\cdot)=B(t,\mu_t)(\cdot)$ such that
\[
 \| b^\mu(t,\cdot)\|_{B^\alpha_{\infty,\infty}} \leq h_t,
\]
where $h\in L^q_T$ is the function associated to $B$ from Definition \ref{def: class of functions alpha small}.
Thus $b^\mu$ satisfies Assumption \ref{ass:drift-SDE} and the associated SDE has a unique solution $X$ by Corollary \ref{cor:RndDataWellPosed};
if in addition $\mu_0\in \cP_p$, then by Remark \ref{rem:holder-bound-solution} the map $t\mapsto\cL(X_t)$ belongs to $C_T \cP_p$.

Thus for fixed $\mu_0$, setting $\cI^{\mu_0}(\mu)_\cdot=\cL(X_\cdot)$, we can define a map $\cI^{\mu_0}$ from $C_T \cP_p$ to itself; this map comes with an alternative notion of solution to the DDSDE.

\begin{defn}\label{def:solution-law-DDSDE}
Assume $B\in \cG^{q,\alpha}_p$ with parameters satisfying \eqref{eq:parameters-sec4.2}, $\mu_0\in \cP_p$; we say that a flow of measures $\mu\in C_T \cP_p$ is a solution to the DDSDE  associated to $(\mu_0,B)$ if it satisfies $\cI^{\mu_0}(\mu)=\mu$.
\end{defn}

The next lemma clarifies the relation between Definitions \ref{def:solution-singular-DDSDE} and \ref{def:solution-law-DDSDE}.

\begin{lem}\label{lem:SolDefEquiv}
Let $B\in \cG^{q,\alpha}_p$ with parameters satisfying \eqref{eq:parameters-sec4.2}, $\mu_0\in \cP_p$. The following hold:
\begin{enumerate}[label=\roman*.]
\item \label{it:sol-equiv-1} if $X$ is a weak solution to \eqref{eq:DDSDE-sec4}, then $\mu_t=\cL(X_t)$ is a fixed point for $\cI^{\mu_0}$;
\item \label{it:sol-equiv-2} if $\mu$ is a fixed point for $\cI^{\mu_0}$, then there exists a strong solution $X$ to \eqref{eq:DDSDE-sec4};
\item \label{it:sol-equiv-3} if there exists at most one fixed point for $\cI^{\mu_0}$, then pathwise uniqueness and uniqueness in law hold for \eqref{eq:DDSDE-sec4}.
\end{enumerate}
\end{lem}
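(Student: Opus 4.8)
The plan is to observe that all three statements reduce to the well-posedness of a \emph{standard} SDE once the flow of measures $\mu$ is frozen. Indeed, as noted in the discussion preceding the lemma, for any $\mu\in C_T\cP_p$ the drift $b^\mu(t,\cdot):=B_t(\mu_t)(\cdot)$ satisfies Assumption \ref{ass:drift-SDE}, so Corollary \ref{cor:RndDataWellPosed} applies and yields strong existence, pathwise uniqueness and uniqueness in law for the SDE associated to $(\xi,b^\mu)$ with $\cL(\xi)=\mu_0$; by construction $\cI^{\mu_0}(\mu)_\cdot$ is precisely the law of this solution, and it belongs to $C_T\cP_p$ by Remark \ref{rem:holder-bound-solution}. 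Thus the map $\cI^{\mu_0}$ encodes exactly the operation ``freeze $\mu$, solve the resulting standard SDE, and take marginals'', and each point becomes a matter of translating the SDE theory of Section \ref{sec:recap-SDE} into the language of fixed points.

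For point \ref{it:sol-equiv-1}, I would let $X$ be a weak solution associated to $(\xi,B)$ and set $\mu_t=\cL(X_t)$. By Definition \ref{def:solution-singular-DDSDE}, $X$ is a pathwise solution to the SDE with the \emph{fixed} drift $b^\mu$; since that SDE enjoys uniqueness in law, its marginals are uniquely determined and coincide with $\cI^{\mu_0}(\mu)_\cdot$. Hence $\mu_t=\cL(X_t)=\cI^{\mu_0}(\mu)_t$ for all $t$, i.e.\ $\mu$ is a fixed point. For point \ref{it:sol-equiv-2}, given a fixed point $\mu=\cI^{\mu_0}(\mu)$, Corollary \ref{cor:RndDataWellPosed} furnishes a strong solution $X$ to the SDE associated to $(\xi,b^\mu)$; by the fixed point property $\cL(X_t)=\cI^{\mu_0}(\mu)_t=\mu_t$, so that $b^\mu(t,\cdot)=B_t(\cL(X_t))(\cdot)$, which is exactly the statement that $X$ solves the DDSDE in the sense of Definition \ref{def:solution-singular-DDSDE}. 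Adaptedness of $X$ carries over from the SDE, so $X$ is strong.

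For point \ref{it:sol-equiv-3}, I would assume that $\cI^{\mu_0}$ admits at most one fixed point. Given two solutions $X^1,X^2$ with the same initial law $\mu_0$, point \ref{it:sol-equiv-1} shows that $\mu^i_\cdot=\cL(X^i_\cdot)$ are both fixed points, hence $\mu^1=\mu^2=:\mu$; consequently both $X^i$ solve the same standard SDE with drift $b^\mu$ and initial law $\mu_0$. Uniqueness in law for that SDE then gives $\cL(X^1)=\cL(X^2)$, proving uniqueness in law for the DDSDE; if moreover $X^1,X^2$ live on the same space with the same $(\xi,W)$, pathwise uniqueness for the SDE yields $X^1=X^2$ $\PP$-a.s.

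The argument is essentially bookkeeping, and I do not expect any serious analytic obstacle; the one point requiring care is that in point \ref{it:sol-equiv-1} one must invoke uniqueness \emph{in law} rather than pathwise uniqueness, since a weak solution may be defined on an arbitrary probability space and need not be comparable to the canonical strong solution on the level of trajectories. Provided this distinction is respected, everything follows as a direct transcription of Corollary \ref{cor:RndDataWellPosed}.
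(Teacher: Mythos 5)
Your proof is correct and follows essentially the same route as the paper's: freeze the law, reduce each claim to the well-posedness of the standard SDE with drift $b^\mu$ from Corollary \ref{cor:RndDataWellPosed}, and translate back into fixed-point language. You are in fact slightly more careful than the paper on point \ref{it:sol-equiv-1} (which the paper dismisses as immediate), correctly identifying that uniqueness in law, rather than pathwise uniqueness, is the ingredient needed to identify $\cL(X_\cdot)$ with $\cI^{\mu_0}(\mu)_\cdot$ for a weak solution on an arbitrary space.
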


\begin{proof}
Point \ref{it:sol-equiv-1} immediately follows from the definitions.
To see Point \ref{it:sol-equiv-2}, assume $\cI^{\mu_0}(\mu)_\cdot=\mu_\cdot$ and set $b^\mu(t,\cdot)= B_t(\mu_t)(\cdot)$; then $b^\mu\in L^q_T B^\alpha_{\infty,\infty}$, so by the results of Section \ref{sec:recap-SDE}, we can construct a strong solution $X$ to the SDE associated to $(\mu_0,b^\mu)$. But then by definition of $\cI^{\mu_0}$ it holds $\cL(X_t)=\mu_t$ and so $X$ solves the DDSDE.
It remains to show Point \ref{it:sol-equiv-3}; assume $X^i$ are two solutions and set $\mu^i_\cdot=\cL(X^i_\cdot)$. Then by Point \ref{it:sol-equiv-2}, $\mu^i$ are both fixed points for $\cI^{\mu_0}$, so $\mu^1=\mu^2$ and $b^{\mu^1}=b^{\mu^2}$. But then $X^i$ both solve the SDE associated to $b^{\mu^1}$, for which uniqueness holds both pathwise and in law, so the conclusion follows.
\end{proof}
It follows from the above that, in order to show strong existence, pathwise uniqueness and uniqueness in law for the DDSDE \eqref{eq:DDSDE-sec4} in the sense of Definition \ref{def:solution-singular-DDSDE}, it's enough to show that there exists exactly one solution $\mu \in C_T \cP_p$ in the sense of Definition \ref{def:solution-law-DDSDE}.

\begin{prop}\label{prop:uniqueness-H<1/2}
Let $B\in\cG^{q,\alpha}_p$ with parameters satisfying \eqref{eq:parameters-sec4.2};
then for any $\mu_0\in\cP_p(\RR^d)$ strong existence, pathwise uniqueness and uniqueness in law hold for the DDSDE \eqref{eq:DDSDE-sec4} associated to $(\mu_0,B)$.
\end{prop}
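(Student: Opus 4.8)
The plan is to invoke Lemma \ref{lem:SolDefEquiv}, which reduces the whole statement (strong existence, pathwise uniqueness and uniqueness in law) to showing that the map $\cI^{\mu_0}:C_T\cP_p\to C_T\cP_p$ admits exactly one fixed point. First I would confirm that $\cI^{\mu_0}$ is a well-defined self-map of $C_T\cP_p$: for $\mu\in C_T\cP_p$ the associated drift $b^\mu_t=B_t(\mu_t)$ satisfies $\|b^\mu_t\|_{B^\alpha_{\infty,\infty}}\le h_t\in L^q_T$, hence Assumption \ref{ass:drift-SDE} holds upon choosing a finite $\tilde q\le q$ with $(\alpha,\tilde q)$ satisfying \eqref{eq:good parameters}, which is possible precisely because \eqref{eq:parameters-sec4.2} leaves room to enlarge $\tilde q$ below $q$. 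Corollary \ref{cor:RndDataWellPosed} then produces a unique solution $X$ with $\cL(X_t)=\cI^{\mu_0}(\mu)_t$; the $p$-integrability of $\cL(X_t)$ follows from the Girsanov comparison $\cL_\QQ(X)=\cL_\PP(\xi+W)$ of Corollary \ref{cor:SDE-random-data-girsanov}, and Remark \ref{rem:holder-bound-solution} together with $\mu_0\in\cP_p$ gives $\EE[\llbracket X\rrbracket_{H-\eps}^p]<\infty$, so that $t\mapsto\cL(X_t)$ is Hölder continuous in $d_p$ and lands in $\cP_p$.

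The core is a contraction estimate. Given $\mu^1,\mu^2\in C_T\cP_p$ sharing the initial datum $\mu_0$, set $b^i=b^{\mu^i}$. Applying the Wasserstein stability bound \eqref{eq:SDE-estim-wasserstein} on each subinterval $[0,t]$, using $d_p(\mu^1_0,\mu^2_0)=0$, and combining it with Condition \ref{it:alpha-small-2} of Definition \ref{def: class of functions alpha small}, namely $\|B_s(\mu^1_s)-B_s(\mu^2_s)\|_{B^{\alpha-1}_{\infty,\infty}}\le h_s\,d_p(\mu^1_s,\mu^2_s)$, I obtain
\[
d_p(\cI^{\mu_0}(\mu^1)_t,\cI^{\mu_0}(\mu^2)_t)\le C\Big(\int_0^t h_s^{\tilde q}\,d_p(\mu^1_s,\mu^2_s)^{\tilde q}\,\dd s\Big)^{1/\tilde q}
\quad\forall\,t\in[0,T].
\]

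To upgrade this into a genuine contraction over the whole interval, I would introduce the weighted metric $\rho_\lambda(\mu^1,\mu^2):=\sup_{t\in[0,T]}e^{-\lambda\Lambda(t)}d_p(\mu^1_t,\mu^2_t)$ with $\Lambda(t):=\int_0^t h_s^{\tilde q}\,\dd s$. Inserting $d_p(\mu^1_s,\mu^2_s)\le e^{\lambda\Lambda(s)}\rho_\lambda(\mu^1,\mu^2)$ into the previous display and using $\int_0^t\Lambda'(s)e^{\tilde q\lambda\Lambda(s)}\,\dd s\le (\tilde q\lambda)^{-1}e^{\tilde q\lambda\Lambda(t)}$ (recall $\Lambda(0)=0$), the factor $e^{-\lambda\Lambda(t)}$ cancels and yields
\[
\rho_\lambda(\cI^{\mu_0}(\mu^1),\cI^{\mu_0}(\mu^2))\le C\,(\tilde q\lambda)^{-1/\tilde q}\,\rho_\lambda(\mu^1,\mu^2).
\]
Hence for $\lambda$ large enough $\cI^{\mu_0}$ is a strict contraction. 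Since $\Lambda$ is bounded on $[0,T]$, the metric $\rho_\lambda$ is equivalent to the usual sup-distance, so $(C_T\cP_p,\rho_\lambda)$ is complete and Banach's fixed point theorem provides a unique fixed point, which by Lemma \ref{lem:SolDefEquiv} concludes the proof.

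I expect the delicate points to be bookkeeping rather than conceptual: selecting $\tilde q<\infty$ to accommodate $q=\infty$ while preserving \eqref{eq:good parameters}, and verifying that $\cI^{\mu_0}$ truly lands in $C_T\cP_p$. The substantive analytic difficulty — that one solves the SDE at regularity $B^\alpha_{\infty,\infty}$ but contracts only in the weaker scale $B^{\alpha-1}_{\infty,\infty}$, measured through the Wasserstein distance $d_p$ — has already been absorbed into Theorem \ref{thm:fBmSDEWellPosed} and Corollary \ref{cor:RndDataWellPosed}, so the present proposition amounts to feeding those estimates into the weighted fixed-point scheme.
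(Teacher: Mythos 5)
Your argument is correct, and the reduction via Lemma \ref{lem:SolDefEquiv}, the verification that $\cI^{\mu_0}$ maps $C_T\cP_p$ to itself, and the core estimate
$d_p(\cI^{\mu_0}(\mu^1)_t,\cI^{\mu_0}(\mu^2)_t)\le C(\int_0^t h_s^{\tilde q}\,d_p(\mu^1_s,\mu^2_s)^{\tilde q}\dd s)^{1/\tilde q}$
obtained from \eqref{eq:SDE-estim-wasserstein} and Condition \ref{it:alpha-small-2} of Definition \ref{def: class of functions alpha small} all match the paper. Where you genuinely diverge is in the globalization step. The paper obtains a contraction on a short interval $[0,\bar\tau]$ (with $\bar\tau$ depending only on $C\|h\|_{L^q_T}$, the smallness coming from a factor $\tau^\gamma$ in $\sup_{t\le\tau}\EE[|X^1_t-X^2_t|^p]^{1/p}\le\tau^\gamma\EE[\|X^1-X^2\|_{\gamma;[0,\tau]}^p]^{1/p}$), and then, because fBm is not Markov and the equation cannot be restarted at $\bar\tau$, it iterates by contracting $\cI^{\mu_0}$ on the nested closed subsets $E=\{\mu: \mu|_{[0,n\bar\tau]}=\bar\mu\}$ of $C([0,(n+1)\bar\tau];\cP_p)$. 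You instead use a Bielecki-type weighted metric $\rho_\lambda$ with weight $e^{-\lambda\Lambda(t)}$, $\Lambda(t)=\int_0^t h_s^{\tilde q}\dd s$, which turns $\cI^{\mu_0}$ into a strict contraction on all of $C_T\cP_p$ in one stroke. This works precisely because the stability estimate \eqref{eq:SDE-estim-wasserstein} is already global in time with a constant uniform in $\tau$, so no restarting is ever required; your route is arguably cleaner (a single application of Banach's fixed point theorem and no discussion of the non-Markovianity of $W$), while the paper's interval-iteration makes the dependence of the contraction radius on $\|h\|_{L^q_T}$ more explicit and reuses verbatim the structure of its uniqueness proof for $H>1/2$. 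The bookkeeping points you flag (choosing $\tilde q<\infty$ when $q=\infty$, and the equivalence of $\rho_\lambda$ with the sup metric since $\Lambda(T)<\infty$) are exactly the right ones and are handled correctly.
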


\begin{proof}
Define the map $\cI^{\mu_0}:C_T\cP_p\to C_T\cP_p$ associated to $(\mu_0,B)$ as before; in order to show that there exists exactly one fixed point to $\cI^{\mu_0}$, it's enough to establish its contractivity.

Given $\mu^i \in C_T\cP_p$, $i=1,2$, set $b^i: = b^{\mu^i} = B(t,\mu^i_t)$; denote by $X^i$ two solutions, defined on the same probability space and with respect to the same data $(\xi,W)$, to the SDEs associated to $(\xi,b^i)$, where $\cL(\xi)=\mu_0$.
By definition of $\cG^{q,\alpha}_p$, there exists $h\in L^q_T$, such that for any $\tau \in (0,T]$ we have
$$\|b^1_t-b^2_t\|_{B^{\alpha-1}_{\infty,\infty}}\leq h_t\, \sup_{t \in [0,\tau]} d_p(\mu^1_t,\mu^2_t).$$
Applying Corollary \ref{cor:RndDataWellPosed}, using the fact that $X^1_0=X^2_0=\xi$, we can find $\gamma>1/2$ and $C>0$ such that for any $\tau \in (0,T]$, we have
\begin{align*}
    \sup_{t\in [0,\tau]} d_p(\cI^{\mu_0}(\mu^1)_t,\cI^{\mu_0}(\mu^2)_t)
    & \leq\sup_{t\in [0,\tau]} \EE_{\PP}[|X^1_t-X^2_t|^p]^{\frac{1}{p}}
    \leq \tau^\gamma\, \EE_{\PP}\big[\|X^1-X^2\|_{\gamma;[0,\tau]}^p\big]^{\frac{1}{p}}\\
    & \leq C\tau^\gamma \left( \int_0^\tau \|b^1_t-b^2_t\|_{B^{\alpha-1}_{\infty,\infty}}^{q}\dd t\right)^{\frac{1}{q}}
    \leq C \|h\|_{L^q_T}\, \tau^\gamma\, \sup_{t\in[0,\tau]}d_p(\mu^1_t,\mu^2_t).
\end{align*}
Choosing $\bar\tau>0$ sufficiently small such that $C\|h\|_{L^q_T}\,\tau^\gamma<1$, we find that $\cI^{\mu_0}$ is a contraction from $C([0,\bar\tau];\cP_p)$ to itself, so therein there exists a unique fixed point $\bar{\mu} = \cI^{\mu_0}(\bar{\mu})$; it remains to show we can extend uniquely this fixed point to the whole interval $[0,T]$.

To do this, the classical argument for SDEs would require to restart the equation at $t=\bar\tau$; however we can't perform this, as the fractional Brownian motion is not a Markov process. We can exploit the fact that $\bar\tau$ only depends on $C\|h\|_{L^q_T}$ and not the history of the paths $X^i$ nor $\mu^i$ to give the following alternative reasoning.

Given $\bar\tau$, $\bar{\mu}\in C([0,\bar\tau];\cP_p)$ as above, consider $E := \{ \mu_{\,\cdot\,} \in C([0,2\bar\tau];\cP_p) \,:\, \mu|_{[0,\tau]} =\bar{\mu} \}$, which is a closed subset of $C([0,2\bar\tau];\cP_p)$ and thus a complete metric space with the same norm.
Since $\bar{\mu}$ is a fixed point on $[0,\bar \tau]$, $\cI^{\mu_0}$ leaves $E$ invariant; for any $\mu^i \in E$, arguing as above it holds
\begin{align*}
    \sup_{t \in [0,2\bar\tau]} d_p(\cI^{\mu_0}(\mu^1)_t,\cI^{\mu_0}(\mu^2)_t)
    = \sup_{t\in [\bar\tau,2\bar\tau]} d_p(\cI^{\mu_0}(\mu^1)_t,\cI^{\mu_0}(\mu^2)_t)
    &\leq \sup_{t \in [\bar\tau,2\bar\tau]} \bar\tau^\gamma \, \EE_{\PP}\big[\|X^1-X^2\|_{\gamma;[\bar\tau;2\bar\tau]}^p\big]^{\frac{1}{p}}\\
    &\leq \bar\tau^\gamma C\|h\|_{L^q_T}\, \sup_{t \in [0,2\bar\tau]}d_p(\mu^1_t,\mu^2_t).
\end{align*}
It follows that $\cI^{\mu_0}$ is a contraction on $E$ and admits a unique fixed point on it, which is necessarily the only possible extension of $\bar\mu$ on $[0,2\bar\tau]$. Repeating the argument on $[0,n\bar\tau]$ as many times as necessary to cover $[0,T]$ concludes the proof.
\end{proof}

\subsection{Stability estimates for DDSDEs}\label{subsec:stability-DDSDE}

The purpose of this section is to provide the proof of Theorem \ref{thm:main_thm2}, which loosely speaking establishes Lipschitz dependence of the solutions $\mu^i\in C_T \cP_p$ in terms of the data $(\mu^i_0,B^i)$ for $i=1,2$.

We assume that we are given drifts $B^i$ belonging to $\cH^{H,\alpha}_p$ for parameters satisfying \eqref{eq:parameters-H>1/2} when $H>1/2$, respectively $B^i\in \cG^{q,\alpha}_p$ for parameters satisfying \eqref{eq:parameters-H<1/2} when $H\leq 1/2$; in both cases we denote the optimal constants by $\| B^i\|$.
Given $\mu_0^i\in \cP_p$, we denote by $\mu^i\in C_T \cP_p$ the unique solutions associated to $(\mu^i_0,B^i)$, whose existence is granted by Theorem \ref{thm:main_thm1}.

Finally, for $\alpha,\,q$ given as above, let us recall the notation introduced in Theorem \ref{thm:main_thm2}:
\begin{equation*}
    \| B^1-B^2\|_{\infty}:=\sup_{(t,\nu)\in [0,T]\times \cP_p} \| B^1_t(\nu)-B^2_t(\nu)\|_{B^{\alpha-1}_{\infty,\infty}}
\end{equation*}
and 
\begin{equation*}
\| B^1-B^2\|_{q,\infty}:=\bigg( \int_0^T \sup_{\nu\in \cP_p} \| B^1_t(\nu)-B^2_t(\nu)\|_{B^{\alpha-1}_{\infty,\infty}}^{q} \dd t\bigg)^{1/q}.
\end{equation*}

\begin{proof}[Proof of Theorem \ref{thm:main_thm2}]
Let $\mu^i$ be the solutions as above and set $b^i_t:=B^i(t,\mu^i_t)$.
Recall from the proofs of Propositions \ref{prop:uniqueness-H>1/2} and \ref{prop:uniqueness-H<1/2} that if $\|B^i\|\leq M$, then $\|b^i\|_E \leq C(M)$, $E$ being suitable spaces for which Assumption \ref{ass:drift-SDE} is met; so we are in a position to apply estimates from Section \ref{subsec:stability}.
First observe that, by addition and subtraction of $B^1_t(\mu^2)$, we have
\begin{equation}\label{eq:bound diff b}
    \| b^1_t-b^2_t\|_{B^{\alpha-1}_{\infty,\infty}}
    \leq \| B^1_t(\mu^1_t)-B^1_t(\mu^2_t)\|_{B^{\alpha-1}_{\infty,\infty}}
    + \sup_{\nu\in \cP_p} \| B^1_t(\nu)-B^2_t(\nu)\|_{B^{\alpha-1}_{\infty,\infty}}.
\end{equation}
The argument slightly differs in the $H>\frac{1}{2}$ and $H\leq \frac{1}{2}$ cases, so we will handle them separately.

We begin with $H>\frac{1}{2}$. Let us choose $\tilde{q}<\infty$ big enough so that $(\alpha,\tilde q,H) $  satisfies \eqref{eq:good parameters}; then we can apply estimate \eqref{eq:SDE-estim-wasserstein} to obtain
\begin{equation}\label{eq:sup ineq}
\sup_{t\in [0,\tau]} d_p(\mu^1_t,\mu^2_t)^{\tilde q} \lesssim_{M,\tilde q,T} d_p(\mu^1_0,\mu^2_0)^{\tilde q} + \left(\int_0^\tau \| b^1_s-b^2_s\|_{B^{\alpha-1}_{\infty,\infty}}^{\tilde q} \dd s\right);
\end{equation}
on the other hand, by estimate \eqref{eq:bound diff b} and the assumption $B^1\in \cH^{H,\alpha}_p$ with $\|B^1\|\leq M$, it holds
\begin{equation*}
    \| b^1_s-b^2_s\|_{B^{\alpha-1}_{\infty,\infty}} \leq M \sup_{r\in [0,s]} d_p(\mu^1_r,\mu^2_r)+ \| B^1-B^2\|_{\infty}.
\end{equation*}
Putting everything together, setting $f_t:=\sup_{s\in [0,t]} d_p(\mu^1_s,\mu^2_s)^{\tilde{q}}$, we obtain
\begin{equation*}
f_t \lesssim_{\tilde q} f_0 + \int_0^t M^{\tilde q} f_s \dd s + T \| B^1-B^2\|_{\infty}^{\tilde q} \quad \forall\, t\in [0,T];
\end{equation*}
applying Gr\"onwall to $f$ and taking the power $1/\tilde q$ no both sides readily gives
\begin{equation*}
    \sup_{t\in [0,T]} d_p(\mu^1_t,\mu^2_t) \lesssim d_p(\mu^1_0,\mu^2_0) + \| B^1-B^2\|_\infty
\end{equation*}
which is exactly the desired estimate \eqref{eq:stability-estim-DDSDE}.

Suppose now $X^i$ are solutions defined on the same probability space, then combining estimate \eqref{eq:SDEGammaNormEstim} with the ones above we find
\begin{align*}
    \EE[\| X^1-X^2\|_\gamma^p]^{1/p}
    & \lesssim \| X^1_0-X^2_0\|_{L^p_\Omega} + \sup_{t\in [0,T]} \| b^1_t-b^2_t\|\\
    & \lesssim_M \| X^1_0-X^2_0\|_{L^p_\Omega} + \sup_{t\in [0,T]} d_p(\mu^1_t,\mu^2_t) + \| B^1-B^2\|_\infty\\
    & \lesssim \| X^1_0-X^2_0\|_{L^p_\Omega} + d_p(\mu^1_0,\mu^2_0) + \| B^1-B^2\|_\infty
\end{align*}
and the conclusion readily follows from $d_p(\mu^1_0,\mu^2_0)\leq \| X^1_0-X^2_0\|_{L^p_\Omega}$.

We now move on to the case $H\leq \frac{1}{2}$; for $q=\infty$ the proof is the same as above, so we can assume w.l.o.g. $q<\infty$ here.
For $B^i\in \cG^{q,\alpha}_p$, it follows again by \eqref{eq:bound diff b} that 
\begin{equation}\label{eq:b small ineq}
    \| b^1_t-b^2_t\|_{B^{\alpha-1}_{\infty,\infty}}
    \leq h^1_t \sup_{r\in [0,t]} d_p(\mu^1_r,\mu^2_r)+\sup_{\nu\in \cP_p}\| B^1_t(\nu)-B^2_t(\nu)\|_{B^{\alpha-1}_{\infty,\infty}},
\end{equation}
where we recall that $h^i\in L^q_T$ are the functions associated to $B^i$ given in Definition \ref{def: class of functions alpha small}.
Following the same strategy as before, by \eqref{eq:b small ineq} and \eqref{eq:sup ineq}, $f_t:=\sup_{s\in [0,t]} d_p(\mu^1_s,\mu^2_s)^q$ satisfies
\begin{equation*}
    f_t \leq f_0+\int_0^t |h^1_s|^q f_s\dd s + \| B^1-B^2\|_{q,\infty} \quad \forall t\in [0,T];
\end{equation*}
by this inequality and the assumption $\| h^1\|_{L^q_T} = \| B^1\| \leq M$, we conclude again by Gr\"onwall's inequality that
\begin{align*}
    \sup_{t\in [0,T]} d_p(\mu^1_t,\mu^2_t) \lesssim_M d_p(\mu^1_0,\mu^2_0) + \| B^1-B^2\|_{q,\infty}
\end{align*}
which gives estimate \eqref{eq:stability small}. The statement for $\EE[\| X^1-X^2\|^p_\gamma]$ now follows exactly as in the case $H>1/2$.
\end{proof}

We conclude this section with the application of Theorem \ref{thm:main_thm2} to a particularly relevant case.

\begin{ex}\label{ex:stability-estimate}
Let $H\in (0,1)$, $\alpha>1-1/(2H)$ and set
\begin{equation*}
    E = \begin{cases}
    C^{\alpha H}_T C^0_x\cap C^0_T C^\alpha_x \quad & \text{for } H>\frac{1}{2}\\
    L^\infty_T B^\alpha_{\infty,\infty} & \text{for } H\leq\frac{1}{2}
    \end{cases}.
\end{equation*}
Let $B^i$ be of the form $B^i(t,\mu)=f^i_t + g^i_t \ast \mu$ for $f^i,g^i\in E$ with $\| f^i\|_E,\|g^i\|_E\leq M$. Then $B^i$ satisfy the assumptions of Theorem \ref{thm:main_thm2} in $\cP_p$ for any $p\in [1,\infty)$ and estimate \eqref{eq:stability-estim-DDSDE} becomes
\[
\sup_{t\in [0,T]} d_p(\mu^1_t,\mu^2_t) \lesssim d_p(\mu^1_0,\mu^2_0) + \|f^1-f^2 \|_{L^\infty_T B^{\alpha-1}_{\infty,\infty}} + \| g^1-g^2\|_{L^\infty_T B^{\alpha-1}_{\infty,\infty}}.
\]
\end{ex}


\section{Refined results in the convolutional case}\label{sec:refined-convolution}

In this section we focus on the case of DDSDEs with convolutional structure, namely
\begin{equation}\label{eq:ConvMckeanSDE}
X_t = \xi + \int_0^t (b_s\ast \mu_s)(X_s)\,\mathd s+W_t,\quad \mu_t=\mathcal{L}(X_t)\quad \forall\, t\in [0,T].
\end{equation}
They correspond to the case $B_t(\mu)=b_t\ast\mu$ and can therefore be solved under suitable assumptions on $b$ (e.g. $b\in E$ as in Example \ref{ex:stability-estimate}). Due to their specific structure however, as soon as the associated solution $X$ has a regular law $\mu$, its regularity immediately transfers to the drift $b^\mu_t=b_t\ast \mu_t$, as the next simple lemma shows.

\begin{lem}\label{lem:regularity-convolution}
Let $H\in (0,1)$, $b\in B^\alpha_{\infty,\infty}$ for $\alpha>1-1/(2H)$, $\mu_0\in \cP_1$ and $X$ denote the unique solution to the DDSDE \eqref{eq:ConvMckeanSDE} with $\cL(\xi,W)=\mu_0\otimes \mu^H$.
Then $X$ also solves an SDE with drift $b^\mu$ which belongs to $L^1_T C^1_x$.
\end{lem}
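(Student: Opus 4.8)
The plan is to first recognise, directly from the definition of solution to \eqref{eq:ConvMckeanSDE}, that $X$ solves the standard SDE \eqref{eq:simple sde} with the effective drift $b^\mu_t := b \ast \mu_t$, where $\mu_t = \cL(X_t)$. Since each $\mu_t$ is a probability measure, one has $\| b^\mu_t\|_{B^\alpha_{\infty,\infty}} = \| b \ast \mu_t\|_{B^\alpha_{\infty,\infty}} \le \| b\|_{B^\alpha_{\infty,\infty}}$ uniformly in $t$, because $\Delta_n(b\ast\mu_t) = (\Delta_n b)\ast\mu_t$ and convolution with a probability measure is a contraction on $L^\infty$. Combining this uniform spatial bound with the $d_1$-continuity of $t\mapsto\mu_t$ and the convolution Lipschitz estimate of Example \ref{ex:convolutional-setting}, one checks exactly as in the well-posedness proofs (Propositions \ref{prop:uniqueness-H>1/2} and \ref{prop:uniqueness-H<1/2}) that $b^\mu$ satisfies Assumption \ref{ass:drift-SDE} with the same $\alpha$. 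The content of the lemma thus reduces to the smoothing property $b^\mu \in L^1_T C^1_x$.

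The key point is that the law $\mu$ is itself regular, and convolution with $b$ transfers that regularity to $b^\mu$. Since $b^\mu$ satisfies Assumption \ref{ass:drift-SDE}, Proposition \ref{prop:regularity-density1} applies to the SDE solved by $X$ and yields $\mu \in L^{q_0}_T B^s_{1,1}$ for any pair $(s, q_0) \in (0,\infty)\times(1,2)$ with $s < \frac1H\big(\frac1{q_0}-\frac12\big)$ (here $(s,q_0)$ plays the role of $(\alpha,q)$ in that proposition). As $q_0 \downarrow 1$ the admissible threshold $\frac1H\big(\frac1{q_0}-\frac12\big)$ increases to $\frac1{2H}$, so $s$ may be taken arbitrarily close to $\frac1{2H}$ from below. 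Because by hypothesis $\alpha > 1 - \frac1{2H}$, I can fix such an $s < \frac1{2H}$ with the additional properties $\alpha + s > 1$ and $\alpha + s \notin \NN$, together with a corresponding $q_0 \in (1,2)$.

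With this choice in hand, I would invoke the standard Young-type convolution estimate in Besov spaces, $\| b \ast \mu_t\|_{B^{\alpha+s}_{\infty,\infty}} \lesssim \| b\|_{B^\alpha_{\infty,\infty}}\,\| \mu_t\|_{B^s_{1,1}}$; this follows from $\|\Delta_j b \ast \Delta_k \mu_t\|_{L^\infty} \le \|\Delta_j b\|_{L^\infty}\|\Delta_k\mu_t\|_{L^1}$ combined with the near-diagonal Fourier support of convolutions, and is the smoothing counterpart of Lemma \ref{lem:basic-Besov-convolve}. Integrating in time and using that $L^{q_0}_T \hookrightarrow L^1_T$ on the finite interval $[0,T]$ since $q_0 > 1$, I obtain $b^\mu = b\ast\mu \in L^1_T B^{\alpha+s}_{\infty,\infty}$. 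Finally, since $\alpha + s > 1$ and $\alpha+s\notin\NN$, the embedding $B^{\alpha+s}_{\infty,\infty} = C^{\alpha+s}_x \hookrightarrow C^1_x$ gives $b^\mu \in L^1_T C^1_x$, as claimed.

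The only genuinely delicate step is the parameter bookkeeping in the second paragraph: one must exploit the strict inequality $\alpha > 1 - \frac1{2H}$ to ensure that the regularity gain $s$ afforded by Proposition \ref{prop:regularity-density1} is large enough for $\alpha + s$ to cross the threshold $1$ required for $C^1$ regularity. The convolution estimate and the passage from $L^{q_0}_T$ to $L^1_T$ integrability are routine once the exponents are aligned.
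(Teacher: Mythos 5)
Your proposal is correct and follows essentially the same route as the paper: reduce to the SDE with effective drift $b^\mu$ satisfying Assumption \ref{ass:drift-SDE}, apply Proposition \ref{prop:regularity-density1} to get $\mu\in L^{q_0}_T B^s_{1,1}$ with $s$ close to $1/(2H)$, and conclude by Young's convolution inequality since $\alpha+s>1$. If anything you are slightly more careful than the paper, which invokes Proposition \ref{prop:regularity-density1} at the boundary value $q=1$ of its stated range $q\in(1,2)$, whereas you take $q_0\in(1,2)$ and use $L^{q_0}_T\hookrightarrow L^1_T$.
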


\begin{proof}
Let $X$ be the aforementioned solution, then by the proof of Theorem \ref{thm:main_thm1} we know that it is solves an SDE with drift $b^\mu$ satisfying Assumption \ref{ass:drift-SDE};
applying Proposition \ref{prop:regularity-density1} for the choice $q=1$ in \eqref{eq:parameters-regularity-density1}, we deduce that $\mu\in L^1_T B^{\tilde \alpha}_{1,1}$ for any $\tilde \alpha <1/(2H)$. 
Therefore by the hypothesis and Young's inequality it holds $b^\mu=b_\cdot\ast \mu_\cdot\in L^1_T B^{\alpha+\tilde \alpha}_{\infty,\infty}$ for some $\alpha>1-1/(2H)$ and all $\tilde \alpha<1/(2H)$; choosing $\tilde\alpha$ appropriately gives the conclusion.
%
\end{proof}

\begin{rem}
Up to technicalities, the proof readapts to the case of time-dependent drifts $B_t(\mu)=b_t\ast \mu$ with $b$ satisfying Assumption \ref{ass:drift-SDE}, with the same conclusion that $b^\mu\in L^1_T C^1_x$.
\end{rem}

Lemma \ref{lem:regularity-convolution} shows that in this setting the effective drift $b^\mu$ is much more regular than the original $b$, to the point that the SDE associated to $b^\mu$ can be solved classically.
However in order to give meaning to the DDSDE, it suffices to know that $b^\mu$ satisfies the weaker Assumption \ref{ass:drift-SDE};
for this reason we expect the criteria coming from Theorem \ref{thm:main_thm1} to be suboptimal for convolutional DDSDEs \eqref{eq:ConvMckeanSDE}, as they don't take in account the different regularity of $b$ and $b^\mu$.

A partial improvement of those results is given by Theorems \ref{thm:main_thm3} and \ref{thm:main_thm4}, whose proofs are presented respectively in Sections \ref{subsec:refined-divergence} and \ref{subsec:refined-integrable}.
Before moving further, let us define rigorously what we mean by solutions here, although the concept is very similar to that of Definition \ref{def:solution-singular-DDSDE}.

\begin{defn}\label{def:solution-convolution-DDSDE}

Fix $H\in (0,1)$; let $(\Omega,\cF,\PP)$ be a probability space, $(X,\xi,W)$ be a $C_T\times \RR^d\times C_T$-valued random variable defined on it with $\cL_\PP(\xi,W)=\mu_0\otimes \mu^H$ and $b$ be a distributional drift.
We say that $X$ is a solution to the DDSDE \eqref{eq:ConvMckeanSDE} associated to $(\mu_0,b)$ if setting $\mu_t:=\cL_\PP(X_t)$, $b^\mu_t:=b_t\ast \mu_t$, $X$ satisfies the SDE
\begin{equation*}
    X_t= \xi + \int_0^t b^\mu_s(X_s)\,\dd s + W_t \quad \forall\, t\in [0,T],
\end{equation*}
where we additionally require that either:
\begin{enumerate}[label=\roman*.]
    \item $b^\mu$ satisfies Assumption \ref{ass:drift-SDE} and the SDE is interpreted in the sense of Definition \ref{def:SDE-as-NLYE}, or
    \item $b^\mu \in L^1_T C^0_x$ and the SDE is interpreted in the standard integral sense.
\end{enumerate}
All the concepts of weak solution, strong solution, pathwise uniqueness and uniqueness in law are readapted similarly.
\end{defn}

A major role in the proofs of Theorems \ref{thm:main_thm3} and \ref{thm:main_thm4} is given by the following conditional uniqueness result.

\begin{prop}\label{prop:RefinedAPrioriUniqueness}
Let $H\in (0,1)$, $p\in [1,\infty)$, $p'$ its conjugate exponent; let $b$ be a distributional drift satisfying one of the following conditions:
\begin{itemize}
    \item[i.] If $H>1/2$, then $b\in C^{\alpha H}_T L^p_x\cap b\in C^0_T B^\alpha_{p,p}$ with 
    $\alpha>1-\frac{1}{2H}$.
    \item[ii.] If $H\leq 1/2$ then $b \in L^q_T B^\alpha_{p,p}$,
    with
    $\alpha>1-\frac{1}{2H}+\frac{1}{Hq}$.
\end{itemize}
Assume furthermore that for a given $\mu_0\in L^{p'}_x$ there exists a weak solution $X$ to the DDSDE \eqref{eq:ConvMckeanSDE} associated to $(\mu_0,b)$,
satisfying
\begin{equation}\label{eq:integrability-law-sec5}
    \sup_{t\in [0,T]} \| \cL(X_\cdot)\|_{L^{p'}_x} <\infty.
\end{equation}
Then $X$ is a strong solution; moreover it is the unique one (both pathwise and in law) in the class of solutions satisfying condition \eqref{eq:integrability-law-sec5}
\end{prop}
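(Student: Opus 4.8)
The plan is to exploit the convolutional structure together with the assumed integrability \eqref{eq:integrability-law-sec5} to reduce everything to the fixed-drift SDE theory of Section \ref{sec:recap-SDE}. The guiding principle is that, although the bare drift $b$ only has finite integrability index $p$, the effective drift $b^\mu_t := b_t \ast \mu_t$ inherits the $B^\alpha_{\infty,\infty}$-regularity needed to apply Theorem \ref{thm:fBmSDEWellPosed} as soon as $\mu_t \in L^{p'}_x$; moreover, for two laws both bounded in $L^{p'}_x$, the associated effective drifts differ by a quantity controlled \emph{linearly} by the Wasserstein distance, which is exactly what allows the uniqueness argument to close.

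\emph{Strong solution.} First I would check that $b^\mu$ satisfies Assumption \ref{ass:drift-SDE}. By the Young-type convolution estimate $\|f \ast g\|_{B^\alpha_{\infty,\infty}} \lesssim \|f\|_{B^\alpha_{p,p}} \|g\|_{L^{p'}_x}$ and \eqref{eq:integrability-law-sec5}, in the case $H \le 1/2$ one obtains $\|b^\mu_t\|_{B^\alpha_{\infty,\infty}} \lesssim \|b_t\|_{B^\alpha_{p,p}} \sup_t \|\mu_t\|_{L^{p'}_x}$, so $b^\mu \in L^q_T B^\alpha_{\infty,\infty}$ with $(\alpha,q)$ meeting \eqref{eq:good parameters}. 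In the case $H > 1/2$ the same estimate gives the spatial bound $b^\mu \in C^0_T C^\alpha_x$, while the temporal bound follows by splitting $b^\mu_t - b^\mu_s = (b_t - b_s)\ast \mu_t + b_s \ast (\mu_t - \mu_s)$, using $b \in C^{\alpha H}_T L^p_x$ for the first term and, for the second, the Wasserstein-Hölder continuity $d_p(\mu_t,\mu_s) \lesssim |t-s|^H$ inherited from the SDE as in \eqref{eq:MeasureHolderCty_H>1/2}, interpolated against the uniform $C^\alpha_x$-bound; since $\alpha > 1 - \tfrac{1}{2H}$ strictly, the resulting exponent still satisfies Assumption \ref{ass:drift-SDE} for a slightly reduced $\tilde\alpha$. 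Once $b^\mu$ satisfies Assumption \ref{ass:drift-SDE}, the process $X$ is by definition a solution of the \emph{fixed-drift} SDE associated to $(\xi, b^\mu)$, so Theorem \ref{thm:fBmSDEWellPosed} and Corollary \ref{cor:RndDataWellPosed} force $X$ to coincide with the unique strong solution of that SDE; in particular $X$ is strong.

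\emph{Uniqueness, and the main obstacle.} Given two solutions $X^1, X^2$ satisfying \eqref{eq:integrability-law-sec5}, set $\mu^i = \cL(X^i)$ and $b^i = b \ast \mu^i$; both satisfy Assumption \ref{ass:drift-SDE}, so the stability estimate of Corollary \ref{cor:RndDataWellPosed} yields $\sup_{t \le \tau} d_p(\mu^1_t,\mu^2_t) \lesssim \|b^1 - b^2\|_{L^{\tilde q}(0,\tau; B^{\alpha-1}_{\infty,\infty})}$. The crux is to bound $\|b_t \ast (\mu^1_t - \mu^2_t)\|_{B^{\alpha-1}_{\infty,\infty}}$ linearly in $d_p(\mu^1_t,\mu^2_t)$, and this is the genuinely hard point. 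The naive coupling argument only gives $\|b_t \ast (\mu^1_t - \mu^2_t)\|_{B^{\alpha-1}_{p,p}} \lesssim \|b_t\|_{B^\alpha_{p,p}} d_1(\mu^1_t,\mu^2_t)$, and raising the integrability index from $p$ to $\infty$ through $B^{\alpha-1}_{p,p} \hookrightarrow B^{\alpha-1-d/p}_{\infty,\infty}$ costs $d/p$ derivatives, so interpolating against the uniform bound leaves only the Hölder modulus $d_1^{\,p/(p+d)}$, which is too weak for Grönwall. I would resolve this with an inequality of Loeper type: since both $\mu^i_t$ are bounded in $L^{p'}_x$, the difference obeys $\|\mu^1_t - \mu^2_t\|_{\dot W^{-1,p'}} \lesssim \max_i \|\mu^i_t\|_{L^{p'}_x}^{\kappa}\, d_p(\mu^1_t,\mu^2_t)$ to the first power; writing $\mu^1_t - \mu^2_t = \nabla \cdot V_t$ with $\|V_t\|_{L^{p'}_x} \lesssim d_p(\mu^1_t,\mu^2_t)$, one gets $\|b_t \ast (\mu^1_t - \mu^2_t)\|_{B^{\alpha-1}_{\infty,\infty}} = \|\nabla \cdot (b_t \ast V_t)\|_{B^{\alpha-1}_{\infty,\infty}} \lesssim \|b_t\|_{B^\alpha_{p,p}} \|V_t\|_{L^{p'}_x} \lesssim \|b_t\|_{B^\alpha_{p,p}}\, d_p(\mu^1_t,\mu^2_t)$. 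I would isolate this as a separate lemma, as it is the new ingredient beyond Lemma \ref{lem:basic-Besov-convolve} and the precise reason the integrability class \eqref{eq:integrability-law-sec5} is the natural one for uniqueness.

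\emph{Conclusion.} With this linear bound in hand, and using $\mu^1_0 = \mu^2_0$, the function $f_t := \sup_{s \le t} d_p(\mu^1_s,\mu^2_s)^{\tilde q}$ satisfies $f_t \lesssim \int_0^t \|b_s\|^{\tilde q}_{B^\alpha_{p,p}} f_s \dd s$, so Grönwall forces $\mu^1 \equiv \mu^2$ on a first small interval and, iterating exactly as in the proofs of Propositions \ref{prop:uniqueness-H>1/2} and \ref{prop:uniqueness-H<1/2} (each step length depending only on $\|b\|$ and the $L^{p'}$ bound, not on the history of the paths), on all of $[0,T]$. Then $b^1 = b^2$, and since $X^1, X^2$ solve the same fixed-drift SDE with identical data, pathwise uniqueness from Theorem \ref{thm:fBmSDEWellPosed} gives $X^1 = X^2$ almost surely, whence uniqueness in law follows a fortiori.
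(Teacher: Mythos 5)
Your overall architecture matches the paper's: strong solvability follows from Young's inequality (plus, for $H>1/2$, the time-regularity splitting and interpolation you sketch, which is exactly what the paper does), and uniqueness is reduced to a bound on $\|b_t\ast(\mu^1_t-\mu^2_t)\|_{B^{\tilde\alpha-1}_{\infty,\infty}}$ that is linear in a Wasserstein distance, fed into Corollary \ref{cor:RndDataWellPosed} and Gr\"onwall. You also correctly identify the crux. However, the key lemma you propose to close the argument --- the Loeper-type inequality $\|\mu^1_t-\mu^2_t\|_{\dot W^{-1,p'}}\lesssim\max_i\|\mu^i_t\|_{L^{p'}_x}^{\kappa}\,d_p(\mu^1_t,\mu^2_t)$, i.e.\ $\mu^1_t-\mu^2_t=\nabla\cdot V_t$ with $\|V_t\|_{L^{p'}_x}\lesssim d_p(\mu^1_t,\mu^2_t)$ --- does not hold with these exponents. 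Running the standard Benamou--Brenier/displacement-interpolation proof, one writes $V_t=\int_0^1 v_\lambda\mu_\lambda\,\dd\lambda$ and must estimate $\|v_\lambda\mu_\lambda\|_{L^q}$ by H\"older against $\|v_\lambda\|_{L^r(\mu_\lambda)}$ (which is what $W_r$ controls) and an $L^m$ norm of the interpolant; the exponent bookkeeping gives $1/q=1/r+1/(mr')$, so reaching $q=p'$ with density bounds only in $L^m=L^{p'}$ forces $r=\infty$. In other words, with densities merely in $L^{p'}_x$ you would need the $\infty$-Wasserstein distance on the right-hand side (or, to keep a finite-order $W_r$, an $L^\infty$ bound on the densities), and neither is available here. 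The ``separate lemma'' you would isolate is therefore precisely the point where the proof breaks.

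The paper resolves the same difficulty differently: via the Hajlasz inequality $|b(x)-b(y)|\le c\,|x-y|\,(MDb(x)+MDb(y))$ for the maximal function of $Db$ (Lemma \ref{lem: maximal function1} and Corollary \ref{cor: maximal function2}). This yields $\|b_t\ast(\mu^1_t-\mu^2_t)\|_{B^{\alpha-1}_{s,s}}\lesssim\|b_t\|_{B^\alpha_{p,p}}\big(\|\mu^1_t\|_{L^{p'}_x}^{1/r}+\|\mu^2_t\|_{L^{p'}_x}^{1/r}\big)\,d_{r'}(\mu^1_t,\mu^2_t)$ whenever $1+r/s\le r/p+1/p'$, with $s$ arbitrarily large and $r$ close to $1$; the embedding $B^{\alpha-1}_{s,s}\hookrightarrow B^{\alpha-1-d/s}_{\infty,\infty}$ then costs only $d/s$ derivatives, which the strict inequality in the hypothesis on $\alpha$ absorbs, and the price of working with $d_{r'}$ for $r'$ large is harmless because one first checks $d_{r'}(\mu^1_t,\mu^2_t)<\infty$ using the Girsanov moment bounds. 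If you replace your Loeper lemma with this maximal-function estimate --- accepting a small, affordable loss of regularity rather than none --- the rest of your argument goes through essentially as written.
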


\begin{proof}
We handle the cases $H\leq 1/2$ and $H>1/2$ slightly differently.

\textit{The case $H\leq 1/2$.} First observe that, if $X$  satisfies \eqref{eq:integrability-law-sec5}, then by Young's inequality $b^\mu_\cdot=b_\cdot\ast\cL(X_\cdot)\in L^q_T B^\alpha_{\infty,\infty}$; in particular $b^\mu$ satisfies Assumption \ref{ass:drift-SDE}, Definition \ref{def:solution-convolution-DDSDE} is meaningful and $X$ is necessarily a strong solution.

Now let $X^i$, $i=1,2$, be two solutions to \eqref{eq:ConvMckeanSDE} satisfying \eqref{eq:integrability-law-sec5};
as they are both strong solutions, by the usual arguments, we can assume them to be defined on the same probability space, w.r.t. the same $(\xi,W)$, and we only need to check that $X^1=X^2$ $\PP$-a.s. Moreover thanks to the strict inequality $\alpha>1-\frac{1}{2H}+\frac{1}{Hq}$ here we can assume w.l.o.g. $q<\infty$. 

For $i=1,2$, set $\mu^i_t = \cL(X^i_t)$;
%
%
as $b \ast \mu^i$ both satisfy Assumption \ref{ass:drift-SDE}, we may apply Corollary \ref{cor:RndDataWellPosed} to find
%
\begin{align*}
\sup_{t \in [0,T]} d_{r'}(\mu^1_t,\mu^2_t)^{r'} &\leq \sup_{t\in [0,T]}\EE[| X^1_t-X^2_t|^{r'}]
 \leq \EE[\| X^1-X^2\|_\gamma^{r'}]
\lesssim \| b\ast( \mu^1-\mu^2)\|_{L^q_T B^{\alpha-1}_{\infty,\infty}}^{r'}\\
&\lesssim \| b\|_{L^q_T B^\alpha_{p,p}}^{r'} (\| \mu^1\|_{L^\infty_T L^{p'}_x}^{r'} + \| \mu^2\|_{L^\infty_T L^q_x}^{r'})<\infty.
\end{align*}
%
In particular the quantity $d_{r'}(\mu^1_t,\mu^2_t)$ is finite for any $r'\in [1,\infty)$ and any $t\in [0,T]$.

We now wish to apply Corollary \ref{cor: maximal function2} from Appendix \ref{app:UsefulLemmas} to obtain better control on the difference of the drifts $b\ast \mu^1-b\ast \mu^2$.
To do so observe that, under our assumptions on the parameters $(\alpha,q,p)$, we can find new parameters $(s,r)\in (1,\infty)^2$ with $s$ large and $r$ close to $1$ such that
\begin{equation}\label{eq:proof_new_coeff}
   \alpha - \frac{d}{s} > 1-\frac{1}{2H} + \frac{1}{Hq}, \quad 1+\frac{r}{s}\leq \frac{r}{p}+\frac{1}{p'}.
\end{equation}
For this choice, set $\tilde\alpha := \alpha- d/s$;
by construction the parameters $(p,p',s,r)$ satisfy the assumptions of Corollary \ref{cor: maximal function2} from Appendix \ref{app:UsefulLemmas}; its application, together with standard Besov embeddings, yields
\begin{align*}
    \| b_t\ast (\mu^1_t-\mu^2_t) \|_{B^{\tilde\alpha-1}_{\infty,\infty}}
    & \lesssim \| b_t\ast (\mu^1_t-\mu^2_t) \|_{B^{\alpha-1}_{s,s}}\\
    & \lesssim \| b_t\|_{B^\alpha_{p,p}} (\| \mu^1_t\|_{L^{p'}}^{1/r} + \| \mu^2_t\|_{L^{p'}}^{1/r}) d_{r'}(\mu^1_t,\mu^2_t).
\end{align*}
Now since under $\eqref{eq:proof_new_coeff}$ the triple $(\tilde{\alpha},q,H)$ also satisfies \eqref{eq:good parameters}, we can again apply estimate \eqref{eq:SDEGammaNormEstim} from Corollary \ref{cor:RndDataWellPosed} to find
\begin{align*}
    d_{r'}(\mu^1_t,\mu^2_t)^q
    \leq \|X^1_t-X^2_t\|_{L^{r'}_\Omega}^q
    \lesssim \int_0^t \| b_u\ast (\mu^1_u-\mu^2_u)\|_{B^{\tilde\alpha-1}_{\infty,\infty}}^q\, \dd u
    \lesssim  \int_0^t \| b_u\|^q_{B^\alpha_{p,p}} d_{r'}(\mu^1_u,\mu^2_u)^q \dd u.
\end{align*}
Applying Gr\"onwall's lemma we conclude that $d_{r'}(\mu^1_t,\mu^2_t)=0$ and so $\mu^1_t=\mu^2_t$ for all $t\in [0,T]$. Thus, $X^i$ are solutions solutions to the same SDE and therefore $X^1_\cdot=X^2_\cdot$ $\PP$-a.s.

\textit{The case $H>1/2$.}
We argue essentially in the same way, only this time checking that $X$ is a strong solution starting from the available information on $b$ and $\cL(X_\cdot)$ is less straightforward.

First observe that Young's inequality still provides $b^\mu\in C^0_T C^\alpha_x$, so that by Definition \ref{def:solution-convolution-DDSDE} the DDSDE is meaningful in the classical integral sense.
In order to check Assumption \ref{ass:drift-SDE} for $b^\mu$ (which implies $X$ being strong), it remains to show that $b\ast \mu\in C^{\tilde \alpha H}_T C^0_x$ for some $\tilde \alpha$ such that
\begin{align*}
    1-\frac{1}{2H} < \tilde\alpha\leq \alpha.
\end{align*}
%
%
%
By addition and subtraction, $b^\mu_t -b^\mu_s = (b_t-b_s)\ast \mu_t + b_s\ast(\mu_t-\mu_s)$; by the hypothesis on $b,\,\mu,$ we can estimate the first term by
\begin{equation}\label{eq:bTimeIncrement}
    \|(b_t-b_s)\ast \mu_s\|_{C^0_x}
    \leq \|b_t-b_s\|_{L^p_x}\|\mu\|_{L^\infty_T L^{p'}_x} \lesssim |t-s|^{\alpha H}.
\end{equation}
Since $X$ is solution to the SDE $X_t=\xi + \int_0^t b^\mu_s(X_s)\mathd s+ W_t$, for any $r'\in (1,\infty)$ it holds
\begin{equation*}
    d_{r'}(\mu_t,\mu_s)
    \leq \|X_t-X_s\|_{L^{r'}_\Omega}
    \leq \| b\ast \mu^i\|_{L^\infty_T C^0_x}\, |t-s| +\|W_t-W_s\|_{L^{r'}_\Omega}
    \lesssim_{r',T} |t-s|^H.
\end{equation*}
Now similarly to the case $H\leq 1/2$, choose $(s,r) \in (1,\infty)^2$ such that
\begin{equation*}
    \tilde \alpha:=\alpha -\frac{d}{s} > 1-\frac{1}{2H}, \quad 1+\frac{r}{s}\leq \frac{r}{p}+ \frac{1}{p'};
\end{equation*}
%
%
Applying Corollary \ref{cor: maximal function2} and the previous estimate for $d_{r'}(\mu_t,\mu_s)$, we then find
\begin{align*}
   \| b_t\ast (\mu^i_t- \mu^i_s)\|_{B^{\tilde\alpha-1}_{\infty,\infty}} \lesssim d_{r'}(\mu_t,\mu_s) \lesssim |t-s|^H.
\end{align*}
On the other hand, since $b\in C^0_T B^\alpha_{p,p}$ and $\mu \in L^\infty_T L^{p'}_x$, by Young's inequality, we have $\|b_t \ast (\mu^i_t - \mu^i_s)\|_{B^{\alpha}_{\infty,\infty}}\lesssim 1$.
%
We can now interpolate between the two estimates: choose $\theta=\tilde\alpha\in (0,1)$, so that
$\theta (\tilde\alpha -1) + (1-\theta)\alpha = (1-\tilde\alpha)(\alpha-\tilde\alpha)>0$, then by the embedding $B^\eps_{\infty,\infty}\hookrightarrow C^0_x$ for $\eps>0$, we obtain
\begin{align*}
    \| b_t\ast (\mu^i_t-\mu^i_s)\|_{C^0_x}
    \lesssim  \| b\ast (\mu^i_t-\mu^i_s)\|_{B^\alpha_{\infty,\infty}}^{1-\tilde\alpha} \,\| b_t\ast (\mu^i_t-\mu^i_s)\|_{B^{\tilde\alpha-1}_{\infty,\infty}}^{\tilde\alpha}
    \lesssim  |t-s|^{\tilde\alpha H}.
\end{align*}
So we conclude that $b\ast \mu^i\in C^0_T C^{\tilde\alpha}_x\cap C^{\tilde{\alpha} H}_T C^0_x$, where by construction $\tilde\alpha>1-1/(2H)$.

The second part of the argument, concerning the comparison of two solutions $X^i$ satisfying \eqref{eq:integrability-law-sec5}, now proceeds identically as in the case $H\leq 1/2$.
\end{proof}


\subsection{Distributional kernels with bounded divergence}\label{subsec:refined-divergence}

Proposition \ref{prop:RefinedAPrioriUniqueness} reduces the problem of uniqueness of solutions (in a suitable class) to that of establishing their regularity, in the sense of equation \eqref{eq:integrability-law-sec5}.

One classical way to show that the condition $\mu_0\in L^{p'}_x$ is propagated at positive times, which has been exploited systematically after \cite{diperna1989ordinary}, is to impose boundedness of $\div b$; in the setting of DDSDEs with general additive noise and regular drift $b$, an analogous statement can be found in \cite[Proposition 4.3]{GalHarMay_21benchmark}.

\begin{prop}\label{prop:DivStrongExist}
Let $H\in (0,1)$, $q \in (2,\infty]$, $p\in [1,\infty)$, $p'$ its conjugate exponent.
Let $b$ be a distributional drift such that $\div b \in L^1_{T} L^\infty_x$ and either:
\begin{enumerate}[label=\roman*.]
    \item[i.] If $H>1/2$, then $b\in C^{\alpha H}_T L^p_x\cap  C^0_T B^\alpha_{p,p}$ for some $\alpha>1-\frac{1}{2H}$.
    \item[ii.] If $H\leq 1/2$, then $b \in L^q_T B^\alpha_{p,p}$ for some $\alpha>1-\frac{1}{2H}+\frac{1}{Hq}$.
\end{enumerate}
Then for any $\mu_0\in L^{p'}_x$ there exists a strong solution to the DDSDE \eqref{eq:ConvMckeanSDE} associated to $(\mu_0,b)$, which moreover satisfies
$\mathcal{L}(X_\cdot)\in L^\infty_T L^{p'}_x$.
\end{prop}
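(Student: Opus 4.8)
The plan is to construct the solution by mollification, derive the integrability bound $\cL(X_\cdot)\in L^\infty_T L^{p'}_x$ through a \emph{pathwise transport argument} that substitutes for the unavailable Fokker--Planck analysis, and then pass to the limit using the stability estimates of Section~\ref{subsec:stability}. Since uniqueness in the class \eqref{eq:integrability-law-sec5} is already granted by Proposition~\ref{prop:RefinedAPrioriUniqueness}, I only need to exhibit one strong solution satisfying \eqref{eq:integrability-law-sec5}. First I would take a standard mollification $b^n=b\ast\psi^n$; this preserves all relevant bounds, in particular the norm of $b$ in the space $E$ of Assumption~\ref{ass:drift-SDE} and, crucially, $\|\div b^n\|_{L^1_T L^\infty_x}\leq\|\div b\|_{L^1_T L^\infty_x}$, while $b^n\to b$ in $L^{\tilde q}_T B^{\alpha-\eps}_{p,p}$ for every $\eps>0$. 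For each $n$ the drift $(x,\mu)\mapsto(b^n_t\ast\mu)(x)$ is bounded and Lipschitz in $(x,\mu)$ with respect to $d_1$, so the smooth DDSDE admits a unique solution $X^n$ with flow of measures $\mu^n\in C_T\cP_p$ by classical McKean--Vlasov theory.

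The main step, and the key new ingredient, is the uniform bound $\sup_n\sup_{t\in[0,T]}\|\mu^n_t\|_{L^{p'}_x}\leq C\|\mu_0\|_{L^{p'}_x}$. I would obtain it pathwise by conditioning on $W$: for a fixed realization $\omega$ of the fBm, $\theta_t:=X^n_t-W_t(\omega)$ solves the genuine ODE $\dot\theta_t=(b^n_t\ast\mu^n_t)(\theta_t+W_t(\omega))$, whose flow $x\mapsto\phi^\omega_t(x)$ is a $C^1$-diffeomorphism (here $\mu^n$ is the already-constructed solution, so the vector field is a given time-dependent function and there is no circularity). Then $\cL(X^n_t\mid W=\omega)=(\phi^\omega_t+W_t(\omega))_\#\mu_0$; since translation leaves the $L^{p'}_x$-norm invariant, the change-of-variables formula gives $\|\cL(X^n_t\mid W=\omega)\|_{L^{p'}_x}^{p'}=\int|\rho_0(x)|^{p'}J^\omega_t(x)^{1-p'}\dd x$, where $J^\omega_t$ is the Jacobian of $\phi^\omega_t$ and $\rho_0$ the density of $\mu_0$. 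Liouville's formula yields $J^\omega_t(x)=\exp\big(\int_0^t((\div b^n_s)\ast\mu^n_s)(\phi^\omega_s(x)+W_s)\dd s\big)$, and Young's inequality against the probability measure $\mu^n_s$ forces $J^\omega_t\geq\exp(-\|\div b\|_{L^1_T L^\infty_x})=:c>0$ uniformly in $\omega,n,t,x$. As $1-p'\leq0$ this bounds the conditional density by $c^{(1-p')/p'}\|\mu_0\|_{L^{p'}_x}$, and averaging over $W$ via the integral triangle inequality (convexity of $\nu\mapsto\|\nu\|_{L^{p'}_x}$) gives the claimed uniform bound. This argument is deterministic in $W$ and hence insensitive to $H$, which is exactly what lets it bypass the absence of a parabolic PDE.

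With the uniform $L^{p'}$ bound in hand, the effective drifts satisfy $\|b^n_t\ast\mu^n_t\|_{B^\alpha_{\infty,\infty}}\lesssim\|b^n_t\|_{B^\alpha_{p,p}}\|\mu^n_t\|_{L^{p'}_x}$ by Young's inequality, so they are bounded in $L^q_T B^\alpha_{\infty,\infty}$ and meet Assumption~\ref{ass:drift-SDE} uniformly. To show $\{\mu^n\}$ is Cauchy I would reproduce the computation of Proposition~\ref{prop:RefinedAPrioriUniqueness}: splitting $b^n\ast\mu^n-b^m\ast\mu^m=(b^n-b^m)\ast\mu^n+b^m\ast(\mu^n-\mu^m)$, the first term tends to $0$ in the relevant $L^{\tilde q}_T B^{\tilde\alpha-1}_{\infty,\infty}$ norm because $b^n\to b$, while the second is controlled by Corollary~\ref{cor: maximal function2} (trading $d/s$ derivatives against the uniform $L^{p'}$ bound) as $\lesssim\|b_s\|_{B^\alpha_{p,p}}\,d_{r'}(\mu^n_s,\mu^m_s)$, with $(s,r)$ chosen as in \eqref{eq:proof_new_coeff} so that $\tilde\alpha=\alpha-d/s$ still satisfies \eqref{eq:good parameters}. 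Feeding this into the stability estimate \eqref{eq:SDE-estim-wasserstein} of Corollary~\ref{cor:RndDataWellPosed} and applying Gr\"onwall shows $\sup_t d_{r'}(\mu^n_t,\mu^m_t)\to0$, hence $\mu^n\to\mu$ in $C_T\cP_{r'}$. The limit $b\ast\mu$ satisfies Assumption~\ref{ass:drift-SDE} (by the argument of Proposition~\ref{prop:RefinedAPrioriUniqueness}, which also treats the time-regularity in the case $H>1/2$), so by Theorem~\ref{thm:fBmSDEWellPosed} the associated SDE has a unique strong solution $X$ with $\cL(X_\cdot)=\mu$; the convergence of the drifts and the closure property of nonlinear Young equations (Lemma~\ref{lem:closure-YDE}) identify $X$ as the limit of the $X^n$. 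Finally, weak lower semicontinuity of the $L^{p'}_x$-norm under $\mu^n_t\rightharpoonup\mu_t$ upgrades the uniform bound to $\sup_t\|\mu_t\|_{L^{p'}_x}\leq C\|\mu_0\|_{L^{p'}_x}$, establishing \eqref{eq:main_thm3_integrability}.

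I would flag the uniform $L^{p'}$ estimate as the principal obstacle. Unlike the Brownian case one cannot invoke the Fokker--Planck equation, so the bound must come from the conditional flow representation, and the only structural input that makes it work is the DiPerna--Lions-type condition $\div b\in L^1_T L^\infty_x$, entering through Liouville's formula; everything downstream (compactness, convergence, identification of the limit) then follows the template already established for the uniqueness statement.
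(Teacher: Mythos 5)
Your proposal is correct and follows the same overall template as the paper's proof (mollify $b$, establish a uniform $L^{p'}_x$ bound on the laws via the divergence condition, pass to the limit through the stability estimates), but it differs in the two substantive steps. For the a priori bound, the paper simply cites \cite[Proposition 4.3]{GalHarMay_21benchmark}, whereas you re-derive it from scratch by conditioning on $W$, representing the conditional law as a pushforward under the Lagrangian flow of the ODE $\dot\theta_t=(b^n_t\ast\mu^n_t)(\theta_t+W_t(\omega))$, and invoking Liouville's formula together with $\|(\div b^n_s)\ast\mu^n_s\|_{L^\infty_x}\leq\|\div b^n_s\|_{L^\infty_x}$; this is precisely the mechanism behind the cited result, so your version is self-contained at the cost of some extra bookkeeping (global well-posedness of the flow for each fixed $n$, the convexity/Minkowski step to average over $\omega$, and the edge case $p'=\infty$). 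For the limit passage, the paper extracts a weakly convergent subsequence by tightness (uniform H\"older bounds from Remark \ref{rem:holder-bound-solution} plus Ascoli--Arzel\`a) and identifies the limit drift via Corollary \ref{cor:BesovConvolve} and dominated convergence; you instead run a Cauchy argument in $C_T\cP_{r'}$ by splitting $b^n\ast\mu^n-b^m\ast\mu^m$ and controlling the cross term with Corollary \ref{cor: maximal function2} and Gr\"onwall, exactly as in the uniqueness proof of Proposition \ref{prop:RefinedAPrioriUniqueness}. Both routes are valid; the compactness route needs only weak convergence of the measures, while yours avoids subsequence extraction and gives a quantitative rate, at the price of importing the parameter gymnastics \eqref{eq:proof_new_coeff}. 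You correctly flag that in the regime $H>1/2$ the uniform $L^{p'}_x$ bound alone does not yield Assumption \ref{ass:drift-SDE} for the effective drifts and that the interpolation argument of Proposition \ref{prop:RefinedAPrioriUniqueness} must be invoked to recover the time regularity $C^{\tilde\alpha H}_TC^0_x$, uniformly in $n$; this is also the point the paper treats only in outline.
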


\begin{proof}
We start by dealing with the case $H\leq 1/2$; at the end of the proof we explain how the reasoning needs to be modified for $H>1/2$.

\textit{The case $H\leq 1/2$.}
In this case we can assume w.l.o.g. $q<\infty$;
recall that if $f^n$ is a bounded sequence in $L^q_T B^\alpha_{\infty,\infty}$, for $(\alpha,q)$ satisfying Assumption \ref{ass:drift-SDE} such that $f^n\to f$ in $L^q_T B^{\alpha-1}_{\infty,\infty}$, then by Corollary \ref{cor:RndDataWellPosed} the associated solutions
\[
X^n_t = \xi + \int_0^t f^n(X^n_s)\,\mathd s + W_t
\]
converge to the unique strong solution $X$ of the SDE associated to $(\xi,W,f)$ (we can assume $\{X^n\}_{n\geq 1}$ and $X$ to be defined on the same probability space for the same $(\xi,W)$).

Given $b$ as in the hypothesis, consider a sequence of smooth, bounded functions $b^n$ such that $b^n\to b$ in $L^q_T B^\alpha_{p,p}$ with $\| \div b^n\|_{L^1_T L^\infty_x} \leq \| \div b\|_{L^1_T L^\infty_x}$;
let $X^n$ be the solutions to
\begin{equation*}
X^n_t = \xi + \int_0^t b^n\ast\mathcal{L}(X^n_s)(X^n_s)\mathd s + W_t,
\end{equation*}
whose existence is granted by classical results (see e.g. \cite[Theorem 7]{coghi2020pathwise}) and set $\mu^n_t = \cL(X^n_t)$.
By \cite[Proposition 4.3]{GalHarMay_21benchmark}, there exists $C=C(\| \div b\|_{L^1_T L^\infty_x})>0$ such that
\begin{align*}
   \sup_{n\in\NN} \sup_{t\in [0,T]} \| \mu^n_t\|_{L^{p'}_x} \leq C \| \mu_0\|_{L^{p'}_x}<\infty.
\end{align*}
As a consequence, each $X^n$ solves an SDE with drift $f^n=b^n\ast\mu^n$ satisfying
\[
\sup_{n\in\NN} \|f^n \|_{L^q_T B^\alpha_{\infty,\infty}}
\lesssim  \sup_{n\in\NN}\|b^n\|_{L^q_T B^\alpha_{p,p}}\, \sup_{n\in\NN}\sup_{t\in [0,T]} \| \mu^n_t\|_{L^{p'}_x}
\lesssim C \| b\|_{L^q_T B^\alpha_{p,p}} \| \mu_0\|_{L^{p'}_x}<\infty.
\]
In turn this implies by Remark \ref{rem:holder-bound-solution} that for any fixed $\eps>0$ we have the uniform estimate $\sup_n \EE[\llbracket X^n\rrbracket_{C^{H-\eps}}]<\infty$; since moreover $X^n_0=\xi$ for all $n\in\NN$, we can conclude by Ascoli--Arzel\`a that the sequence $\{X^n\}_{n\geq 1}$ is tight in $C_T$.
We can then extract a (not relabelled) subsequence such that $\cL(X^n)$ converge weakly to some $\mu\in \cP(C_T)$; consequently $\mu^n_t\rightharpoonup \mu_t$ in $\cP(\RR^d)$ for any $t\in [0,T]$, where $\mu_t = e_t \sharp \mu$ and $e_t:C_T\rightarrow \RR^d$ is the evaluation map. 
It follows from the uniform estimates that $\| \mu_t\|_{L^{p'}_x} \leq C \| \mu_0\|_{L^{p'}}$ as well.

We claim that the drifts $f^n_t=b^n_t\ast \mu^n_t$ converge to $f_t:=b_t\ast \mu_t$ in $L^q_T B^{\alpha-1}_{\infty,\infty}$.
Once this is shown, by the initial observation the solutions $X^n$ must converge to the unique solution $X$ associated to $(\xi,W,f)$; then it must hold $\cL(X_t)=\mu_t$, $f_t=b_t\ast \cL(X_t)$ and so we can conclude that $X$ is a solution to \eqref{eq:ConvMckeanSDE} with the desired regularity.

It remains to show the claim;
to this end, we set
\[
f^n_t-f_t
=b_t^n\ast (\mu^n_t-\mu_t)+(b_t^n-b_t)\ast\mu_t
=: g^n_t + h^n_t.\]
By Corollary \ref{cor:BesovConvolve} in Appendix \ref{app:UsefulLemmas}, $g^n_t\to 0$ in $B^{\tilde \alpha}_{\infty,\infty}$ for all $\tilde \alpha<\alpha$ and a.e. $t\in [0,T]$;
the bound
$|g^n_t|\leq \|b_t\|_{B^\alpha_{p,p}} (\|\mu^n_t\|_{L^{p'}_x}+\|\mu_t\|_{L^{p'}_x})$ and dominated convergence imply that $g^n \rightarrow g$ in $L^q_T B^{\tilde \alpha}_{\infty,\infty}$ for all $\tilde \alpha<\alpha$.
For $h^n$ we have the estimate
\[
\lim_{n\to\infty} \|h^n\|_{L^q_T B^\alpha_{\infty.\infty}} \leq  \sup_{t\in [0,T]} \| \mu_t\|_{L^{p'}_x}\,\lim_{n\to\infty}\| b^n-b\|_{L^q_T B^\alpha_{p,p}} = 0.
\]
Hence we have shown the claim and thus the conclusion in this case.


\textit{The case $H>1/2$.} 
As in the proof of Proposition \ref{prop:RefinedAPrioriUniqueness}, in this regime $\cL(X_\cdot)\in L^\infty_T L^{p'}_x$ is not enough to deduce straightaway that $b\ast \cL(X_\cdot)$ satisfies Assumption \ref{ass:drift-SDE}; however up to technical details, the proof is almost the same as above.

Specifically, we can consider a sequence $\{b^n\}_n$ of smooth functions, uniformly bounded in $C^{\alpha H}_T L^p_x \cap C^0_T B^\alpha_{p,p}$, with $\div b^n$ uniformly bounded in $L^1_T L^\infty_x$ and such that $b^n\to b$ in $L^q_T B^\alpha_{p,p}$ for any $q<\infty$.
Then exploiting the a priori bound from \cite[Proposition 4.3]{GalHarMay_21benchmark} and the argument from Proposition \ref{prop:RefinedAPrioriUniqueness}, one can derive uniform estimates for the solutions $X^n$ associated to $X$ and finally pass to the limit with the help of Corollary \ref{cor:RndDataWellPosed}.

Alternatively, let us mention that the existence of a weak solution $X$ satisfying $\cL(X_\cdot)\in L^\infty_T L^{p'}_x$ in this setting can be obtained by an application of \cite[Proposition 4.4]{GalHarMay_21benchmark}.
%
%
%
\end{proof}

\begin{proof}[Proof of Theorem \ref{thm:main_thm3}]
It is now an immediate consequence of Propositions \ref{prop:RefinedAPrioriUniqueness} and \ref{prop:DivStrongExist}.
\end{proof}

\subsection{Integrable kernels}\label{subsec:refined-integrable}

We now restrict ourselves to the case $H \leq 1/2$ and drifts $b\in L^q_T L^p_x$; in this setting we can present a second route to establishing existence of a solution with sufficiently regular law, to which we can apply Proposition \ref{prop:RefinedAPrioriUniqueness}.

Before proceeding further, let us explain why it is reasonable to expect so. By the Besov embedding $L^p_t \hookrightarrow B^{-d/p}_{\infty,\infty}$, drifts $b\in L^q_T L^p_x$ satisfy Assumption \ref{ass:drift-SDE} if and only if
\begin{equation}\label{eq:parameters-qp-strong}
     \frac{1}{q}+\frac{Hd}{p}<\frac{1}{2}-H;
\end{equation}
however, differently from the class $L^q_T B^\alpha_{p,p}$, for $b\in L^q_T L^p_x$ it is known after the works \cite{nualart03stochastic,le2020stochastic} that Girsanov transform (and thus weak existence and uniqueness in law for associated SDEs) is available as soon as
\begin{equation}\label{eq:parameters-qp-weak}
    \frac{1}{q}+\frac{Hd}{p}<\frac{1}{2}.
\end{equation}
As already seen in Section \ref{subsec:reg-estim}, Girsanov transform allows to deduce information on the regularity of $\cL(X_t)$, which in turn provides higher regularity of the effective drift $b^\mu$ for the convolutional DDSDE.
In particular, we may hope that starting from $b\in L^q_T L^p_x$ for $(q,p)$ satisfying \eqref{eq:parameters-qp-weak}, we end up with $b^\mu\in L^{\tilde q}_T L^{\tilde p}_x$ with $(\tilde q,\tilde p)$ satisfying \eqref{eq:parameters-qp-strong}.


At a technical level, we will proceed similarly as in Section \ref{subsec:refined-divergence}, first establishing uniform a priori estimates for regular $b$ and then running an approximation procedure.
We start by establishing the recalling and improving the available results on Girsanov transform; as we are only interested in smooth approximations, for simplicity we restrict to regular drifts.

\begin{lem}\label{lem:girsanov-sec5}
Let $(\Omega,\cF,\PP)$ be a probability space, $(\xi,W)$ a $\RR^d\times C_T$-valued r.v. on it with $\cL_\PP(\xi,W)=\mu_0\otimes \mu^H$ for some $H \leq 1/2$ and let $f:[0,T]\times\RR^d\to \RR^d$ be a globally Lipschitz drift, $f\in L^q_T L^p_x$ for parameters $(p,q)\in [1,\infty]^2$ satisfying \eqref{eq:parameters-qp-weak}; let $X$ be the unique strong solution to
\begin{equation*}
    X_t= \xi + \int_0^t f_s (X_s)\dd s+ W_t \quad \forall\, t\in [0,T].
\end{equation*}
Then there exists a measure $\QQ$ equivalent to $\PP$ such that $\cL_\QQ(X)=\cL_\PP(\xi+W)$ and there exists an increasing function $F$, depending on $H,T,p,q$, such that
\begin{equation*}
    \EE_\QQ\bigg[\Big(\frac{\dd \PP}{\dd \QQ}\Big)^n\bigg] + \EE_\QQ\bigg[\Big(\frac{\dd \QQ}{\dd \PP}\Big)^n\bigg] \leq F(n, \| f\|_{L^q_T L^p_x})<\infty \quad \forall\, n\in \NN
\end{equation*}
where the estimate does not depend on $\mu_0$ nor the specific function $f$.
\end{lem}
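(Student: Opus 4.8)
The plan is to apply, for each deterministic starting point, the version of Girsanov's theorem for fractional Brownian motion of \cite{nualart2002regularization}, to control the moments of the resulting density uniformly in the starting point by means of the averaging estimates of \cite{le2020stochastic}, and finally to assemble $\QQ$ by integrating against $\mu_0$ exactly as in the proof of Corollary \ref{cor:SDE-random-data-girsanov}. I would therefore work on the canonical space and condition on $\xi=x$: since $f$ is globally Lipschitz, the equation $X^x_t=x+\int_0^t f_s(X^x_s)\,\dd s+W_t$ has a unique strong solution, which we write as $X^x=x+h^x+W$ with $h^x_t:=\int_0^t f_s(X^x_s)\,\dd s$, an $\cF_t$-adapted path vanishing at $0$. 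Recalling the canonical representation $W_t=\int_0^t K_H(t,s)\,\dd B_s$, the candidate density is the stochastic exponential
\begin{equation*}
    \frac{\dd\QQ^x}{\dd\mu^H} = \exp\bigg(-\int_0^T g^x_s\,\dd B_s - \frac12\int_0^T |g^x_s|^2\,\dd s\bigg), \qquad g^x:=K_H^{-1}h^x,
\end{equation*}
where $K_H^{-1}$ is the inverse Volterra operator; for $H\le 1/2$ it acts, up to explicit weights and constants, as a Riemann--Liouville fractional integral of order $1/2-H$ of $(h^x)'_u=f_u(X^x_u)$. Provided $g^x\in L^2([0,T])$ and Novikov's condition holds, \cite[Theorem 2]{nualart2002regularization} yields that $X^x-x$ is an fBm under $\QQ^x$, whence $\cL_{\QQ^x}(X^x)=\cL_{\mu^H}(x+W)$.

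I would first explain why Proposition \ref{prop:Girsanov sufficient} cannot be invoked directly: identifying $f\in L^q_T L^p_x\hookrightarrow L^q_T B^{-d/p}_{\infty,\infty}$, the requirement there that $h^x\in C^\gamma_T$ with $\gamma>H+1/2$ amounts, through \eqref{eq:good parameters}, to the \emph{strong} condition \eqref{eq:parameters-qp-strong}, while we only assume the \emph{weak} one \eqref{eq:parameters-qp-weak}; the gap is precisely the $-H$ separating the two. The improvement consists in working directly with the Cameron--Martin norm $\int_0^T|g^x_s|^2\,\dd s$ rather than with a H\"older norm of $h^x$, which is what allows Girsanov to go through in the larger regime \eqref{eq:parameters-qp-weak}.

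The moments of the density are then controlled by the same completing-the-square and Novikov argument underlying \cite[Theorem 14]{galeati2020noiseless}: the quantities $\EE_{\QQ^x}[(\dd\QQ^x/\dd\mu^H)^n]$ and $\EE_{\QQ^x}[(\dd\mu^H/\dd\QQ^x)^n]$ are bounded in terms of $\EE[\exp(c_n\int_0^T|g^x_s|^2\,\dd s)]$ evaluated under measures equivalent to $\mu^H$ under which $X^x$ is again a shifted fractional path. It therefore suffices to prove that $\int_0^T|g^x_s|^2\,\dd s$ has finite exponential moments of every order, uniformly in $x$; after transferring to the free path every such expectation takes the form
\begin{equation*}
    \EE_{\mu^H}\bigg[\exp\bigg(c\int_0^T\big|K_H^{-1}\big(\textstyle\int_0^\cdot f_r(x+W_r)\,\dd r\big)_s\big|^2\,\dd s\bigg)\bigg].
\end{equation*}

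The heart of the matter, and the step where \eqref{eq:parameters-qp-weak} is sharp, is to bound this last display by a finite constant depending only on $H,T,p,q,c$ and $\|f\|_{L^q_T L^p_x}$, uniformly in $x$. Here I would invoke the stochastic sewing / John--Nirenberg estimates of \cite{le2020stochastic} (the same circle of ideas as \cite[Lemma 6.4]{le2020stochastic}, already used in Proposition \ref{prop:regularity-density2}): under \eqref{eq:parameters-qp-weak} they provide, for the averaged field along $x+W$, increment bounds with the optimal $\sqrt{m}$ growth and a time-exponent strictly above $1/2$, uniformly in $x$. Applying the order-$(1/2-H)$ operator $K_H^{-1}$ then yields finiteness of $\int_0^T|g^x_s|^2\,\dd s$ together with Gaussian-type tails, and the exponential moments follow by the Stirling-type summation already used in the proof of Proposition \ref{prop:main_avg_estimates-1}. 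Finally, defining $\QQ$ on $\RR^d\times C_T$ by $\QQ(\dd x,\dd\omega)=\QQ^x(\dd\omega)\,\mu_0(\dd x)$ exactly as in Corollary \ref{cor:SDE-random-data-girsanov}, the uniformity in $x$ makes all the bounds independent of $\mu_0$ and of $f$ beyond its $L^q_T L^p_x$ norm, producing the claimed function $F$. I expect this last exponential estimate along the free path, rather than the change-of-measure bookkeeping, to be the genuine obstacle.
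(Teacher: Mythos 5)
Your proposal is correct and follows essentially the same route as the paper: the paper disposes of the deterministic-initial-data case by directly citing \cite[Lemma 6.7]{le2020stochastic} (whose proof is precisely the fBm-Girsanov-plus-stochastic-sewing argument you sketch, with the uniformity in the starting point already stressed there), and then extends to random $\xi$ by integrating the conditional measures $\QQ^x$ against $\mu_0$ exactly as in Corollary \ref{cor:SDE-random-data-girsanov}, using that $\|f(x_0+\cdot)\|_{L^q_T L^p_x}=\|f\|_{L^q_T L^p_x}$. The only difference is that you unpack the content of the cited lemma rather than invoking it as a black box.
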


\begin{proof}
For deterministic initial data $\xi=x_0\in \RR^d$ (equiv. $\mu_0=\delta_{x_0}$), the statement is a direct consequence of \cite[Lemma 6.7]{le2020stochastic}, where it is already stressed that the estimates only depend on $\| f\|_{L^q_T L^p_x}$ but not on $x_0$ nor the specific $f$.
The proof for random initial data $\xi$ independent of $W$ is now identical to that of Corollary \ref{cor:SDE-random-data-girsanov}; the estimate not depending on $\xi$ follows from the property that $\| f(x_0+\cdot)\|_{L^q_T L^p_x} = \| f\|_{L^q_T L^p_x}$ for all $x_0\in \RR^d$.
\end{proof}

The next lemma shows that the initial regularity of $\mu_0$ is propagated at positive times, establishing useful a priori estimates; the proof is similar to that of Proposition \ref{prop:regularity-density2}.

\begin{lem}\label{lem:girsanov implies integrability}
Let $\xi,\,W,\,X,\,f,\,(p,q)$ be as in Lemma \ref{lem:girsanov-sec5} and assume $\mu_0\in L^r_x$ for some $r\in (1,\infty)$; then
\begin{align*}
    \sup_{t\in [0,T]} \| \cL_\PP(X_t)\|_{L^{\tilde r}_x} <\infty \quad \forall\, \tilde r\in (1,r).
\end{align*}
\end{lem}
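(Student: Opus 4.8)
The plan is to combine the change of measure from Lemma \ref{lem:girsanov-sec5} with a duality argument, exactly in the spirit of the proof of Proposition \ref{prop:regularity-density2}; the one genuinely new point is to organise the exponents so that the resulting bound is uniform in $t$ all the way down to $t=0$. First I would record the objects involved. By Lemma \ref{lem:girsanov-sec5} there is a measure $\QQ\sim\PP$ with $\cL_\QQ(X)=\cL_\PP(\xi+W)$ and with all moments of $\dd\PP/\dd\QQ$ controlled uniformly in terms of $\|f\|_{L^q_T L^p_x}$ alone. Writing $p_t$ for the Gaussian density of $W_t\sim N(0,t^{2H}I_d)$ and $\rho_0\in L^r_x$ for the density of $\mu_0$, the marginal $\cL_\QQ(X_t)=\cL_\PP(\xi+W_t)$ has density $p_t\ast\rho_0$; since $\PP\ll\QQ$ one checks that $\cL_\PP(X_t)\ll\cL_\QQ(X_t)$, so $\cL_\PP(X_t)$ has a density $\rho_t$, and it remains to bound $\|\rho_t\|_{L^{\tilde r}_x}$ uniformly in $t$.

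Next I would fix $\tilde r\in(1,r)$ and run the duality $\|\rho_t\|_{L^{\tilde r}}=\sup\{\EE_\PP[g(X_t)]:\|g\|_{L^{\tilde r'}}\le 1\}$, where it suffices to take $g$ smooth and compactly supported. Setting $m:=\tilde r'/r'>1$ (this is where the strict inequality $\tilde r<r$ enters), an application of Girsanov followed by H\"older with exponents $(m,m')$ gives
\[
\EE_\PP[g(X_t)]=\EE_\QQ\Big[g(X_t)\tfrac{\dd\PP}{\dd\QQ}\Big]\le \EE_\QQ[|g(X_t)|^m]^{1/m}\,\EE_\QQ\Big[\big(\tfrac{\dd\PP}{\dd\QQ}\big)^{m'}\Big]^{1/m'},
\]
and the last factor is bounded by a constant $C$ independent of $t$ and $f$ by Lemma \ref{lem:girsanov-sec5} (after bounding $m'$ by $\lceil m'\rceil\in\NN$). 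For the first factor I would use $\cL_\QQ(X_t)=\cL_\PP(\xi+W_t)$ and a change of variables to write $\EE_\QQ[|g(X_t)|^m]=\int_{\RR^d}|g(z)|^m\,(p_t\ast\rho_0)(z)\,\dd z$, and then apply H\"older with exponents $(r',r)$ together with Young's convolution inequality:
\[
\int_{\RR^d}|g|^m\,(p_t\ast\rho_0)\le \|g\|_{L^{mr'}}^m\,\|p_t\ast\rho_0\|_{L^r}\le \|g\|_{L^{\tilde r'}}^m\,\|p_t\|_{L^1}\,\|\rho_0\|_{L^r}.
\]
Since $mr'=\tilde r'$ and $\|p_t\|_{L^1}=1$, and recalling $\|g\|_{L^{\tilde r'}}\le 1$, this is at most $\|\rho_0\|_{L^r}$, whence $\|\rho_t\|_{L^{\tilde r}}\le C\,\|\rho_0\|_{L^r}^{1/m}$ uniformly in $t\in[0,T]$, which is the claim.

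The key — and essentially only delicate — point is the second display: the entire loss of integrability is pushed into the Girsanov factor, which is harmless since it is uniformly bounded, so that the Young step only ever sees $\|p_t\|_{L^1}=1$. Had one instead used $\|p_t\|_{L^s}$ with $s>1$, one would pick up a factor of order $t^{-Hd/s'}$ blowing up as $t\to 0$, destroying the uniform-in-$t$ bound; avoiding this is precisely what the choice $m=\tilde r'/r'>1$ accomplishes, and it is exactly here that the structure of the statement (targeting any $\tilde r<r$ rather than $\tilde r=r$) is genuinely used. I would also briefly note that the case $t=0$ is covered by the same computation with $p_0\ast\rho_0=\rho_0$, and that the finiteness of the supremum of the duality pairings legitimises a posteriori that $\rho_t\in L^{\tilde r}_x$ with the stated norm.
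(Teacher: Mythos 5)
Your argument is correct and is essentially the paper's own proof: the same Girsanov change of measure from Lemma \ref{lem:girsanov-sec5}, followed by H\"older with exponents $(m,m')$ to isolate the Radon--Nikodym density (the paper writes $m=1+\eps$ with $r'(1+\eps)=\tilde r'$, which is exactly your $m=\tilde r'/r'$), then H\"older with $(r',r)$ and Young's inequality using that $\cL(W_t)$ is a probability measure, and finally duality. Your closing remark about why the loss of integrability must be absorbed into the Girsanov factor rather than into $\|p_t\|_{L^s}$ is a correct and worthwhile observation, but the mechanism is the one the paper already uses.
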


\begin{proof}
Fix $\tilde r<r$ and denote by $\tilde r'$ the conjugate exponent of $\tilde r$; take $\eps>0$ such that $r'(1+\eps)=\tilde r'$.
Let $\QQ$ be the measure given by Lemma \ref{lem:girsanov-sec5} such that $\cL_\QQ(X)=\cL_\PP(\xi+W)$; since $\mathd \PP/\mathd \QQ$ admits moments of any order, for any $g\in C^\infty_c(\RR^d)$, by H\"older
\begin{align*}
|\langle g,\mathcal{L}(X_t)\rangle|
& \leq \EE_\PP[|g|(X_t)]
= \EE_\QQ\bigg[|g|(\xi+W_t)\, \frac{\mathd \PP}{\mathd\QQ} \bigg]\\
& \leq \EE_\QQ\big[|g|^{1+\eps} (\xi+W_t)\big]^{\frac{1}{1+\eps}} \, \EE_\QQ\bigg[\Big( \frac{\mathd \PP}{\mathd \QQ}\Big)^{1+\frac{1}{\eps}}\bigg]^{\frac{\eps}{1+\eps}}\\
& \lesssim_\eps \langle |g|^{1+\eps}, \mu_0\ast \mathcal{L}(W_t)\rangle^{\frac{1}{1+\eps}}
\end{align*}
where in the last passage we used the fact that $\xi$ and $W_t$ are independent.
Recalling that $\mathcal{L}(W_t)$ is a probability measure, by H\"older's and then Young's inequality we arrive at
\begin{equation*}
    |\langle g,\mathcal{L}(X_t)\rangle|
    \lesssim_\eps \| |g|^{1+\eps}\|_{L^{r'}_x}^{\frac{1}{1+\eps}}\, \| \rho\ast \cL(W_t) \|_{L^r_x}^{\frac{1}{1+\eps}}
    \lesssim \| g\|_{L^{r'(1+\eps)}_x}\, \| \mu_0 \|_{L^r_x}^{\frac{1}{1+\eps}}
    = \| g\|_{L^{\tilde r'}_x}\, \| \mu_0 \|_{L^r_x}^{\frac{1}{1+\eps}}
\end{equation*}
%
%
As the estimate is uniform over all $g\in C^\infty_c(\RR^d)$ and $t\in [0,T]$, by duality we deduce that
\begin{equation*}
  \sup_{t\in [0,T]}  \|\cL(X_t)\|_{L^{\tilde r}_x}\lesssim_{\varepsilon} \|\mu_0\|_{L^r_x}^{\frac{1}{1+\eps}};
\end{equation*}
%
as the reasoning holds for all $\tilde r<r$, the conclusion follows.
\end{proof}


We are now ready to prove the existence of solutions to the DDSDE \eqref{eq:ConvMckeanSDE} for $b\in L^q_T L^p_x$ and sufficiently integrable $\mu_0$.

\begin{prop}\label{prop:GirsanovLqLpExist}
Let $H \leq 1/2$,  $(p,r,q) \in [1,\infty)\times (1,\infty) \times (2,\infty]$ such that
\begin{equation}\label{eq:GirsanovBoodstrap}
\frac{1}{q}+Hd \bigg(\frac{1}{p}+\frac{1}{r}-1\bigg) < \frac{1}{2} -H, \quad \frac{1}{q}+\frac{Hd}{p}<\frac{1}{2}.
\end{equation}
Then for any $b \in L^q_T L^{p}_x$ and any $\mu_0\in L^r_x$ there exists a strong solution $X$ to the associated DDSDE \eqref{eq:ConvMckeanSDE}, which moreover satisfies $\mathcal{L}(X_\cdot)\in L^\infty_T L^{\tilde r}_x$ for any $\tilde r\in [1,r)$.
\end{prop}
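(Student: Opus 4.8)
The plan is to establish existence by a compactness-and-approximation scheme, exactly as in the proof of Proposition \ref{prop:DivStrongExist}, but replacing the divergence-based a priori bound with the Girsanov-based integrability estimate of Lemma \ref{lem:girsanov implies integrability}. First I would mollify $b$ into a sequence $\{b^n\}_n$ of smooth, bounded, globally Lipschitz drifts with $b^n\to b$ in $L^q_T L^p_x$ and $\sup_n \|b^n\|_{L^q_T L^p_x}\leq \|b\|_{L^q_T L^p_x}$, and for each $n$ solve the classical DDSDE
\[
X^n_t = \xi + \int_0^t (b^n_s\ast \mu^n_s)(X^n_s)\,\dd s + W_t,\qquad \mu^n_t=\cL(X^n_t),
\]
whose well-posedness is guaranteed by \cite[Theorem 7]{coghi2020pathwise} since $b^n\ast\mu$ is Lipschitz. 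Each $X^n$ then solves a plain SDE with effective drift $f^n:=b^n\ast\mu^n$.

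The crucial observation, which makes the a priori estimate non-circular, is that $\mu^n_t$ is a probability measure: by Young's inequality $\|f^n_t\|_{L^p_x}\leq \|b^n_t\|_{L^p_x}\,\|\mu^n_t\|_{L^1_x}=\|b^n_t\|_{L^p_x}$, so that $\sup_n \|f^n\|_{L^q_T L^p_x}\leq \|b\|_{L^q_T L^p_x}<\infty$. Since $(p,q)$ satisfies the weak condition \eqref{eq:parameters-qp-weak} (the second inequality in \eqref{eq:GirsanovBoodstrap}), Lemma \ref{lem:girsanov-sec5} applies to each $X^n$ with Girsanov constants uniform in $n$, and Lemma \ref{lem:girsanov implies integrability} yields
\[
\sup_n \sup_{t\in[0,T]} \|\mu^n_t\|_{L^{\tilde r}_x}=:C_{\tilde r}<\infty\qquad \forall\,\tilde r\in(1,r).
\]

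Next I would bootstrap this integrability back into the effective drift. For $\tilde r<r$, Young's inequality gives $f^n\in L^q_T L^{\tilde p}_x$ with $1/\tilde p=1/p+1/\tilde r-1$, uniformly in $n$; through the embedding $L^{\tilde p}_x\hookrightarrow B^{-d/\tilde p}_{\infty,\infty}$ this means $f^n$ is uniformly bounded in $L^q_T B^{\tilde\alpha}_{\infty,\infty}$ with $\tilde\alpha=-d/\tilde p$. Here the first inequality of \eqref{eq:GirsanovBoodstrap} enters decisively: it is precisely the statement that, as $\tilde r\uparrow r$, the pair $(\tilde\alpha,q)$ satisfies the strong condition \eqref{eq:parameters-qp-strong}, equivalently \eqref{eq:good parameters}; being strict, by continuity I may fix $\tilde r<r$ close enough to $r$ so that $f^n$ satisfies Assumption \ref{ass:drift-SDE} uniformly in $n$. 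With this uniform bound, Remark \ref{rem:holder-bound-solution} provides $\sup_n \EE[\llbracket X^n\rrbracket_{H-\eps}]<\infty$; together with $X^n_0=\xi$ and tightness of $\mu_0$, Ascoli--Arzel\`a yields tightness of $\{\cL(X^n)\}_n$ in $\cP(C_T)$. I then extract a subsequence with $\cL(X^n)\rightharpoonup\mu\in\cP(C_T)$, so $\mu^n_t\rightharpoonup\mu_t:=e_t\sharp\mu$ for every $t$, and the uniform bounds pass to the limit to give $\sup_t\|\mu_t\|_{L^{\tilde r}_x}\leq C_{\tilde r}$ for all $\tilde r<r$.

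It remains to pass to the limit in the equation, exactly as in Proposition \ref{prop:DivStrongExist}. Writing $f^n-f=b^n\ast(\mu^n-\mu)+(b^n-b)\ast\mu$ with $f:=b\ast\mu$, the second term tends to $0$ in $L^q_T B^{\tilde\alpha-1}_{\infty,\infty}$ by Young's inequality, $b^n\to b$ in $L^q_T L^p_x$ and the uniform $L^{\tilde r}_x$ bound on $\mu$; for the first term Corollary \ref{cor:BesovConvolve} gives pointwise-in-time convergence in $B^{\tilde\alpha-1}_{\infty,\infty}$, upgraded to convergence in $L^q_T B^{\tilde\alpha-1}_{\infty,\infty}$ by dominated convergence. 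Since all $f^n$ and $f$ satisfy Assumption \ref{ass:drift-SDE}, the stability estimate of Corollary \ref{cor:RndDataWellPosed} forces $X^n\to X$, the unique strong solution of the SDE associated to $(\xi,W,f)$; passing to the limit identifies $\cL(X_t)=\mu_t$ and $f_t=b_t\ast\cL(X_t)$, so that $X$ is a strong solution of \eqref{eq:ConvMckeanSDE} with $\cL(X_\cdot)\in L^\infty_T L^{\tilde r}_x$ for every $\tilde r<r$. I expect the main obstacle to be essentially bookkeeping: checking that the two conditions in \eqref{eq:GirsanovBoodstrap} line up precisely with the weak Girsanov regime (for the a priori estimate) and the strong regime of Assumption \ref{ass:drift-SDE} (for the stability argument), and verifying the drift convergence $b^n\ast(\mu^n-\mu)\to0$ from mere weak convergence of the measures, for which the quantitative input of Corollary \ref{cor:BesovConvolve} is essential.
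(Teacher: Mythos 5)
Your proposal is correct and follows essentially the same route as the paper's proof: mollify $b$, solve the approximating DDSDEs via \cite[Theorem 7]{coghi2020pathwise}, use that $\mu^n_t$ is a probability measure to get the uniform bound $\|b^n\ast\mu^n\|_{L^q_T L^p_x}\leq\|b\|_{L^q_T L^p_x}$, invoke Lemmas \ref{lem:girsanov-sec5} and \ref{lem:girsanov implies integrability} for the uniform $L^{\tilde r}_x$ bound, bootstrap via Young's inequality into the strong regime \eqref{eq:parameters-qp-strong}, and conclude by tightness and the stability estimate of Corollary \ref{cor:RndDataWellPosed}. The only (cosmetic) difference is in the limit passage, where the paper uses the tailor-made Corollary \ref{cor:IntegrableConvolve} to handle $b^n\ast\mu^n\to b\ast\mu$ with both factors varying, whereas you invoke Corollary \ref{cor:BesovConvolve}, which strictly requires the extra splitting $b^n\ast(\mu^n-\mu)=(b^n-b)\ast(\mu^n-\mu)+b\ast(\mu^n-\mu)$ and, in the regime $1/p+1/\tilde r\geq 1$, the intermediate convergence in $B^{\alpha}_{\tilde p,\tilde p}$ followed by a Besov embedding rather than the final clause of that corollary.
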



\begin{proof}
We pursue the same general strategy as in the proof of Proposition \ref{prop:DivStrongExist}.

As condition \eqref{eq:GirsanovBoodstrap} only contains strict inequalities, w.l.o.g. we can assume $q<\infty$; consider a sequence $\{b^n\}_n$ of Lipschitz, compactly supported functions such that $b^n\to b$ in $L^q_T L^p_x$ and $\| b^n\|_{L^q_T L^p_x}\leq \| b\|_{L^q_T L^p_x}$.
It follows from \cite[Theorem 7]{coghi2020pathwise} that for every $n$ there exists a unique solution $X^n$ to the approximating DDSDE
\begin{equation*}
    X^n_t = \xi + \int_0^t (b^n_s\ast \mu^n_s)(X^n_s)\,\dd s+ W_t, \quad \mu^n_t=\cL(X^n_t).
\end{equation*}
In particular, each $X^n$ is also a solution to an SDE with drift $f^n_t:=b^n_t\ast\cL(X^n_t)$ and by Young's inequality
\begin{align*}
    \sup_n \| f^n\|_{L^q_T L^p_x} \leq \sup_n \| b^n\|_{L^q_T L^p_x} \leq \| b\|_{L^q_T L^p_x}.
\end{align*}
Therefore we may apply Lemmas \ref{lem:girsanov-sec5} and \ref{lem:girsanov implies integrability} to obtain the uniform bound
\begin{equation*}
    \sup_n \|\mu^n_\cdot\|_{L^\infty_T L^{\tilde r}_x} <\infty
\end{equation*}
for all $\tilde r\in [1,r)$.
Applying H\"older's inequality to the integral in time and and Young's inequalities to the convolution in space, we find
\begin{equation*}
    \sup_n \|b^n \ast \mu^n\|_{L^q_T L^{\tilde p}_x} <\infty
\end{equation*}
for any $\tilde p< \bar{p}$, where
\begin{equation*}
    1+\frac{1}{\bar{p}} = \frac{1}{r} + \frac{1}{p}.
\end{equation*}
%
%
%
Using the fact that $\tilde p$ can be chosen arbitratrily close to $\bar{p}$ and that the first inequality in \eqref{eq:GirsanovBoodstrap} is strict, we see that the family $\{b^n\ast \mu^n\}$ is bounded in $L^q_T L^{\tilde p}_x$ for parameters $(q,\tilde p)$ satisfying
\begin{equation*}
    \frac{1}{q}+ \frac{Hd}{\tilde p} <\frac{1}{2}-H;
\end{equation*}
but this is exactly condition \eqref{eq:parameters-qp-strong}, i.e. the regularity regime in which we know how to solve the SDE in a strong sense.
On the other hand, the uniform bound for $\| b^n\|_{L^q_T L^p_x}$ and the use of Girsanov transform allows to derive a uniform bound for $\EE[\llbracket X^n\rrbracket_{H-\eps}]$ for any $\eps>0$;
together with $X^n_0=\xi$ for all $n$ this implies tightness of $\{X^n\}_n$, so that we can extract a (not relabelled) subsequence such that $\cL(X^n)\rightharpoonup \mu$ in $\cP(C_T)$, $\mu^n_t\rightharpoonup \mu_t=e_t\sharp \mu$ for all $t\in [0,T]$.

From here, the argument is almost identical to that of Proposition \ref{prop:DivStrongExist}: once we show that $b^n\ast \mu^n\to b\ast \mu$ in a sufficiently strong topology, then by Corollary \ref{cor:RndDataWellPosed} the solutions $X^n$ will converge to the unique strong solution $X$ associated to $(b\ast \mu,\xi)$, which must therefore be a solution to the DDSDE associated to $\mu_0=\cL(\xi)$ and $b$.
By the uniform bounds on $\{\mu^n\}_n$ and weak convergence $\mu^n_t\rightharpoonup\mu_t=\cL(X_t)$, we also deduce that $\cL(X_\cdot)\in L^\infty_T L^{\tilde r}_x$ for all $\tilde r< r$.

Since by construction $b^n\to b$ in $L^q_t L^p_x$, $\mu^n_t\rightharpoonup \mu_t$ and $\{b^n\ast \mu^n\}_n$ is bounded in $L^q_T L^{\tilde p}_x$ for some $(q,\tilde p)$ satisfying \eqref{eq:parameters-qp-strong}, we can apply Corollary \ref{cor:IntegrableConvolve} from Appendix \ref{app:UsefulLemmas} to deduce that (up to further relabelling $\tilde p-\eps$ into $\tilde p$) $b^n\ast \mu^n\to b\ast \mu$ in $L^q_T L^{\tilde p}_x$.
In light of the embedding $L^q_T L^{\tilde p}_x \hookrightarrow L^q_T B^{-d/\tilde{p}}_{\infty,\infty}$ and Corollary \ref{cor:RndDataWellPosed}, this implies the conclusion.
\end{proof}

We are now ready to prove the main result of this subsection.

\begin{thm}\label{thm:main_sec5}
Let $H \leq 1/2$, $(p,r,q) \in [1,\infty)\times (1,\infty) \times (2,\infty]$ satisfy \eqref{eq:GirsanovBoodstrap}.
Then for any $b \in L^q_T L^p_x$ and $\mu_0 \in L^r_x$, then there exists a strong solution $X$ to \eqref{eq:ConvMckeanSDE}, which satisfies $\mathcal{L}(X_\cdot)\in L^\infty_T L^{\tilde r}_x$ for any $\tilde r<r$; pathwise uniqueness and uniqueness in law hold in the class of solutions satisfying this condition.
\end{thm}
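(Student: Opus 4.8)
The plan is to deduce Theorem \ref{thm:main_sec5} by combining the existence statement already contained in Proposition \ref{prop:GirsanovLqLpExist} with a uniqueness argument patterned on the proof of Proposition \ref{prop:RefinedAPrioriUniqueness}, transported from the Besov scale $L^q_T B^\alpha_{p,p}$ to the Lebesgue scale $L^q_T L^p_x$. Existence of a strong solution $X$ with the a priori regularity $\mathcal{L}(X_\cdot)\in L^\infty_T L^{\tilde r}_x$ for every $\tilde r<r$ is precisely the conclusion of Proposition \ref{prop:GirsanovLqLpExist}, so no further work is required there; the entire content of the theorem beyond that is uniqueness within this integrability class. Note that Proposition \ref{prop:RefinedAPrioriUniqueness} cannot be cited verbatim, since it couples the integrability exponent of $b$ with that of $\mu_0$ (requiring $\mu_0\in L^{p'}_x$) and asks $b$ itself to sit in a Besov class, whereas here $b\in L^q_T L^p_x$ satisfies only the \emph{weak} condition \eqref{eq:parameters-qp-weak} and $r$ is decoupled from $p$.

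For uniqueness I would first check that any solution $X$ with $\mathcal{L}(X_\cdot)\in L^\infty_T L^{\tilde r}_x$ for all $\tilde r<r$ is automatically strong. By Young's inequality the effective drift satisfies $b^\mu_t=b_t\ast\mathcal{L}(X_t)\in L^q_T L^{\tilde p}_x$ with $1/\tilde p=1/p+1/\tilde r-1$; since the first inequality in \eqref{eq:GirsanovBoodstrap} is strict, one can pick $\tilde r<r$ close enough to $r$ that the pair $(q,\tilde p)$ satisfies the \emph{strong} condition \eqref{eq:parameters-qp-strong}. Through the embedding $L^{\tilde p}_x\hookrightarrow B^{-d/\tilde p}_{\infty,\infty}$ the drift $b^\mu$ then meets Assumption \ref{ass:drift-SDE}, so by Corollary \ref{cor:RndDataWellPosed} the SDE with this fixed drift is well-posed and $X$ is its unique strong solution; consequently two solutions $X^1,X^2$ in the class can be realised on one probability space with the same $(\xi,W)$.

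The heart of the proof is then a Gr\"onwall estimate on $t\mapsto d_m(\mu^1_t,\mu^2_t)$, where $\mu^i_t=\mathcal{L}(X^i_t)$ and $m$ is an auxiliary integrability exponent. Using $\xi^1=\xi^2$ together with the stability estimate \eqref{eq:SDEGammaNormEstim} of Corollary \ref{cor:RndDataWellPosed}, one bounds $\EE[\|X^1-X^2\|_{\gamma;[0,t]}^m]^{1/m}$ by $\|b\ast(\mu^1-\mu^2)\|_{L^{\tilde q}(0,t;B^{\tilde\alpha-1}_{\infty,\infty})}$; the difference of effective drifts is in turn controlled in Wasserstein distance exactly as in Proposition \ref{prop:RefinedAPrioriUniqueness}, by applying Corollary \ref{cor: maximal function2} from Appendix \ref{app:UsefulLemmas} (after the embedding $L^p_x\hookrightarrow B^{-\eps}_{p,p}$) and the a priori bound $\mu^i\in L^\infty_T L^{\tilde r}_x$, which gives $\|b_t\ast(\mu^1_t-\mu^2_t)\|_{B^{\tilde\alpha-1}_{\infty,\infty}}\lesssim d_m(\mu^1_t,\mu^2_t)$ for a suitable $\tilde\alpha$. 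Substituting this back and invoking Gr\"onwall's lemma forces $\mu^1_t=\mu^2_t$ for all $t$, so $X^1,X^2$ solve the same SDE and coincide $\PP$-a.s.; pathwise uniqueness and uniqueness in law then follow.

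The main obstacle I anticipate is the simultaneous calibration of the auxiliary exponents $(\tilde r,\tilde p,\tilde q,\tilde\alpha,m)$: they must be chosen so that the bootstrapped drift $b^\mu$ lands in the strong regime \eqref{eq:parameters-qp-strong} (equivalently \eqref{eq:good parameters}), so that the Besov order $\tilde\alpha$ appearing in the convolution--Wasserstein estimate \emph{also} satisfies \eqref{eq:good parameters}, and so that the distances $d_m(\mu^1_t,\mu^2_t)$ stay finite and the Gr\"onwall loop closes. This is the direct analogue of the delicate parameter selection \eqref{eq:proof_new_coeff} in Proposition \ref{prop:RefinedAPrioriUniqueness}, and the strictness of both inequalities in \eqref{eq:GirsanovBoodstrap} is exactly what provides the slack needed to realise it.
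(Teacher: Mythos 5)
Your proposal is correct in substance and reaches the same conclusion, but it packages the uniqueness step differently from the paper. You take existence verbatim from Proposition \ref{prop:GirsanovLqLpExist} (as does the paper) and then \emph{re-derive} the uniqueness argument of Proposition \ref{prop:RefinedAPrioriUniqueness} in the Lebesgue scale: strongness of any solution in the class via Young's inequality and the bootstrap into the regime \eqref{eq:parameters-qp-strong}, followed by a Wasserstein--Gr\"onwall loop through Corollary \ref{cor: maximal function2}. The paper instead observes that Proposition \ref{prop:RefinedAPrioriUniqueness} \emph{can} be cited after all --- just not verbatim, as you correctly note --- provided one first re-indexes the integrability of $b$: for $\tilde r<r$ with $\tilde r'>p$ one embeds $L^q_T L^p_x \hookrightarrow L^q_T B^{\alpha}_{\tilde r',\tilde r'}$ with $\alpha=-d(1/p+1/\tilde r-1)$, so that the conjugate-exponent coupling of that proposition is restored with the pair $(\tilde r',\tilde r)$ in place of $(p,p')$, and the regularity lost in the embedding is exactly what the first strict inequality in \eqref{eq:GirsanovBoodstrap} pays for (the cases $r\geq p'$ are handled by the cruder embedding $L^p_x\hookrightarrow B^{-\eps}_{p,p}$). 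What the paper's route buys is brevity --- a two-line verification of hypotheses instead of a page of repeated estimates --- and, as a side benefit, it sidesteps the restriction $p>1$ in the maximal-function machinery when $p=1$, since the re-indexed integrability $\tilde r'$ is always in $(1,\infty)$; your direct route would need the extra embedding step at that endpoint. What your route buys is that the "delicate calibration" you flag at the end is made fully explicit rather than delegated, and indeed it closes for precisely the reason you identify: the strictness of both inequalities in \eqref{eq:GirsanovBoodstrap}.
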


\begin{proof}
The proof is based on a (non-trivial) combination of Propositions \ref{prop:RefinedAPrioriUniqueness} and \ref{prop:GirsanovLqLpExist}.

Under our assumptions, the existence of a strong solution such that $\cL(X_\cdot)\in L^\infty_T L^{\tilde r}_x$ for any $\tilde r<r$ is granted; in particular if $r>p'$, then we can choose $\tilde r=p'$ and then assumptions of Proposition \ref{prop:DivStrongExist} in this case are satisfied thanks to the embedding $L^q_T L^p_x \hookrightarrow L^q_T B^{-\eps}_{p,p}$ for any $\eps>0$, giving the uniqueness part of the statement.
Up to technicalities the borderline case $r=p'$ can be treated similarly, exploiting the embedding $L^q_T L^p_x \hookrightarrow L^q_T B^{-\eps}_{\tilde p,\tilde p}$ for some $\tilde p=\tilde p(\eps)>p$ chosen so that $1/\tilde r + 1/\tilde p =1$.

Thus it remains to study the regime $r< p'$, equivalently $r'> p$; in this case we can choose $\tilde r<r$ such that $\tilde r'>p$ as well.
By Besov embedding it then holds
\begin{align*}
    L^q_T L^p_x \hookrightarrow B^\alpha_{\tilde r',\tilde r'} \quad \text{for}\quad
    \alpha:= -d\bigg( \frac{1}{p}-\frac{1}{\tilde r'}\bigg)
    = -d\bigg( \frac{1}{p}+\frac{1}{\tilde r}-1\bigg);
\end{align*}
to verify that $b$ satisfies the assumptions of Proposition \ref{prop:RefinedAPrioriUniqueness}, it then suffices to check that
\begin{align*}
    \frac{1}{q} + Hd \bigg( \frac{1}{p}+\frac{1}{\tilde r}-1\bigg) < \frac{1}{2}-H;
\end{align*}
since $\tilde r$ can be taken arbitrarily close to $r$, this follows from the first strict inequality in \eqref{eq:GirsanovBoodstrap}.
\end{proof}

\begin{rem}
For $r>d/(d-1)$, condition \eqref{eq:parameters-qp-weak} implies condition \eqref{eq:GirsanovBoodstrap}.
Therefore Theorem \ref{thm:main_thm4} is a particular subcase of Theorem \ref{thm:main_sec5}.
\end{rem}

\appendix
\section{Some useful lemmas}\label{app:UsefulLemmas}

We collect in this appendix several technical lemmas of analytic nature that have been used throughout the paper. We start with useful facts on the identification of elements of dual spaces.

\begin{lem}\label{lem:duality1}
Suppose $f\in L^1_{loc}([0,T]\times\RR^d)$, $q\in [1,\infty)$ and there exists a constant $C$ such that
\[
\int_{[0,T]\times\RR^d} f(t,x)\varphi(t,x)\,\dd t\dd x \leq C \| \varphi\|_{L^{q'}_T L^\infty_x} 
\]
for all compactly supported $\varphi\in L^\infty([0,T]\times\RR^d)$. Then $f\in L^q_T L^1_x$ and $\| f\|_{L^q_T L^1_x} \leq C$.
\end{lem}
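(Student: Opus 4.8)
The plan is to recover the mixed norm $\|f\|_{L^q_T L^1_x}$ by duality, testing the hypothesis against the natural ``dual maximizer''. Set
\[
g(t) := \|f(t,\cdot)\|_{L^1_x} = \int_{\RR^d}|f(t,x)|\,\dd x ,
\]
which is a measurable function of $t$: it is the increasing pointwise limit as $R\to\infty$ of the maps $t\mapsto \int_{B_R}|f(t,x)|\,\dd x$, each of which is measurable and a.e. finite by Tonelli together with the assumption $f\in L^1_{loc}$. The whole statement then reduces to showing $\|g\|_{L^q_T}\leq C$.

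First I would fix a nonnegative $h\in L^\infty_T$ and a radius $R>0$, and test the hypothesis with
\[
\varphi_R(t,x) := \operatorname{sign}(f(t,x))\,h(t)\,\ind_{B_R}(x).
\]
This is admissible: it is bounded (by $\|h\|_{L^\infty_T}$) and compactly supported in $[0,T]\times\RR^d$, since $[0,T]$ is already compact and $\ind_{B_R}$ cuts off in space. As $f\varphi_R = |f|\,h\,\ind_{B_R}\geq 0$ and $\|\varphi_R(t,\cdot)\|_{L^\infty_x}\leq |h(t)|$, the one-sided nature of the hypothesis is exactly what is needed, and it yields
\[
\int_0^T h(t)\int_{B_R}|f(t,x)|\,\dd x\,\dd t \;\leq\; C\,\|\varphi_R\|_{L^{q'}_T L^\infty_x}\;\leq\; C\,\|h\|_{L^{q'}_T}.
\]

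Next I would let $R\to\infty$. By monotone convergence (in $x$ for fixed $t$, then in $t$, using $h\geq 0$), the left-hand side increases to $\int_0^T g(t)\,h(t)\,\dd t$, so that
\[
\int_0^T g(t)\,h(t)\,\dd t \;\leq\; C\,\|h\|_{L^{q'}_T}\qquad\text{for every } h\in L^\infty_T,\ h\geq 0.
\]
Finally I would conclude via the duality characterization of the $L^q$ norm of the nonnegative measurable function $g$: for $q\in[1,\infty)$ one has $\|g\|_{L^q_T} = \sup\{\int_0^T g h : h\geq 0,\ \|h\|_{L^{q'}_T}\leq 1\}$, and this supremum may be restricted to bounded $h$ (for $q'<\infty$ the nonnegative bounded functions are dense in the nonnegative part of the unit ball of $L^{q'}_T$, while for $q'=\infty$ one simply takes $h\equiv 1$). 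This gives $\|f\|_{L^q_T L^1_x}=\|g\|_{L^q_T}\leq C$, in particular $f\in L^q_T L^1_x$.

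There is no serious obstacle here; the only point requiring care is the matching of integrability. One must test with \emph{bounded} $h$ so that $\varphi_R\in L^\infty$, rather than with a general $h\in L^{q'}_T$, and then recover the full norm through the density of bounded functions; keeping the truncation $\ind_{B_R}$ to ensure compact support and removing it by monotone convergence is the other (routine) bookkeeping step. As a byproduct, testing with $h=\ind_E$ for $E$ of positive measure shows a posteriori that $g<\infty$ a.e., consistent with the claimed membership $f\in L^q_T L^1_x$.
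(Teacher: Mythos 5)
Your proof is correct and follows essentially the same duality route as the paper's: test the hypothesis against $\mathrm{sgn}(f(t,x))$ times a function of $t$ alone (with a spatial cutoff), thereby reducing the claim to an $L^q$--$L^{q'}$ duality statement in the time variable for $g(t)=\|f(t,\cdot)\|_{L^1_x}$. The only difference is in execution: the paper inserts the explicit extremizer $h_t=g_t^{q-1}$ (after truncating $|f|\le M$ so that all quantities are finite) and closes via the self-improving inequality $\int_0^T g_t^q\,\dd t\le C\big(\int_0^T g_t^q\,\dd t\big)^{1-1/q}$, whereas you test with a general bounded nonnegative $h$ and invoke the standard converse-H\"older characterization of the $L^q$ norm, which performs the same truncation internally.
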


\begin{proof}
Fix $M>0$ and set
\[
g_t := \int_{\RR^d} |f(t,x)| \ind_{|f(t,\cdot)|\leq M} (x)\, \ind_{|x|\leq M} \dd x,\quad h(t,x):= g_t^{q-1} \text{sgn}(f(t,x)) \ind_{|f(t,\cdot)|\leq M} (x)\, \ind_{|x|\leq M};
\]
then by assumption
\[
\int_{[0,T]} |g_t|^q \dd t = \int_{[0,T]\times\RR^d} f(t,x) h(t,x)\,\dd t\dd x \leq C \| h\|_{L^{q'}_t L^\infty_x} = C \bigg(\int_{[0,T]} |g_t|^q \dd t\bigg)^{1-1/q},
\]
namely
\[
\Bigg( \int_{[0,T]} \bigg( \int_{\RR^d} |f(t,x)| \ind_{|f(t,\cdot)|\leq M} (x)\, \ind_{|x|\leq M} \dd x\bigg)^q \dd t\Bigg)^{1/q} \leq C.
\]
Taking the limit $M\to\infty$ gives the conclusion.
\end{proof}

\begin{lem}\label{lem:duality2}
Suppose $f\in L^1_{loc}([0,T]\times\RR^d)$, $q\in (1,\infty)$ and there exists a constant $C$ such that
\[
\int_{[0,T]\times\RR^d} f(t,x)\varphi(t,x)\,\dd t\dd x \leq C \| \varphi\|_{L^{q'}_T L^1_x} 
\]
for all compactly supported $\varphi\in L^\infty([0,T]\times\RR^d)$. Then $f\in L^q_T L^\infty_x$ and $\| f\|_{L^q_T L^\infty_x} \leq C$.
\end{lem}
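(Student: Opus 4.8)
This is the dual counterpart of Lemma \ref{lem:duality1}: there the target space was $L^q_T L^1_x$ and the pairing was controlled by the dual norm $L^{q'}_T L^\infty_x$, and one recovered the norm by testing against $g_t^{q-1}\,\mathrm{sgn}(f)$. Here the target is $L^q_T L^\infty_x$, so the natural object to control is $\phi(t):=\esssup_{x}|f(t,x)| = \|f(t,\cdot)\|_{L^\infty_x}\in[0,\infty]$, and I want to prove $\int_0^T \phi(t)^q\,\dd t\le C^q$. I would first record that $t\mapsto\phi(t)$ is measurable (standard: e.g. $\|f(t,\cdot)\|_{L^\infty(B_M)}=\lim_{k\to\infty}|B_M|^{-1/k}\|f(t,\cdot)\|_{L^k(B_M)}$, each term measurable in $t$ by Fubini, then $\phi=\sup_M$ of an increasing family). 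I then truncate with $\phi_N:=\min(\phi,N)$ and aim to show $\int_0^T\phi_N^q\,\dd t\le C^q$ for every $N$; letting $N\to\infty$ via monotone convergence yields $\phi\in L^q_T$, which in particular forces $\phi<\infty$ a.e., i.e. $f\in L^q_T L^\infty_x$ with the stated bound.

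\textbf{The main technical point: a countable dual characterization of the $L^\infty_x$ norm.} The obstruction, absent in Lemma \ref{lem:duality1}, is that extracting an $L^\infty$ norm by pairing against normalized $L^1_x$ functions requires test functions concentrated near the essential supremum, whose sup-norm blows up; so I cannot simply build an explicit near-optimizer. I would instead fix once and for all a \emph{countable} family $\mathcal D\subset C^\infty_c(\RR^d)$ consisting of $\pm$ standard bumps supported on balls with rational center and radius, normalized so that $\|\psi\|_{L^1}=1$ and $\|\psi\|_{L^\infty}\lesssim |B_\rho|^{-1}$, and prove the identity, valid in $[0,\infty]$,
\[
\phi(t)=\|f(t,\cdot)\|_{L^\infty_x}=\sup_{\psi\in\mathcal D}\ \int_{\RR^d} f(t,x)\,\psi(x)\,\dd x\qquad\text{for a.e. }t.
\]
The easy inequality ``$\ge$'' is H\"older. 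For ``$\le$'', given $\sigma<\phi(t)$, the set $\{|f(t,\cdot)|>\sigma\}$ has positive measure, so by the Lebesgue differentiation theorem there is a Lebesgue point $x_0$ with $|f(t,x_0)|>\sigma$; testing against a bump of $\mathcal D$ concentrated on a small rational ball containing $x_0$ gives $\int f(t,\cdot)\psi\to f(t,x_0)$, whence the sup exceeds $\sigma$. The same reasoning with $\sigma\to\infty$ covers the case $\phi(t)=\infty$. Since $t\mapsto\int f(t,\cdot)\psi$ is measurable for each fixed $\psi\in\mathcal D$ (Fubini), the right-hand side is a countable supremum of measurable functions, consistent with the measurability of $\phi$.

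\textbf{Greedy selection and conclusion.} With this characterization I would, for fixed $N$ and $\eps>0$, enumerate $\mathcal D=\{\psi_k\}_k$ and set $S_k:=\{t\in[0,T]:\int f(t,\cdot)\psi_k>\phi_N(t)-\eps\}$; by the identity above $\bigcup_k S_k=[0,T]$ up to a null set, and $T_k:=S_k\setminus\bigcup_{j<k}S_j$ is a measurable partition with $\int f(t,\cdot)\psi_k\ge\phi_N(t)-\eps$ on $T_k$. I then test against the \emph{admissible} function
\[
\varphi^{(K)}(t,x):=\sum_{k=1}^K \ind_{T_k}(t)\,\phi_N(t)^{q-1}\,\psi_k(x),
\]
which is bounded and compactly supported precisely because $K$ is finite, $\phi_N\le N$, and each $\psi_k\in C^\infty_c$ — this is why the finite truncation in $k$ is essential. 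Since $\|\varphi^{(K)}(t,\cdot)\|_{L^1_x}\le\phi_N(t)^{q-1}$ (disjointness of the $T_k$ and $\|\psi_k\|_{L^1}=1$) and $(q-1)q'=q$, the hypothesis gives
\[
\sum_{k=1}^K\int_{T_k}\phi_N^{q-1}(\phi_N-\eps)\,\dd t\ \le\ C\Big(\int_0^T\phi_N^q\,\dd t\Big)^{1/q'}.
\]
Letting $K\to\infty$ (dominated convergence on the finite measure space) and then $\eps\to0$ yields $\int_0^T\phi_N^q\le C\big(\int_0^T\phi_N^q\big)^{1/q'}$; as $\int_0^T\phi_N^q\le TN^q<\infty$ one divides to get $\big(\int_0^T\phi_N^q\big)^{1/q}\le C$, and $N\to\infty$ completes the proof. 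The genuinely delicate step is the countable dual characterization of the $L^\infty_x$ norm together with the measurable greedy selection; once those are in place the remaining estimate is the exact analogue of the computation in Lemma \ref{lem:duality1}.
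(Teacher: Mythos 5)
Your proof is correct, but it takes a genuinely different route from the paper's. The paper argues by restriction to a ball $D=\{|x|\leq R\}$: H\"older's inequality on the finite-measure set $D$ gives $\|\varphi\|_{L^{q'}_TL^1_D}\leq (c_dR^d)^{1/p'}\|\varphi\|_{L^{q'}_TL^p_D}$, so the hypothesis makes $f$ a bounded functional on $L^{q'}_TL^p_D$ for every $p\in(1,\infty)$; mixed-norm duality then yields $\|f\|_{L^q_TL^{p'}_D}\leq C(c_dR^d)^{1/p'}$, and since the constant tends to $1$ as $p'\to\infty$, Fatou and $\|g\|_{L^\infty_D}=\lim_{p'\to\infty}\|g\|_{L^{p'}_D}$ give $\|f\|_{L^q_TL^\infty_D}\leq C$, after which $R\to\infty$. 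You instead construct an explicit near-optimal test function: a countable family of normalized bumps characterizes $\|f(t,\cdot)\|_{L^\infty_x}$ pointwise in $t$ via the Lebesgue differentiation theorem, a measurable greedy selection picks the right bump on each piece $T_k$ of a partition of $[0,T]$, and pairing against $\sum_k\ind_{T_k}\phi_N^{q-1}\psi_k$ reproduces the computation of Lemma \ref{lem:duality1}. Your argument is more elementary and self-contained (it avoids invoking the duality $(L^{q'}_TL^p_D)^\ast\simeq L^q_TL^{p'}_D$ for mixed-norm spaces), at the cost of the more delicate measurable-selection machinery; the paper's argument is shorter but leans on that duality theorem and on the observation that the H\"older constant degenerates to $1$ in the limit $p'\to\infty$. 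All the steps you flag as delicate (measurability of $t\mapsto\|f(t,\cdot)\|_{L^\infty_x}$, the countable dual characterization at Lebesgue points, admissibility of the truncated test function, and the division by the finite quantity $\int_0^T\phi_N^q$) do go through as you describe.
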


\begin{proof}
Fix $R>0$ and set $D=\{x\in \RR^d:|x|\leq R\}$; we can identify $L^p_D:=L^p(D)$ as the subset of $L^p(\RR^d)$ made of functions supported on $D$, similarly $L^q_T L^p_D$ as a subset of $L^q_T L^p_x$.
Observe that for $p>1$, $L^{q'}_T L^p_D \hookrightarrow L^{q'}_T L^1_D$ with $\| f\|_{L^{q'}_T L^1_D} \leq (c_d R^d)^{1/p'} \| f\|_{L^{q'}_T L^p_D}$.
By hypothesis $f$ defines an element of the dual of $L^{q'}_T L^p_D$, thus by duality
\[
\| f\|_{L^q_T L^{p'}_D} \leq C (c_d R^d)^{1/p'} \quad \forall p'\in (1,\infty).
\]
Since $D$ has finite measure, $\| g\|_{L^\infty_D} = \lim_{p\to\infty} \| g\|_{L^p_D}$ for all measurable $g$; taking $p'\to\infty$ in the above by Fatou's lemma we obtain
\[
\| f\|_{L^q_T L^\infty_D} \leq C;
\]
as the reasoning holds for any $R>0$, the conclusion follows talking $R\to\infty$.
\end{proof}

The next statements concern the compactness properties of convolutions, specifically how weak convergence of measures is enhanced to strong convergence of associated functionals $b\ast \mu$.

\begin{lem}\label{lem:MeasureConvolveConverge}
Let $b\in L^p_x$ with $p\in [1,\infty)$ and $\{\mu^n\}_n\subset \cP(\RR^d)$ such that $\mu^n\rightharpoonup \mu$ weakly, then
\begin{align*}
    \lim_{n\to\infty}\|b\ast \mu^n-b\ast \mu\|_{L^p_x}=0.
\end{align*}
If moreover $\{\mu^n\}_n$ is bounded in $L^r_x$ with $r>1$, then $b\ast \mu^i\to b\ast \mu$ in $L^{\tilde p}_x$ for any $\tilde p\in [p,\infty)$ s.t.
\begin{equation}\label{eq:parameters-measure-convolve}
    1+\frac{1}{\tilde p} >\frac{1}{p}+\frac{1}{r}.
\end{equation}
\end{lem}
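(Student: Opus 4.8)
The plan is to establish both claims by first reducing to a continuous, compactly supported kernel, then upgrading pointwise convergence to norm convergence via tightness, and finally deducing the second statement by interpolation against the first.

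\textbf{First statement.} Young's inequality yields, for any probability measure $\nu$, the uniform bound $\|b\ast\nu\|_{L^p_x}\le \|b\|_{L^p_x}\|\nu\|_{TV}=\|b\|_{L^p_x}$. Hence, given $\eps>0$, if I pick $b_\eps\in C_c(\RR^d)$ with $\|b-b_\eps\|_{L^p_x}<\eps$, then
\[
\|b\ast\mu^n-b\ast\mu\|_{L^p_x}\le 2\eps+\|b_\eps\ast\mu^n-b_\eps\ast\mu\|_{L^p_x},
\]
so it suffices to treat the smooth kernel $b_\eps$. For such $b_\eps$, since $y\mapsto b_\eps(x-y)$ is bounded and continuous, weak convergence gives the pointwise limit $(b_\eps\ast\mu^n)(x)\to(b_\eps\ast\mu)(x)$ for every $x$, together with the uniform bound $|b_\eps\ast\mu^n|\le\|b_\eps\|_{C^0_x}$.

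The main obstacle is converting this pointwise convergence into $L^p_x$ convergence over the non-compact space $\RR^d$; here I would exploit tightness. A weakly convergent sequence of probability measures on $\RR^d$ is tight, so for any $\delta>0$ there is a compact $K$ with $\sup_n\mu^n(K^c),\,\mu(K^c)<\delta$. Splitting $\mu^n=\ind_K\mu^n+\ind_{K^c}\mu^n$ (and similarly for $\mu$), Young's inequality controls the tail parts via $\|b_\eps\ast(\ind_{K^c}\mu^n)\|_{L^p_x}\le\|b_\eps\|_{L^p_x}\delta$, while $b_\eps\ast(\ind_K\mu^n)$ is supported in the fixed bounded set $K+\mathrm{supp}(b_\eps)$. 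On a large ball $B_\rho$ containing this set, dominated convergence (finite Lebesgue measure, uniformly bounded integrands) makes the $L^p(B_\rho)$ error vanish as $n\to\infty$; on $B_\rho^c$ only the $K^c$-parts survive, with norm $\le 2\|b_\eps\|_{L^p_x}\delta$. Letting $n\to\infty$, then $\delta\to0$, then $\eps\to0$ concludes the first claim.

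\textbf{Second statement.} Assume in addition $\sup_n\|\mu^n\|_{L^r_x}=:C_r<\infty$. I would produce a uniform bound in a higher exponent and interpolate against the $L^p_x$ convergence just obtained. Set $1/s:=\max(1/p+1/r-1,0)$. If $1/p+1/r>1$, then $s<\infty$ and Young's inequality gives $\|b\ast\mu^n\|_{L^s_x}\le\|b\|_{L^p_x}C_r$; if $1/p+1/r\le1$, then $s=\infty$, and since $r\ge p'$ the interpolation $\|\mu^n\|_{L^{p'}_x}\le\|\mu^n\|_{L^1_x}^{1-\lambda}\|\mu^n\|_{L^r_x}^{\lambda}\le C_r^{\lambda}$ combined with Hölder yields $\|b\ast\mu^n\|_{L^\infty_x}\le\|b\|_{L^p_x}C_r^{\lambda}$. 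In either case $\{b\ast\mu^n\}$ is bounded in $L^s_x$, and $b\ast\mu$ inherits the same bound by lower semicontinuity (extract an a.e.\ convergent subsequence from the $L^p_x$ convergence and apply Fatou). Now condition \eqref{eq:parameters-measure-convolve}, i.e.\ $1+1/\tilde p>1/p+1/r$, together with $\tilde p<\infty$, is precisely equivalent to $1/\tilde p>1/s$, hence $p\le\tilde p<s$. Writing $1/\tilde p=\theta/p+(1-\theta)/s$ with $\theta\in(0,1]$, the interpolation inequality
\[
\|b\ast\mu^n-b\ast\mu\|_{L^{\tilde p}_x}\le\|b\ast\mu^n-b\ast\mu\|_{L^p_x}^{\theta}\,\|b\ast\mu^n-b\ast\mu\|_{L^s_x}^{1-\theta}
\]
finishes the proof, the second factor being uniformly bounded and the first tending to $0$ by the first statement. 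I expect this interpolation step to be routine, the genuine work being the tightness-based tail estimate in the first part.
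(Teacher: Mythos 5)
Your proof is correct, but your argument for the first statement follows a genuinely different route from the paper's. The paper realizes the weak convergence $\mu^n\rightharpoonup\mu$ via Skorokhod's representation theorem as almost sure convergence $X^n\to X$ of random variables with these laws, writes $b\ast\mu^n(x)=\EE[b(x-X^n)]$, applies Jensen's inequality to bound $\|b\ast\mu^n-b\ast\mu\|_{L^p_x}^p\le \EE\big[\|\tau_{X^n}b-\tau_X b\|_{L^p_x}^p\big]$, and concludes from the continuity of translations on $L^p_x$ together with dominated convergence. This is shorter and entirely avoids compactness considerations, at the price of invoking a probabilistic representation. Your argument instead approximates $b$ in $L^p_x$ by a continuous compactly supported kernel, uses Prokhorov tightness of the weakly convergent sequence, and splits each measure into a part carried by a compact set (handled by dominated convergence on a fixed ball) and a small tail (handled by Young's inequality); this is more elementary and purely analytic, though it requires the extra tightness bookkeeping. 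For the second statement your argument coincides with the paper's (uniform $L^s_x$ bound from Young's inequality plus interpolation against the $L^p_x$ convergence); in fact you are somewhat more careful than the paper, which defines $\bar p$ by $1+1/\bar p=1/p+1/r$ without commenting on the degenerate case $1/p+1/r\le 1$, whereas you treat the resulting $s=\infty$ endpoint explicitly via H\"older and an $L^{p'}_x$ interpolation bound on the densities, and you also justify that the limit $b\ast\mu$ inherits the uniform bound via Fatou. No gaps.
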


\begin{proof}
Given $h\in \RR^d$, define the translation operator $\tau_h:f\mapsto f(\cdot+h)$ acting on $L^p_x$. Recall that any given $b\in L^p_x$ is equicontinuous, in the sense that $\tau_{h^n} b\to \tau_h b$ in $L^p_x$ for $h^n\to h$ in $\RR^d$.

Since $\mu^n\rightharpoonup\mu$, by Skorokhod's representation theorem we can construct a probability space $(\Omega,\cF,\PP)$ and a family of r.v.s $\{X^n\}_n,\,X$ on it such that $\cL_\PP(X^n)=\mu^n,\,\cL_\PP(X)=\mu$ and $X^n\to X$ $\PP$-a.s.; it then holds
\begin{align*}
    \| b\ast \mu^n-b\ast \mu\|_{L^p_x}^p
    & = \int_{\RR^d} |\EE[b(x-X^n)-b(x-X)]|^p\, \dd x\\
    & \leq \EE\bigg[ \int_{\RR^d} |b(x-X^n)-b(x-X)|^p\, \dd x\bigg]
    = \EE\Big[ \| \tau_{X^n} b-\tau_{X} b\|_{L^p_x}^p\Big]
\end{align*}
where in the second passage we used Jensen's inequality. By the aforementioned equicontinuity it holds $\| \tau_{X^n} b-\tau_{X} b\|_{L^p_x}\to 0$ $\PP$-a.s. and we have the uniform bound $\| \tau_{X^n} b-\tau_{X} b\|_{L^p_x}\leq 2 \| b\|_{L^p_x}$, thus the first claim follows from dominated convergence.

Regarding the second claim, Young's inequality gives a uniform bound for $\{b\ast \mu^n\}$ in $L^{\bar p}_x$ for $1+1/\bar{p}=1/p+1/r$; combined with convergence in $L^p_x$ and interpolation estimates, we deduce convergence in $L^{\tilde p}_x$ for any $\tilde p\in [p,\bar{p})$.
\end{proof}

\begin{rem}
In the borderline case of $b\in L^p_x$ and $\{\mu^n\}_n$  bounded in $L^{p'}_x$, then $\{b\ast \mu^n\}_n$ is a bounded sequence in  $C_0(\RR^d)$, which denotes the Banach space of continuous functions vanishing at infinity, endowed with the supremum norm. In this case it can be shown that $\{b\ast \mu^n\}_n$ is also equicontinuous, so by Ascoli--Arzel\`{a} it converges to $b\ast \mu$ uniformly on compact sets.
\end{rem}

\begin{cor}\label{cor:BesovConvolve}
Let $b\in B^\alpha_{p,p}$ with $p\in [1,\infty)$,  $\alpha\in \RR$, and $\{\mu^n\}_n\subset \cP(\RR^d)$ such that $\mu^n \rightharpoonup \mu$ weakly, then $b\ast \mu^n \to b\ast \mu$ in $B^\alpha_{p,p}$;
if moreover $\{\mu^n\}_n$ is bounded in $L^r_x$ with $r>1$, then $b\ast \mu^n\to b\ast \mu$ in $B^\alpha_{\tilde p,\tilde p}$ for any $\tilde p\in [p,\infty)$ satisfying \eqref{eq:parameters-measure-convolve}.
Finally, if
\begin{align*}
    \frac{1}{p}+\frac{1}{r}<1,
\end{align*}
then $b\ast \mu^n\to b\ast \mu$ in $B^{\tilde \alpha}_{\infty,\infty}$ for any $\tilde \alpha<\alpha$.
\end{cor}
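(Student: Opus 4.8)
The plan is to bootstrap from the second assertion of the corollary, which has already been established, and to upgrade the integrability index from $\tilde p<\infty$ to $\infty$ at the cost of an arbitrarily small loss of regularity, by means of a Besov embedding.

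First I would observe that the hypothesis $\frac1p+\frac1r<1$ removes the constraint on $\tilde p$ appearing in the second claim. Indeed, condition \eqref{eq:parameters-measure-convolve} reads $1+\frac{1}{\tilde p}>\frac1p+\frac1r$, and since the right-hand side is now strictly smaller than $1\leq 1+\frac{1}{\tilde p}$, the inequality holds for \emph{every} $\tilde p\in[p,\infty)$. Consequently the second part of the corollary yields
\[
b\ast\mu^n\to b\ast\mu \quad\text{in } B^\alpha_{\tilde p,\tilde p}, \qquad \forall\, \tilde p\in[p,\infty).
\]

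Next, fix $\tilde\alpha<\alpha$. I would invoke the Besov embedding $B^\alpha_{\tilde p,\tilde p}\hookrightarrow B^{\alpha-d/\tilde p}_{\infty,\infty}$ (see \cite{BahCheDan}), valid for every $\tilde p\in[1,\infty)$, together with the elementary embedding $B^{s'}_{\infty,\infty}\hookrightarrow B^{s}_{\infty,\infty}$ for $s\leq s'$. Choosing $\tilde p\geq p$ large enough that $\alpha-d/\tilde p\geq\tilde\alpha$ — equivalently $\tilde p\geq d/(\alpha-\tilde\alpha)$, which is a finite threshold precisely because we only require $\tilde\alpha$ to lie strictly below $\alpha$ — I obtain the chain of continuous embeddings
\[
B^\alpha_{\tilde p,\tilde p}\hookrightarrow B^{\alpha-d/\tilde p}_{\infty,\infty}\hookrightarrow B^{\tilde\alpha}_{\infty,\infty}.
\]

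Combining the convergence $b\ast\mu^n\to b\ast\mu$ in $B^\alpha_{\tilde p,\tilde p}$ (for this particular $\tilde p$) with the boundedness of these embeddings gives the desired convergence $b\ast\mu^n\to b\ast\mu$ in $B^{\tilde\alpha}_{\infty,\infty}$, completing the proof. There is no genuine analytic obstacle here; the only point requiring care is verifying that the parameter constraint \eqref{eq:parameters-measure-convolve} degenerates under the assumption $\frac1p+\frac1r<1$, so that $\tilde p$ is free to be taken as large as the target regularity $\tilde\alpha<\alpha$ demands, the strict inequality providing exactly the room needed to absorb the $d/\tilde p$ loss in the Sobolev-type Besov embedding.
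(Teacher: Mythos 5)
Your treatment of the final assertion is correct and is precisely the paper's argument: under $\frac1p+\frac1r<1$ the constraint \eqref{eq:parameters-measure-convolve} holds for every $\tilde p\in[p,\infty)$, so the second assertion gives convergence in $B^\alpha_{\tilde p,\tilde p}$ for all such $\tilde p$, and the embedding $B^\alpha_{\tilde p,\tilde p}\hookrightarrow B^{\alpha-d/\tilde p}_{\infty,\infty}$ with $\tilde p$ chosen so that $\alpha-d/\tilde p\geq\tilde\alpha$ concludes.

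The gap is that you prove only one of the corollary's three assertions. The first two are part of the statement being proved, not previously established facts, and they are where the actual work lies; your bootstrap has nothing to stand on without them. For the first assertion the paper writes $\Delta_j(b\ast\mu^n-b\ast\mu)=(\Delta_j b)\ast(\mu^n-\mu)$, applies Lemma \ref{lem:MeasureConvolveConverge} to each block to get $\|(\Delta_j b)\ast(\mu^n-\mu)\|_{L^p_x}\to 0$ for fixed $j$, and then passes to the limit in $\sum_j 2^{\alpha jp}\|(\Delta_j b)\ast(\mu^n-\mu)\|_{L^p_x}^p$ by dominated convergence, using the uniform domination $\|(\Delta_j b)\ast(\mu^n-\mu)\|_{L^p_x}\lesssim\|\Delta_j b\|_{L^p_x}$ coming from Young's inequality against probability measures. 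For the second assertion, Young's inequality with the $L^r_x$ bound gives uniform boundedness of $\{b\ast\mu^n\}$ in $B^\alpha_{\bar p,\bar p}$ where $1+\frac{1}{\bar p}=\frac1p+\frac1r$, and interpolating this against the $B^\alpha_{p,p}$-convergence from the first part yields convergence in $B^\alpha_{\tilde p,\tilde p}$ exactly for $\tilde p\in[p,\bar p)$, which is the range \eqref{eq:parameters-measure-convolve}. You should supply both of these steps before invoking them.
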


\begin{proof}
Observe that $\Delta_j (b\ast \mu^n-b\ast \mu)=(\Delta_j b)\ast (\mu^n-\mu)$, so by Lemma \ref{lem:MeasureConvolveConverge}  $\|\Delta_j (b\ast \mu^n-b\ast \mu) \|_{L^p_x}\to 0$ for any fixed $j\in\NN$. Moreover
\begin{equation*}
\| (\Delta_j b)\ast (\mu^n- \mu) \|_{L^p_x}^p\lesssim_p \| \Delta_j b \|_{L^p_x}^p
\quad \text{with} \quad
\sum_j 2^{\alpha j p} \| \Delta_j b \|_{L^p_x}^p<\infty
\end{equation*}
therefore by dominated convergence
\begin{equation*}
\lim_i \| b\ast \mu^n-b\ast \mu\|_{B^\alpha_{p,p}}^p
= \lim_i \sum_j 2^{\alpha j p} \| (\Delta_j b)\ast (\mu^n- \mu) \|_{L^p_x}^p
= 0.
\end{equation*}
The second statement follows as before by interpolation between convergence in $B^\alpha_{p,p}$ and boundedness in $B^{\alpha}_{\bar{p},\bar{p}}$ with $\bar{p}$ satisfying the equality in \eqref{eq:parameters-measure-convolve}.

Finally, if $1>1/p+1/r$, then condition \eqref{eq:parameters-measure-convolve} is satisfied for all $\tilde p$ big enough; the conclusion then follows from convergence in $B^\alpha_{\tilde p,\tilde p}$ and the Besov embedding $B^\alpha_{\tilde p,\tilde p}\hookrightarrow B^{\alpha-d/\tilde p}_{\infty,\infty}$.
\end{proof}

\begin{cor}\label{cor:IntegrableConvolve}
Let $p,q\in [1,\infty)$, $\{b^n\}_n\subset L^q_T L^p_x$ be a sequence such that $b^n\to b$ in $L^q_T L^p_x$; moreover let $\{\mu^n,\mu\}\subset C_T \cP(\RR^d)$ be such that $\mu^n_t\rightharpoonup \mu_t$ weakly for every $t\in [0,T]$ and such that
\begin{align*}
    \sup_{n\in\NN} \sup_{t\in [0,T]}\|\mu^n\|_{L^r_x} + \sup_{t\in [0,T]} \|\mu\|_{L^r_x} <\infty
\end{align*}
for some $r\in [1,\infty]$. Then $b^n\ast \mu^n\to b\ast \mu$ in $L^q_T L^{\tilde p}_x$ for all $\tilde{p}\in [p,\infty)$ satisfying \eqref{eq:parameters-measure-convolve}.
\end{cor}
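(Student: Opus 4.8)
The plan is to reduce the statement to the pointwise-in-time convergence already established in Lemma \ref{lem:MeasureConvolveConverge}, controlling the resulting time integral by dominated convergence. First I would split the difference into two contributions by adding and subtracting $b\ast\mu^n$:
\[
b^n\ast\mu^n - b\ast\mu = (b^n-b)\ast\mu^n + b\ast(\mu^n-\mu).
\]

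For the first term I would invoke Young's convolution inequality pointwise in $t$. Fixing $s\in[1,r]$ determined by $1+1/\tilde p = 1/p + 1/s$, the constraint \eqref{eq:parameters-measure-convolve} is exactly $1/s>1/r$, i.e.\ $s<r$, while $\tilde p\ge p$ forces $1/s\le 1$, i.e.\ $s\ge 1$; hence such an $s$ exists. Since each $\mu^n_t$ is a probability density, interpolation between $L^1_x$ and $L^r_x$ gives $\|\mu^n_t\|_{L^s_x}\lesssim \|\mu^n_t\|_{L^r_x}^\theta$ for a suitable $\theta\in[0,1]$, so by hypothesis $\sup_{n,t}\|\mu^n_t\|_{L^s_x}<\infty$. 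Young's inequality then yields $\|(b^n_t-b_t)\ast\mu^n_t\|_{L^{\tilde p}_x}\lesssim \|b^n_t-b_t\|_{L^p_x}$ uniformly in $n,t$, and raising to the power $q$ and integrating in time bounds this contribution by $\|b^n-b\|_{L^q_T L^p_x}$, which tends to $0$ by assumption.

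For the second term I would argue pointwise in $t$. For a.e.\ $t$ we have $b_t\in L^p_x$ and, by hypothesis, $\mu^n_t\rightharpoonup\mu_t$ weakly with $\{\mu^n_t\}_n$ bounded in $L^r_x$; Lemma \ref{lem:MeasureConvolveConverge} (applied with $b_t$ in place of $b$) therefore gives $\|b_t\ast(\mu^n_t-\mu_t)\|_{L^{\tilde p}_x}\to0$ as $n\to\infty$, precisely for the range of $\tilde p$ satisfying \eqref{eq:parameters-measure-convolve}. To upgrade this to convergence of the time integral I would produce a dominating function: another application of Young's inequality gives $\|b_t\ast(\mu^n_t-\mu_t)\|_{L^{\tilde p}_x}^q\lesssim \|b_t\|_{L^p_x}^q\big(\sup_{n,t}\|\mu^n_t\|_{L^s_x}+\sup_t\|\mu_t\|_{L^s_x}\big)^q \lesssim \|b_t\|_{L^p_x}^q$, and the right-hand side lies in $L^1_T$ because $b\in L^q_T L^p_x$. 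Dominated convergence then yields $\|b\ast(\mu^n-\mu)\|_{L^q_T L^{\tilde p}_x}\to0$.

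Combining the two estimates gives the claim. The only genuinely delicate point is the exponent bookkeeping: one has to check that the strict inequality in \eqref{eq:parameters-measure-convolve} simultaneously guarantees both the admissible Young exponent $s\in[1,r]$ used for the uniform measure bounds and the admissible range $\tilde p<\bar p$ (where $1+1/\bar p=1/p+1/r$) in which Lemma \ref{lem:MeasureConvolveConverge} applies; everything else is a routine combination of Young's inequality and dominated convergence. A minor point to verify in passing is the joint measurability in $t$ of the integrands, which follows from the continuity assumptions on $\mu^n,\mu$ and the measurability of $b$.
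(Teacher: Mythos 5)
Your proof is correct and follows essentially the same route as the paper: the same splitting $b^n\ast\mu^n-b\ast\mu=(b^n-b)\ast\mu^n+b\ast(\mu^n-\mu)$, Young's inequality for the first term, and Lemma \ref{lem:MeasureConvolveConverge} plus dominated convergence in time for the second. The only (immaterial) difference is that you carry out the interpolation to reach the exponent $\tilde p$ inside each term via the intermediate Young exponent $s$, whereas the paper first proves convergence in $L^q_TL^p_x$ and interpolates once at the end against the uniform $L^q_TL^{\bar p}_x$ bound.
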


\begin{proof}
It suffices to show that $b^n\ast \mu^n\to b\ast\mu$ in $L^q_T L^p_x$; once this is done, convergence in $L^q_T L^{\tilde p}_x$ follows as usual by interpolation and boundedness in $L^q_T L^{\bar{p}}_x$, which comes from the assumptions and Young's inequality. It holds
\begin{align*}
    \| b^n\ast \mu^n\|_{L^q_T L^p_x} \leq \| (b^n-b)\ast\mu^n\|_{L^q_T L^p_x} + \| b\ast \mu^n-b\ast \mu\|_{L^q_T L^p_x}
\end{align*}
where we can estimate the first term by $\| (b^n-b)\ast\mu^n\|_{L^q_T L^p_x} \leq \| b^n-b\|_{L^q_T L^p_x} \to 0$.
For the second term, by Lemma \ref{lem:MeasureConvolveConverge} and the assumptions it holds $\| b_t\ast (\mu^n_t-\mu_t)\|_{L^p_x}\to 0$ for Lebesgue a.e. $t\in [0,T]$, as well as $\| b_t\ast (\mu^n_t-\mu_t)\|_{L^p_x}\leq 2\| b_t\|_{L^p_x}$; thus by dominated convergence we infer $\| b\ast \mu^n-b\ast\mu\|_{L^q_T L^p_x}\to 0$ as well.
\end{proof}

\begin{lem}\label{lem:basic-Besov-convolve}
For any $p\in [1,\infty)$ and $\alpha\in \RR$ there exists a constant $C=C(\alpha)$ such that
\begin{align*}
    \| b\ast (\mu-\nu)\|_{B^{\alpha-1}_{\infty,\infty}} \leq C \, \| b\|_{B^\alpha_{\infty,\infty}}\, d_p(\mu,\nu)
\end{align*}
for all $b\in B^\alpha_{\infty,\infty}$ and $\mu,\nu\in \cP(\RR^d)$.
\end{lem}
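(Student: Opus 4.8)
The plan is to estimate directly the Littlewood--Paley blocks of $b\ast(\mu-\nu)$, exploiting the smoothing effect of convolving against a signed measure of small Wasserstein size, at the cost of exactly one derivative. Recall that $\|f\|_{B^{\alpha-1}_{\infty,\infty}}=\sup_{n\geq -1}2^{(\alpha-1)n}\|\Delta_n f\|_{L^\infty}$ and that the Littlewood--Paley projectors commute with convolution, so that $\Delta_n(b\ast(\mu-\nu))=(\Delta_n b)\ast(\mu-\nu)$. Thus it suffices to bound each block $\|(\Delta_n b)\ast(\mu-\nu)\|_{L^\infty}$ by $C\,2^n\|\Delta_n b\|_{L^\infty}\,d_p(\mu,\nu)$ and then resum.

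First I would fix an optimal coupling $m\in\Pi(\mu,\nu)$ realizing $d_p(\mu,\nu)$, whose existence is guaranteed by \cite[Theorem 4.1]{villani2008optimal}. Expressing the convolution against $\mu-\nu$ through this coupling gives, for every $x\in\RR^d$,
\[
(\Delta_n b)\ast(\mu-\nu)(x)=\int_{\RR^{2d}}\bigl[\Delta_n b(x-y)-\Delta_n b(x-y')\bigr]\,m(\dd y,\dd y').
\]
I would then apply the mean value theorem together with Bernstein's inequality $\|\nabla\Delta_n b\|_{L^\infty}\leq C\,2^n\|\Delta_n b\|_{L^\infty}$, valid uniformly over all $n\geq -1$ (the inhomogeneous low-frequency block $n=-1$ only contributing a harmless multiplicative constant), to bound the bracketed term by $C\,2^n\|\Delta_n b\|_{L^\infty}\,|y-y'|$. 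This yields
\[
\|(\Delta_n b)\ast(\mu-\nu)\|_{L^\infty}\leq C\,2^n\|\Delta_n b\|_{L^\infty}\int_{\RR^{2d}}|y-y'|\,m(\dd y,\dd y').
\]

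Next I would control the integral factor. Since $m$ is a probability measure on $\RR^{2d}$, Jensen's inequality (equivalently Hölder with exponents $p,p'$) gives $\int|y-y'|\,m\leq\bigl(\int|y-y'|^p\,m\bigr)^{1/p}=d_p(\mu,\nu)$, the last equality being the optimality of $m$. Multiplying the previous block estimate by $2^{(\alpha-1)n}$ produces
\[
2^{(\alpha-1)n}\|\Delta_n(b\ast(\mu-\nu))\|_{L^\infty}\leq C\,2^{\alpha n}\|\Delta_n b\|_{L^\infty}\,d_p(\mu,\nu),
\]
and taking the supremum over $n\geq -1$ gives exactly $\|b\ast(\mu-\nu)\|_{B^{\alpha-1}_{\infty,\infty}}\leq C\,\|b\|_{B^\alpha_{\infty,\infty}}\,d_p(\mu,\nu)$.

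I do not expect any serious obstacle here; the proof is essentially the ``trade one derivative for a Wasserstein distance'' estimate. The only points requiring minor care are the uniform validity of the Bernstein estimate across the inhomogeneous block $n=-1$ (which is where the constant $C=C(\alpha)$ is absorbed) and choosing the direction of Jensen's inequality correctly, so that the \emph{optimal} coupling yields an upper bound by $d_p$ rather than merely a lower bound by $d_1$. Note that no density or approximation argument for non-smooth $b$ is needed, since every manipulation takes place at the level of the smooth Littlewood--Paley blocks.
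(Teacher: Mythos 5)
Your proposal is correct and follows essentially the same route as the paper's proof: decompose into Littlewood--Paley blocks, use the optimal coupling (equivalently the Lipschitz characterization of $d_1$) together with Bernstein's inequality to trade one derivative for a factor $2^n$, and conclude via $d_1(\mu,\nu)\leq d_p(\mu,\nu)$. The paper performs the reduction to $p=1$ first and phrases the block estimate as $\|(\Delta_n b)\ast(\mu-\nu)\|_{L^\infty_x}\leq\|\Delta_n b\|_{W^{1,\infty}_x}\,d_1(\mu,\nu)$, but this is the same coupling-plus-mean-value argument you give, including your (correct) remark about the inhomogeneous block $n=-1$.
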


\begin{proof}
It's enough give the proof for $p=1$, as the general case follows from $d_1(\mu,\nu)\leq d_p(\mu,\nu)$; we can assume $d_1(\mu,\nu)<\infty$, otherwise the inequality is trivial.
By Bernstein estimates, reasoning on Littlewood-Paley blocks, we have
\begin{align*}
    \Vert b \ast (\mu-\nu)\Vert_{B^{\alpha-1}_{\infty,\infty}}
    & = \sup_n \big\{ 2^{n(\alpha-1)} \Vert (\Delta_n b)\ast (\mu-\nu)\Vert_{L^\infty_x} \big\}\\
    & \leq \sup_n  \big\{ 2^{n(\alpha-1)} \Vert \Delta_n b\Vert_{W^{1,\infty}_x} \big\}\, d_1(\mu,\nu)\\
    & \lesssim \sup_n  \big\{ 2^{n\alpha} \Vert \Delta_n b\Vert_{L^\infty_x} \big\}\, d_1(\mu,\nu)
    = \Vert b\Vert_{B^\alpha_{\infty,\infty}}\, d_1(\mu,\nu)
\end{align*}
which gives the claim.
\end{proof}

The last statements we are going to provide concern the continuity of the map $\mu\mapsto b\ast \mu$ in suitable topologies. Their proof require the use of maximal functions and their basic properties, which we recall first; we refer the interested reader to \cite{stein1970singular} for their proofs.

Given $b\in L^p(\RR^d)$, $p\in [1,\infty]$, its maximal function $M b$ is defined by
\[
M b(x) := \sup_{r>0} \frac{1}{\lambda_d\, r^d} \int_{B(x,r)} |b(y)|\,\dd y
\]
where $\lambda_d$ stands for the Lebesgue measure of $B(0,1)$ in $\RR^d$. It is well known that if $p\in (1,\infty]$, then $M f\in L^p(\RR^d)$ and
\[
\|Mb \|_p\leq c_{d,p} \|b\|_p
\]
for some constant $c_{d,p}>0$; similar definitions and properties hold in the case of vector-valued drifts $b\in L^p(\RR^d;\RR^m)$ (in which case $c=c_{d,p,m}$).

If $b\in W^{1,p}(\RR^d;\RR^d)$, then there exists a Lebesgue-negligible set $N\subset\RR^d$ and a constant $c_d>0$ such that the Hajlasz inequality holds:
\begin{equation}\label{eq: maximal function1}
|b(x)-b(y)|\leq c_d\, |x-y|\,( M Db(x) + MDb(y))\quad \forall\, x,y\in \RR^d\setminus N.
\end{equation}

The above and similar inequalities (see \cite{caravenna2021directional, brue2021positive} for recent asymmetric extensions) allow to control the map $\mu\mapsto b\ast \mu$ in Wasserstein spaces.

\begin{lem}\label{lem: maximal function1}
Let $(p,q,r,s)\in (1,\infty)^4$ be such that $r \leq p\wedge s$ and
\begin{equation}\label{eq:param-maximal-ineq}
    1+\frac{r}{s} \leq \frac{r}{p}+\frac{1}{q}.
\end{equation}
Then there exists a constant $C$, depending on $d$ and the above parameters, such that for any $b\in W^{1,p}_x$ and $\mu,\nu \in L^q_x$ it holds
\begin{equation*}
\| b\ast(\mu-\nu) \|_{L^s_x} \leq C \| b\|_{W^{1,p}_x}\, \big(\| \mu\|_{L^q_x}^{1/r}+\|\nu\|_{L^q_x}^{1/r}\big)\, d_{r'}(\mu,\nu)
\end{equation*}
\end{lem}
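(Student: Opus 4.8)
The plan is to combine the pointwise Hajlasz bound \eqref{eq: maximal function1} with an optimal-coupling representation of the convolution, and then to recast the resulting double integral as an ordinary convolution to which Young's inequality and the maximal inequality $\|Mf\|_{L^p_x}\lesssim\|f\|_{L^p_x}$ (available since $p>1$) apply. By the symmetry of the two terms produced by \eqref{eq: maximal function1}, it suffices to treat the one involving $MDb(\cdot-y)$; the $\nu$-term is handled identically and accounts for the summand $\|\nu\|_{L^q_x}^{1/r}$. Throughout I write $\rho_\mu$ for the density of $\mu$, so that $\|\rho_\mu\|_{L^1_x}=1$ and $\|\rho_\mu\|_{L^q_x}=\|\mu\|_{L^q_x}$.

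First I would fix an optimal coupling $m\in\Pi(\mu,\nu)$ realizing $d_{r'}(\mu,\nu)$ and write
\[
(b\ast\mu)(x)-(b\ast\nu)(x)=\int_{\RR^{2d}}\big(b(x-y)-b(x-y')\big)\,m(\dd y,\dd y'),
\]
so that \eqref{eq: maximal function1} gives the pointwise bound
\[
\big|(b\ast(\mu-\nu))(x)\big|\le c_d\int_{\RR^{2d}}|y-y'|\,\big(MDb(x-y)+MDb(x-y')\big)\,m(\dd y,\dd y').
\]
Next I would apply Hölder in the variable $m$ with the conjugate pair $(r',r)$, separating $|y-y'|$ from the maximal-function factor; since $r>1$ makes $r'$ finite and $r\le s$ makes $s/r\ge1$, the first term is controlled by
\[
\int_{\RR^{2d}}|y-y'|\,MDb(x-y)\,m(\dd y,\dd y')\le d_{r'}(\mu,\nu)\,\Big(\big((MDb)^r\ast\rho_\mu\big)(x)\Big)^{1/r},
\]
where I integrated out $y'$ and recognized $\int MDb(x-y)^r\,\rho_\mu(y)\,\dd y=((MDb)^r\ast\rho_\mu)(x)$.

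Taking the $L^s_x$ norm then reduces the estimate to bounding $\|(MDb)^r\ast\rho_\mu\|_{L^{s/r}_x}^{1/r}$, and here I would invoke Young's convolution inequality. Using $r\le p$ together with the maximal inequality yields $\|(MDb)^r\|_{L^{p/r}_x}=\|MDb\|_{L^p_x}^{r}\lesssim\|Db\|_{L^p_x}^{r}\lesssim\|b\|_{W^{1,p}_x}^{r}$, and pairing $(MDb)^r\in L^{p/r}_x$ against $\rho_\mu$ produces a function in $L^{s/r}_x$ precisely when the Young balance holds; condition \eqref{eq:param-maximal-ineq} is exactly this balance. At the borderline (equality in \eqref{eq:param-maximal-ineq}) one pairs $(MDb)^r$ directly with $\rho_\mu\in L^q_x$ and reads off the factor $\|\mu\|_{L^q_x}^{1/r}$; the general inequality is accommodated by interpolating the density between $L^1_x$ (where the mass is normalized to $1$) and $L^q_x$, so that only the portion of $L^q$-integrability dictated by \eqref{eq:param-maximal-ineq} is consumed.

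The step I expect to be the main obstacle is precisely this final bookkeeping: one must simultaneously verify that every Hölder and Young exponent is admissible — this is where the standing hypothesis $r\le p\wedge s$ is used — and that \eqref{eq:param-maximal-ineq} delivers exactly the Young relation outputting the $L^{s/r}_x$ norm while keeping the power of $\|\mu\|_{L^q_x}$ equal to $1/r$. The interpolation of $\rho_\mu$ between $L^1_x$ and $L^q_x$, combined with $\int\rho_\mu=1$, is what reconciles the inequality in \eqref{eq:param-maximal-ineq} with the clean scale-invariant equality case (the unique case in which both sides of the desired bound share the same scaling). Once the exponents are pinned down, the remaining ingredients — the coupling representation, the pointwise Hajlasz estimate, and the boundedness of the maximal operator — are entirely routine, and symmetrizing in $\mu,\nu$ completes the argument.
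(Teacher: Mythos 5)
Your proposal follows the paper's proof essentially step for step: optimal coupling for $d_{r'}$, the pointwise Hajlasz bound \eqref{eq: maximal function1}, H\"older with exponents $(r',r)$ in the coupling variable, recognition of $\big(|MDb|^r\ast\mu\big)^{1/r}$, and Young's inequality with \eqref{eq:param-maximal-ineq} as the exponent balance (the paper likewise reduces to the equality case by noting $\mu,\nu\in L^{\tilde q}_x$ for all $\tilde q\in[1,q]$). The only detail you omit is the observation that the Hajlasz exceptional set $N$ is $m$-negligible because $\mu$ and $\nu$ are absolutely continuous, which is what licenses applying \eqref{eq: maximal function1} under the coupling integral.
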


\begin{proof}
If $d_{r'}(\mu,\nu)=+\infty$ the inequality is trivially true, so we can assume $d_{r'}(\mu,\nu)<\infty$; moreover since $\mu,\nu\in L^{\tilde{q}}_x$ for any $\tilde{q}\in [1,q]$, w.l.o.g. we can assume equality holds in \eqref{eq:param-maximal-ineq}. 

Let $m\in \Pi(\mu,\nu)$ be an optimal coupling of $(\mu,\nu)$ for $d_{r'}(\mu,\nu)$ and let $N\subset\RR^d$ be as in \eqref{eq: maximal function1}; since $\mu,\nu$ are absolutely continuous w.r.t. Lebesgue, it holds that $m((N\times\RR^d) \cup (\RR^d\times N))=0$.
Therefore we can apply \eqref{eq: maximal function1} for any fixed $x\in \RR^d$ to find
\begin{align*}
|b\ast(\mu-\nu)(x)|
& = \bigg| \int_{\RR^{2d}} [b(x-y)-b(x-z)]\, m(\dd y,\dd z)]\bigg|\\
& \lesssim \int_{\RR^{2d}} |y-z|\, (M Db(x-y)+MDb(x-z))\, m(\dd y,\dd z) \\
& \leq \bigg(\int_{\RR^{2d}} | y-z|^{r'} \,m(\dd y,\dd z) \bigg)^{1/r'}\, \bigg(\int_{\RR^{2d}} | M Db(x-y) + M Db(x-z)|^r \,m(\dd y,\dd z)\bigg)^{1/r}\\
& \leq d_{r'}(\mu,\nu)\, \bigg[ \Big(\int_{\RR^d} |M Db(x-y)|^r \mu(\dd y)\Big)^{1/r} + \Big(\int_{\RR^d} |M Db(x-z)|^r \nu(\dd z)\Big)^{1/r}\bigg].
\end{align*}
Observe that
\begin{align*}
    \Big(\int_{\RR^d} |M Db(\cdot-y)|^r \mu(\dd y)\Big)^{1/r} = (|M Db|^r\ast \mu)^{1/r}(\cdot)
\end{align*}
and by assumption \eqref{eq:param-maximal-ineq} and Young's inequality it holds
\begin{align*}
    \|(|M Db|^r \ast \mu)^{1/r} \|_{L^s_x}
    = \| |M Db|^r \ast \mu\|_{L^{s/r}_x}^{1/r}
    \lesssim \||M Db|^r \|_{L^{p/r}_x}^{1/r} \, \| \mu\|_{L^q_x}^{1/r}
    = \| M D b\|_{L^p_x}\,\| \mu\|_{L^q_x}^{1/r}.
\end{align*}
Together with a similar estimate for $|M Db|\ast\nu$ and the property $\| M Db\|_{L^p_x} \lesssim \| b\|_{W^{1,p}_x}$, combining everything we arrive at
\begin{equation*}
    \| b\ast (\mu-\nu)\|_{L^s_x} \lesssim \| b\|_{W^{1,p}_x} (\| \mu\|_{L^q_x}^{1/r} + \| \nu\|_{L^q_x}^{1/r})\, d_{r'}(\mu,\nu)
\end{equation*}
which is the claim.
\end{proof}

\begin{cor}\label{cor: maximal function2}
Let $\alpha\in \RR$, $(p,q,r,s)$ as in Lemma \ref{lem: maximal function1}. Then there exists a constant $C$, depending on $d$ and the above parameters, such that for any $b\in B^\alpha_{p,p}$ and $\mu,\nu\in L^q_x$
\begin{equation*}
\| b\ast(\mu-\nu) \|_{B^{\alpha-1}_{s,s}} \leq C \| b\|_{B^\alpha_{p,p}}\, \big(\| \mu\|_{L^q_x}^{1/r}+\|\nu\|_{L^q_x}^{1/r}\big)\, d_{r'}(\mu,\nu)
\end{equation*}
\end{cor}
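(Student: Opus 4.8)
The plan is to reduce the statement to Lemma \ref{lem: maximal function1} by decomposing $b$ into Littlewood--Paley blocks and estimating each block separately, then resumming.

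First I would use that convolution commutes with the Fourier multipliers $\Delta_n$, so that $\Delta_n(b\ast(\mu-\nu)) = (\Delta_n b)\ast(\mu-\nu)$ for every $n \geq -1$. Since each block $\Delta_n b$ is band-limited, it lies in $W^{1,p}_x$ whenever $b \in B^\alpha_{p,p}$, so Lemma \ref{lem: maximal function1} applies to it directly and yields
\[
\|(\Delta_n b)\ast(\mu-\nu)\|_{L^s_x} \lesssim \|\Delta_n b\|_{W^{1,p}_x}\,\big(\|\mu\|_{L^q_x}^{1/r}+\|\nu\|_{L^q_x}^{1/r}\big)\, d_{r'}(\mu,\nu).
\]

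Next I would invoke Bernstein's inequality: since $\Delta_n b$ has Fourier support in a ball/annulus of radius $\sim 2^n$, one has $\|\Delta_n b\|_{W^{1,p}_x} \lesssim 2^n \|\Delta_n b\|_{L^p_x}$ uniformly in $n \geq -1$ (using $1+2^n \leq 3\cdot 2^n$). Inserting this into the previous estimate, raising to the power $s$, multiplying by $2^{n(\alpha-1)s}$ and summing, the factors $2^{-ns}$ and $2^{ns}$ cancel, leaving
\[
\|b\ast(\mu-\nu)\|_{B^{\alpha-1}_{s,s}}^s = \sum_{n\geq -1} 2^{n(\alpha-1)s}\|(\Delta_n b)\ast(\mu-\nu)\|_{L^s_x}^s \lesssim \Big(\sum_{n\geq -1} 2^{n\alpha s}\|\Delta_n b\|_{L^p_x}^s\Big)\big(\|\mu\|_{L^q_x}^{1/r}+\|\nu\|_{L^q_x}^{1/r}\big)^s d_{r'}(\mu,\nu)^s.
\]

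The subtle point, and the only place where the structural hypotheses enter beyond Lemma \ref{lem: maximal function1}, is that the sum on the right is the $B^\alpha_{p,s}$ norm of $b$ (the summability index is $s$, not $p$), whereas the statement asks for the $B^\alpha_{p,p}$ norm. These are reconciled by the embedding $B^\alpha_{p,p}\hookrightarrow B^\alpha_{p,s}$, valid whenever $p \leq s$, which is just the inclusion $\ell^p \hookrightarrow \ell^s$ applied to the sequence $\{2^{n\alpha}\|\Delta_n b\|_{L^p_x}\}_n$. To verify $p \leq s$ I would rearrange the standing hypothesis $1 + r/s \leq r/p + 1/q$ into $r(1/p - 1/s) \geq 1 - 1/q$; since $q>1$ forces $1-1/q>0$ and $r>0$, this gives $1/p > 1/s$, i.e. $p < s$. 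Hence $\sum_n 2^{n\alpha s}\|\Delta_n b\|_{L^p_x}^s = \|b\|_{B^\alpha_{p,s}}^s \lesssim \|b\|_{B^\alpha_{p,p}}^s$, and taking $s$-th roots yields the claim. I do not expect a genuine obstacle: the argument is a block-by-block transfer of Lemma \ref{lem: maximal function1}, with the Besov index bookkeeping (deducing $p<s$ from the parameter constraint and applying the $\ell^p\hookrightarrow\ell^s$ embedding) being the only point requiring care.
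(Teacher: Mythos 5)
Your proposal is correct and follows essentially the same route as the paper's proof: Littlewood--Paley decomposition, blockwise application of Lemma \ref{lem: maximal function1} combined with the Bernstein estimate $\|\Delta_n b\|_{W^{1,p}_x}\lesssim 2^n\|\Delta_n b\|_{L^p_x}$, and the embedding $\ell^p\hookrightarrow\ell^s$ justified by deducing $p\leq s$ from \eqref{eq:param-maximal-ineq}. Your verification of $p<s$ is in fact slightly more explicit than the paper's, which simply asserts that the assumption implies $s\geq p$.
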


\begin{proof}
Let $\Delta_j b$ denote the Littlewood--Paley blocks of $b$, then $\Delta_j( b\ast(\mu-\nu))= (\Delta_j b)\ast (\mu-\nu)$. By the previous lemma and Bernstein estimates, one has that
\begin{align*}
\|(\Delta_j b)\ast (\mu-\nu) \|_{L^s_x}
& \lesssim \| \Delta_j b\|_{W^{1,p}_x} \big(\| \mu\|_{L^q_x}^{1/r}+\|\nu\|_{L^q_x}^{1/r}\big)\, d_{r'}(\mu,\nu)\\
& \lesssim 2^j\, \| \Delta_j b\|_p \big(\| \mu\|_{L^q_x}^{1/r}+\|\nu\|_{L^q_x}^{1/r}\big)\, d_{r'}(\mu,\nu);
\end{align*}
moreover assumption \eqref{eq:param-maximal-ineq} implies $s\geq p$. Therefore we have
\begin{align*}
    \| b\ast (\mu-\nu)\|_{B^\alpha_{s,s}}
    & = \bigg(\sum_j \Big[ 2^{(\alpha-1) j} \|(\Delta_j b)\ast (\mu-\nu) \|_{L^s_x}\Big]^s\bigg)^{1/s}\\
    & \lesssim \bigg(\sum_j \Big[ 2^{\alpha j} \|\Delta_j b\|_{L^s_x}\Big]^s\bigg)^{1/s} \big(\| \mu\|_{L^q_x}^{1/r}+\|\nu\|_{L^q_x}^{1/r}\big)\, d_{r'}(\mu,\nu) \\
    & \lesssim \bigg(\sum_j \Big[ 2^{\alpha j} \|\Delta_j b\|_{L^s_x}\Big]^p\bigg)^{1/p} \big(\| \mu\|_{L^q_x}^{1/r}+\|\nu\|_{L^q_x}^{1/r}\big)\, d_{r'}(\mu,\nu)
\end{align*}
which gives the conclusion.
\end{proof}

\bibliography{all}{}

\begin{thebibliography}{10}

\bibitem{amine2017c}
Oussama Amine, David Ba{\~n}os, and Frank Proske.
\newblock C-infinity-regularization by {N}oise of {S}ingular {ODE}s.
\newblock {\em arXiv preprint arXiv:1710.05760}, 2017.

\bibitem{BahCheDan}
Hajer Bahouri, Jean-Yves Chemin, and Rapha\"{e}l Danchin.
\newblock {\em Fourier analysis and nonlinear partial differential equations},
  volume 343 of {\em Grundlehren der Mathematischen Wissenschaften [Fundamental
  Principles of Mathematical Sciences]}.
\newblock Springer, Heidelberg, 2011.

\bibitem{banos2019strong}
David Ba{\~n}os, Torstein Nilssen, and Frank Proske.
\newblock Strong existence and higher order {F}r{\'e}chet differentiability of
  stochastic flows of fractional {B}rownian motion driven {SDE}s with singular
  drift.
\newblock {\em Journal of Dynamics and Differential Equations}, pages 1--48,
  2019.

\bibitem{bauer2019mckean}
Martin Bauer and Thilo Meyer-Brandis.
\newblock Mc{K}ean--{V}lasov equations on infinite-dimensional {H}ilbert spaces
  with irregular drift and additive fractional noise.
\newblock {\em arXiv preprint arXiv:1912.07427}, 2019.

\bibitem{BauerMBrandisProske2018}
Martin Bauer, Thilo Meyer-Brandis, and Frank Proske.
\newblock {Strong solutions of mean-field stochastic differential equations
  with irregular drift}.
\newblock {\em Electronic Journal of Probability}, 23(none):1 -- 35, 2018.

\bibitem{bresch2019mean}
Didier Bresch, Pierre-Emmanuel Jabin, and Zhenfu Wang.
\newblock On mean-field limits and quantitative estimates with a large class of
  singular kernels: {A}pplication to the {P}atlak--{K}eller--{S}egel model.
\newblock {\em Comptes Rendus Mathematique}, 357(9):708--720, 2019.

\bibitem{brue2021positive}
Elia Bru{\`e}, Maria Colombo, and Camillo De~Lellis.
\newblock Positive solutions of transport equations and classical nonuniqueness
  of characteristic curves.
\newblock {\em Archive for Rational Mechanics and Analysis}, pages 1--36, 2021.

\bibitem{caravenna2021directional}
Laura Caravenna and Gianluca Crippa.
\newblock A directional {L}ipschitz extension lemma, with applications to
  uniqueness and {L}agrangianity for the continuity equation.
\newblock {\em Communications in Partial Differential Equations}, pages 1--33,
  2021.

\bibitem{Catellier2016}
R\'emi Catellier and Massimiliano Gubinelli.
\newblock Averaging along irregular curves and regularisation of {ODE}s.
\newblock {\em Stochastic Process. Appl.}, 126(8):2323--2366, 2016.

\bibitem{coghi2020pathwise}
Michele Coghi, Jean-Dominique Deuschel, Peter~K. Friz, and Mario Maurelli.
\newblock Pathwise {McKean--Vlasov} theory with additive noise.
\newblock {\em Annals of Applied Probability}, 30(5):2355--2392, 2020.

\bibitem{chaudru2020strong}
P.E.~Chaudru de~Raynal.
\newblock Strong well posedness of {M}c{K}ean--{V}lasov stochastic differential
  equations with {H}{\"o}lder drift.
\newblock {\em Stochastic Processes and their Applications}, 130(1):79--107,
  2020.

\bibitem{diperna1989ordinary}
Ronald~J. DiPerna and Pierre-Louis Lions.
\newblock Ordinary differential equations, transport theory and {S}obolev
  spaces.
\newblock {\em Inventiones mathematicae}, 98(3):511--547, 1989.

\bibitem{Flandoli2011}
Franco Flandoli.
\newblock {\em Random Perturbation of {PDE}s and Fluid Dynamic Models:
  {\'E}cole d'{\'e}t{\'e} de Probabilit{\'e}s de {S}aint-{F}lour {XL}--2010},
  volume 2015.
\newblock Springer Science \& Business Media, 2011.

\bibitem{fournier2014propagation}
Nicolas Fournier, Maxime Hauray, and St{\'e}phane Mischler.
\newblock Propagation of chaos for the 2{D} viscous vortex model.
\newblock {\em Journal of the European Mathematical Society}, 16(7):1423--1466,
  2014.

\bibitem{GalHarMay_21benchmark}
L.~Galeati, F.~Harang, and A.~Mayorcas.
\newblock Distribution dependent {SDE}s driven by additive continuous noise,
  2021.
\newblock Work in progress.

\bibitem{galeati2020nonlinear}
Lucio Galeati.
\newblock Nonlinear {Y}oung differential equations: a review.
\newblock {\em J. Dyn. Diff. Equat., \rm{available at}
  https://doi.org/10.1007/s10884-021-09952-w}, 2021.

\bibitem{galeati2020noiseless}
Lucio Galeati and Massimiliano Gubinelli.
\newblock Noiseless regularisation by noise.
\newblock {\em arXiv preprint arXiv:2003.14264, {\rm to appear in} Revista
  Matem\'atica Iberoamericana}, 2020.

\bibitem{galeati2020prevalence}
Lucio Galeati and Massimiliano Gubinelli.
\newblock Prevalence of $\rho$-irregularity and related properties.
\newblock {\em arXiv preprint arXiv:2004.00872}, 2020.

\bibitem{galeati2020regularization}
Lucio Galeati and Fabian~A. Harang.
\newblock Regularization of multiplicative {SDE}s through additive noise.
\newblock {\em arXiv preprint arXiv:2008.02335}, 2020.

\bibitem{Gartner1988}
J\"{u}rgen G\"{a}rtner.
\newblock On the {M}c{K}ean-{V}lasov limit for interacting diffusions.
\newblock {\em Math. Nachr.}, 137:197--248, 1988.

\bibitem{harang2020pathwise}
Fabian Harang and Avi Mayorcas.
\newblock Pathwise {R}egularisation of {S}ingular {I}nteracting {P}article
  {S}ystems and their {M}ean {F}ield {L}imits.
\newblock {\em arXiv preprint arXiv:2010.15517}, 2020.

\bibitem{harang2020cinfinity}
Fabian~A. Harang and Nicolas Perkowski.
\newblock ${C}^\infty$-regularization of {ODE}s perturbed by noise.
\newblock {\em Stochastics and Dynamics}, 2021.

\bibitem{hoeksema2020large}
Jasper Hoeksema, Thomas Holding, Mario Maurelli, and Oliver Tse.
\newblock Large deviations for singularly interacting diffusions.
\newblock {\em arXiv preprint arXiv:2002.01295}, 2020.

\bibitem{huang2019distribution}
Xing Huang and Feng-Yu Wang.
\newblock Distribution dependent {SDE}s with singular coefficients.
\newblock {\em Stochastic Processes and their Applications},
  129(11):4747--4770, 2019.

\bibitem{huang2020mckean}
Xing Huang and Feng-Yu Wang.
\newblock Mc{K}ean--{V}lasov {SDE}s with drifts discontinuous under
  {W}asserstein distance.
\newblock {\em arXiv preprint arXiv:2002.06877}, 2020.

\bibitem{jabin2017mean}
Pierre-Emmanuel Jabin and Zhenfu Wang.
\newblock Mean field limit for stochastic particle systems.
\newblock In {\em Active Particles, Volume 1}, pages 379--402. Springer, 2017.

\bibitem{jabin2018quantitative}
Pierre-Emmanuel Jabin and Zhenfu Wang.
\newblock Quantitative estimates of propagation of chaos for stochastic systems
  with ${W}^{-1,\infty}$ kernels.
\newblock {\em Inventiones mathematicae}, 214(1):523--591, 2018.

\bibitem{jabir2019rate}
Jean-Francois Jabir.
\newblock Rate of propagation of chaos for diffusive stochastic particle
  systems via {G}irsanov transformation.
\newblock {\em arXiv preprint arXiv:1907.09096}, 2019.

\bibitem{lacker2018strong}
Daniel Lacker.
\newblock On a strong form of propagation of chaos for {M}c{K}ean--{V}lasov
  equations.
\newblock {\em Electronic Communications in Probability}, 23, 2018.

\bibitem{lasry2007mean}
Jean-Michel Lasry and Pierre-Louis Lions.
\newblock Mean field games.
\newblock {\em Japanese journal of mathematics}, 2(1):229--260, 2007.

\bibitem{le2020stochastic}
Khoa L{\^e}.
\newblock A stochastic sewing lemma and applications.
\newblock {\em Electronic Journal of Probability}, 25, 2020.

\bibitem{mckean1966class}
H.~P.~Jr. McKean.
\newblock A class of {M}arkov processes associated with nonlinear parabolic
  equations.
\newblock {\em Proceedings of the National Academy of Sciences of the United
  States of America}, 56(6):1907--1911, 1966.

\bibitem{mishura2016existence}
Yuliya~S. Mishura and Alexander~Yu Veretennikov.
\newblock Existence and uniqueness theorems for solutions of
  {M}c{K}ean--{V}lasov stochastic equations.
\newblock {\em arXiv preprint arXiv:1603.02212}, 2016.

\bibitem{nualart2006malliavin}
David Nualart.
\newblock {\em The {M}alliavin calculus and related topics}, volume 1995.
\newblock Springer, 2006.

\bibitem{nualart2002regularization}
David Nualart and Youssef Ouknine.
\newblock Regularization of differential equations by fractional noise.
\newblock {\em Stochastic Processes and their Applications}, 102(1):103--116,
  2002.

\bibitem{nualart03stochastic}
David Nualart and Youssef Ouknine.
\newblock Stochastic differential equations with additive fractional noise and
  locally unbounded drift.
\newblock In Evariste Gin{\'e}, Christian Houdr{\'e}, and David Nualart,
  editors, {\em Stochastic Inequalities and Applications}, pages 353--365,
  Basel, 2003. Birkh{\"a}user Basel.

\bibitem{picard2011representation}
Jean Picard.
\newblock Representation formulae for the fractional {B}rownian motion.
\newblock In {\em S{\'e}minaire de Probabilit{\'e}s XLIII}, pages 3--70.
  Springer, 2011.

\bibitem{rockner2018well}
Michael R{\"o}ckner and Xicheng Zhang.
\newblock Well-posedness of distribution dependent {SDE}s with singular drifts.
\newblock {\em arXiv preprint arXiv:1809.02216}, 2018.

\bibitem{serfaty2017mean}
Sylvia Serfaty.
\newblock Mean field limits of the {G}ross-{P}itaevskii and parabolic
  {G}inzburg-{L}andau equations.
\newblock {\em J. Amer. Math. Soc}, 30(3):713--768, 2017.

\bibitem{serfaty2020mean}
Sylvia Serfaty.
\newblock Mean field limit for {C}oulomb-type flows.
\newblock {\em Duke Mathematical Journal}, 169(15):2887--2935, 2020.

\bibitem{stein1970singular}
Elias~M. Stein.
\newblock {\em Singular integrals and differentiability properties of
  functions}, volume~2.
\newblock Princeton university press, 1970.

\bibitem{sznitman1991topics}
Alain-Sol Sznitman.
\newblock Topics in propagation of chaos.
\newblock In {\em Ecole d'{\'e}t{\'e} de probabilit{\'e}s de Saint-Flour
  XIX-1989}, pages 165--251. Springer, 1991.

\bibitem{tanaka1984Limits}
Hiroshi Tanaka.
\newblock Limit theorems for certain diffusion processes with interaction.
\newblock In Kiyosi It\^o, editor, {\em {Stochastic Analysis}}, volume~32 of
  {\em North-Holland Mathematical Library}, pages 469 -- 488. Elsevier, 1984.

\bibitem{tomasevic2020propagation}
Milica Tomasevic.
\newblock Propagation of chaos for stochastic particle systems with singular
  mean-field interaction of ${L}^q-{L}^p$ type.
\newblock {\em HAL-03086253 available at https://hal.inria.fr/hal-03086253},
  2020.

\bibitem{Veretennikov1981}
Alexander~Yu.. Veretennikov.
\newblock On strong solution and explicit formulas for solutions of stochastic
  integral equations.
\newblock {\em Math. USSR Sb.}, 39:387--403, 1981.

\bibitem{villani2008optimal}
C{\'e}dric Villani.
\newblock {\em Optimal transport: old and new}, volume 338.
\newblock Springer Science \& Business Media, 2008.

\bibitem{Zvonkin1974}
A.~K. Zvonkin.
\newblock A transformation of the phase space of a diffusion process that will
  remove the drift.
\newblock {\em Mat. Sb. (N.S.)}, 93(135):129--149, 152, 1974.

\end{thebibliography}
\bibliographystyle{plain}

\end{document}